\newcommand{\ov}{\overline}
\def\cA{{\mathcal A}}
\def\cAI{\widetilde{{\mathcal A}}}
\def\cC{{\mathcal C}}
\def\cD{{\mathcal D}}
\def\cE{{\mathcal E}}
\def\cF{{\mathcal F}}
\def\cG{{\mathcal G}}
\def\cH{{\mathcal H}}
\def\cI{{\mathcal I}}
\def\cJ{{\mathcal J}}
\def\cK{{\mathcal K}}
\def\cM{{\mathcal M}}
\def\cO{{\mathcal O}}
\def\cP{{\mathcal P}}
\def\cR{{\mathcal R}}
\def\cS{{\mathcal S}}
\def\cT{{\mathcal T}}
\def\cV{{\mathcal V}}
\def\cW{{\mathcal W}}
\def\cX{{\mathcal X}}
\def\sH{{\mathscr H}}
\def\ga{{\mathfrak a}}
\def\gb{{\mathfrak b}}
\def\gc{{\mathfrak c}}
\def\gd{{\mathfrak d}}
\def\goe{{\mathfrak e}} 
\def\gI{{\mathfrak I}}
\def\gJ{{\mathfrak J}}
\def\gM{{\mathfrak M}}
\def\gP{{\mathfrak P}}
\def\gp{{\mathfrak p}}
\def\gu{{\mathfrak u}}
\def\zero{{\mathbf 0}}
\def\a{{\mathbf a}}
\def\b{{\mathbf b}}
\def\c{{\mathbf c}}
\def\bd{{\mathbf d}}
\def\be{{\mathbf e}}
\def\bl{{\mathbf l}}
\def\u{{\mathbf u}}
\def\ut{\tilde{\mathbf u}}
\def\v{{\mathbf v}}
\def\w{{\mathbf w}}
\def\y{{\mathbf y}}
\def\z{{\mathbf z}}
\def\d  {{\mathrm{\,d\,}}}
\def\vol {{\mathrm{\,vol\,}}}
\def\det {{\mathrm{\,det\,}}}
\def\Disc{{\mathrm{\,Disc\,}}}
\newcommand{\C}{\mathbb{C}}
\newcommand{\F}{\mathbb{F}}
\newcommand{\N}{\mathbb{N}}
\newcommand{\Q}{\mathbb{Q}}
\newcommand{\R}{\mathbb{R}}
\newcommand{\Z}{\mathbb{Z}}
\newcommand{\e}{\mathrm{e}}
\newcommand{\x}{\mathbf{x}}
\newtheorem{thrm}{Theorem}[section]
\newtheorem{lmm}[thrm]{Lemma}
\newtheorem{prpstn}[thrm]{Proposition}
\newtheorem*{dfntn}{Definition}
\newtheorem*{rmk}{Remark}
\numberwithin{sblmm}{thrm} 
\numberwithin{equation}{section}
\newcommand{\Mod}[1]{\ (\mathrm{mod}\ #1)}
\renewcommand{\mod}[1]{\mathrm{mod}\ #1}
\newcommand{\ZZ}{\mathbb{Z}}
\newcommand{\RR}{\mathbb{R}}
\newcommand{\Rc}{\mathcal{R}}
\renewcommand{\aa}{\mathbf{a}}
\newcommand{\bb}{\mathbf{b}}
\newcommand{\cc}{\mathbf{c}}
\newcommand{\xx}{\mathbf{x}}
\newcommand{\yy}{\mathbf{y}}
\newcommand{\zz}{\mathbf{z}}
\newcommand{\vv}{\mathbf{v}}
\newcommand{\ww}{\mathbf{w}}
\newcommand{\1}{\mathbf{1}}
\newcommand{\pf}{\mathfrak{p}}
\newcommand{\Sf}{\mathfrak{S}}
\newcommand{\Sft}{\tilde{\Sf}}
\newcommand{\Oc}{\mathcal{O}}
\newcommand{\af}{\mathfrak{a}}
\newcommand{\bfr}{\mathfrak{b}}
\newcommand{\Cc}{\mathcal{C}}
\newcommand{\Bc}{\mathcal{B}}
\begin{document}
%
%
%
%
\title{On the largest prime factor of  quartic polynomial values: the cyclic and dihedral cases}
\author{Cécile Dartyge and James Maynard} 

\date{}

\maketitle
%
%
%
%
\abstract{Let $P(X)\in\Z [X]$ be an irreducible, monic, quartic polynomial with cyclic or dihedral Galois group. We prove that there exists a constant $c_P>0$ such that for a positive proportion of integers $n$, $P(n)$ has a prime factor $\ge n^{1+c_P}$.}
%
%
%
%
\tableofcontents
%
%
%
%
\section{Introduction}
%
%
%
%
Let $P(X)\in\Z[X]$ be an irreducible degree  polynomial with $d\ge 2$. Assuming that there is no local obstruction, it is widely believed \cite{SchinzelSierpinski} that $P$ should take on infinitely many prime values, but unfortunately this conjecture remains completely open for all non-linear polynomials $P$.

As an approximation to this problem, one can look for integers $n$ for which $P(n)$ has a large prime factor. For general polynomials $P$, the best known bound is due to Tenenbaum \cite{Tenenbaum}, who shows that there are infinitely many integers $n$ such that $P(n)$ has a prime factor of size at least $n \exp((\log{n})^\alpha)$ for any $\alpha<2-\log{4}$. When the degree of $P$ is 5 or more, this is the best known result, but for some low degree polynomials, one can produce bounds which are much stronger.

 Hooley \cite{Hooley} proved the first result of this kind, showing that the largest prime factor $P^+(n^2+1)$ of $n^2+1$ satisfies $P^+(n^2+1)>n^{11/10}$ infinitely often. The exponent $11/10$ has been improved by Deshouillers and Iwaniec \cite{DI82}, next by La Bretèche and Drappeau \cite{BD20} and  the current record due to Merikoski \cite{Merikoski} is that $P^+(n^2+1)>n^{1.279}$ infinitely often. Heath-Brown \cite{HB01} showed that $P^+(n^3+2)>n^{1+10^{-303}}$ infinitely often.
 Irving \cite{Ir15} proved fifteen years later that exponent $1+10^{-303}$ can be replaced by $1+10^{-52}$.
 It seems plausible that the underlying methods could be adapted to more general degree 2 or degree 3 polynomials.

For degree 4 polynomials, results can currently only be obtained when the Galois group $G$ of $P(X)$ takes a simple form. When $P(X)=X^4-X^2+1$, the twelfth cyclotomic polynomial, Dartyge \cite{D15} proved that there are infinitely many 
 $n$ such that  $P^+(n^4-n^2+1)>n^{1+10^{-26531}}$.
 La Bretèche \cite{B15} generalised this result to quartic irreducible even monic polynomials with Galois group isomorphic to the Klein group $V:=\Z /2\Z\times \Z/2\Z$.  For such polynomials $P$, he proved that there exists  $c_P>0$ such that  $P^+(P(n))>n^{1+c_P}$ for a positive proportion of integers $n$. It seems plausible that the methods of  \cite{B15} and \cite{D15} may be adapted for some more general quartic polynomials, but the condition that the Galois group is $V$ is crucial to the method.

In this work we obtain results for irreducible quartic polynomials with  Galois group isomorphic to the cyclic group   $C_4:=\Z/4 \Z$ or the  dihedral group  $D_4=\Z/2\Z\ltimes \Z/4\Z$. Our method  doesn't work for polynomials with Galois group $A_4$ or $S_4$ which are the most frequent Galois groups for quartic irreducible polynomials.
However, the fifth cyclotomic polynomial $\Phi_5 (X)=X^4+X^3+X^2+X+1$, $X^4-5X^2+5$, $X^4+13X+39$ are   examples of polynomials with cyclic Galois group and $X^4+2$, $X^4+3X+3$, $X^4-5X^2+3$ are polynomials with Galois group $D_4$. (cf. \cite{C} for other examples of quartic polynomials with dihedral or cyclic Galois group). 
%
%
%
%
\begin{thrm}\label{cP}Let $P(X)$ be a monic quartic irreducible polynomial with Galois group $C_4$ or $D_4$. Then there exists a constant $c_P>0$ such that for $x>x_0 (P)$, 
we have 
\begin{equation*}
|\{ x<n\le 2x : P^+ (P(n)) \ge x^{1+c_P}\} |\gg x.
\end{equation*}
\end{thrm}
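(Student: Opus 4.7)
The plan is to exploit the Galois structure via the unique intermediate quadratic field. Let $\alpha$ be a root of $P$ and $L = \Q(\alpha)$. When $\mathrm{Gal}(P) = C_4$, the quartic $L$ is Galois and has a unique quadratic subfield $K$. When $\mathrm{Gal}(P) = D_4$, the quartic $L$ is non-Galois, but the point stabilizer of $\alpha$ is an order-$2$ non-central subgroup contained in a unique Klein four-subgroup of $D_4$; by Galois correspondence $L$ still contains a unique quadratic subfield $K$. In both cases $L/K$ has degree $2$ and is therefore normal, yielding a factorization
\begin{equation*}
P(X) = g(X)\,\tau(g)(X), \qquad g(X) = X^2 + bX + c \in \cO_K[X],
\end{equation*}
where $\tau$ generates $\mathrm{Gal}(K/\Q)$, so that $P(n) = N_{K/\Q}(g(n))$. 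Writing $g(n) = u(n) + v(n)\omega$ in a $\Z$-basis $\{1,\omega\}$ of $\cO_K$, one sees that $P(n)$ is a value of the binary quadratic norm form of $K$ evaluated on the thin one-parameter family $(u(n),v(n))$ with $u$ quadratic and $v$ linear in $n$ --- exactly as $n^2+1 = N_{\Q(i)/\Q}(n+i)$ in the classical Hooley setting, but with $v(n)$ non-constant.

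The analytic strategy then follows Hooley's scheme. Suppose for contradiction that only $o(x)$ integers $n \in (x, 2x]$ satisfy $P^+(P(n)) \ge x^{1+c_P}$, and study
\begin{equation*}
S \;=\; \sum_{x < n \le 2x} \; \sum_{\substack{p\ \text{prime}\\ p \mid P(n)\\ x^{1+c_P} < p \le x^A}} 1
\end{equation*}
for a suitable $A > 1+c_P$. For each prime $p$ that splits in $K$ (a positive density by Chebotarev), the relation $p \mid P(n)$ is equivalent to $\pi \mid g(n)$ in $\cO_K$ for some prime $\pi$ above $p$, i.e.\ $n$ lies in a bounded number of residue classes modulo $\pi$. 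Summing over split $p$ in the range gives the expected lower bound $S \gg x$, provided one establishes suitable equidistribution of roots of $g$ modulo prime ideals. Since $P(n) = O(x^4)$, a given $n$ can have at most $\lfloor 4/(1+c_P)\rfloor$ distinct prime factors in the range, so $S \gg x$ already forces a positive proportion of $n$ to admit such a prime factor, contradicting the hypothesis. Contributions from $n$ with multiple large prime factors are standardly controlled by a second-moment bound using $p_1 p_2 \mid P(n) \ll x^4$.

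The core difficulty is the equidistribution input: one needs a Type II bilinear estimate
\begin{equation*}
\sum_{\substack{\pi \in \cO_K\\ N\pi \asymp x^{1+c_P}}} \alpha_\pi \sum_{\substack{x < n \le 2x\\ g(n) \equiv 0 \ (\bmod\ \pi)}} \beta_n
\end{equation*}
with power saving uniform in the moduli, worked out over $\cO_K$ instead of $\Z$ and, crucially, over the one-parameter curve $\{u(n)+v(n)\omega\} \subset \cO_K$ rather than the full lattice. It is this thinness that has obstructed earlier methods: La Bretèche's Klein treatment used the three quadratic subfields of $L$ to decouple variables, a structure that is unavailable for $C_4$ or $D_4$. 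I expect the main effort to lie in transporting the Deshouillers--Iwaniec / La Bretèche--Drappeau / Merikoski apparatus (sums of Kloosterman sums, the dispersion method, and the arithmetic of split primes in $\cO_K$) to this setting, and in exploiting the algebraic relation between $u(n)$ and $v(n)$ to unlock the needed cancellation on the thin set. The failure of the approach for $A_4$ or $S_4$ is then transparent: no quadratic subfield exists, so $P(n)$ is not a norm form and the basic bilinear structure evaporates.
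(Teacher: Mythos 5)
Your algebraic observation is correct: for $G=C_4$ or $D_4$ the quartic field $\Q(\alpha)$ contains a unique quadratic subfield $K$, and $P(n)=N_{K/\Q}(g(n))$ for $g\in\cO_K[X]$ monic quadratic. But this factorisation plays no role in the paper, and your overall strategy has a gap that is more fundamental than the bilinear estimate you flag at the end.

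The framework you propose --- Hooley's direct contradiction, showing $S=\sum_n\sum_{p\mid P(n),\,x^{1+c_P}<p\le x^A}1\gg x$ --- is the degree-$2$ method, and it breaks down already for degree $3$. The reason Hooley's argument works for $n^2+1$ is that after removing the large prime $p>n^{1+c}$ the cofactor $(n^2+1)/p$ is $<n^{1-c}$, i.e.\ \emph{smaller} than the variable, which allows a direct parameterisation. For a quartic, $P(n)\asymp x^4$, so the cofactor is $\asymp x^{3-c}$, far too large. Correspondingly, estimating $S$ by swapping the order of summation produces $S\approx x\sum_{p}\rho(p)/p+\text{error}$, where the error per prime $p\in(x^{1+c_P},x^A]$ is $O(\rho(p))$ with no built-in cancellation; the trivial total error is $\gg x^A/\log x$, which swamps the main term for any $A>1$. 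Turning this error into $o(x)$ is exactly the equidistribution of roots of a quartic mod $p$ at level beyond $x$, a problem that is open and for which the norm-form structure over $K$ by itself gives no handle. You acknowledge that the bilinear input is missing, but the issue is one level up: even granting a good bilinear estimate, the direct-counting scheme is not what succeeds for $\deg\ge 3$.

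This is precisely why Heath-Brown's $n^3+2$ proof --- and the paper --- go through an entirely different reduction (Step 1, Lemma \ref{HBP+}): one shows that a positive proportion of $n\in(x,2x]$ have an unusually large $x$-friable part in $P(n)$, and deduces the large prime factor by a $\log$-weight bookkeeping argument. The friable-part framing is what makes the modulus amenable to the $q$-Van der Corput method (Step 2): one only needs exponential-sum estimates to moduli $q(a_1,a_2,a_3)$ that have been \emph{engineered} to factor into pieces of controlled size, not to arbitrary prime moduli up to $x^{1+c}$. And then Step 3 (Theorem \ref{DistriNorm}) shows that such factorisations of $q=q_1q_2$ occur with positive density, using Maynard-style Type II estimates for incomplete norm forms over the \emph{quartic} field $\Q(r_1+r_3)$ (Proposition \ref{Nq1}), not over the quadratic subfield. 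The quadratic structure you identify does appear in the paper --- $q_2$ in \eqref{formf} factors over a quadratic extension, and this is used to impose small-prime divisibility of $q_2$ via residue classes --- but it is a secondary device, not the engine of the proof.

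In short: the algebra is fine but misdirected, and the analytic scheme needs to be replaced wholesale by the friable-part reduction plus the $q$-Van der Corput / incomplete-norm-form machinery. The observation that $A_4$ and $S_4$ fail because there is no quadratic subfield is likewise not the paper's obstruction; the paper's $C_4/D_4$ hypothesis is used to factor the ternary sextic $q$ as $q_1q_2$ with $q_1$ an incomplete quartic norm form (Lemma \ref{lmm:q}), which fails for $A_4,S_4$ where $q$ stays irreducible.
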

%
%
%
%

The key new technical innovation behind our proof of Theorem \ref{cP} is to incorporate `Type II' or `bilinear' information into the method of detecting large prime factors; previous approaches had relied solely on `Type I' information. This Type II information allows us to handle polynomials with Galois groups $C_4$ or $D_4$ which were out of reach of the Type I approach. In principle one could hope to handle the remaining possibilities $A_4$ or $S_4$ to cover all Galois groups by a similar procedure, but we do not know how to handle the relevant Type II estimates in this case, and so our paper is limited to $C_4$ and $D_4$.
Following the approaches of Heath-Brown \cite{HB01}, Dartyge \cite{D15} and La Bret\`eche \cite{B15}, the key to obtaining estimates like Theorem \ref{cP} is showing that a certain multivariate polynomial $q$ associated to $P(X)$ has a convenient prime factorisation for a positive proportion of its values. 

For quartic $P(X)$, this associated polynomial $q=q(a_1,a_2,a_3)$ is a ternary sextic form. 
If $P$ has a Galois group $V$, then $q(a_1,a_2,a_3)=q_1(a_1,a_2,a_3)q_2(a_1,a_2,a_3)q_3(a_1,a_2,a_3)$ is a product of 3 ternary quadratic forms, and the methods of \cite{D15} and \cite{B15} could then produce many suitable prime factorisations by showing equidistribution of $q_1$ and $q_2$ in suitable arithmetic progressions\footnote{Similarly, in the work of Heath-Brown \cite{HB01} dealing with cubic $P(X)$, the associated form $q$ is a binary cubic, and it suffices to just obtain distribution estimates for $q$ in arithmetic progressions}. (This is why we refer to their methods as `Type I' methods.) When $P$ has a larger Galois group, then the form $q(a_1,a_2,a_3)$ is the product of a quartic and a quadratic (if $G=C_4$ or $D_4$) or is an irreducible sextic (if $G=A_4$ or $S_4$). Unfortunately one cannot obtain a suitable factorisation by just considering analogous equidistribution in arithmetic progressions in these cases, since one would need to work with moduli which are too large for equidistribution to occur.

We find that if $G=C_4$ or $D_4$, the ternary quartic factor of $q$ has the additional algebraic structure of being an `incomplete norm form'. Maynard \cite{May15b} produced various Type II estimates which were used to count prime values of incomplete norm forms. By adapting the ideas underlying these estimates to our situation we are able to show that $q$ has a convenient prime factorisation for a positive proportion of its values. This part corresponds to Theorem \ref{DistriNorm} announced in Section \ref{sec:DistriNorm}.

Combining this result with the previous machinery (suitably generalised to our situation) then yields Theorem \ref{cP}.
%
%
%
%
\subsection{Outline of the proof of Theorem \ref{cP} }\label{outlines}
%
%
%
%

The proof of Theorem \ref{cP} takes three key steps. Step 1 is an argument due to Heath-Brown \cite{HB01} (see also \cite{Er52}), which reduces the problem to showing the existence of many integers where $P(n)$ has an unusually large friable part (i.e. a part without large prime factor). 

Step 2 follows and generalises \cite{HB01,D15,B15} and shows that by using the $q$-analogue of Van der Corput's method, it suffices to show that a certain ternary form $q(a_1,a_2,a_3)$ associated to $P$ takes many values with a suitable prime factorisation. This step makes use of the fact that $P$ is a quartic polynomial. The key new ingredient in our work is Step 3, where we establish that $q(a_1,a_2,a_3)$ takes on many values with the suitable prime factorisation when $P$ has Galois group $C_4$ or $D_4$. For this final step we incorporate ideas of Maynard \cite{May15b} on prime values of incomplete norm forms.

\textbf{Step 1: Reduction to many integers with large friable part. }

Let $r_1\in\overline{\Q}$ be a root of $P(n)$, $K=\Q(r_1)$ and $N_P=N_{K/\Q}$ the associated norm. Then we see that $N_P(n-r_1)=P(n)$, and so we are interested in counting integers $n$ such that the ideal $(n-r_1)$ has a prime ideal factor of large norm. In particular,
\[
\sum_{\substack{n\in[x,2x]\\ P^+(P(n))\ge x^{1+\eta}}}1=\sum_{\substack{n\in[x,2x]\\ \exists \pf|(n-r_1):\,N_P(\pf)\ge x^{1+\eta}}}1\gg \frac{1}{\log{x}}\sum_{n\in[x,2x]}\sum_{\substack{\pf^e|(n-r_1)\\ N_P(\pf)\ge x^{1+\eta} }}\log N_P(\pf).
\]
By inclusion-exclusion and the fact that $\sum_{\pf^e|(n-r_1)}\log{\pf}=\log P(n)$, we have that the double sum on the right hand side is given by
\[
\sum_{n\in[x,2x]}\log P(n)-\sum_{n\in[x,2x]}\sum_{\substack{\pf^e|(n-r_1)\\ N_P(\pf)\le 2x}}\log N_P(\pf)-\sum_{n\in[x,2x]}\sum_{\substack{\pf^e|(n-r_1)\\ 2x<N_P(\pf)< x^{1+\eta} }}\log N_P(\pf).
\]
Since $P(n)\asymp n^4$, the first sum is $(4+o(1))x\log{x}$. Swapping the order of summation and applying the Prime Ideal Theorem shows that the second sum is $(1+o(1))x\log{x}$. Let $\cA$ be the set of integers $n$ with $\sum_{\pf|(n-r_1),N_P(\pf)\le 2x}\log{N_P(\pf)}\ge (1+\delta_0)\log{x}$. We split the third sum according to whether $n\in\cA$ or not. Therefore the above expression is
\begin{align*}
(3+o(1))x\log{x}-\!\sum_{\substack{n\in[x,2x]\\ n\in \cA}}\sum_{\substack{\pf^e|(n-r_1)\\ 2x<N_P(\pf)< x^{1+\eta} }}\!\!\!\!\!\!\!\!\log N_P(\pf)-\sum_{\substack{n\in[x,2x]\\ n\notin \cA}}\sum_{\substack{\pf^e|(n-r_1)\\ 2x<N_P(\pf)< x^{1+\eta} }}\!\!\!\!\!\!\!\!\log N_P(\pf).
\end{align*}
If $n\in \cA$ then since prime ideals with $N_P(\pf)\le 2x$ contribute at least $(1+\delta_0)\log{x}$ to $\sum_{\pf|(n-r_1)}\log{N_P(\pf)}=(4+o(1))\log{x}$, the contribution from prime ideals with $N_P(\pf)>2x$ must be $\le(3-\delta_0-o(1))\log{x}$. If $n\notin \cA$ then we note from size considerations there can be at most 3 prime ideals with $N_P(\pf)\ge 2x$ dividing $(n-r_1)$, and so the inner sum over $\pf$ is at most $3(1+\eta)\log{x}$. Substituting these bounds into the above, we find
\[
\sum_{n\in[x,2x]}\sum_{\substack{\pf^e|(n-r_1)\\ N_P(\pf)\ge x^{1+\eta} }}\log N_P(\pf)\ge \delta_0\#\cA\log{x}-(3\eta+o(1)) x \log{x}.
\]
In particular, if $\#\cA\gg x$ then choosing $\eta=\delta_0\#\cA/(4x)$ shows that the left hand side is $\gg x\log{x}$. Thus it suffices to show
\[
\#\Bigl\{n\in[x,2x]:\, \prod_{\substack{\pf^e|(n-r_1)\\ N_P(\pf)\le x}}N_P(\pf)\ge x^{1+\delta_0}\Bigr\}\gg x.
\]

\textbf{Step 2: Reduction to values of a polynomial with convenient factorisation. }

By concentrating on multiples of friable principle ideals $\gJ=(a_0+a_1r_1+a_2r_1^2+a_3r_1^3)$ of norm $\asymp x^{1+\delta_0}$, where $r_1$ is a root of $P$, we find it suffices to show there is some dense set $\cA\subset \ZZ^4\cap[1,x^{(1+\delta_0)/4}]$ such that
\[
\sum_{\substack{(a_0,a_1,a_2,a_3)\in\cA}}\sum_{\substack{n\in [x,2x]\\ (a_0+a_1r_1+a_2r_1^2+a_3r_1^3)|(n-r_1)}}1\gg x.
\]
The condition $(a_0+a_1r_1+a_2r_1^2+a_3r_1 ^3)|(n-r_1)$ is equivalent to a congruence condition $n\equiv k_{\aa}\Mod{N_P(a_0+a_1r_1+a_2r_1^2+a_3r_1^3)}$, and so by completion of sums and swapping the order of summation, it suffices to obtain a power-saving in the exponential sums (for small integers $h\ne 0$ and with the standard notation $\e (t)=\exp (2i\pi t)$)
\[
\sum_{a_0,a_1,a_2,a_3\in \cA}\e\Bigl(\frac{h k_{a_0,a_1,a_2,a_3}}{N_P(a_0+a_1r_1+a_2r_1 ^2+a_3r_1 ^3)}\Bigr).
\]
This is complicated by the fact that the variables $a_0,a_1,a_2,a_3$ appear in both the numerator and denominator. However, for quartic $P$ we find that there are polynomials $B_{14}(a_0,a_1,a_2,a_3), B_{13}(a_0,a_1,a_2,a_3)$ and $q(a_1,a_2,a_3)$ with no common factor such that
\[
\e\Bigl(\frac{h k_{a_0,a_1,a_2,a_3}}{N_P(a_0+a_1r_1+a_2r_2+a_3r_3)}\Bigr)\approx \e\Bigl(\frac{hB_{13}(a_0,a_1,a_2,a_3)\overline{B_{14}(a_0,a_1,a_2,a_3)}}{q(a_1,a_2,a_3)}\Bigr),
\]
and now the denominator is independent of $a_0$. We wish to obtain a power-saving estimate for the sum over $a_0$, but this is complicated by the fact that the modulus of the expression $q(a_1,a_2,a_3)\asymp x^{6(1+\delta_0)/4}$ is much larger than the length $x^{(1+\delta_0)/4}$ of summation of $a_0$. To estimate such short exponential sums, we can use the $q$-analogue of Van der Corput's method provided the modulus $q(a_1,a_2,a_3)$ consists only of small prime factors.

Thus our task has reduced to showing that for a positive proportion of integers $a_1,a_2,a_3\in [1,x^{(1+\delta_0)/4}]$ we can ensure that the polynomial $q(a_1,a_2,a_3)$ has a convenient prime factorisation. Specifically, we will require that 
\begin{equation}\label{factoring}
q (a_1,a_2,a_3)=d_0d_1\cdots d_r\end{equation}
where $d_0< x^{2-\varepsilon}$, $\max (d_1,\ldots ,d_r)\le x^{1-\varepsilon}$, $\min (d_0,\ldots ,d_r)\ge x^\varepsilon$ for some fixed $\varepsilon >0$.

\textbf{Step 3: Counting factorisations of incomplete norm forms}

So far we have followed a similar approach to the works \cite{D15,B15}. If the Galois group of $P$ is the Klein group, then it turns out that the polynomial $q(a_1,a_2,a_3)$ is the product of three quadratic polynomials. By considering the distribution in suitable residue classes one can then guarantee that each quadratic has a suitable factor, and so $q(a_1,a_2,a_3)$ then has a suitable prime factorisation.

When the Galois group of $P$ is $C_4$ or $D_4$, it turns out that $q(a_1,a_2,a_3)=q_1(a_1,a_2,a_3)q_2(a_1,a_2,a_3)$ is the product of a quartic polynomial and a quadratic polynomial. Unfortunately the fact that one factor is quartic means that one cannot guarantee a suitable prime factorisation by looking at variables in residue classes to reasonably small moduli. The difficulty here is that $q_1(a_1,a_2,a_3)\approx (\max_i a_i)^4$, so the size of the values considered are very  large compared to the size of the variables $a_i$. Indeed, it is not known that an arbitrary ternary quartic form $q_1$ takes infinitely many values compatible with the factorisation \eqref{factoring}.

Fortunately in our problem the form $q_1$ is not arbitrary, and in fact we can show that $q_1$ corresponds to an incomplete norm form of a number field.  More precisely, we prove that there exist a number field $K$ of degree 4 over $\Q$ depending only on $P$ and some elements   
 $\nu_1,\nu_2,\nu_3\in K$ such that $q_1(a_1,a_2,a_3)=N_{K/\Q} (\sum_{i=1}^3a_i\nu_i)$.

Maynard \cite{May15b} gave asymptotic formulae for the number of primes represented by incomplete norm forms; that is
primes  $p$ such that $p=N(a_1+a_2\omega +\cdots +a_{n-k}\omega^{n-k-1})$ where $a_1,\ldots ,a_{n-k}$  are integers, $\omega$ is a root of monic and irreducible polynomial $f\in\Z [X]$ of degree $n\ge 4k$ and $N$ is a norm of the corresponding number field. For $k=1$ and $n=4$ this result counts values quartic norms in $3$ variables with a particular type of prime factorisation. We adapt the methods of \cite{May15b} to our situation to count representations of the type \eqref{factoring}. Unfortunately we require various additional technical conditions (such as a localized version of Maynard's estimates where the variables lie in suitable arithmetic progressions), which means that large parts of \cite{May15b} have to be generalised to our specific situation.  
Once suitable technical estimates have been obtained, we find \eqref{factoring} is satisfied for a positive proportion of $a_1,a_2,a_3$, as required.
%
%
%
%
\section{Acknowledgements}
%
%
CD was supported by ANR grant ANR-20-CE91-0006.

JM is supported by a Royal Society Wolfson Merit Award, and this project has received funding from the European Research Council (ERC) under the European Union’s Horizon 2020 research and innovation programme (grant agreement No 851318). Part of this work was conducted while JM was visiting the Institute for Advanced Study in Princeton.
%
%
%
%
\section{Initial steps}\label{Section:InitialSteps}
%
%
%
%
Following the argument of Heath-Brown sketched as `step 1' in our outline, we have the following result.
%
%
%
%
\begin{lmm}\label{HBP+}
Let $P\in\Z [X]$ be an irreducible quartic and monic polynomial of degree $4$ with root $r_1$, and let
\begin{equation}\label{defcE}
\cE(\delta):=\{ X<n\le 2X : \prod_{\substack{\pf^e|(n-r_1)\\ N_P(\pf)\le x}}N_P(\pf)\ge X^{1+\delta}\}.
\end{equation}
If $\delta_0, \delta_1>0$ are such that for all $X$ large enough in terms of $\delta_0,\delta_1, P$ we have $|\cE (\delta_0)|>\delta _1X$, then we have for sufficiently large $X$
\begin{equation*}
|\{ n\in]X,2X] : P^+ (P(n)) \gg X^{1+\frac{\delta_0\delta_1}{3}}\}|\ge (\delta_1\delta_0^2+o(1))X.
\end{equation*}
\end{lmm}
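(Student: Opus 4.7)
The plan is to make Heath-Brown's reduction sketched in Step~1 of the introduction fully quantitative. Setting $\eta:=\delta_0\delta_1/3$, I would begin from the identity
\[
\log P(n)=\sum_{\pf^e|(n-r_1)}\log N_P(\pf)=(4+o(1))\log X \quad \text{for } n\in (X,2X],
\]
and split each inner sum into the small range $N_P(\pf)\le X$, the medium range $X< N_P(\pf)<X^{1+\eta}$, and the large range $N_P(\pf)\ge X^{1+\eta}$. Summing over $n\in(X,2X]$, swapping the order of summation, and applying the Prime Ideal Theorem in $K=\Q(r_1)$ shows that the small range contributes $(1+o(1))X\log X$ in total, so the combined medium plus large ranges contribute $(3+o(1))X\log X$.

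Next I would bound the medium-range contribution. For $n\in\cE(\delta_0)$ the definition guarantees that the small-range sum already exceeds $(1+\delta_0)\log X$, so the medium plus large sum is at most $(3-\delta_0+o(1))\log X$. For $n\notin\cE(\delta_0)$ I would use a trivial size argument: $(n-r_1)$ has norm $P(n)\asymp X^4$, so it has at most three prime ideal factors of norm $\ge X$, each contributing at most $(1+\eta)\log X$ to the medium range. Combining these bounds with the hypothesis $|\cE(\delta_0)|\ge \delta_1 X$ and the choice $\eta=\delta_0\delta_1/3$ yields
\[
\sum_{n\in(X,2X]}\sum_{\substack{\pf^e|(n-r_1) \\ N_P(\pf)\ge X^{1+\eta}}}\log N_P(\pf)\gg (\delta_0\delta_1^2+o(1)) X\log X.
\]

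To convert this into the desired count of $n$ with $P^+(P(n))\ge X^{1+\eta}$, I would use the key algebraic observation that for any unramified prime ideal $\pf\subset\cO_K$ satisfying $\pf\mid(n-r_1)$ for some $n\in\Z$, the inertia degree $f(\pf)$ must equal $1$. Indeed, $\pf\mid(n-r_1)$ forces $r_1\equiv n\pmod{\pf}$, so the reduction $r_1\bmod\pf$ lies in $\F_p\subseteq\cO_K/\pf$; via the correspondence between prime ideals of $\cO_K$ above $p$ and irreducible factors of $P\bmod p$, this forces $f(\pf)=1$ and hence $N_P(\pf)=p$. Since only finitely many primes ramify in $K$, for $X$ large every $\pf\mid(n-r_1)$ with $N_P(\pf)\ge X^{1+\eta}$ corresponds to a rational prime $p\mid P(n)$ with $p\ge X^{1+\eta}$, so the displayed inner sum is positive exactly when $P^+(P(n))\ge X^{1+\eta}$. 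Dividing the displayed bound by its trivial pointwise upper bound $\le (4+o(1))\log X$ yields the claimed density lower bound.

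The only genuinely nontrivial step is the inertia-degree observation; the rest is routine bookkeeping, and the precise numerical constant in the density is recovered by careful optimization of $\eta$ against the pointwise upper bound on the inner sum.
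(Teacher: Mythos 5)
Your decomposition into small/medium/large norm ranges and the Prime Ideal Theorem estimate for the small range matches what the paper sketches in Step~1 of the Introduction, and you have correctly identified the algebraic fact that makes the final step work: an unramified prime ideal $\pf\mid(n-r_1)$ with $p\nmid[\cO_K:\Z[r_1]]$ must have residue degree one, so $N_P(\pf)$ is a genuine rational prime dividing $P(n)$. In the paper this observation is packaged inside Lemma~\ref{ideaux} (which guarantees that the $\pf$ dividing $(n-r_1)$ away from $\Disc P$ have prime norms, pairwise distinct), and the proof of the present lemma merely invokes it through the transfer inequality $\sum_{p\mid P(n),\,p\le z}\log p\ge\sum_{\pf\mid(n-r_1),\,N_P(\pf)\le z}\log N_P(\pf)$ before citing Heath-Brown's Lemma~2 (or La Bret\`eche's Lemme~4.1), which is the analogous statement for rational primes. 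So the paper's route is to transfer the prime-ideal hypothesis to the rational-prime setting and cite existing work; you instead re-run the whole dispersion of $\log P(n)$ inside the ideal world and convert at the very end. Both are legitimate, yours just re-derives the cited lemma.

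There is, however, a concrete numerical slip. Following the computation you set up (and the paper's Introduction), one gets
\[
\sum_{n\in(X,2X]}\sum_{\substack{\pf^e\mid(n-r_1)\\ N_P(\pf)\ge X^{1+\eta}}}\log N_P(\pf)\;\ge\;\bigl(\delta_0\delta_1-3\eta-o(1)\bigr)X\log X,
\]
so with the choice $\eta=\delta_0\delta_1/3$ the main terms cancel exactly and the right-hand side is only $o(X\log X)$, not $\gg(\delta_0\delta_1^2+o(1))X\log X$ as you claim. You need $\eta$ strictly below $\delta_0\delta_1/3$ to extract a positive count — for instance $\eta=\delta_0\delta_1/4$, the choice used in the paper's own sketch, which gives $\ge(\delta_0\delta_1/4-o(1))X\log X$ and then a lower bound $\gg\delta_0\delta_1 X$ after dividing by the pointwise bound $(4+o(1))\log X$. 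The specific constants $\delta_0\delta_1/3$ and $\delta_0^2\delta_1$ appearing in the lemma statement are inherited from the precise form of Heath-Brown's Lemma~2; they do not come out of the na\"ive calculation you carry out, so if you want a self-contained proof you should either adjust $\eta$ downwards and accept a different constant, or carefully reproduce the bookkeeping in \cite{HB01}.
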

%
%
\begin{proof}
This is essentially \cite[Lemma 2]{HB01}, (or \cite[Lemme 4.1]{B15}) after noting that $\sum_{p|P(n),p\le z}\log{p}\ge\sum_{\pf|(n-r_1),N_P(\pf)\le z}\log{N_P(\pf)}$.
\end{proof}
%
%
Thus it suffices to show that $|\cE(\delta_0)|\gg X$ for some small absolute constant $\delta_0>0$. To do this we will choose a set of ideals $\cJ$ (the explicit, technical choice is made in Section \ref{sec:Ideal}) such that 
\begin{align}
\prod_{\substack{\pf^e|\gJ\\ N_P(\pf)\le X}}N_P(\pf)&\ge X^{1+\delta_0} \qquad\qquad\forall \gJ\in\cJ.\label{eq:JProp1}
\end{align}
Let $\cJ_2:=\{\gJ\in\cJ:\,P^-(N_P(\gJ))\ge X^{\theta_0}\}$ for some small absolute constant $\theta_0>0$. Then we see that for any $n\in[X,2X]$ there are at most $2^{4\theta_0^{-1}}$ ideals $\gJ\in\cJ_2$ with $\gJ|(n-r_1)$, since $(n-r_1)$ can have at most $4\theta^{-1}$ prime ideal factors with norm bigger than $X^{\theta_0}$.
We then see that
\begin{align*}
|\cE(\delta_0)|&\ge |\{ X<n\le 2X : \exists\ \gJ\in\cJ_2\ {\rm such}\ {\rm that}\ \gJ| (n-r_1)\}|\\
&\ge \frac{\sum_{\gJ\in\cJ_2}|\cE_\gJ|}{\sup_{X\le n\le 2X}|\{\gJ\in\cJ_2:\,\gJ|(n-r)\}|}\\
&\gg \sum_{\substack{\gJ\in\cJ\\ P^-(N_P(\gJ))\ge X^{\theta_0}}}|\cE_\gJ|,
\end{align*}
where  $\cE_\gJ:=\{ X<n\le 2X : \gJ| (n-r_1)\}$. Every ideal $\gJ$ has at most $\alpha_0^{-1}$ representations as $\gJ=KL$ for $K$ a prime ideal with $N_P(K)\in[X^{4\alpha_0},X^{5\alpha_0}]$. Thus we see that 
\[
|\cE(\delta_0)|\gg \sum_{\substack{K \in\cK}}\sum_{\substack{P^-(N_P(L))\ge X^{\theta_0}\\ KL\in\cJ}}|\cE_{KL}|,
\]
where
\begin{equation}
\cK:=\Bigl\{K\text{ prime ideal,}\  N_P(K)\in [X^{4\alpha_0},X^{5\alpha_0}]\Bigr\}.
\label{eq:KDef}
\end{equation}
We apply a linear sieve of level $X^{3\theta_0}$ to bound the condition $P^-(N_P(L))\ge X^{\theta_0}$ from below, giving
\[
|\cE(\delta_0)|\gg\sum_{K\in\cK}\sum_{KL\in\cJ}\Biggl(\sum_{d|N_P(L)} \lambda_d^-\Biggr )|\cE_{KL}|
\]
where $\lambda_d^-$ are the usual Rosser-Iwaniec lower bound linear sieve weights (\cite{I80a} and \cite{I80b}) supported on $d<X^{3\theta_0}$ with $p|d\Rightarrow p\le X^{\theta_0}$. We see that if $X$ is large enough $\cE_\gJ$ has density $\rho_P(N_P(\gJ))/N_P(\gJ)$, where
\begin{equation}
\varrho_P (\gI):={\rm card}
\{0\le n< N_P(\gI): n\equiv r_1\Mod \gI\}.
\label{eq:RhoPDef}
\end{equation}
With this in mind, we define the error $R_\gJ$ in the approximation by
\begin{equation}\label{defRJ}
R_\gJ:=|\cE_\gJ|-X\frac{\varrho_P (N_P(\gJ))}{N_P(\gJ)}.
\end{equation}
Thus
\begin{equation*}
|\cE _1|\gg XS_0+S_1,
\end{equation*}
where
\begin{equation}\label{defS0S1}\begin{split}
S_0&:=\sum_{K\in\cK}\sum_{KL\in\cJ (K)}\Bigg (\sum_{d|N_P(L)}\lambda_d^-\Bigg )\frac{\varrho _P (KL)}{N_P(KL)},\\
 S_1&:=\sum_{K\in\cK}\sum_{KL\in\cJ (K)}\Bigg (\sum_{d|N_P(L)}\lambda_d^-\Bigg )R_{KL}.
 \end{split}
\end{equation}
To obtain Theorem \ref{cP} we see it suffices to prove the following two key propositions. 
%
%
\begin{prpstn}[Estimate for $S_0$]\label{prpstn:S0}
Let $\theta_0$ be sufficiently small, and $\cJ$ be the set of ideals described in Section \ref{sec:Ideal}. Then we have
\[
S_0\gg 1.
\]
\end{prpstn}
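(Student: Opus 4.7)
My plan is to treat $S_0$ as a sifted sum and apply the Rosser--Iwaniec linear sieve (fundamental lemma). First I would swap the order of summation to bring the sieve variable $d$ to the outside, writing
\[
S_0 = \sum_{K\in\cK}\sum_{d<X^{3\theta_0}} \lambda_d^- \, T(K,d), \qquad T(K,d) := \sum_{\substack{KL\in\cJ(K)\\ d\mid N_P(L)}} \frac{\varrho_P(KL)}{N_P(KL)}.
\]
The target is to factor $T(K,d) = g(d) T(K,1)(1+o(1))$ for some multiplicative function $g$, supported on primes $p\le X^{\theta_0}$, whose local factor $g(p) = \sum_{\gp\mid p}\varrho_P(\gp)/N_P(\gp)$ is essentially $\rho_P(p)/p$ (the local density of roots of $P$ modulo $p$). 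Since $\varrho_P$ is multiplicative on coprime ideals via the Chinese Remainder Theorem, this reduces to showing that, for each fixed $K$, the ideals $L$ with $KL\in\cJ(K)$ are equidistributed modulo divisors $d$ of $N_P(L)$.

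Once this factorisation is in hand, I would invoke the Rosser--Iwaniec lower bound sieve with level $D=X^{3\theta_0}$ and sifting threshold $z=X^{\theta_0}$: the sifting ratio is $s=\log D/\log z =3$, which exceeds the linear-sieve threshold of $2$, so $f(3)>0$ and
\[
\sum_{d<D}\lambda_d^- g(d) \;\ge\; \bigl(f(3)+o(1)\bigr)\prod_{p<z}\bigl(1-g(p)\bigr).
\]
By the Chebotarev density theorem (or the prime ideal theorem), the average of $g(p)\log p$ over primes $p$ is $1$, so Mertens' theorem for the number field $\Q(r_1)$ yields $\prod_{p<z}(1-g(p))\asymp 1/\log z = 1/(\theta_0\log X)$.

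It then remains to evaluate $\sum_{K\in\cK} T(K,1)$. By construction $\cK$ has size $\asymp X^{5\alpha_0}/\log X$ (Prime Ideal Theorem), and the set $\cJ$ from Section \ref{sec:Ideal} will be chosen to have density giving $T(K,1) \asymp (\log X)/|\cK|$ on average (so that the whole unsifted sum is of order $\log X$). Combining,
\[
S_0 \;\ge\; \bigl(f(3)+o(1)\bigr)\cdot\frac{1}{\theta_0\log X}\cdot (c\log X) \;\gg\; 1.
\]

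The main obstacle is the factorisation step $T(K,d)=g(d)T(K,1)(1+o(1))$: it requires that the ideals $L$ with $KL\in\cJ(K)$ be sufficiently equidistributed over arithmetic progressions of norms up to modulus $d\le X^{3\theta_0}$, uniformly in $K\in\cK$. This equidistribution hinges on the explicit construction of $\cJ$ in Section \ref{sec:Ideal}, which presumably takes $\cJ$ to consist of principal ideals generated by lattice points in a nicely shaped region so that standard lattice-point counting with congruence conditions gives the required main term with a power-saving error. Once that input is available, the remaining steps (applying the linear sieve, evaluating the Mertens product, and summing over $K$) are routine and the positivity of $f(3)$ guarantees $S_0\gg 1$.
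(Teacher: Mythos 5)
Your outline gets the sieve mechanics roughly right — the paper also applies the Rosser--Iwaniec lower bound for the sum over $d$, obtaining the constant $\frac{2e^\gamma\log 2}{3}$ from evaluating the linear sieve lower bound function at $s=3$ — but it misidentifies where the difficulty actually lies, and the step you wave through as routine is in fact the heart of the paper.

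The critical gap is in your sentence: ``the set $\cJ$ from Section \ref{sec:Ideal} will be chosen to have density giving $T(K,1)\asymp(\log X)/|\cK|$ on average \ldots standard lattice-point counting with congruence conditions gives the required main term with a power-saving error.'' This presupposes the conclusion. The set $\cJ$ is \emph{not} the set of lattice points in a nicely shaped region: it is the set of principal ideals $(\alpha)$ with $\alpha=a_0+a_1r_1+a_2r_1^2+a_3r_1^3$ whose coordinates satisfy a delicate \emph{factorization} condition (C4) — namely that the quartic incomplete norm form $q_1(a_1,a_2,a_3)$ factors as a product of six primes $q_{11},\dots,q_{16}$ lying in prescribed narrow dyadic windows, while simultaneously the quadratic $q_2(a_1,a_2,a_3)$ has a prime factor $q_{21}$ in another window. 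Because $q_1(a_1,a_2,a_3)\approx (\max_i a_i)^4$, these primes are as large as $X^{0.2}$ and the moduli $\prod q_{1j}$ are far beyond the reach of equidistribution in arithmetic progressions; one cannot show $\cJ$ has positive density by a Type~I argument. Proving this density is positive requires the paper's key Type~II input, Theorem~\ref{DistriNorm}, applied after a long chain of reductions (Lemmas~\ref{lmm:S02}--\ref{lmm:S07}) that pass through splitting into small hyperrectangles, removing coprimality constraints by careful Möbius manoeuvres with bounded moduli $Z=(\log X)^{O(1)}$, and finally invoking the incomplete-norm-form estimate.

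A second, more technical, gap: your factorization $T(K,d)=g(d)T(K,1)(1+o(1))$ silently assumes the multiplicative structure in $d$ is uniform across $K$ and across the defining constraints of $\cJ(K)$, but conditions (C4)--(C5) tie $a_0,a_1,a_2,a_3$ together in a way that does not decouple as cleanly as an Euler product; the paper handles this by first isolating $a_0$ via Möbius inversion to remove $(q,B_{14})=1$ (Lemma~\ref{lmm:S01}), producing a local count $S(r_1',r_2')$ whose value is computed exactly via Lemmas~\ref{B14p} and \ref{B13B14N}, \emph{before} the linear sieve is applied. Your proposed route would need an analogous computation before the sieve step is legitimate. In short, the sieve step and the Mertens product are indeed routine, but the real work — which your proposal defers to an unexamined hypothesis — is showing that $\cJ$ is nonempty of positive density, and that is exactly what Theorem~\ref{DistriNorm} and the intervening reductions accomplish.
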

%
%
\begin{prpstn}[Estimate for $S_1$]\label{prpstn:S1}
Let $\theta_0$ be sufficiently small, and  $\cJ$ be the set of ideals described in Section \ref{sec:Ideal}. Then we have
\[
S_1=o(X).
\]
\end{prpstn}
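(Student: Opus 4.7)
The plan is to prove Proposition \ref{prpstn:S1} by implementing Step 2 of the outline at the level of the sieve error: apply Poisson summation to complete each $|\cE_{KL}|$, reparametrize the ideals $\gJ = KL$ by generators of principal ideals, and apply the $q$-analogue of Van der Corput's method to the resulting short exponential sums, exploiting the factorization of $q(a_1,a_2,a_3)$ built into the construction of $\cJ$. The prime ideal $K$ plays essentially a passive role; the saving comes from the sum over $a_0$.

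I would first swap summation to put $\lambda_d^-$ outermost. Since $\lambda_d^-$ is supported on $d < X^{3\theta_0}$ with $|\lambda_d^-| \leq 1$, it suffices to establish a power saving $\ll X^{1-\eta}$ (uniform in $d$, with $\eta > 3\theta_0$) on
\[
T_d := \sum_{K \in \cK} \sum_{\substack{KL \in \cJ \\ d \mid N_P(L)}} R_{KL}.
\]
For each $\gJ = KL$ appearing in $T_d$, fix a generator $\alpha_\gJ = a_0 + a_1 r_1 + a_2 r_1^2 + a_3 r_1^3$ (accounting for units and non-principal ideals by a finite-index decomposition). By construction of $\cJ$, the coefficients $(a_0, a_1, a_2, a_3)$ lie in a box of side $\asymp X^{(1+\delta_0)/4}$ and $q(a_1,a_2,a_3)$ factors as in \eqref{factoring}. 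Since $\cE_\gJ$ counts $n \in (X, 2X]$ in the single residue class $k_\gJ$ modulo $N_P(\gJ)$, Poisson summation gives
\[
R_\gJ = \sum_{1 \leq |h| \leq H} c_h \cdot \frac{X}{N_P(\gJ)} \cdot \e\!\Bigl( \frac{h k_\gJ}{N_P(\gJ)} \Bigr) + O\bigl(X^{-100}\bigr),
\]
with $|c_h| \ll 1/|h|$ and $H = X^\varepsilon N_P(\gJ)/X$. Following Step 2, one establishes an identity
\[
\frac{k_\gJ}{N_P(\gJ)} \equiv \frac{B_{13}(a_0,a_1,a_2,a_3) \, \overline{B_{14}(a_0,a_1,a_2,a_3)}}{q(a_1, a_2, a_3)} \pmod 1,
\]
for explicit integer polynomials $B_{13}, B_{14}$ having no common factor with $q$; in particular, the modulus depends only on $(a_1, a_2, a_3)$.

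With this substitution, the sum over $a_0$ with $(a_1,a_2,a_3)$ fixed is a short exponential sum of length $\asymp X^{(1+\delta_0)/4}$ modulo $q(a_1,a_2,a_3) \asymp X^{3(1+\delta_0)/2}$. The $q$-analogue of Van der Corput's method (as in \cite{HB01, D15, B15}) exploits the factorization $q = d_0 d_1 \cdots d_r$ from \eqref{factoring}: differencing through suitable subproducts of the $d_i$'s reduces matters to complete Weil-type sums, producing a saving $X^{-c\varepsilon}$ over the trivial bound for a positive proportion of $(a_1,a_2,a_3)$. Accumulating this saving over $(a_1,a_2,a_3)$, $h$, $K$, and $d$, and choosing $\theta_0,\alpha_0$ small relative to $\varepsilon$, then yields $T_d \ll X^{1-\eta}$ and hence $S_1 = o(X)$. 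The main obstacle will be verifying that the differencing really produces a power saving for almost all $(a_1,a_2,a_3)$: this requires controlling the singular moduli of the complete exponential sums that arise, which is delicate because $B_{13}, B_{14}, q$ are high-degree polynomials with intricate algebraic relations. The fact that the quartic factor $q_1$ of $q$ arises as an incomplete norm form of a quartic field should be essential in avoiding systematic degeneracies.
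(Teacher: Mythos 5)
Your plan mirrors the paper's proof of Proposition~\ref{prpstn:S1} quite closely: reduce $R_{KL}$ to short exponential sums (Lemma~\ref{TEl}), replace $k_\alpha/N_P(\alpha)$ by a phase whose modulus $q(a_1,a_2,a_3)$ is independent of $a_0$, and apply the $q$-analogue of van der Corput's method exploiting the factorization of $q$ forced by condition (C4) of $\cJ$. However, two points deserve correction. First, the phase identity is not the one you wrote: Lemma~\ref{kj} gives
\[
\e\Bigl(\frac{-hk_\alpha}{N_P(\alpha)}\Bigr)=\e\Bigl(\frac{-hU\overline{B_{14}}}{q}+hR\Bigr),
\]
where $U$ is the B\'ezout cofactor from $UB_{13}+VB_{14}=qq_3$, not $B_{13}$ itself, and $R$ is a smooth real-valued remainder removed by partial summation; the displayed congruence with $B_{13}$ asserted exactly mod $1$ is false. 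This distinction is load-bearing, because the non-degeneracy needed for Lemma~\ref{HB} to be non-trivial --- that $U$ is not a polynomial multiple of $B_{14}$ modulo any $p\mid q$ --- is established in Lemma~\ref{qU0U1} precisely for $U$. Second, you attribute the absence of systematic degeneracies to $q_1$ being an incomplete norm form; in the paper that structure is used only in Theorem~\ref{DistriNorm} / Proposition~\ref{prpstn:S0} to show $\cJ$ has positive density, while the non-degeneracy inside the exponential-sum estimate comes entirely from the explicit coprimality and squarefreeness conditions (C1), (C5) built into $\cJ$. A further minor omission: you should first discard the terms with $q_2(a_1,a_2,a_3)$ small (Lemma~\ref{removingsmallq22}), so that the largest modulus $r_0=q_{22}$ fed into van der Corput is genuinely large.
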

%
%
Together these propositions rely heavily on our key technical result, Theorem \ref{DistriNorm}. Section \ref{sec:S1} is devoted to establishing Proposition \ref{prpstn:S1}, which uses the fact that $\cJ$ is a set of ideals with small prime factors to bound the relevant exponential sums. Section \ref{sec:S0} is devoted to establishing Proposition \ref{prpstn:S0} assuming Theorem \ref{DistriNorm}. The rest of the paper is then devoted to establishing Theorem \ref{DistriNorm}, which asserts that $\cJ$ is a  set of nonzero density. 
%
%
%
%
\section{Localised divisors of values of incomplete norm forms}\label{sec:DistriNorm} 
%
%
%
%
As described in the outline, the key to the proof of Theorem \ref{cP} is to show that for a positive proportion of $a_1,a_2,a_3$ (in a box like $[A,2A]^3$) an auxiliary polynomial $q(a_1,a_2,a_3)=q_1(a_1,a_2,a_3)q_2(a_1,a_2,a_3)$ takes values where $P^+(q_2(a_1,a_2,a_3))<A^{2-\epsilon}$ and $P^+(q_1(a_1,a_2,a_3))<A^{1-\epsilon}$. The term $q_2$ will be a quadratic form, and so $P^+(q_2(a_1,a_2,a_3))<A^{2-\epsilon}$ if $p|q_2(a_1,a_2,a_3)$ for some $p\in [A^{2\epsilon},A^{3\epsilon}]$, which occurs if $a_1,a_2,a_3$ lie in suitable residue classes $\Mod{p}$. Thus it suffices to show that there are the expected number of $(a_1,a_2,a_3)$ such that $P^+(q_1(a_1,a_2,a_3))<A^{1-\epsilon}$ and $(a_1,a_2,a_3)$ lies in a suitable residue class modulo $p$ on average over $p\in[A^{2\epsilon},A^{3\epsilon}]$. Since $q_1$ will be an incomplete norm form for a quartic extension, we see that we are therefore counting friable values of an incomplete norm form (with some additional congruence constraints). The aim of this section is to introduce the notation to state Theorem \ref{DistriNorm}, and then to explain how this technical statement relates to our specific problem by giving a suitable asymptotic for such friable values of auxiliary forms.

 Let $K$ be a quartic extension of $\Q$ with a $\Z$-basis $\{\nu_1,\nu_2,\nu_3,\nu_4\}$ for $\Oc_K$ such that $\nu_1=1$ and $K=\Q (\nu_2)$. Given a large value $X$, we wish to count the number of $(a_1,a_2,a_3)$ in a small box such that $N_{K/\Q}(a_1\nu_1+a_2\nu_2+a_3\nu_3)$ has only small prime factors, and such that an auxiliary quadratic form $f(a_1,a_2,a_3)$ is a multiple of some fairly small $p\in [X^{\tau},X^{\tau'}]$.
 
With this in mind, we consider the box $\cX$ given by
\begin{align}
\cX&:=\prod_{i=1}^3[X_i,X_i(1+\eta_1)[,
\label{eq:cXDef}
\end{align}
where $\eta_1\in\R$ and $X_1,X_2,X_3\in\Z$ are parameters satisfying
\begin{align}
\eta_1&:=(\log{X})^{-100},
\label{eq:etaDef}\\
X_1,X_2,X_3&\in[\eta_1X,X],
\label{eq:Xi1}\\
N_{K/\Q}(X_1\nu_1+X_2\nu_2+X_3\nu_3)&\ge \eta_1^{1/10}\max_i(X_i^4).
\label{eq:Xi2}
\end{align}
We are then interested in the sets
\begin{equation}\label{setN}
\begin{split}
\cA&:=\{(a_1,a_2,a_3)\in \Z^3\cap\cX\},\\
\cA (\u_0,m,p)&:=\{ (a_1,a_2,a_3)\in\cA : \a\equiv \u_0\Mod m,\ p|f(a_1,a_2,a_3)\},\\
\cA_d (\u_0, m,p)&:=\{ (a_1,a_2,a_3)\in  \cA (\u_0,m,p): d|N_{K/\Q} (a_1\nu_1+a_2\nu_2+a_3\nu_3)\}.
\end{split}
\end{equation}
Since we wish to count points when $N_{K/\Q} (a_1\nu_1+a_2\nu_2+a_3\nu_3)$ has small prime factors, we will count how often $d|N_{K/\Q} (a_1\nu_1+a_2\nu_2+a_3\nu_3)$ for an integer $d$ of the form $d=q_1\cdots q_\ell$ where each $q_i$ is a prime localised to lie in an interval $[X^{\theta_{j}},X^{\theta_{j}'}]$ for some fixed constants $\theta_i,\theta_i'$. We will require $\theta_{j},\theta_{j}'$ satisfy the following conditions.
\begin{itemize}
\item (Non-trivial intervals counting primes which are not too large)
\begin{equation}
\delta<\theta_i<\theta_i'<1-\delta\quad\forall\, 1\le i\le \ell.
\label{eq:th1}
\end{equation}
\item ($q_{1j}$ are distinct primes)
\begin{equation}
[\theta_i,\theta_i']\cap[\theta_j,\theta_j']=\emptyset \quad\forall \,1\le i<j\le \ell .
\label{eq:th2}
\end{equation}
\item ($\prod_{j=1}^\ell q_{1j}$ is not too large to divide $N$)
\begin{equation}
\sum_{i=1}^\ell\theta_i'<4-\delta.
\label{eq:th3}
\end{equation}
\item (Impossible for $q_{1j}^2$ to divide $N(a_1\nu_1+a_2\nu_2+a_3\nu_3)$)
\begin{equation}
\theta_j+\sum_{i=1}^\ell\theta_i>4+\delta\quad \forall\,1\le j\le \ell.
\label{eq:th4}
\end{equation}
\item (The product of the first $q_{1i}$ is of controlled size) There exists $\ell '\in [1,\ell-1]$ such that
\begin{equation}
1+\delta<\sum_{i=1}^{\ell '}\theta_i<\sum_{i=1}^{\ell '}\theta_i'<2-\delta.
\label{eq:th5}
\end{equation}
\end{itemize}
The conditions \eqref{eq:th1}-\eqref{eq:th4} are minor constraints to avoid some technical issues and to ensure that we expect that $d|N_{K/\Q}(a_1\nu_1+a_2\nu_2+a_3\nu_3)$ can actually occur; these constraints could be significantly weakened at the cost of some effort. The condition \eqref{eq:th5} is a technical condition which is vital for our method.

To avoid some further technical issues we will focus on the case when the quadratic form $f$ is irreducible but not geometrically irreducible, and so the condition $f(a_1,a_2,a_3)$ becomes a product of two linear factors over $\F_p$ after restricting $p$ to an arithmetic progression. Again, this setup could be relaxed at the cost of additional technical effort, but is the situation that arises when dealing with Theorem \ref{cP}.  It would be also interesting to have a more general result for incomplete norm forms and ternary forms $f$.

Finally we are in a position to state our counting result. 
%
%
%
%
\begin{thrm}[localised factors of values of incomplete norm forms]\label{DistriNorm}
Let $f(X_1,X_2,X_3)\in\Z[X_1,X_2,X_3]$ be a homogeneous quadratic polynomial which splits into two distinct linear factors 
$$f(X_1,X_2,X_3)=L_1(X_1,X_2,X_3)L_2(X_1,X_2,X_3)$$  over a suitable extension of $\Q$.
Let $D_f\in\N$ such that if $p\equiv 1\Mod{D_f}$ then the $\F_p$-reduction of the two linear forms $L_1 (X_1,X_2,X_3)$, $L_2 (X_1,X_2,X_3)$ are in $\F_p [X_1,X_2,X_3]$. 

 Let $K$ be a quartic extension of $\Q$ with $\{\nu_1,\nu_2,\nu_3,\nu_4\}$ being a $\Z$-basis for $\Oc_K$ such that $\nu_1=1$ and $K=\Q (\nu_2)$. Let $X_1,X_2,X_3$ satisfy \eqref{eq:Xi1} and \eqref{eq:Xi2}. Let $\ell,\ell'\in\N$ such that $1\le\ell '<\ell$ and 
 $\theta_1,\theta_1',\dots,\theta_\ell,\theta_\ell '$ be reals satisfying \eqref{eq:th1}-\eqref{eq:th5}. Let $0<\tau<\tau'$ satisfy 
\begin{equation}
\label{eq:tau'}
\tau'<\min\Bigl(\frac{4-2\theta'_{1}-\ldots-2\theta'_{\ell '}}{100},\frac{\theta_1+\cdots+\theta_{\ell'}-1}{100}\Bigr).
\end{equation}
Let $\cA_d(\u_,m,p)$ be as given by \eqref{setN}.

Then for any choice of $\u_0\Mod{m}$ and $A>0$, we have
\begin{align*}
\sum_{\substack{p\in [X^{\tau},X^{\tau'}]\\ p\equiv 1\Mod{D_f} }}&\sum_{\substack{ q_{1},\ldots,q_{\ell}\text{ prime}\\ q_{j}\in [X^{\theta_{j}},X^{\theta_{j}'}]\,\forall 1\le j \le \ell}}|\cA_{q_{1}\cdots q_{\ell}}(\u_0,m,p)|\\
&=\eta_1^3X_1X_2X_3\frac{2\log(\frac{\tau'}{\tau})}{m^3\varphi (D_f)}
\prod_{i=1}^{\ell}\log \Bigl (\frac {\theta_i'}{\theta_i}\Bigr)+O\Bigl(\frac{ X_1X_2X_3}{(\log X)^A}\Bigr).
\end{align*}
The implied constant depends on $f,K,A,\delta$ and the $\theta_i,\theta_i'$ only.
\end{thrm}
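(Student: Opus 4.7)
The plan is to detect the divisibility $d\mid N_{K/\Q}(a_1\nu_1+a_2\nu_2+a_3\nu_3)$ by summing over integral ideals $\gJ$ of $\Oc_K$ with $N_{K/\Q}(\gJ)=d=q_1\cdots q_\ell$ that divide the principal ideal $(a_1\nu_1+a_2\nu_2+a_3\nu_3)$. Conditions \eqref{eq:th1}--\eqref{eq:th4} ensure that admissible $\gJ$ are squarefree products of distinct prime ideals, one above each $q_j$. The condition $p\mid f(\a)$ with $p\equiv 1\Mod{D_f}$ splits, for $p$ large, as $p\mid L_1(\a)$ or $p\mid L_2(\a)$, each a linear congruence on $\a$ modulo $p$. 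Combined with $\a\equiv\u_0\Mod{m}$, for each fixed choice of $(p,L_j,\gJ)$ the count of valid $\a\in\cX$ becomes a lattice-point count in $\cX$ for a rank-$3$ sublattice $\Lambda\subset\Z^3$ with $[\Z^3:\Lambda]=m^3\cdot p\cdot N_{K/\Q}(\gJ)$ up to bounded factors coming from the interaction with ramified primes.

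For the main term, the expected lattice-point density $\eta_1^3X_1X_2X_3/[\Z^3:\Lambda]$, once summed over $p\in[X^\tau,X^{\tau'}]$ with $p\equiv 1\Mod{D_f}$ via Mertens' theorem on primes in a progression, yields the factor $\tfrac{2\log(\tau'/\tau)}{\varphi(D_f)}$. Summing over admissible prime ideals of $\Oc_K$ above each $q_j$ and applying Mertens' theorem for prime ideals of $\Oc_K$ in dyadic intervals then produces the factor $\prod_i\log(\theta_i'/\theta_i)$, giving the claimed main term. It must then be verified that the contribution of ramified primes and primes dividing $mD_f$ or the discriminant of $K$ is negligible compared with the error term we aim for.

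The essential difficulty is controlling the error. The combined modulus $m^3pN_{K/\Q}(\gJ)$ can be as large as $X^{4-\delta+\tau'}$, exceeding the volume $\asymp X^3$ of $\cX$, so a pointwise lattice-point estimate is hopeless and cancellation after summation must be exploited. Here condition \eqref{eq:th5} is crucial: it lets us factor $\gJ=\gJ_1\gJ_2$ with $N_{K/\Q}(\gJ_1)\in[X^{1+\delta},X^{2-\delta}]$ and a complementary $\gJ_2$. Neither sum alone has Type I length in three variables, so we proceed via a Type II / bilinear estimate: applying Cauchy--Schwarz in one of $a_1,a_2,a_3$ and expanding the square, the problem reduces to counting, for typical pairs $(\gJ_1,\gJ_1')$ of equal norm, triples $\a\in\cX$ with $\sum a_i\nu_i$ divisible by both $\gJ_1$ and $\gJ_1'$. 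By the algebraic structure of the norm form in $\Oc_K$, this joint divisibility is governed by the least common multiple $\gJ_1\gJ_1'/\gcd(\gJ_1,\gJ_1')$, and a power saving follows from the main term of the diagonal contribution $\gJ_1=\gJ_1'$ together with equidistribution of the off-diagonal pairs. This is precisely the bilinear input introduced by Maynard \cite{May15b}.

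The main obstacle is adapting these bilinear estimates to the present localised setting, with the uniform additional constraints $\a\equiv\u_0\Mod{m}$, $p\mid f(\a)$, and each $q_j\in[X^{\theta_j},X^{\theta_j'}]$. The logical structure of the Cauchy--Schwarz expansion and subsequent ideal-theoretic analysis is unchanged from \cite{May15b}, but the dependence on $m$, $p$ and the interval endpoints must be tracked explicitly at every step, including in auxiliary estimates on the number of ideals of $\Oc_K$ of given norm in short intervals and on the associated exponential and character sums over $\Oc_K/\gJ$. Obtaining a uniform version of these estimates saving an arbitrary power of $\log X$, combined with the routine Type I treatment of the range where $N_{K/\Q}(\gJ)$ is small and dyadic summation over $N_{K/\Q}(\gJ_1)$ and $N_{K/\Q}(\gJ_2)$, then yields the asymptotic of the theorem.
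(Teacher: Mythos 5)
The broad outline you propose — detecting divisibility via prime ideals, splitting $f$ modulo $p$ into two linear factors, Mertens-type sums giving the main-term constants, and a bilinear/dispersion argument enabled by condition \eqref{eq:th5} — correctly captures the flavour of the argument, and the citation to \cite{May15b} is exactly the right ancestor. However, two essential structural ideas are missing, and as written the key step of your proposal would fail.

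\textbf{The missing sieve-approximation.} You never introduce the comparison weight $\widetilde{\1}_{\cR_2}(\gb)=c_{\cR_2}(N(\gb))\sum_{\gd\mid\gb}\lambda_\gd$ from \eqref{eq:TildeDef} and the decomposition $T(\cR)=T_{\mathrm{sieve}}(\cR)+T_1(\cR)$. This decomposition is not cosmetic: the dispersion method applied directly to $\1_{\cR_2}$ gives no cancellation at all. The whole point is that one first replaces the indicator of the ``sieved'' ideal factor by a smoothed sieve approximation computable via Type I information, and then uses Cauchy--Schwarz only on the \emph{difference} $g_\gb=\1_{\cR_2}(\gb)-\widetilde{\1}_{\cR_2}(\gb)$. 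What ultimately has to be shown small is $\sum_{\bb\in\cC,\,\bb\equiv\cc\Mod{d}}g_\bb$ over cubes and short residue classes (Proposition \ref{prpstn:Local}, proved via the Prime Ideal Theorem for Hecke Gr\"ossencharacters), not ``equidistribution of $\aa$ in progressions modulo $\mathrm{lcm}(\gJ_1,\gJ_1')$''. Your final paragraph invokes ``exponential and character sums over $\Oc_K/\gJ$'', which is not the mechanism here; the log-power saving comes from the oscillating sign of the sieve comparison weight.

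\textbf{The lattice count as you describe it would fail.} You propose, after Cauchy--Schwarz, to count $\aa\in\cX\subset\Z^3$ with $\mathrm{lcm}(\gJ_1,\gJ_1')\mid(\sum_i a_i\nu_i)$. For $\gJ_1\neq\gJ_1'$ with norms near $X^{1+\delta}$, this is a rank-$3$ sublattice of $\Z^3$ of index $\gg X^{2+2\delta}/\gcd$; the expected count is $\approx X^{1-2\delta}$ but any Davenport/geometry-of-numbers argument in $\Z^3$ has boundary error $\gg X^2$, which swamps it completely. The paper (following \cite{May15b}) avoids this by \emph{reparametrising} $\ga\gI\gc$ and $\gb\gc'$ by $4$-vectors via a choice of fundamental domain and class-group representatives, applying Cauchy--Schwarz over the full $4$-vector $\aa$, and then counting $\aa\in\Z^4$ in the \emph{rank-two} lattice $\{\xx:(\xx\diamond\bb_1)_4=(\xx\diamond\bb_2)_4=0\}$. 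The determinant of this lattice is $\wedge(\bb_1,\bb_2)/D_{\bb_1,\bb_2}\asymp B^2$, and the Davenport error is $O(A/\|\zz_1\|)$, which after summation over $(\bb_1,\bb_2)$ remains controllable. Without this dimensional reduction (which is also what allows the delicate analysis of collinear pairs, the index $f_{\bb_1,\bb_2,p_1,p_2}$, and the rank of the multiplication matrix $M_\aa$ in Lemmas \ref{lmm:S4t}--\ref{lmm:S1}), the error terms cannot be closed.

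Finally, ``Cauchy--Schwarz in one of $a_1,a_2,a_3$'' is not what happens: the Cauchy--Schwarz is over the ideal factor $\ga$ (i.e.\ over its $4$-vector coordinates), producing pairs of $\bb$-coordinates, not pairs of $\gJ$-ideals directly. Get this parametrisation in place first; everything downstream, including the treatment of $p_1=p_2$ vs.\ $p_1\neq p_2$, depends on it.
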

%
%
%
%
At first sight Theorem \ref{DistriNorm} looks like a Type I estimate since we are counting $a_1,a_2,a_3$ such that $N_{K/\Q}(a_1\nu_1+a_2\nu_2+a_3\nu_3)$ is a multiple of $q_1\ldots q_\ell$. 
However, since there are typically no values of $a_1,a_2,a_3$ such that this occurs (it is only a thin set of $q_j$'s when there is a solution), we instead are required to view this as a Type II estimate counting $N_{K/\Q}(a_1\nu_1+a_2\nu_2+a_3\nu_3)=m_1m_2$ where $m_1=q_1\cdots q_{\ell '}$ is a product of $\ell'$ primes of constrained size and $m_2=q_{\ell '+1}\cdots q_\ell r$ is the product of $\ell -\ell '$ primes and some other integer $r$.
%
%
%
%
\subsection{Application to Theorem \ref{cP}}
%
%
%
%
If $P$ is an irreducible monic quartic polynomial, then (generalising previous works) there is an auxilliary sextic form $q(a_1,a_2,a_3)$ such that provided $q$ takes suitably friable values a positive proportion of the time, then we can use exponential sum methods to establish Theorem \ref{cP}. If $P$ has Galois group $C_4$ or $D_4$, then it turns out that the roots $r_1,r_2,r_3,r_4$ of $P$ can be ordered such that $r_1r_2+r_3r_4\in \Q$ (c.f. Lemma \ref{lmm:C4D4}), and that $q$ factorises as $q_1q_2$ for a quartic form $q_1$ and a quadratic form $q_2$ (c.f. Lemma \ref{lmm:q}) which split completely in the splitting field of $P$.

Moreover, we find that for the quartic extension $K:=\Q(r_1+r_3)$ of $\Q$, the form $q_1$ satisfies
\begin{equation*}
    q_1 (a_1,a_2,a_3)=\pm N_{K/\Q}(a_1+a_2 (r_1+r_3)+a_3 (r_1^2+r_1r_3+r_3^2)),
\end{equation*}
and so takes the shape of an incomplete norm form (c.f. Proposition \ref{Nq1}).

The quadratic $q_2$ takes the form
\begin{equation}\label{formf}\begin{split}
q_2(a_1,a_2,a_3)&=[a_1+(r_1+r_2)a_2+(r_1^2+r_1r_2+r_2^2)a_3]\\
&\times [a_1+(r_3+r_4)a_2+(r_3^2+r_3r_4+r_4^2)a_3].
\end{split}
\end{equation}
Since the two polynomials $P_1(X):=(X-(r_1+r_2))(X-(r_3+r_4))$ and $P_2(X):=(X-(r_1^2+r_1r_2+r_2^2))(X-(r_3^2+r_3r_4+r_4^2))$ are in $\Z [X]$, $r_1+r_2$ and $r_1^2+r_1r_2+r_2^2$ are of degree at most $2$ over $\Q$. Let $\Delta_1$ and $\Delta _2$ be the discriminant of these two polynomials
and 
\begin{equation}
    D_{q_2}:= 
    \begin{cases}
    [8,\Delta_1 ,\Delta_2]
& \text{if}\ \Delta_1\Delta_2\not =0,\\
[8,\Delta_1+\Delta_2] & \text{otherwise}.
\end{cases}
\label{eq:DqDef}
\end{equation}
Since $P$ is irreducible of degree $4$, we don't have $\Delta_1 =\Delta_2 =0$.\footnote{If $\Delta_1=\Delta_2=0$ then the roots of $r_1+r_2$ and $r_1r_2$ are in $\Q$. This contradicts the fact that 
$[\Q(r_1):\Q]=4$.} If $p\equiv 1\Mod{D_{q_2}}$ and $\Delta_1\Delta_2\not =0$, then $(\Delta_1/p)=(\Delta _2/p)=1$  where  $(n/p)$ is the Legendre symbol. Thus the polynomials $P_1$ and $P_2$ modulo $p$ factor into products of two degree one polynomials. 
The linear factors of $q_2$ in \eqref{formf} have their coefficients in $\F_p$.  We also verify that it is still the case when $p\equiv 1\Mod{D_{q_2}}$  and $\Delta_1\Delta_2=0$.

Then $N_{K/\Q}(\sum_{i=1}^4 a_i\nu_i)$ is a quartic form in the integer variables $a_1,a_2,a_3,a_4$,  and we have for all $a_1,a_2,a_3,a_4\in\Z$
$$N_{K/\Q}\Big (\sum_{i=1}^4 a_i\nu_i\Big )=\prod_{i=1}^4 \Big (\sum_{j=1}^4 a_j\sigma _i (\nu_j)\Big ),$$
where $\sigma_1,\sigma_2,\sigma_3 ,\sigma _4$ are the different embeddings of $K/\Q$.

Given an irreducible quartic polynomial $P\in\Z [X]$ with Galois group $C_4$ or $D_4$ it is the case (see Lemma \ref{lmm:C4D4}) that the distinct roots $r_1,r_2,r_3,r_4$ of $P$ can be ordered 
such that $r_1r_2+r_3r_4\in\Q$. We are interested in the auxiliary polynomial $q_2$ (see \eqref{defq2}), given by

To ensure that $q_1(a_1,a_2,a_3)=N_{K/\Q}(a_1\nu_1+a_2\nu_2+a_3\nu_3)$ is composed only of suitably small prime factors, we will look for $a_1,a_2,a_3$ such that
\[
q_{11}q_{12}q_{13}q_{14}\ldots q_{1\ell}|N_{K/\Q}(a_1\nu_1+a_2\nu_2+a_3\nu_3)
\]
for some suitable primes $q_{11},q_{12},q_{13},q_{14},\ldots ,q_{1\ell}<X^{1-\delta}$ with $\prod_{j=1}^\ell q_{1j}>X^{3+\delta}$. 
In the application to Theorem \ref{cP}, we will only need the case $\ell =6$, but the proof in this particular case is exactly the same as in the general case.
%
%
%
%
\section{Algebraic properties of auxilliary polynomials}
%
%
%
%
\subsection{Ideals}
%
%
 Let $r_1$ be a root of $P$.  We define for any ideal
 $\gI$  of  $\Z [r_1]$ the function
\begin{equation*}
\varrho_P (\gI)={\rm card}
\{0\le n< N_P(\gI): n\equiv r_1\Mod \gI\},
\end{equation*}
where $N_P=N_{\Q (r_1)/\Q}$ is the norm on $\Q (r_1)$.
If $\gI$ is principal, $\gI=(\alpha)$, we will write simply $\varrho_P (\alpha)$ in place of $\varrho_P ((\alpha))$. 
%
%
%
%
\begin{lmm}\label{ideaux}
Let $\gI$ be an  ideal of $\Oc_{\Q(r_1)}$ such that  $(N_P(\gI),\Disc (P))=1$. If the equation  $n\equiv r_1\Mod \gI$ has a  solution with $n\in\Z$ 
then  $\gI$ is a product of prime ideals whose norm is a prime number.  Furthermore $\gI$ can't be divisible by two different prime ideals with the same norm.  Conversely,  if $\gI$ satisfies these  different conditions then this  congruence admits some solutions and $\varrho_P (\gI)=1$. 
Finally if  $\gI$ is an   ideal such that   $\varrho_P (\gI)=1$ then for  $m\in\Z$, $\gI|m\Leftrightarrow N_P(\gI)|m$. 
\end{lmm}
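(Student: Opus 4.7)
The plan is to handle the three assertions in turn, with the Kummer-Dedekind factorisation and a local identification at unramified residue-degree-one primes carrying all the weight. For the first implication I would suppose $n \equiv r_1 \pmod{\gI}$ for some $n \in \Z$ and fix any prime $\gp \mid \gI$ above the rational prime $p$. Since $(N_P(\gI), \Disc(P)) = 1$, we have $p \nmid \Disc(P)$, so $p$ is unramified and $\Z[r_1]$ is $p$-maximal in $\Oc := \Oc_{\Q(r_1)}$. Kummer-Dedekind then supplies $\gp = (p, g(r_1))$ for a monic irreducible factor $g \in \F_p[X]$ of $P \bmod p$, with $\Oc/\gp \cong \F_p[X]/(g)$ of cardinality $p^{\deg g}$. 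Reducing the hypothesis modulo $\gp$ and applying $g$ gives $g(n) \equiv g(r_1) \equiv 0 \pmod{\gp}$; since $g(n) \in \Z$ and $\gp \cap \Z = p\Z$, $n$ is a root of $g$ over $\F_p$, forcing $\deg g = 1$ and $N_P(\gp) = p$. Two distinct primes $\gp_1 \ne \gp_2$ dividing $\gI$ with the same prime norm would both lie above the same rational $p$ and take the form $\gp_i = (p, r_1 - c_i)$ with $c_1 \not\equiv c_2 \pmod p$; the congruences $n \equiv r_1 \pmod{\gp_i}$ would then impose the incompatible constraints $n \equiv c_1 \equiv c_2 \pmod p$.

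For the converse I would write $\gI = \prod_i \gp_i^{e_i}$ with each $\gp_i$ of prime norm $p_i$ and the $p_i$ distinct. Unramifiedness of $p_i$ together with residue degree one identifies the completion $\Oc_{\gp_i}$ with $\Z_{p_i}$, so $\Oc/\gp_i^{e_i} \cong \Z/p_i^{e_i}\Z$ canonically; the reduction $\Z \to \Oc/\gp_i^{e_i}$ is therefore surjective with kernel $p_i^{e_i}\Z$. This yields a unique $n_i \pmod{p_i^{e_i}}$ with $n_i \equiv r_1 \pmod{\gp_i^{e_i}}$, and CRT over the pairwise coprime moduli $p_i^{e_i}$ glues these into a unique $n \pmod{N_P(\gI)}$ satisfying $n \equiv r_1 \pmod{\gI}$, giving both solvability and $\varrho_P(\gI) = 1$.

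For the last statement I would let $d$ denote the positive generator of $\gI \cap \Z$. The integer solutions to $n \equiv r_1 \pmod{\gI}$ form either the empty set or a coset of $d\Z$, contributing $N_P(\gI)/d$ representatives in $[0, N_P(\gI))$, so $\varrho_P(\gI) = 1$ forces $d = N_P(\gI)$ and $\gI \cap \Z = N_P(\gI)\Z$; then for $m \in \Z$, $\gI \mid m$ iff $m \in \gI \cap \Z$ iff $N_P(\gI) \mid m$. The only substantive input throughout is the local identification $\Oc/\gp^e \cong \Z/p^e\Z$ at residue-degree-one primes above rationals coprime to $\Disc(P)$, and I do not anticipate any deep obstacle beyond careful bookkeeping through Kummer-Dedekind and CRT.
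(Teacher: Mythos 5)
Your proof is correct. Note that the paper does not prove this lemma itself: it simply cites \cite[Lemma 3.1]{B15} (with the case $P=\Phi_{12}$ in \cite[Lemma 3.1]{D15}), and your argument — Kummer--Dedekind at primes coprime to $\Disc(P)$, the identification $\Oc/\gp^e\cong\Z/p^e\Z$ at unramified degree-one primes, CRT for the converse, and the coset count $N_P(\gI)/d$ with $d$ generating $\gI\cap\Z$ for the last assertion — is exactly the standard argument underlying that reference. The only point left implicit is that $d\mid N_P(\gI)$, which follows since $N_P(\gI)\in\gI\cap\Z$; with that remark the counting step is complete.
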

%
%
\begin{proof}
This is \cite[Lemma 3.1]{B15}.
The particular case $P=\Phi_{12}$ is handled in \cite[Lemma 3.1]{D15}. 
\end{proof}
%
%
%
%
\subsection{The roots of $P$ modulo $m$}
%
%
%
%
In this part only we suppose that  $P(X)=X^n+c_{n-1}X^{n-1}+\dots+c_0\in\Z[X]$ is monic, irreducible of degree $n$.
In our problem, the degree of $P$ is $4$ but the argument of this part is valid for all irreducible and monic polynomials and might be used in other contexts. Throughout the rest of the paper we fix a root $r_1$ of $P$.

For $\alpha\in\Z [r_1]$,  we write  $\alpha=a_0+a_1 r_1+a_2r_1^2+a_3r_1^3+\cdots + a_{n-1}r_1^{n-1}$.
Let $m_\alpha : \Q (r_1)\rightarrow \Q (r_1)$  be the multiplication-by-$\alpha$ map: $m_\alpha (x)=\alpha x$.
Let $M_\alpha$ be the  matrix of $m_\alpha$ with respect to the basis  $\{1,r_1,r_1^2,r_1^3, \ldots , r_1^{n-1}\}$ and  $N_P(\alpha )=N_{\Q(r_1)/\Q}(\alpha )$ its determinant.
For $P(X)=X^4+2$ the corresponding matrix is
\begin{equation*}
\left (\begin{matrix}
a_0 & -2a_3 & -2a_2 & -2a_1\\ a_1 & a_0 & -2a_3 & -2a_2\\ a_2 & a_1 & a_0 & -2a_3\\ a_3& a_2 & a_1 & a_0\\
\end{matrix}
\right ). 
\end{equation*} 
More generally since $r_1^n= -c_0-c_1r_1-\cdots - c_{n-1}r_1^{n-1}$, we have
\begin{equation}\label{malpha}
M_\alpha=\begin{pmatrix}
a_0 & -c_0a_{n-1} & * & \cdots & *\\
a_1 & a_0-c_1a_{n-1} & * & \cdots & *\\
\vdots & \vdots & \vdots &  \vdots & \vdots \\
a_{n-1} & a_{n-2}-c_{n-1}a_{n-1} & * & \cdots & *\\
\end{pmatrix}.
\end{equation}

In this section we prove results analogous to \cite[Lemma 4.1]{D15} or \cite[Lemma 3.2]{B15}.
As in these two papers, we let $B_{ij}=B_{ij}(\alpha)$ be the cofactor formed by taking the determinant of the $(n-1)\times(n-1)$ matrix formed by removing line $i$ and column $j$ from $M_\alpha$ and multiply it by $(-1)^{i+j}$. If $\alpha=a_0+a_1r_1+\dots+a_{n-1}r_1^{n-1}$ then $B_{ij}$ is a polynomial in the $a_i$. By an abuse of notation we will sometimes use $B_{ij}$ to refer to this polynomial, and sometimes the value attained at a particular point $(a_0,a_1,\dots,a_{n-1})$. The intended usage should be clear from the context.
%
%
%
%
\begin{lmm}\label{r}
Let $\alpha =a_0+a_1r_1+\cdots +a_{n-1}r_1^{n-1}$, with $a_0,\ldots ,a_{n-1}\in\Z$ be such that  $(N_P(\alpha), B_{1n})=1$. Then there exists an integer  $k_\alpha$, with $0\le k_\alpha<N_P(\alpha )$
such that we have
\begin{equation*}
n-r_1\equiv 0\Mod{(\alpha)}\Leftrightarrow n\equiv k_\alpha \Mod{N_P(\alpha )}.
\end{equation*}
This integer $k_\alpha$ satisfies the  congruence
\begin{equation*}
k_\alpha\equiv B_{2n}\overline{B_{1n}}\Mod{N_P(\alpha )}.
\end{equation*}
Furthermore, if  $\gJ$ is an   ideal of  $\Z [r_1]$ containing a  principal ideal  $(\alpha )$ with $\alpha$ as above then there exists a unique   $k_\gJ$
with $0\le k_\gJ<N_P(\gJ)$  and 
\begin{equation*}
n-r_1\in \gJ\Leftrightarrow n\equiv k_\gJ\Mod{N_P(\gJ )}.
\end{equation*}
\end{lmm}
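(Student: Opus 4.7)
The plan is to translate the divisibility $n-r_1 \in (\alpha)$ into a linear system solvable by Cramer's rule and then deduce the single claimed congruence from the resulting component divisibilities via Jacobi's complementary-minor identity.

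Writing $\beta = \sum_{j=0}^{n-1} b_{j+1} r_1^j \in \Z[r_1]$ and expanding $\alpha\beta = n - r_1$ in the basis $\{1, r_1, \ldots, r_1^{n-1}\}$, the divisibility is equivalent to the linear system $M_\alpha \mathbf{b} = \mathbf{w}$, where $\mathbf{w} = (n, -1, 0, \ldots, 0)^T$ and $\mathbf{b} = (b_1, \ldots, b_n)^T \in \Z^n$. Since $\det M_\alpha = N_P(\alpha) \ne 0$, Cramer's rule gives a unique rational solution; expanding the Cramer determinant along the column replaced by $\mathbf{w}$ yields
\[
b_j = \frac{n B_{1j} - B_{2j}}{N_P(\alpha)} \qquad (1 \le j \le n).
\]
Hence $n - r_1 \in (\alpha)$ iff $N_P(\alpha) \mid n B_{1j} - B_{2j}$ for every $j$. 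Specialising to $j = n$ and inverting $B_{1n}$ modulo $N_P(\alpha)$ (permissible by the coprimality hypothesis) gives the forward direction and the formula $k_\alpha \equiv B_{2n} \overline{B_{1n}} \pmod{N_P(\alpha)}$.

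The main step is the converse, which requires deducing the remaining $n-1$ component divisibilities from the one at $j = n$. This is the main obstacle I anticipate, and it is resolved by the Plücker-type identity
\[
B_{1j} B_{2n} - B_{2j} B_{1n} \equiv 0 \pmod{N_P(\alpha)} \qquad (1 \le j \le n),
\]
which is an instance of Jacobi's complementary-minor identity applied to $\mathrm{adj}(M_\alpha)$: since $\mathrm{adj}(M_\alpha)_{ik} = B_{ki}$, the $2\times 2$ minor of $\mathrm{adj}(M_\alpha)$ on rows $\{j,n\}$ and columns $\{1,2\}$ has determinant $B_{1j} B_{2n} - B_{2j} B_{1n}$, and Jacobi's identity equates this with $\pm N_P(\alpha)$ times a complementary $(n-2) \times (n-2)$ minor of $M_\alpha$. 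Given this identity, multiplying $n B_{1n} \equiv B_{2n}$ by $B_{1j}$ and substituting yields $n B_{1j} B_{1n} \equiv B_{2j} B_{1n} \pmod{N_P(\alpha)}$, and cancelling the unit $B_{1n}$ gives $n B_{1j} \equiv B_{2j} \pmod{N_P(\alpha)}$, as required.

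For the ideal statement, set $k_\gJ := k_\alpha \bmod N_P(\gJ)$. Existence follows since $k_\alpha - r_1 \in (\alpha) \subseteq \gJ$ and $N_P(\gJ) \in \gJ$ (as $N_P(\gJ)$ annihilates $\Z[r_1]/\gJ$), so $k_\gJ - r_1 \in \gJ$. For uniqueness, the relation $r_1 \equiv k_\alpha \pmod \gJ$ shows that $\Z[r_1]/\gJ$ is generated as a ring by the integer $k_\alpha$, and hence is a quotient of $\Z$; comparing cardinalities yields $\gJ \cap \Z = N_P(\gJ)\Z$, and the uniqueness of $k_\gJ$ modulo $N_P(\gJ)$ follows at once.
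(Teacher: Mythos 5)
Your proof is correct, and it takes a genuinely somewhat different route from the paper's. The paper also manipulates cofactors of $M_\alpha$, but it works modulo the ideal $(\alpha)$: starting from $\alpha r_1^j\in(\alpha)$ it sets up the system \eqref{relmat}, applies Cramer's rule after deleting a row, and obtains $B_{1i}r_1\equiv B_{2i}\Mod{(\alpha)}$ for every $i$ (equation \eqref{r11}); the equivalence with the integer congruence $n\equiv B_{2n}\overline{B_{1n}}\Mod{N_P(\alpha)}$ is then extracted by passing between congruences mod $(\alpha)$ and mod $N_P(\alpha)$ via Lemma \ref{ideaux} (and, implicitly for the converse, the invertibility of the integer $B_{1n}$ on $\Z[r_1]/(\alpha)$, guaranteed by the coprimality hypothesis). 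You instead solve $\alpha\beta=n-r_1$ outright: membership in $(\alpha)$ becomes integrality of all the adjugate coordinates $(nB_{1j}-B_{2j})/N_P(\alpha)$, which makes both directions of the equivalence completely explicit and avoids Lemma \ref{ideaux} altogether, but it forces you to supply one extra ingredient --- the reduction of the $n$ divisibility conditions to the single one at $j=n$ --- which you correctly obtain from Jacobi's complementary-minor identity $B_{1j}B_{2n}-B_{2j}B_{1n}\equiv 0\Mod{N_P(\alpha)}$. That identity is not needed at this point in the paper, although it is essentially the same fact the paper later proves (for $n=4$) as Lemma \ref{lmm:q3indep}, there deduced from \eqref{r11}. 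Your handling of the ideal statement is also more complete than the paper's one-line deduction: identifying $\gJ\cap\Z=N_P(\gJ)\Z$ via the surjection $\Z\to\Z[r_1]/\gJ$ (using that $N_P(\gJ)$ is the index, hence annihilates the quotient) pins down both the existence and the uniqueness of $k_\gJ$ cleanly. In short, the paper's route is shorter because it leans on Lemma \ref{ideaux}; yours is self-contained at the cost of citing the classical minor identity, and is, if anything, more airtight on the converse direction.
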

%
%
\begin{proof}
The starting point is the following trivial observation: $\alpha r_1^j\in (\alpha)$ for all $j=0,1,2,3,\ldots ,  n-1$. Let $(m_{i,j})_{1\le i,j\le n}$ be the  coefficients of $M_\alpha$. We obtain the  equations
\begin{equation*}
m_{1,j}+m_{2,j}r_1+\cdots +m_{n,j}r_1^{n-1}=0 \Mod {(\alpha)},\,\forall\, 1\le j\le n.
\end{equation*}
This system can be represented as
\begin{equation}\label{relmat}
\left (
\begin{matrix}
m_{2,1} & m_{3,1} & \ldots & m_{n,1}\\ m_{2,2} & m_{3,2} & \ldots & m_{n,2}\\
\vdots &\vdots &\ldots  &\vdots\\
m_{2,n}& m_{3,n}& \ldots & m_{n,n}\\
\end{matrix}\right )
\left (
\begin{matrix}
r_1\\ r_1^2\\ \vdots \\ r_1^{n-1}
\end{matrix}
\right )
=\left ( \begin{matrix} -m_{1,1}\\ -m_{1,2}\\ \vdots\\ -m_{1,n}
\end{matrix}\right )\Mod{(\alpha )}
\end{equation}
If we remove the i-th line in this system and apply Cramer's rule, we find
\begin{equation}\label{reli}
r_1\det \left (
\begin{matrix}
m_{2,1} & m_{3,1} & \ldots & m_{n,1}\\ \vdots &\vdots &\ldots  &\vdots \\ m_{2,i-1} & m_{3,i-1} & \ldots & m_{n,i-1}\\m_{2,i+1} & m_{3,i+1} & \ldots & m_{n,i+1}\\
\vdots &\vdots &\ldots  &\vdots\\
m_{2,n-1}& m_{3,n-1}& \ldots & m_{n,n-1}\\
m_{2,n}& m_{3,n}& \ldots & m_{n,n}\\
\end{matrix}\right )=\det \left (
\begin{matrix}
-m_{1,1} & m_{3,1} & \ldots & m_{n,1}\\ \vdots &\vdots &\ldots  &\vdots\\ -m_{1,i-1} & m_{3,i-1} & \ldots & m_{n,i-1}\\ -m_{1,i+1} & m_{3,i+1} & \ldots & m_{n,i+1}\\
\vdots &\vdots &\ldots  &\vdots\\
-m_{1,n-1}& m_{3,n-1}& \ldots & m_{n,n-1}\\
-m_{1,n}& m_{3,n}& \ldots & m_{n,n}\\
\end{matrix}\right )\Mod{(\alpha)}.
\end{equation}
The transpose of the matrix on the left is the submatrix of $M_\alpha$ obtained by removing the first line and the $i^{th}$ column. The matrix on the right is the submatrix of $M_\alpha$ obtained by removing the second line and the $i^{th}$ column and by multiplying all elements of the first column by $-1$. 

We recall that the   $B_{ij}$, $1\le i,j\le n$, are   the cofactors of $M_\alpha$, so that 
 \begin{equation}\label{cofactor}
 M^{-1}_\alpha =\frac{1}{N_P(\alpha)}\left (\begin{matrix}
 B_{11}& B_{21} & \ldots & B_{n1}\\
 B_{12} & B_{22} & \ldots & B_{n2}\\
 \vdots & \vdots & \ldots & \vdots\\
 B_{1n} & B_{2n} & \ldots & B_{nn}\end{matrix}\right ).
 \end{equation}
 With this notation, \eqref{reli} becomes
\begin{equation*}
(-1)^{i+1}B_{1i}r_1\equiv  -(-1)^{i+2} B_{2i}\Mod{(\alpha )}.
\end{equation*}
In particular, this gives
\begin{equation}\label{r11}
B_{1i}r_1\equiv B_{2i}\Mod{(\alpha )}.
\end{equation}
By Lemma \ref{ideaux}, if an integer is congruent to $0\Mod{(\alpha)}$ then it is divisible by $N_P(\alpha)$. Therefore considering $i=n$ now gives the claim of the first part of Lemma \ref{r}. 

For the second part when $J| (\alpha)$, thus it suffices to take $k_J\in [0,N_P(J)]$ such that $k_J\equiv{k_\alpha}\Mod{N_P(J)}$. The claim now follows from \eqref{r11}.
  \end{proof}
%
%
%
%
 \medskip
 
 We end this subsection by observing some connection between the cofactors $B_{1i}$ and $B_{2j}$
 with $1\le i,j\le n$.
Since $(m_\alpha )^{-1}=m_{\alpha ^{-1}}$, we have 
\begin{equation*}
\alpha ^{-1} = \frac{1}{N_P(\alpha)}(B_{11}+B_{12}r_1+\cdots +B_{1n}r_1^{n-1}),
\end{equation*}
and the columns of  $M_\alpha^{-1}$ satisfy the same   relations \eqref{malpha} as the one in  $M_\alpha$. By the  relations \eqref{malpha} for $M_{\alpha^{-1}}$, we see that
\begin{equation}\label{B2iB1j}
\left (\begin{matrix} B_{21}\\ B_{22}\\ \vdots\\ B_{2(n-1)}\\ B_{2n}
\end{matrix}\right )=
\left (\begin{matrix} -c_0B_{1n}\\ B_{11}-c_1B_{1n}\\ \vdots \\ B_{1(n-2)}-c_{n-2}B_{1n}\\ B_{1(n-1)}-c_{n-1}B_{1n}\end{matrix}\right ).
\end{equation}
In particular the last line implies that   
\begin{equation}
B_{2n}=B_{1(n-1)}-c_{n-1}B_{1n}.
\label{eq:CofactorRelation}
\end{equation} 
For $n=4$, and $c_3=0$, we recover the  formula $B_{14}r_1\equiv B_{24}=B_{13}\Mod{(\alpha )}$,
 proved in  \cite{B15} and in \cite{D15}.
%
%
%
 %
\subsection{Elimination of $a_0$}
%
%

The aim of this subsection is to approximate the fraction $k_\gJ/N_P(\alpha )$ by a fraction whose denominator depends 
only on $a_1,a_2,a_3$. Now and for the rest of this paper we restrict our attention to $P$ having degree $4$. 
In this subsection we prove the analogue of  \cite[Lemma 3.3]{B15}, or \cite[Lemma 6.2]{D15}. A natural way to proceed is to work with some   resultants of the different forms defined previously.

%
%
\begin{lmm}\label{lmm:q3indep}
There is a homogeneous polynomial $q_3=q_3(a_1,a_2,a_3)$ in $a_1,a_2,a_3$ such that
 \begin{equation}\label{BBN}
B_{24}B_{13}-B_{14}B_{23}=q_3 N_P(\alpha ).
\end{equation}
\end{lmm}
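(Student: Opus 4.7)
The plan is to show the divisibility $N_P(\alpha)\mid B_{24}B_{13}-B_{14}B_{23}$ in $\Z[a_0,\ldots,a_3]$, and then verify that the quotient $q_3$ depends only on $a_1,a_2,a_3$ by a short $a_0$-degree count. Homogeneity of $q_3$ will follow from the uniform degrees of the ingredients.

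For the divisibility, the key observation is that the cofactors $B_{ij}$ are polynomials in $a_0,\ldots,a_3$ whose coefficients only involve the $c_k$'s of $P$ (see \eqref{malpha}), and hence do not depend on the choice of a particular root of $P$. Consequently, if $r_1,\ldots,r_4$ are the roots of $P$ in a splitting field $L$ and $\alpha_k:=a_0+a_1r_k+a_2r_k^2+a_3r_k^3$, the same reasoning that gave $B_{1i}r_1\equiv B_{2i}\pmod{\alpha}$ in Lemma \ref{r} yields
\[
B_{13}r_k\equiv B_{23}\pmod{\alpha_k},\qquad B_{14}r_k\equiv B_{24}\pmod{\alpha_k}
\]
in $\Oc_L[a_0,\ldots,a_3]$ for every $k$. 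Multiplying these by $B_{14}$ and $B_{13}$ respectively and subtracting shows that each $\alpha_k$ divides $D:=B_{24}B_{13}-B_{14}B_{23}$ in $\Oc_L[a_0,\ldots,a_3]$. Viewed as polynomials in $a_0$, each $\alpha_k=a_0-\xi_k$ is linear with $\xi_k:=-(a_1r_k+a_2r_k^2+a_3r_k^3)$, and the $\xi_k$ are pairwise distinct in $\overline{\Q}(a_1,a_2,a_3)$: their differences are nonzero polynomials in $a_1,a_2,a_3$ because specialising $(a_1,a_2,a_3)=(1,0,0)$ gives $\xi_k-\xi_{k'}=r_{k'}-r_k\ne 0$. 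Hence the $\alpha_k$ are pairwise coprime linear factors in $\overline{\Q}(a_1,a_2,a_3)[a_0]$, so their product $N_P(\alpha)$ divides $D$. Since $N_P(\alpha)$ is monic in $a_0$ and $D,N_P(\alpha)\in\Z[a_0,\ldots,a_3]$, the quotient $q_3=D/N_P(\alpha)$ lies in $\Z[a_0,\ldots,a_3]$.

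To see that $q_3$ is independent of $a_0$, inspect the matrix \eqref{malpha}: the variable $a_0$ appears only on the diagonal entries $m_{kk}$ of $M_\alpha$. For each cofactor $B_{ij}$ with $i\in\{1,2\}$ and $j\in\{3,4\}$, removing row $i$ and column $j$ eliminates the two (distinct) diagonal entries $m_{ii}$ and $m_{jj}$, leaving only two diagonal entries in the resulting $3\times 3$ submatrix. Thus $\deg_{a_0}B_{ij}\le 2$, and therefore $\deg_{a_0}D\le 4$. On the other hand $N_P(\alpha)$ has $a_0$-degree exactly $4$ with leading coefficient $1$, so $\deg_{a_0}q_3=0$. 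Finally, each $B_{ij}$ is a $3\times 3$ minor of a matrix whose entries are linear in $a_0,\ldots,a_3$, hence homogeneous of total degree $3$, while $N_P(\alpha)$ is homogeneous of total degree $4$; so $q_3$ is homogeneous of degree $2$ in $a_1,a_2,a_3$.

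The main obstacle is upgrading the evident divisibility $\alpha\mid D$ in $\Oc_L[a_0,\ldots,a_3]$ (which one reads off directly from the cofactor relations for $\alpha=\alpha_1$) to divisibility by the full norm $N_P(\alpha)$. The resolution is to exploit Galois symmetry: the matrix $M_\alpha$ and its cofactors make no reference to a preferred root of $P$, so the same congruences hold verbatim with each $\alpha_k$ in place of $\alpha$, yielding one linear factor of $N_P(\alpha)$ at a time; coprimality of the $\alpha_k$ as polynomials in $a_0$ then multiplies these up to the product $N_P(\alpha)$.
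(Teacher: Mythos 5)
Your proof is correct and follows essentially the same route as the paper: both rest on the cofactor congruence \eqref{r11} and conclude with the $a_0$-degree count to show $q_3$ is independent of $a_0$. Your version spells out more explicitly the step from $\alpha\mid D$ to $N_P(\alpha)\mid D$, by passing to the four conjugates $\alpha_k$ and noting they are pairwise coprime as polynomials in $a_0$ over $\overline{\Q}(a_1,a_2,a_3)$, whereas the paper states the congruence $\bmod\ N_P(\alpha)$ directly.
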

%
%
\begin{proof}
Applying \eqref{r11} with $i=3,4$, $n=4$ we find 
\begin{equation*}
    B_{13}B_{24}\equiv B_{14}B_{23}\Mod{N_P(\alpha)}.
\end{equation*}
Since this holds for all $a_0,a_1,a_2,a_3$, we deduce that there exists a form $q_3=q_3(a_0,a_1,a_2,a_{3})$ such that \footnote{In \cite{B15} and \cite{D15} this form corresponds to the form $q_4$.}
 \begin{equation}
B_{24}B_{13}-B_{14}B_{23}=q_3 N_P(\alpha ).
\end{equation}
Therefore we just need to show that $q_3$ actually doesn't depend on $a_0$. $N_P(\alpha )$ has degree $4$ in $a_0$ while the polynomials $B_{ij}$, $i\not =j$ are of degree $2$ in $a_0$, and so by equating the coefficients of $a_0^4$ we see that $q_3$ must not depend on $a_0$.
\end{proof}
%
%
\begin{rmk}
One can explicitly compute $q_3$ in terms of the coefficients $c_i$ of $P$; it is given by
\begin{equation}\label{deff}
    q_3(a_1,a_2,a_3)=a_2^2-a_1a_3-c_3a_2a_3+c_2a_3^2.
\end{equation}
When $c_3=0$ this coincides with the form $-q_4$
given in \cite[equation (2.7)]{B15}.
\end{rmk}
%
%

\medskip

%
%

\begin{rmk} 
Lemma \ref{lmm:q3indep} makes important use of the fact that $P$ is a quartic polynomial. For polynomials $P$ of  degree $d>4$ the form $q_3$ would have degree $d-4$ in $a_0$, and so would no longer independent of $a_0$.
\end{rmk}

 %
%

\medskip

 %
%

Following the notation of  \cite{B15} and \cite{D15}, we write ${\rm Resultant} (P_1,P_2;x)$ for the resultant of the polynomials $P_1$, $P_2$ 
with respect to the  variable $x$.
We will be interested by the two following  resultants
\begin{equation}\label{resultant}\begin{split}
R&:=R(a_1,a_2,a_3)={\rm Resultant} (B_{14},N_P(\alpha );a_0)\\
R_0&:=R_0(a_1,a_2,a_3)={\rm Resultant} (B_{13},B_{14};a_0)\\
\end{split}
\end{equation}
 %
%
\begin{lmm}\label{RR0}
With the previous notation we have 
\begin{equation*}
q_3^2 R=R_0^2.
\end{equation*}
\end{lmm}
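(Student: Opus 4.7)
The plan is to exploit the expression of both resultants as products over the roots of $B_{14}$ viewed as a polynomial in $a_0$, and then to use the identity from Lemma~\ref{lmm:q3indep} together with the cofactor identity \eqref{eq:CofactorRelation} to convert the product $B_{24}B_{13}=q_3 N_P$ into a perfect-square identity along the vanishing locus of $B_{14}$.

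First I would establish the relevant degrees in $a_0$. Inspection of the $3\times 3$ minor obtained by deleting row~$1$ and column~$4$ of $M_\alpha$ (using the structure \eqref{malpha}) shows $a_0$ occurs in exactly two entries, so $B_{14}$ is a quadratic in $a_0$ over $\Z[a_1,a_2,a_3]$; similarly $B_{13}$ is a quadratic in $a_0$, while $N_P(\alpha)=\det M_\alpha$ is a \emph{monic} quartic in $a_0$ (the $a_0^4$ term arising from the product of the diagonal entries of $M_\alpha$). Let $\alpha_1,\alpha_2$ denote the two roots of $B_{14}$ as a quadratic in $a_0$ over the algebraic closure of $\Q(a_1,a_2,a_3)$, and let $c=c(a_1,a_2,a_3)$ be its leading coefficient. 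The standard expression of the resultant via roots then gives
\[
R = c^4\, N_P(\alpha_1)\,N_P(\alpha_2), \qquad R_0 = c^2\, B_{13}(\alpha_1)\,B_{13}(\alpha_2).
\]

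Next I would evaluate the identity of Lemma~\ref{lmm:q3indep} at $a_0=\alpha_i$. Since $B_{14}(\alpha_i)=0$ and $q_3$ does not depend on $a_0$, this yields
\[
B_{24}(\alpha_i)\,B_{13}(\alpha_i) = q_3\, N_P(\alpha_i), \qquad i=1,2.
\]
The cofactor identity \eqref{eq:CofactorRelation} specialised to $n=4$ reads $B_{24}=B_{13}-c_3 B_{14}$, so $B_{24}(\alpha_i)=B_{13}(\alpha_i)$, and hence $B_{13}(\alpha_i)^2=q_3\,N_P(\alpha_i)$. Multiplying these relations for $i=1,2$ and then by $c^4$ gives
\[
R_0^2 = c^4 B_{13}(\alpha_1)^2 B_{13}(\alpha_2)^2 = c^4 q_3^2 N_P(\alpha_1) N_P(\alpha_2) = q_3^2 R.
\]

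The one technicality is that the root-based resultant formula requires $c\neq 0$; but since $q_3^2 R = R_0^2$ is a polynomial identity in $\Z[a_1,a_2,a_3]$, it suffices to verify it on the Zariski-dense open set where $c\neq 0$. I do not foresee a substantial obstacle: the whole argument rests on the fortuitous coincidence $B_{24}(\alpha_i)=B_{13}(\alpha_i)$, which is precisely what turns the relation $B_{24}B_{13}=q_3 N_P$ (valid modulo $B_{14}$) into the square identity $B_{13}^2=q_3 N_P$, and squaring is exactly what is needed to match $R$ (linear in $N_P(\alpha_i)$) with $R_0^2$ (quadratic in $B_{13}(\alpha_i)$).
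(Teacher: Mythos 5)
Your proof is correct and rests on exactly the same two identities as the paper's argument: the relation $B_{24}B_{13}-B_{14}B_{23}=q_3 N_P(\alpha)$ from Lemma~\ref{lmm:q3indep} and the cofactor relation $B_{24}=B_{13}-c_3B_{14}$, both used to reduce modulo $B_{14}$. The paper phrases the reduction via the modular-invariance, scaling, and multiplicativity properties of resultants, whereas you unwind these into the root-product formula $R=c^4N_P(\alpha_1)N_P(\alpha_2)$, $R_0=c^2B_{13}(\alpha_1)B_{13}(\alpha_2)$; this is the same computation in a slightly more explicit guise, so it is essentially the same approach.
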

 %
%
\begin{proof}
The proof of Lemma \ref{RR0} is the same as that of \cite[Lemma 2.1]{B15}.
Since  $B_{14}$ is of degree $2$ in $a_0$, we have
\begin{equation*}
q_3^2R={\rm Resultant} (B_{14},q_3 N_P(\alpha );a_0)={\rm Resultant} (B_{14},B_{24}B_{13}-B_{14}B_{23} ;a_0).
\end{equation*}
But $B_{24}=B_{13}-c_3B_{14}$ and $B_{13}$ is also of degree $2$ in $a_0$. We deduce that  
\begin{equation*}
q_3^2R={\rm Resultant} (B_{14},B_{13}^2;a_0)=R_0^2.
\end{equation*}
This ends the proof of   Lemma \ref{RR0}.
\end{proof}
 %
%
We see that the polynomial $q_3$ divides $R_0$, and so we can write 
\begin{equation}
R_0=qq_3
\label{eq:qDef}
\end{equation}
for some homogeneous polynomial $q=q(a_1,a_2,a_3)$. Moreover, since $R_0$ is the resultant of $B_{13}$ and $B_{14}$, there are two  polynomials $U$ and $V\in\Z [a_0,a_1,a_2,a_3]$ such that
\begin{equation}\label{UV}
UB_{13}+VB_{14}=qq_3.
\end{equation}
We are now ready to state the main result of this section. It is analogous to \cite[Lemma 6.2]{D15} or \cite[Lemma 3.3]{B15}.
 %
%
\begin{lmm}\label{kj}Suppose $a_0,a_1,a_2,a_3$ are such that $(B_{14}(a_0,a_1,a_2,a_3),q(a_1,a_2,a_3))=1$. Then
$(N_P (\alpha ), B_{14} (a_0,a_1,a_2,a_3))=1$
and for  $h\in\Z$ we have
\begin{equation*}
\e\Big ( \frac{-hk_{\alpha} }{N_P(\alpha )}\Big )=\e\Big ( \frac{-hU(a_0,a_1,a_2,a_3)\overline{B_{14}(a_0,a_1,a_2,a_3)} }{q(a_1,a_2,a_3)}+hR(a_0,a_1,a_2,a_3)\Big ),
\end{equation*}
where $U=U(a_0,a_1,a_2,a_3)$ is defined by \eqref{UV} and $R$ is given by
\begin{equation*}
R(a_0,a_1,a_2,a_3)=\frac{U}{qB_{14}}-\frac{B_{24}}{N_P(\alpha )B_{14}}.
\end{equation*}
\end{lmm}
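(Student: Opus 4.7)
The plan is two-fold: first establish $(N_P(\alpha),B_{14})=1$ from the hypothesis $(B_{14},q)=1$, and then verify the exponential identity by reducing it to a polynomial divisibility.

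For the coprimality, I would use Lemma~\ref{RR0} together with \eqref{eq:qDef} to deduce the polynomial identity $R=q^2$ in $a_1,a_2,a_3$ (obtained by cancelling $q_3^2\neq 0$ from $q_3^2R=R_0^2=q^2q_3^2$). Since $R$ is the resultant of $B_{14}$ and $N_P(\alpha)$ with respect to $a_0$, Bezout for resultants produces polynomials $A_1,A_2\in\Z[a_0,a_1,a_2,a_3]$ with $A_1B_{14}+A_2N_P(\alpha)=R$. Consequently any common prime divisor of $B_{14}(a_0,a_1,a_2,a_3)$ and $N_P(\alpha)$ must divide $R(a_1,a_2,a_3)=q(a_1,a_2,a_3)^2$, and hence $q(a_1,a_2,a_3)$, contradicting the hypothesis.

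With this coprimality Lemma~\ref{r} applies, giving $B_{14}k_\alpha\equiv B_{24}\pmod{N_P(\alpha)}$, i.e. $B_{14}k_\alpha-B_{24}=N_P(\alpha)t$ for some integer $t$. Writing also $\overline{B_{14}}B_{14}-1=qs$ for some integer $s$ (where $\overline{B_{14}}$ denotes any lift of the inverse modulo $q$ as in the statement), a short calculation collapses the difference of the two arguments in the claimed exponential identity to
\[
\frac{U\overline{B_{14}}}{q}-R-\frac{k_\alpha}{N_P(\alpha)}=\frac{U(\overline{B_{14}}B_{14}-1)}{qB_{14}}-\frac{B_{14}k_\alpha-B_{24}}{N_P(\alpha)B_{14}}=\frac{Us-t}{B_{14}}.
\]
It therefore suffices to show $B_{14}\mid Us-t$, which (after multiplying by $qN_P(\alpha)$, coprime to $B_{14}$) is equivalent to $UN_P(\alpha)\equiv qB_{24}\pmod{B_{14}}$.

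To obtain this congruence I would combine \eqref{UV} and \eqref{BBN} at the polynomial level: multiplying $UB_{13}+VB_{14}=qq_3$ by $B_{24}$ and substituting $B_{13}B_{24}=q_3N_P(\alpha)+B_{14}B_{23}$ from \eqref{BBN} produces the identity
\[
q_3(qB_{24}-UN_P(\alpha))=B_{14}(UB_{23}+VB_{24})
\]
in $\Z[a_0,a_1,a_2,a_3]$. The main obstacle is to cancel the factor $q_3$: since $q_3\in\Z[a_1,a_2,a_3]$ while $B_{14}$ has positive degree in $a_0$ with a leading coefficient not divisible by $q_3$, the polynomials $q_3$ and $B_{14}$ are coprime in the UFD $\Z[a_0,a_1,a_2,a_3]$. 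Hence $q_3\mid UB_{23}+VB_{24}$ as polynomials, so that $B_{14}\mid qB_{24}-UN_P(\alpha)$ in the polynomial ring, and evaluating at any integer tuple yields the desired congruence, completing the proof.
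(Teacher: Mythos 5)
Your proof is correct and follows essentially the same strategy as the paper's: establish $(N_P(\alpha),B_{14})=1$ via the resultant $R$, apply Lemma~\ref{r}, and obtain the key congruence $UN_P(\alpha)\equiv qB_{24}\Mod{B_{14}}$ by combining \eqref{UV} with \eqref{BBN} and cancelling the factor $q_3$, which is coprime to $B_{14}$ as a polynomial. The bookkeeping differs slightly --- the paper routes the algebra through $B_{13}$ using the cofactor relation \eqref{eq:CofactorRelation} and invokes the Bézout formula \eqref{Bezout} twice, whereas you derive the $B_{24}$-version of the congruence directly, collapse the exponential arguments in one common-denominator computation, and make explicit that $R=q^2$ rather than merely $q\mid R$ --- but the underlying argument is the same.
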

 %
%
\begin{proof}
To simplify notation, for the proof let $q,q_3,U,B_{14},B_{14},B_{23},B_{24},N_P(\alpha)$ denote the values of the polynomials evaluated at $a_0,a_1,a_2,a_3$. 

Since $q$ divides the resultant $R$ defined in \eqref{resultant}, if $q$ is coprime with $B_{14}$, we have $(N_P (\alpha), B_{14})=1$. By Lemma \ref{r},
\begin{equation*}
\e\Big (\frac{k_{\alpha}}{N_P(\alpha)}\Big )=\e\Big (\frac{B_{24}\overline{B_{14}}}{N_P(\alpha )}\Big ).
\end{equation*}
We use the Bézout relation  
\begin{equation}\label{Bezout}
\frac{\bar u}{v}+\frac{\bar v}{u}\equiv\frac{1}{uv}\Mod 1\quad {\rm for}\ (u,v)=1,
\end{equation}
and the fact that  $(N_P(\alpha ), B_{14})=1$.  This yields the formula
\begin{equation}\label{kj1}
\e\Big (\frac{k_\alpha}{N_P(\alpha)}\Big )=\e\Big (-\frac{B_{24}\overline{N_P(\alpha)}}{B_{14}}+\frac{B_{24}}{B_{14}N_P(\alpha)}\Big ).
\end{equation}
Combining \eqref{eq:CofactorRelation}, \eqref{BBN} and \eqref{UV}, we obtain
\begin{equation*} \begin{split}
UN_P(\alpha )q_3&=U[B_{13}(B_{13}-c_3B_{14})-B_{14}B_{23}]\\
& =U(B_{13}^2-B_{14}(B_{23}+c_3B_{13}))\\
&=B_{13}(q_3q-VB_{14})-UB_{14}(B_{23}+c_3B_{13})).\\
\end{split}
\end{equation*}
This rearranges to give
\begin{equation*}
(UN_P(\alpha) -qB_{13})q_3=B_{14}(-VB_{13}-U(B_{23}+c_3B_{13})).
\end{equation*}
Since  $q_3$ and $B_{14}$ are coprime, we  deduce that
\begin{equation}\label{Congruence}
UN_P(\alpha) -qB_{13}\equiv 0\Mod{B_{14}}.
\end{equation}
Since $B_{24}\equiv B_{13}\Mod {B_{14}}$, we obtain
\begin{equation*}
B_{24}\ov{N_P(\alpha)}\equiv B_{13}\ov{N_P(\alpha )} \Mod {B_{14}}\equiv U\bar q\Mod {B_{14}}.
\end{equation*}
We insert this  in \eqref{kj1} and apply  \eqref{Bezout}  one more time. This gives the desired result.
\end{proof}
 %
%

%
%
\subsection{Explicit computations of $B_{13},B_{14},U,V$.}
%
%

We have used  SAGE to explicitly compute the polynomials $q$, $B_{13}$, $B_{14}$, $U$ and $V$.  The cofactors $B_{13}$ and $B_{14}$ are of degree $2$ in $a_0$
\begin{align}
    B_{13}=& -a_2a_0^2 + \Bigl(a_1^2 + c_3a_1a_2 + (-c_3^2 + c_2)a_2^2 + (-2c_2)a_1a_3 \nonumber\\
    &\qquad + (c_3^3 - c_2c_3 + c_1)a_2a_3 + (-c_2c_3^2 + c_2^2 + c_1c_3 - c_0)a_3^2\Bigr)a_0 \nonumber\\
    &+ (-c_3)a_1^3 + c_3^2a_1^2a_2 + (-c_2c_3)a_1a_2^2 + (c_1c_3 - c_0)a_2^3 \nonumber\\ 
    &+ (-c_3^3 + 2c_2c_3)a_1^2a_3 + (c_2c_3^2 - 3c_1c_3 + 2c_0)a_1a_2a_3\nonumber\\
    &+ (-c_1c_3^2 + 2c_0c_3)a_2^2a_3 + (-c_2^2c_3 + 2c_1c_3^2 - 2c_0c_3)a_1a_3^2\nonumber \\ 
    &+ (c_1c_2c_3 - c_0c_3^2 - c_0c_2)a_2a_3^2 + (-c_1^2c_3 + c_0c_2c_3 + c_0c_1)a_3^3, \label{eq:B13}\\
    B_{14}=& -a_3a_0^2 + \Bigl(2a_1a_2 -c_3a_2^2 -c_3a_1a_3 + c_3^2a_2a_3 + (-c_2c_3 + 2c_1)a_3^2\Bigr)a_0\nonumber\\
    &- a_1^3 + c_3a_1^2a_2 -c_2a_1a_2^2 + c_1a_2^3 + (-c_3^2 + 2c_2)a_1^2a_3 + (c_2c_3 - 3c_1)a_1a_2a_3\nonumber \\
    & + (-c_1c_3 + c_0)a_2^2a_3 + (-c_2^2 + 2c_1c_3 - c_0)a_1a_3^2\nonumber\\ 
    & +(c_1c_2 - c_0c_3)a_2a_3^2 + (-c_1^2 + c_0c_2)a_3^3. \label{eq:B14}
    \end{align}
    The quantities $U$ and $V$ are of degree $1$ in $a_0$.
    In some step we will need the  explicit formula for the coefficient in $a_0$ in $U$ and in $V$
    \begin{align}
      U=&a_0\Bigl(-a_1^2a_3^2 + 2a_1a_2^2a_3  -2c_3a_1a_2a_3^2 + 2c_2a_1a_3^3 -c_3a_2^3a_3 +\nonumber\\
     & \qquad(2c_3^2 - c_2)a_2^2a_3^2 + (-c_3^3 + c_1)a_2a_3^3 + (c_2c_3^2 - c_2^2 - c_1c_3 + c_0)a_3^4\Bigr)\nonumber\\
    &+3a_1^3a_2a_3 -2c_3a_1^3a_3^2 -4a_1^2a_2^3 + 4c_3a_1^2a_2^2a_3 \nonumber\\
    &+ (c_3^2 - 6c_2)a_1^2a_2a_3^2 + (-c_3^3 + 3c_2c_3 + 2c_1)a_1^2a_3^3 \nonumber\\ 
    &+ 4c_3a_1a_2^4 + (-9c_3^2 + 3c_2)a_1a_2^3a_3 + (6c_3^3 + c_2c_3 - 3c_1)a_1a_2^2a_3^2 \nonumber\\
    & + (-c_3^4 - 5c_2c_3^2 + 3c_2^2 + 2c_1c_3 + c_0)a_1a_2a_3^3 + (c_2c_3^3 + c_1c_3^2 - 4c_1c_2 - c_0c_3)a_1a_3^4  -c_3^2a_2^5\nonumber\\ 
    & + (3c_3^3 - c_2c_3 - c_1)a_2^4a_3 + (-3c_3^4 + 5c_1c_3 - 2c_0)a_2^3a_3^2 \nonumber\\
    &+ (c_3^5 + 3c_2c_3^3 - 2c_2^2c_3 - 7c_1c_3^2 + c_1c_2 + 4c_0c_3)a_2^2a_3^3\nonumber \\ 
    &+ (-2c_2c_3^4 + c_2^2c_3^2 + 3c_1c_3^3 + 2c_1c_2c_3 - 2c_0c_3^2 - c_1^2 - 2c_0c_2)a_2a_3^4 \nonumber\\ 
    &+ (c_2^2c_3^3 - c_2^3c_3 - 3c_1c_2c_3^2 + 2c_1c_2^2 + c_1^2c_3 + 2c_0c_2c_3 - c_0c_1)a_3^5,\label{U}
    \end{align}
    \begin{equation}\label{V}
    \begin{split}
    V=&a_0\Bigl(a_1^2a_2a_3 -2a_1a_2^3 + 2c_3a_1a_2^2a_3 -2c_2a_1a_2a_3^2 + c_3a_2^4 \\
    &\qquad+ (-2c_3^2 + c_2)a_2^3a_3 + (c_3^3 - c_1)a_2^2a_3^2 + (-c_2c_3^2 + c_2^2 + c_1c_3 - c_0)a_2a_3^3\Bigr)\\
    &-a_1^4a_3 + a_1^3a_2^2 -2c_3a_1^3a_2a_3 + 4c_2a_1^3a_3^2 + 2c_3a_1^2a_2^3 + (-c_3^2 - 4c_2)a_1^2a_2^2a_3 \\
    &+ (-c_3^3 + 5c_2c_3)a_1^2a_2a_3^2 + (2c_2c_3^2 - 6c_2^2 - 2c_1c_3 + 2c_0)a_1^2a_3^3 + (-3c_3^2 + c_2)a_1a_2^4 \\ &+ (6c_3^3 - c_2c_3 - c_1)a_1a_2^3a_3 + (-3c_3^4 - 7c_2c_3^2 + 5c_2^2 + 6c_1c_3 - 5c_0)a_1a_2^2a_3^2 \\ 
    &+ (7c_2c_3^3 - 4c_2^2c_3 - 5c_1c_3^2 + 5c_0c_3)a_1a_2a_3^3 + (-4c_2^2c_3^2 + 4c_2^3 + 4c_1c_2c_3 - 4c_0c_2)a_1a_3^4 \\
    &+ (c_3^3 - c_2c_3 + c_1)a_2^5 + (-3c3^4 + 4c_2c_3^2 - c_2^2 - 3c_1c_3 + 2c_0)a_2^4a_3  \\ 
    &+(3c_3^5 - 3c_2c_3^3 + c_1c_3^2 + c_1c_2 - 2c_0c_3)a_2^3a_3^2 \\
    &+ (-c_3^6 - 2c_2c_3^4 + 5c_2^2c_3^2 + 3c_1c_3^3 - 2c_2^3 - 4c_1c_2c_3 - 2c_0c_3^2 + 4c_0c_2)a_2^2a_3^3 \\
    &+(2c_2c_3^5 - 3c_2^2c_3^3 - 2c_1c_3^4 + c_2^3c_3 + c_1c_2c_3^2 + 2c_0c_3^3 + c_1^2c_3 - 2c_0c_2c_3 - c_0c_1)a_2a_3^4 \\
    &+ (-c_2^2c_3^4 + 2c_2^3c_3^2 + 2c_1c_2c_3^3 - c_2^4 - 2c_1c_2^2c_3 - c_1^2c_3^2 - 2c_0c_2c_3^2 + 2c_0c_2^2 + 2c_0c_1c_3 - c_0^2)a_3^5.
    \end{split}
    \end{equation}
We don't write the expression for $q$ because it would take more than one page and we won't need to know its precise shape
during the proof.
Let $U=a_0U_1+U_0$, $V=a_0V_1+V_0$. Then $U_1$ satisfies:
\begin{equation}\label{defU1}\begin{split}
    U_1&=-a_1^2a_3^2 + 2a_1a_2^2a_3  -2c_3a_1a_2a_3^2 + 2c_2a_1a_3^3 -c_3a_2^3a_3 +\\ 
    & \qquad (2c_3^2 - c_2)a_2^2a_3^2 + (-c_3^3 + c_1)a_2a_3^3 + (c_2c_3^2 - c_2^2 - c_1c_3 + c_0)a_3^4\\
    &=a_3 \Bigl( -a_1^2a_3 + 2a_1a_2^2  -2c_3a_1a_2a_3 + 2c_2a_1a_3^2 -c_3a_2^3 +\\
     &\qquad  (2c_3^2 - c_2)a_2^2a_3 + (-c_3^3 + c_1)a_2a_3^2 + (c_2c_3^2 - c_2^2 - c_1c_3 + c_0)a_3^3\Bigr).
    \end{split}
\end{equation}
We observe that 
\begin{equation}\label{U1V1}
a_2U_1+a_3V_1=0.
\end{equation}
 %
%
\subsection{Factorisation of $q$}\label{factorisationq}
%
%
\begin{lmm}\label{dlBM}
Let $P\in\Z [X]$ be an irreducible monic quartic polynomial and $r_1,r_2,r_3,r_4$ its roots.
Let $R$ and $R_0$ be the two resultants introduced in \eqref{resultant}. Let $a(r):=a_0+a_1r+a_2r^2+a_3r^3$.
Then there exists $t_P\in\Q^*$ such that
\begin{equation*}
    R(a_1,a_2,a_3)=t_P\prod_{1\le i<j\le 4}(a(r_i)-a(r_j))^2.
\end{equation*}
Furthermore, the resultant $R_0$  is  divisible by 
\begin{equation*}
\prod_{1\le i<j\le 4}(a(r_i)-a(r_j)).
\end{equation*} 
\end{lmm}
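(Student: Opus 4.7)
The plan is to compute $R$ exactly as a product of four values, using a Vandermonde identity hidden in the adjugate of $M_\alpha$. I first extract this identity: from $\alpha^{-1}=\tfrac{1}{N_P(\alpha)}(B_{11}+B_{12}r_1+B_{13}r_1^2+B_{14}r_1^3)$ (reading off the first column of $M_\alpha^{-1}=\tfrac{1}{N_P(\alpha)}\operatorname{adj}(M_\alpha)$, as in the paragraph before \eqref{B2iB1j}) and the fact that the $B_{1k}$ lie in $\Z[a_0,a_1,a_2,a_3]$ and are therefore Galois-invariant, I would apply each embedding $\sigma_j:r_1\mapsto r_j$ and clear denominators. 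This yields the system
\[
B_{11}+B_{12}r_j+B_{13}r_j^2+B_{14}r_j^3 \;=\; \prod_{k\ne j} a(r_k)\qquad (j=1,\ldots,4),
\]
which is Vandermonde with denominator $\prod_{i<j}(r_j-r_i)$, so Cramer's rule expresses $B_{14}$ as a quotient of two explicit determinants.

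The next step is to specialize $a_0\mapsto a_0^{(i)}:=-a_1r_i-a_2r_i^2-a_3r_i^3$, which forces $a(r_i)=0$. At this value every $\prod_{k\ne j}a(r_k)$ with $j\ne i$ vanishes, so in the Cramer numerator only the $i$-th row contributes nontrivially; cofactor expansion along the modified column produces a $3\times 3$ Vandermonde on $\{r_k:k\ne i\}$. Using that $a(r_k)-a(r_i)=(r_k-r_i)\bigl[a_1+a_2(r_i+r_k)+a_3(r_i^2+r_ir_k+r_k^2)\bigr]$ is independent of $a_0$ (so $a(r_k)|_{a_0^{(i)}}=a(r_k)-a(r_i)$), I would cancel the $(r_k-r_i)$ factors against the matching part of the Vandermonde denominator to reach the clean formula
\[
B_{14}(a_0^{(i)},a_1,a_2,a_3)\;=\;-\frac{\prod_{k\ne i}\bigl(a(r_k)-a(r_i)\bigr)}{\prod_{k\ne i}(r_k-r_i)}.
\]

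To close the first claim I would invoke the product-of-values formula: since $N_P(\alpha)$ is monic of degree $4$ in $a_0$ with roots $a_0^{(i)}$ and $B_{14}$ has degree $2$ in $a_0$, one has $R=\prod_{i=1}^4 B_{14}(a_0^{(i)},a_1,a_2,a_3)$. Multiplying the four specialized values and collapsing each double product over ordered pairs $i\ne k$ into a square over unordered pairs $i<k$ (the resulting sign is $(-1)^{\binom{4}{2}}=1$ on both top and bottom), the denominator becomes $\Disc(P)$ while the numerator becomes $\prod_{i<j}(a(r_i)-a(r_j))^2$, so $R=\Disc(P)^{-1}\prod_{i<j}(a(r_i)-a(r_j))^2$ and one can take $t_P=1/\Disc(P)\in\Q^*$. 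For the second claim, Lemma~\ref{RR0} supplies $R_0^2=q_3^2 R=q_3^2 t_P\prod_{i<j}(a(r_i)-a(r_j))^2$; each $a(r_i)-a(r_j)$ is a nonzero linear form in $(a_1,a_2,a_3)$ (since $P$ is separable), hence irreducible in the UFD $\overline{\Q}[a_1,a_2,a_3]$, and comparing multiplicities on the two sides of this square identity forces every $a(r_i)-a(r_j)$ to divide $R_0$ at least once, giving the required divisibility. The only real obstacle is the Vandermonde/cofactor bookkeeping that delivers the specialized formula for $B_{14}$; once that identity is pinned down, both parts of the lemma follow by purely algebraic manipulation.
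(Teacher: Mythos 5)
Your argument is correct. Note, though, that the paper itself disposes of Lemma \ref{dlBM} by citation to \cite{B15} (Lemme 7.1), and only proves the companion statement Lemma \ref{tP} in detail; what you have written is essentially a self-contained proof of both at once, and it follows the same underlying computation as the paper's proof of Lemma \ref{tP}. Where the paper diagonalises the multiplication map $g_a$ in the Lagrange basis $L_1,\dots,L_4$ and reads off $B_{14}(d_i)$ as the $X^3$-coefficient of $\prod_{j\neq i}P_i(r_j)\,L_i(X)$, you instead conjugate the identity $N_P(\alpha)\alpha^{-1}=B_{11}+B_{12}r_1+B_{13}r_1^2+B_{14}r_1^3$ under the embeddings $r_1\mapsto r_j$ and solve the resulting Vandermonde system by Cramer; these are the same interpolation computation in different clothing, and both land on $B_{14}(d_i)=-\prod_{j\neq i}\frac{a(r_j)-a(r_i)}{r_j-r_i}$, whence $R=\prod_i B_{14}(d_i)=\Disc(P)^{-1}\prod_{i<j}(a(r_i)-a(r_j))^2$, recovering Lemma \ref{tP} as well.

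One small point of precision in your last step: knowing merely that each linear form $a(r_i)-a(r_j)$ divides $R_0$ "at least once" yields divisibility by the full product only if the six forms are pairwise non-proportional (which is true here: proportionality of the forms attached to $\{i,j\}$ and $\{k,\ell\}$ would force $r_i+r_j=r_k+r_\ell$ and $r_ir_j=r_kr_\ell$, hence $\{i,j\}=\{k,\ell\}$ since the roots are distinct). Alternatively, and more cleanly, read the square identity $R_0^2=q_3^2\,t_P\prod_{i<j}(a(r_i)-a(r_j))^2$ prime-by-prime in the UFD $\overline{\Q}[a_1,a_2,a_3]$: for every irreducible $\pi$ one gets $v_\pi(R_0)=v_\pi(q_3)+v_\pi\bigl(\prod_{i<j}(a(r_i)-a(r_j))\bigr)$, which gives the required divisibility directly, with no case analysis. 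With that clarification your proof is complete.
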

%
%
\begin{proof}
This is \cite[Lemme 7.1]{B15} in the special case of quartic polynomials.
\end{proof}
%
%
\begin{lmm}\label{tP}
The coefficient $t_P$ in Lemma \ref{dlBM}
is given by
\begin{equation*}
    t_P=\prod_{1\le i<j\le 4}\frac{1}{(r_i-r_j)^2}.
\end{equation*}
\end{lmm}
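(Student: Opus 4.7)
The plan is to compute $R$ explicitly via the standard product formula for the resultant and then compare with the form provided by Lemma \ref{dlBM}. As a polynomial in $a_0$, $N_P(\alpha)=\prod_{i=1}^4 a(r_i)$ is monic of degree $4$ with roots $\gamma_i:=-(a_1r_i+a_2r_i^2+a_3r_i^3)$ in the splitting field of $P$, while $B_{14}$ has degree $2$ (leading coefficient $-a_3$, from \eqref{eq:B14}). Applying $\mathrm{Res}(f,g)=(-1)^{mn}b^m\prod_j f(\beta_j)$ with $\beta_j$ the roots of $g$ yields
\begin{equation*}
R(a_1,a_2,a_3)=\prod_{i=1}^{4}B_{14}(\gamma_i,a_1,a_2,a_3).
\end{equation*}

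Next I would obtain a closed form for $B_{14}(\gamma_i,\cdot)$. The identity $\alpha\tilde\alpha=N_P(\alpha)$ from \eqref{cofactor}, where $\tilde\alpha=B_{11}+B_{12}r_1+B_{13}r_1^2+B_{14}r_1^3$, is really a polynomial identity modulo $P(X)$; substituting $X=r_j$ gives
\begin{equation*}
B_{11}+B_{12}r_j+B_{13}r_j^2+B_{14}r_j^3=\prod_{k\ne j}a(r_k)
\end{equation*}
as an identity of polynomials in $a_0,a_1,a_2,a_3$. Reading off the coefficient of $r^3$ via Lagrange interpolation in the $r_j$ then gives $B_{14}=\sum_{j=1}^4 (\prod_{k\ne j}a(r_k))/P'(r_j)$. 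Now setting $a_0=\gamma_i$ makes $a(r_i)=0$, killing all terms except $j=i$. Writing $a(r_k)|_{a_0=\gamma_i}=(r_k-r_i)\,b(r_i,r_k)$ with $b(r_i,r_k):=a_1+a_2(r_i+r_k)+a_3(r_i^2+r_ir_k+r_k^2)$ (symmetric in its arguments) and using $\prod_{k\ne i}(r_k-r_i)=-P'(r_i)$, one obtains
\begin{equation*}
B_{14}(\gamma_i,a_1,a_2,a_3)=-\prod_{k\ne i}b(r_i,r_k).
\end{equation*}

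Substituting back and grouping the ordered pairs $(i,k)$ with $i\ne k$ into symmetric unordered pairs (using $b(r_i,r_k)=b(r_k,r_i)$), the four minus signs multiply to $+1$ and I arrive at
\begin{equation*}
R(a_1,a_2,a_3)=\prod_{1\le i<k\le 4}b(r_i,r_k)^2.
\end{equation*}

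Finally, the elementary factorization $a(r_i)-a(r_j)=(r_i-r_j)\,b(r_i,r_j)$ (immediate by expansion) gives
\begin{equation*}
\prod_{i<j}(a(r_i)-a(r_j))^2=\Bigl(\prod_{i<j}(r_i-r_j)^2\Bigr)\prod_{i<j}b(r_i,r_j)^2,
\end{equation*}
and comparing with $R=t_P\prod_{i<j}(a(r_i)-a(r_j))^2$ yields the claimed $t_P=\prod_{i<j}(r_i-r_j)^{-2}$. There is no substantive obstacle here; the only mild pitfall is careful bookkeeping of signs (the $(-1)^{mn}$ in the resultant formula and the sign in $\prod_{k\ne i}(r_k-r_i)=-P'(r_i)$) and the pairing argument that turns the ordered product $\prod_{i\ne k}b(r_i,r_k)$ into the squared unordered product.
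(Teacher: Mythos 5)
Your proof is correct and takes essentially the same route as the paper: both use the adjugate identity $\alpha\tilde\alpha=N_P(\alpha)$ together with Lagrange interpolation to identify $B_{14}$ as the coefficient of $X^3$ in $\sum_j\bigl(\prod_{k\ne j}a(r_k)\bigr)L_j(X)$, and then evaluate the resultant as $\prod_i B_{14}(d_i)$. The paper phrases this via the change-of-basis matrix $T$ to the Lagrange basis, whereas you write out the interpolation formula directly; the intermediate output $B_{14}(d_i)=-\prod_{k\ne i}\frac{a(r_k)-a(r_i)}{r_k-r_i}$ and the final comparison with Lemma~\ref{dlBM} are identical.
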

 %
%
\begin{proof}
The proof follows the argument of La Bretèche and Mestre, but for completeness we repeat the main steps.

We note that $N_P(\alpha )$ is the determinant of the linear map $g_a : \Q [X]/P(X)\rightarrow \Q [X]/P(X)$ given by $g_a (H(X))=a(X)H(X)$ where $a(X)=a_0+a_1X+a_2X^2+a_3X^3$. Let $L_1 (X),\ldots , L_4 (X)$ be the Lagrange interpolation polynomials for the roots $r_1,\ldots ,r_4$ of $P$. Thus $L_i(x)=\prod_{j\ne i}(x-r_j)/(r_i-r_j)$ and in particular $L_i(r_j)=1$ if $i=j$, $0$ if $i\ne j$. Then  for all $i=1,2,3,4$,
 \begin{equation*}
     g_a (L_i (X))=a(X)L_i(X)=\sum_{j=1}^4
     a(r_j)L_j(X)L_i(X)=a(r_i)L_i(X),
 \end{equation*}
in $\Q [X]/(P)$, since $P(X)| L_i(X)L_j(X)$ if $i\not =j$ and $P(X)| (L_i^2(X)-L_i(X))$.
Thus the matrix of $g_a$ with respect to the basis $\{L_1(X),L_2(X), L_3 (X), L_4(X)\}$ is diagonal with coefficients $a(r_1), a(r_2), a(r_3), a(r_4)$ on the diagonal.

Let $T$ be the matrix of the polynomials 
$L_1 (X),L_2(X),L_3(X),L_4 (X)$ with respect to the standard basis $\{1,X,X^2,X^3\}$.
Then the matrix of $N_P (\alpha )g_a^{-1}$ with respect to the standard basis is $N_P (\alpha )M_\alpha ^{-1}$ with $M_{\alpha}^{-1}$ given by \eqref{cofactor}. Thus have
\begin{equation}\label{Bga}
    \begin{pmatrix}
    B_{11} & B_{21} & B_{31} & B_{41}\\
    B_{12} & B_{22} & B_{32} & B_{42}\\
    B_{13} & B_{23} & B_{33} & B_{34}\\
    B_{14} & B_{24} & B_{34} & B_{44}
    \end{pmatrix}
    = T \begin{pmatrix} \prod_{j\not =1} a(r_j) & 0 & 0 & 0\\ 0 &\prod_{j\not =2} a(r_j)
    & 0 & 0\\
    0 & 0 & \prod_{j\not =3} a(r_j) & 0\\
    0 & 0 & 0 & \prod_{j\not =4} a(r_j)
    \end{pmatrix}T^{-1}.
\end{equation}
The form $N_P(\alpha )=\prod_{i=1}^4 a(r_i)$ is quartic and monic in $a_0$. If we write $B_{14}=B_{14}(a_0)$ as 
an element of $\Z [a_1,a_2,a_3][a_0]$,
the resultant $R$ satisfies
\begin{equation*}
    R=\prod_{i=1}^4  B_{14}(d_i),
\end{equation*}
where $d_i = -a_1r_i-a_2r_i^2-a_3r_i^3$ for $i=1,2,3,4$
are the roots of $y\mapsto N_P (y+a_1r_1+a_2r_1^2+a_3r_1^3)$.
Let $P_i (X):= d_i + a_1X+a_2X^2+ a_3 X^3$. Formula \eqref{Bga} with $d_1$ in place of $a_0$,   gives
\begin{equation*}
 \begin{pmatrix}
    B_{11}(d_1) & B_{21}(d_1) & B_{31} (d_1) & B_{41} (d_1)\\
    B_{12}(d_1) & B_{22}(d_1) & B_{32}(d_1) & B_{42}(d_1)\\
    B_{13}(d_1) & B_{23}(d_1) & B_{33}(d_1) & B_{34}(d_1)\\
    B_{14}(d_1) & B_{24}(d_1) & B_{34}(d_1) & B_{44}(d_1)
    \end{pmatrix}   
    =T\begin{pmatrix}
    \prod_{\ell\not =1}P_1 (r_\ell) & 0 & 0 & 0\\
    0 & 0 & 0 & 0\\ 0 & 0 & 0 & 0\\
    0 & 0 & 0 & 0
    \end{pmatrix}T^{-1}.
\end{equation*}
We have similar formulas for the polynomials $P_2$, $P_3$, $P_4$. The first column
of the matrix  of the left corresponds to the coordinates in the standard basis of
the image of the constant polynomial $1$ by the map $N_P(\alpha )g_\alpha ^{-1}$. The  decomposition of the polynomial $1$ in the Lagrange basis is $1=L_1(X)+L_2(X)+L_3(X)+L_4(X)$.
The first column of the left matrix is then
\begin{equation*}
    \begin{pmatrix} B_{11}(d_1)\\
    B_{12} (d_1)\\ B_{13} (d_1)\\ B_{14}(d_1)
    \end{pmatrix} =T\begin{pmatrix}
    \prod_{\ell\not =1}P_1 (r_\ell) & 0 & 0 & 0\\
    0 & 0 & 0 & 0\\ 0 & 0 & 0 & 0\\
    0 & 0 & 0 & 0
    \end{pmatrix}
    \begin{pmatrix} 1 \\ 1\\ 1\\ 1
    \end{pmatrix}=T\begin{pmatrix}\prod_{j=2}^4 P_1 (r_j)\\ 0\\ 0\\ 0\end{pmatrix}.
\end{equation*}
In particular we deduce that $B_{14} (d_1)$
is the coefficient of $X^3$ in the polynomial
$\prod_{j=2}^4 P_1 (r_j) L_1 (X)$.
Since $L_1 (X)=\prod_{j=2}^4 (X-r_j)/(r_1-r_j)$, we get
\begin{equation*}
    B_{14} (d_1)=\frac{\prod_{j=2}^4 P_1 (r_j)}
    {\prod_{j=2}^4 (r_1-r_j)}=-
    \prod_{i=2}^4\frac{a(r_j)-a(r_1)}{r_j-r_1}.
\end{equation*}
In the same way we prove  for $i=2,3,4$ :
\begin{equation*}
    B_{14} (d_i)=\frac{\prod_{j\not =i} P_i (r_j)}
    {\prod_{j\not =i}^4 (r_i-r_j)}=-
    \prod_{j\not =i}\frac{a(r_j)-a(r_i)}{r_j-r_i}.
\end{equation*}
This completes the proof of Lemma \ref{tP}.
\end{proof}
 %
%
\begin{rmk} Lemma \ref{tP} is stated for quartic polynomials but is in fact also valid for irreducible polynomials
of degree $n\ge 2$. For these polynomials,
if the resultant is between $N_P(\alpha)$ and the cofactor $B_{1n}$, then
$t_P^{-1}=(-1)^n\prod_{1\le i<j\le n} (r_i-r_j)^2.$
For the resultant between $N_P(\alpha)$ and
$B_{1\ell}$ for some $1\le \ell\le n-1$,
we may also have for $t_P$ an explicit  but more complicate formula, involving
 the coefficients of $X^\ell$
in the Lagrange interpolation polynomials 
associated with the roots $r_1,\ldots ,r_n$ of $P(X)$.
\end{rmk}
%
%
\begin{lmm}\label{lmm:qFactorisation}
The polynomial $q(a_1,a_2,a_3)\in\Q[a_1,a_2,a_3]$ satisfies
\begin{equation*}
    q=\pm\prod_{1\le i<j\le 4}\frac{a(r_i)-a(r_j)}{r_i-r_j},
\end{equation*}
where $a(r):=a_0+a_1r+a_2r^2+a_3r^3$.
\end{lmm}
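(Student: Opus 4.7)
The plan is to combine the three preceding lemmas (Lemmas \ref{RR0}, \ref{dlBM} and \ref{tP}) with the definition $R_0 = qq_3$ from \eqref{eq:qDef}, and read off the claim by comparing the two expressions obtained for $R_0$.

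First, squaring the defining relation $R_0 = qq_3$ gives $R_0^2 = q^2 q_3^2$. On the other hand, Lemma \ref{RR0} asserts $q_3^2 R = R_0^2$, so
\begin{equation*}
q^2 q_3^2 = q_3^2 R.
\end{equation*}
Since $q_3$ is a nonzero polynomial in $\Q[a_1,a_2,a_3]$ (as is clear from \eqref{deff}), we may cancel it to obtain $q^2 = R$. Now substitute the explicit formula for $R$ given by Lemmas \ref{dlBM} and \ref{tP}:
\begin{equation*}
q^2 = R = \prod_{1\le i<j\le 4}\frac{(a(r_i)-a(r_j))^2}{(r_i-r_j)^2}.
\end{equation*}

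It remains to take a square root and check that the right-hand side is the square of a polynomial lying in $\Q[a_1,a_2,a_3]$. Set
\begin{equation*}
F(a_1,a_2,a_3) := \prod_{1\le i<j\le 4}\frac{a(r_i)-a(r_j)}{r_i-r_j}.
\end{equation*}
A direct inspection shows that $F$ is invariant under the action of the symmetric group $S_4$ on the roots $r_1,\dots,r_4$: each transposition changes the sign of both the Vandermonde $\prod_{i<j}(r_i-r_j)$ in the denominator and of $\prod_{i<j}(a(r_i)-a(r_j))$ in the numerator, so the ratio is unchanged. Consequently $F$ is a symmetric function of $r_1,\dots,r_4$ and, by the fundamental theorem of symmetric functions applied to the elementary symmetric polynomials of the roots (which up to sign are the coefficients of $P$), $F$ belongs to $\Q[a_1,a_2,a_3]$. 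Finally, a polynomial ring over $\Q$ is a UFD, so from $q^2 = F^2$ we deduce $q = \pm F$, which is exactly the claim.

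The only point requiring care is that the sign $\pm$ is genuine and may depend on $P$: the identity $q^2 = F^2$ determines $q$ up to sign, and the sign is fixed once one normalises $R_0$ and $q_3$ as in Lemmas \ref{lmm:q3indep} and \eqref{eq:qDef}, but for the statement of the lemma as written the $\pm$ suffices. I do not expect any serious obstacle in this argument; the only subtlety is verifying the symmetry of $F$, which is immediate once one tracks the sign changes carefully.
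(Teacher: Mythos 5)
Your proof is correct and follows exactly the route the paper intends: the paper's own proof is the one-line remark that the claim ``follows immediately from putting together Lemmas \ref{dlBM}, \ref{RR0} and \ref{tP},'' and you have simply made this precise by cancelling $q_3^2$ from $R_0^2=q^2q_3^2=q_3^2R$, substituting the explicit formula for $R$, observing (via the $S_4$-invariance of the ratio of the two Vandermonde-type products) that the candidate factorisation lies in $\Q[a_1,a_2,a_3]$, and extracting the square root in the integral domain $\Q[a_1,a_2,a_3]$.
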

%
%
\begin{proof}
This follows immediately from putting together Lemmas \ref{dlBM},  \ref{RR0} and \ref{tP}.
\end{proof}
%
%

\subsection{The factor $q_1$ as a an incomplete norm form}

%
%
A key point in the work of  \cite{D15} and \cite{B15} is that the form $q$ may be factored as a product of 3 quadratic forms whenever $P$ has a suitably small Galois group. In this  section, we prove that if $G=C_4$ or $D_4$ then  $q$ is a product of two forms $q=q_1q_2$, where $q_1$ has degree $4$, $q_2$ has degree $2$ and $q_1$ is related to a  norm form of a certain number field.
%
%
\begin{lmm}\label{lmm:C4D4}
Let $P(X)$ be a monic quartic with Galois group $C_4$ or $D_4$. Then there is an ordering of the roots $r_1,r_2,r_3,r_4$ of $P$ such that
\[
r_1r_2+r_3r_4\in\Q.
\]
\end{lmm}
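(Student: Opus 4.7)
\textbf{Proof plan for Lemma \ref{lmm:C4D4}.} The natural approach is via the \emph{resolvent cubic} of $P$, whose roots are
\[
s_1 := r_1r_2 + r_3r_4, \qquad s_2 := r_1r_3 + r_2r_4, \qquad s_3 := r_1r_4 + r_2r_3.
\]
These three quantities lie in the splitting field of $P$ and are the coefficients of interest: the claim is equivalent to saying that one of the $s_i$ is rational (for then, after relabeling the roots, we get $r_1r_2+r_3r_4 \in \Q$).

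The key observation is that the Klein four-group $V = \{e,(12)(34),(13)(24),(14)(23)\} \subset S_4$ fixes each $s_i$ individually. Hence the action of the Galois group $G$ on $\{s_1,s_2,s_3\}$ factors through the quotient $G/(G\cap V)$, which embeds into $S_4/V \simeq S_3$. By Galois theory, $s_i \in \Q$ if and only if $s_i$ is fixed by $G$.

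I will then handle the two Galois groups separately, viewing $G$ as a transitive subgroup of $S_4$.

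\emph{Case $G = D_4$.} Each of the three copies of $D_4$ in $S_4$ is precisely the stabilizer of one of the unordered partitions $\{\{i,j\},\{k,l\}\}$ of $\{1,2,3,4\}$ into two pairs, and thus fixes the corresponding $s_i$. Renaming the roots so that the partition stabilized by $G$ is $\{\{1,2\},\{3,4\}\}$ yields $r_1r_2 + r_3r_4 \in \Q$.

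\emph{Case $G = C_4$.} Write $C_4 = \langle \sigma \rangle$ for a $4$-cycle $\sigma = (abcd)$. The unique order-$2$ subgroup of $C_4$ is $\langle \sigma^2 \rangle = \langle (ac)(bd) \rangle$, which lies in $V$; so $C_4 \cap V$ has order $2$ and the image of $C_4$ in $S_3$ has order $2$. Thus one of the three $s_i$ is fixed by $G$. Concretely, $\sigma$ sends the partition $\{\{a,c\},\{b,d\}\}$ to itself, so the corresponding $s_i$ is $G$-invariant; relabeling $(r_1,r_2,r_3,r_4) := (r_a,r_c,r_b,r_d)$ puts this invariant into the form $r_1r_2 + r_3r_4$.

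I do not expect any serious obstacle here: everything reduces to the purely group-theoretic statement that both $C_4$ and $D_4$, when embedded as transitive subgroups of $S_4$, lie inside some pairing-stabilizer $D_4 \subset S_4$. The only thing to verify carefully is the count $|C_4 \cap V| = 2$ in the cyclic case, which is immediate from the explicit description of a $4$-cycle.
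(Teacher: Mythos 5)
Your proof is correct and follows essentially the same route as the paper: both reduce the statement to the fact that the resolvent cubic with roots $r_1r_2+r_3r_4$, $r_1r_3+r_2r_4$, $r_1r_4+r_2r_3$ acquires a rational root when $G$ is $C_4$ or $D_4$. The only difference is that you verify this group-theoretic fact directly (via the action of $G$ on the three pair-partitions through $S_4/V\simeq S_3$, with $D_4$ a partition stabilizer and a $4$-cycle preserving the partition $\{\{a,c\},\{b,d\}\}$), whereas the paper simply quotes it as a standard property of cubic resolvents with references to Conrad and Jensen--Ledet--Yui; your verification is accurate, so this is a harmless (indeed self-contained) variant of the same argument.
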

%
%
\begin{proof}
We recall  the notation $P(X)=X^4+c_3X^3+c_2X^2+c_1X+c_0$. The cubic resolvent of $P$ is 
\begin{align*}
R_3(X)&=(X-(r_1r_2+r_3r_4))(X-(r_1r_3+r_2r_4))(X-(r_1r_4+r_2r_3))\\
&=X^3-c_2X^2+(c_3c_1-4c_0)X-(c_3^2c_0+c_1^2-4c_2c_0).
\end{align*}
We therefore see that the claim of the lemma is equivalent to $R_3(X)$ having a root in $\Q$ when $P(X)$ has Galois group $G=C_4$ or $D_4$. This fact (often stated in the form that the splitting field of $R_3(X)$ is a degree 2 extension) is a standard fact about cubic resolvants; see for example the web page of K. Conrad 
\cite{C}  or the book of Jensen, Ledet and Yui \cite{JLY02} for some nice expositions on the Galois group of quartic polynomials.
\end{proof}
%
%
\begin{lmm}\label{lmm:q}
Let $P(X)$ have Galois group $C_4$ or $D_4$. Then the form $q\in \Q[a_1,a_2,a_3]$ has the factorisation
\[
q=q_1q_2
\]
where $q_1\in\Q[a_1,a_2,a_3]$ has degree 4 and $q_2\in\Q[a_1,a_2,a_3]$ has degree 2. These are explicitly given by 
\begin{equation}\label{defq1}
q_1=\frac{(a(r_1)-a(r_3))(a(r_1)-a(r_4))(a(r_2)-a(r_3))(a(r_2)-a(r_4))}{(r_1-r_3)(r_1-r_4)(r_2-r_3)(r_2-r_4)},
\end{equation}
and
\begin{equation}\label{defq2}
q_2=\frac{(a(r_1)-a(r_2))(a(r_3)-a(r_4))}{(r_1-r_2)(r_3-r_4)},
\end{equation}
where $a(X)=a_0+a_1X+a_2X^2+a_3X^3$ and $r_1,r_2,r_3,r_4$ are the roots of $P(X)$, ordered such that $r_1r_2+r_3r_4\in\Q$.
\end{lmm}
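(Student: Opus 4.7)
The starting point is Lemma \ref{lmm:qFactorisation}, which gives the factorisation
\[
q = \pm\prod_{1\le i<j\le 4}\frac{a(r_i)-a(r_j)}{r_i-r_j}
\]
over the splitting field of $P$. The product has six factors indexed by pairs $\{i,j\}\subset\{1,2,3,4\}$, and the plan is simply to split these six factors according to the partition $\{\{1,2\},\{3,4\}\}$ singled out by Lemma \ref{lmm:C4D4}: the two factors coming from the pairs $\{1,2\}$ and $\{3,4\}$ will give $q_2$, while the four factors coming from the four ``cross-pair'' indices $\{1,3\},\{1,4\},\{2,3\},\{2,4\}$ will give $q_1$. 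By direct inspection these two groups multiply back to the full product appearing in Lemma \ref{lmm:qFactorisation}, so $q=\pm q_1q_2$; any stray sign can be absorbed into $q_2$ (or $q_1$) to match the statement exactly.

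The main point to verify is that $q_1$ and $q_2$, as defined by \eqref{defq1} and \eqref{defq2}, actually lie in $\Q[a_1,a_2,a_3]$ (rather than merely in the splitting field of $P$). I would argue this via Galois invariance. The hypothesis $r_1r_2+r_3r_4\in\Q$ together with Lemma \ref{lmm:C4D4} guarantees that the Galois group $G\in\{C_4,D_4\}$ of $P$ is contained in the stabiliser in $S_4$ of the unordered partition $\{\{1,2\},\{3,4\}\}$. Thus every $\sigma\in G$ either preserves each block or swaps the two blocks. In either case, when applied to the expression
\[
\frac{(a(r_1)-a(r_2))(a(r_3)-a(r_4))}{(r_1-r_2)(r_3-r_4)},
\]
any sign changes produced in the numerator by transpositions within a block or by swapping the two blocks are matched exactly by the same sign changes in the denominator, so the ratio is fixed. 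The same argument (with the four cross-pair factors now permuted among themselves) shows $q_1$ is Galois-invariant. Hence $q_1,q_2\in\Q(r_1,r_2,r_3,r_4)^G=\Q$, so $q_1,q_2\in \Q[a_0,\ldots,a_3]$.

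Finally, independence of $a_0$ and the degree assertions are immediate: since $a(r_i)-a(r_j)=a_1(r_i-r_j)+a_2(r_i^2-r_j^2)+a_3(r_i^3-r_j^3)$, each factor in the numerators of $q_1$ and $q_2$ is linear in $(a_1,a_2,a_3)$ and contains no $a_0$; with two factors for $q_2$ and four for $q_1$ this gives homogeneous forms in $\Q[a_1,a_2,a_3]$ of degrees $2$ and $4$ respectively. I do not anticipate a serious obstacle in this lemma: the only subtlety worth underlining is the passage from ``Galois group permutes the three resolvent values'' to ``Galois group stabilises the chosen partition'', which is exactly what Lemma \ref{lmm:C4D4} was proved for, and once this is in hand the factorisation falls out of the universal formula in Lemma \ref{lmm:qFactorisation}.
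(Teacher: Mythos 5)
Your proposal is correct and takes essentially the same approach as the paper: both reduce to Lemma \ref{lmm:qFactorisation} and then check that $q_1$ and $q_2$ are Galois-invariant because $G$ respects the partition $\{\{1,2\},\{3,4\}\}$. The only difference is cosmetic—the paper verifies invariance on the explicit generators $\sigma=(r_1r_3r_2r_4)$ and $\tau=(r_3r_4)$, whereas you argue structurally that $G$ lies in the partition stabiliser (using that the resolvent roots are distinct since $\Disc P\ne 0$) and that each factor $(a(r_i)-a(r_j))/(r_i-r_j)$ is unchanged under $i\leftrightarrow j$.
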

%
%
\begin{proof}
We recall from Lemma \ref{lmm:qFactorisation} that the explicit formulae \eqref{defq1} and \eqref{defq2} give a factorisation $q=q_1q_2$ over $\overline{\Q}$. Thus we wish to show that in fact $q_1,q_2\in\Q[a_1,a_2,a_3]$, so that this is also a factorisation over $\Q$. A direct computation gives for all $1\le i<j\le 4$:
\begin{equation}\label{rij}
\frac{a(r_i)-a(r_j)}{r_i-r_j}=a_1+a_2(r_i+r_j)+a_3(r_i^2+r_ir_j+r_j^2).
\end{equation}
If $G=C_4$ then $G=\langle \sigma\rangle$ where $\sigma$ is the permutation $\sigma =(r_1r_3r_2r_4)$.
(This choice of $\sigma$ is motivated by the fact that we must have $\sigma (r_1r_2+r_3r_4)=
r_1r_2+r_3r_4$.)
Since $\sigma (q_1)=q_1$ and $\sigma (q_2)=q_2$, we have that $q_1$ and $q_2$ are fixed by all of $G=\{ Id, \sigma ,\sigma^2,\sigma ^3\}$, and so $q_1,q_2\in\Q[a_1,a_2,a_3]$, giving the result in this case.

If $G=D_4$, then $G=\langle \sigma,\tau\rangle$ with $\tau =(r_3r_4)$ and $\sigma =(r_1r_3r_2r_4)$. Since $\tau (q_1)=q_1$ and $\tau (q_2)=q_2$ we also observe that $q_1,q_2\in \Q[a_1,a_2,a_3]$ in this case. This completes the proof.
\end{proof}
%
%
The main result of this section is the following proposition. 
%
%
\begin{prpstn}\label{Nq1}
Let $P(X)\in\Z[X]$ be irreducible, monic, quartic with Galois group $C_4$ or $D_4$.
Let $r_1,r_2,r_3,r_4$ be the roots of $P$ ordered such that $r_1r_2+r_3r_4\in \Q$ and 
let $K:=\Q(r_1+r_3)$. Then the form $q_1$ defined in \eqref{defq1} satisfies
\begin{equation*}
    q_1 (a_1,a_2,a_3)=\pm N_{K/\Q}(a_1+a_2 (r_1+r_3)+a_3 (r_1^2+r_1r_3+r_3^2)).
\end{equation*}
\end{prpstn}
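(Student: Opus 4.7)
The plan is to identify the four Galois conjugates of
\[
\alpha := a_1 + a_2(r_1+r_3) + a_3(r_1^2+r_1 r_3+r_3^2)
\]
directly with the four factors appearing in the explicit formula \eqref{defq1} for $q_1$, so that $N_{K/\Q}(\alpha)$ coincides with $q_1$ up to sign.

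First I will verify that $\alpha \in K = \Q(r_1+r_3)$. Since $r_1^2+r_1 r_3+r_3^2 = (r_1+r_3)^2 - r_1 r_3$, the only nontrivial point is to show $r_1 r_3 \in K$, or equivalently that $r_1 r_3$ is fixed by $H := \mathrm{Stab}_G(r_1+r_3)$. For $G = C_4$ generated by $\sigma = (r_1 r_3 r_2 r_4)$, one checks that no nontrivial power of $\sigma$ fixes $r_1+r_3$, so $H$ is trivial and the claim is immediate. For $G = D_4$ with generators $\sigma$ and $\tau = (r_3 r_4)$ as in the proof of Lemma \ref{lmm:q}, an exhaustive check of the eight elements yields $H = \{e, \sigma\tau\}$; and a direct computation shows that $\sigma\tau$ swaps $r_1$ and $r_3$, hence fixes $r_1 r_3$.

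Next I will describe the four conjugates of $\alpha$. The $[K:\Q]=4$ embeddings of $K$ into $\overline{\Q}$ are parametrised by coset representatives of $G/H$. Each such $\sigma$ sends the pair $(r_1+r_3,\, r_1 r_3)$ to $(r_i+r_j,\, r_i r_j)$ for some unordered pair $\{r_i,r_j\}$ of roots, and hence sends $\alpha$ to $a_1 + a_2(r_i+r_j) + a_3(r_i^2+r_i r_j+r_j^2)$, which by identity \eqref{rij} equals $(a(r_i)-a(r_j))/(r_i-r_j)$. The key combinatorial point is that the orbit of $\{r_1,r_3\}$ under $G$ is exactly the four pairs $\{\{r_1,r_3\}, \{r_1,r_4\}, \{r_2,r_3\}, \{r_2,r_4\}\}$. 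This follows because $r_1 r_2 + r_3 r_4 \in \Q$ (Lemma \ref{lmm:C4D4}) forces $G$ to preserve the partition $\{\{r_1,r_2\},\{r_3,r_4\}\}$, which rules out $\{r_1,r_2\}$ and $\{r_3,r_4\}$ as images, leaving exactly these four pairs; one then checks explicitly in both $C_4$ and $D_4$ that all four do appear.

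Multiplying these conjugates gives
\[
N_{K/\Q}(\alpha) = \prod_{\{i,j\}\in\{\{1,3\},\{1,4\},\{2,3\},\{2,4\}\}} \frac{a(r_i)-a(r_j)}{r_i-r_j},
\]
which is exactly $q_1(a_1,a_2,a_3)$ as given by \eqref{defq1}, up to an overall sign coming from reordering indices in the denominator (absorbed into the $\pm$ of the statement). The main technical obstacle is the step of pinning down $H$ and checking it fixes $r_1 r_3$ in the $D_4$ case; once this algebraic bookkeeping is done, the proposition reduces to the symmetric-function identity \eqref{rij} and the definition of the norm as a product over embeddings.
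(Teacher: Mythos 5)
Your argument is sound in outline and is a cleaner, unified variant of the paper's case-by-case proof. For $C_4$ your argument is essentially the paper's (conjugates under powers of $\sigma$), but for $D_4$ you replace the paper's use of norm transitivity in the tower $L/K_0/\Q$ (which only yields $q_1^2 = N_{K_0/\Q}(\alpha)^2$ and is why the $\pm$ appears) with a direct identification of the four embeddings of $K$, parametrised by cosets of $H$, with the four pairs in the orbit of $\{r_1,r_3\}$. That is a bit more direct.

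However, the phrases ``one checks that no nontrivial power of $\sigma$ fixes $r_1+r_3$'' and ``an exhaustive check of the eight elements yields $H=\{e,\sigma\tau\}$'' hide the one nontrivial algebraic input of the whole proposition. The elements $\sigma$, $\sigma^3$ and their twists by $\tau$ are ruled out trivially by distinctness of the roots, but $\sigma^2=(r_1r_2)(r_3r_4)$ is not: excluding $\sigma^2$ from $H$ amounts to showing $r_1+r_3\ne r_2+r_4$, and this does \emph{not} follow from distinctness of roots alone. The paper supplies the missing argument: since $c_3=-(r_1+r_3)-\sigma^2(r_1+r_3)$, if $\sigma^2$ fixed $r_1+r_3$ then $r_1+r_3\in\Q$; but then $\sigma(r_1+r_3)=r_1+r_3$, i.e.\ $r_2+r_3=r_1+r_3$, forces $r_1=r_2$, contradicting irreducibility. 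Without this step you have not established $[K:\Q]=4$, so your product of four embeddings is not yet justified. (A minor remark: the sign in your last display cannot actually come from ``reordering indices'' --- each factor $(a(r_i)-a(r_j))/(r_i-r_j)$ is symmetric in $i$ and $j$, so your route in fact proves the identity with a $+$ sign; the $\pm$ in the paper's statement is only an artefact of their $D_4$ argument passing through $q_1^2$.)
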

%
%
\begin{proof}
We consider the cases when $G=C_4$ and $G=D_4$ separately.\\
\textbf{Case 1: $G=C_4$}. Let $G=\langle \sigma\rangle$ with $\sigma=(r_1r_3r_2r_4)$ and $r_1r_2+r_3r_4\in\Q$. We see that 

\begin{equation*}
q_1=\prod_{i=0}^3\sigma^i\Big ( \frac{a(r_1)-a(r_3)}{r_1-r_3}\Big )
=N_{\Q (r_1)/\Q}(a_1+a_2 (r_1+r_3)+a_3 (r_1^2+r_1r_3+r_3^2)).
\end{equation*}
To finish the proof, it remains to prove that $\Q(r_1+r_3)=\Q (r_1)$ is the splitting field of $P$. Since it is obviously contained in the splitting field, we just need to verify the field is not fixed by $\sigma^2$. But $c_3=-(r_1+r_2+r_3+r_4)=-(r_1+r_3)-\sigma^2(r_1+r_3)$ so if $\Q(r_1+r_3)$ is fixed by $\sigma^2$ then $\Q(r_1+r_3)=\Q$. But in this case $r_1+r_3=\sigma (r_1+r_3)=r_3+r_2$, so the roots would not be distinct, which contradicts our assumption. Thus $\Q(r_1+r_3)=\Q(r_1)$ as desired.

\medskip

\textbf{Case 2: $G=D_4$}. Let $G=\langle \sigma, \tau\rangle$ with $\sigma$ as above and $\tau=(r_3r_4)$. We work with the permutation $\sigma\tau=(r_1r_3)(r_2r_4)$. Let $L$ be the splitting field of  $P(X)$ and $K_0=\{ x\in L : \sigma\tau (x)=x\}$.
Then  $L/K_0$ is a Galois extension of degree  $2$ and $[K_0:\Q]=4$.
We observe that $r_1+r_3, \frac{a(r_1)-a(r_3)}{r_1-r_3}\in K_0$.

Now,  by looking the orbit of $\{ 1,3\}$ under the subgroup of $S_4$ generated by $\{ (1324), (34)\}$, 
we see that 
\begin{equation*}
N_{L/\Q}\Big ( \frac{a(r_1)-a(r_3)}{r_1-r_3}\Big )=q_1^2
\end{equation*}
and
\begin{equation*}
N_{L/K_0}\Big ( \frac{a(r_1)-a(r_3)}{r_1-r_3}\Big )=\Big ( \frac{a(r_1)-a(r_3)}{r_1-r_3}\Big )^2,
\end{equation*}
since $\frac{a(r_1)-a(r_3)}{r_1-r_3}\in K_0$.
By the transitive property of the   norms,
\begin{equation*}
N_{L/\Q}\Big ( \frac{a(r_1)-a(r_3)}{r_1-r_3}\Big )=N_{K_0/\Q}\Big (N_{L/K_0} \Big (\frac{a(r_1)-a(r_3)}{r_1-r_3}\Big )\Big )=N_{K_0/\Q}\Big (
\frac{a(r_1)-a(r_3)}{r_1-r_3}\Big )^2.
\end{equation*}
We deduce  that
$q_1=\pm N_{K_0/\Q}\Big (\frac{a(r_1)-a(r_3)}{r_1-r_3}\Big )$.

As in the case (i), to finish the proof it remains to check that $\Q(r_1+r_3)=K_0$. We have already seen that $\Q (r_1+r_3)\subset K_0$, and so it suffices to show  $[\Q(r_1+r_3):\Q]=4$. This follows from an identical argument to that of case 1 because 
 the intermediate extension between $K_0$ and $\Q$ is the subfield of $K_0$ fixed  by $\sigma ^2 = (r_1 r_2)(r_3r_4)$.
 \end{proof}
%
%

\medskip

%
%
We will   apply Theorem \ref{DistriNorm} with $K=\Q (r_1+r_3)$ and 
$\nu_1=1$, $\nu_2=r_1+r_3$, $\nu_3=r_1^2+r_3^2+r_1r_3$.
In the next lemma, we verify that these $3$ vectors $\nu_1$, $\nu_2$, $\nu_3$ are linearly independent over $\Q$ (even though the situation would be simpler if there was a linear dependence).
%
%
\begin{lmm}\label{independance}
With the previous notation, $1,r_1+r_3, r_1^2+r_3^2+r_1r_3$ are linearly independent over $\Q$.
 \end{lmm}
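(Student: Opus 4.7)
The plan is to leverage the fact, already established in the proof of Proposition \ref{Nq1}, that $[\Q(r_1+r_3):\Q]=4$, so that $\{1,\nu_2,\nu_2^2,\nu_2^3\}$ is a $\Q$-basis of $K$. The key reduction is the identity
\[
\nu_3 = r_1^2+r_3^2+r_1r_3 = (r_1+r_3)^2 - r_1r_3 = \nu_2^2 - r_1r_3,
\]
so a linear dependence $\alpha+\beta\nu_2+\gamma\nu_3=0$ with $\gamma\ne 0$ is equivalent to $r_1r_3\in\Q+\Q\nu_2$, while $\gamma=0$ forces $\nu_2\in\Q$, contradicting $[K:\Q]=4$. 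Hence it suffices to show $r_1r_3\notin\Q+\Q(r_1+r_3)$.

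To derive this, I would suppose for contradiction that $r_1r_3=a+b(r_1+r_3)$ for some $a,b\in\Q$, and apply a carefully chosen Galois automorphism. In the $C_4$ case with $\sigma=(r_1r_3r_2r_4)$, applying $\sigma$ gives $r_3r_2=a+b(r_3+r_2)$; subtracting the two equations yields $(r_1-r_2)r_3=b(r_1-r_2)$, so $r_3=b\in\Q$, contradicting irreducibility of $P$. In the $D_4$ case it is slightly cleaner to use $\tau=(r_3r_4)$: applying $\tau$ gives $r_1r_4=a+b(r_1+r_4)$, and subtracting yields $r_1(r_3-r_4)=b(r_3-r_4)$, so $r_1=b\in\Q$, again contradicting irreducibility of $P$.

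Combining these two cases with the trivial case $\gamma=0$ completes the proof. There is no real obstacle here: once the problem is reduced to showing $r_1r_3\notin\Q+\Q(r_1+r_3)$, the argument is a one-line Galois-theoretic computation, and the only small subtlety is choosing the right group element in each of the $C_4$ and $D_4$ cases so that the subtraction isolates a single root.
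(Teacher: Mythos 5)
Your proof is correct, and it takes a genuinely different (though equally short) route from the paper's. The paper assumes a dependence $r_1^2+r_3^2+r_1r_3=u+v(r_1+r_3)$, applies $\sigma^2=(r_1r_2)(r_3r_4)$ — an element common to both $C_4$ and $D_4$, so no case split is needed — and then \emph{adds} the resulting equation to the original; since $\sum r_i$ and $\sum r_i^2$ are rational, this forces $r_1r_3+r_2r_4\in\Q$, contradicting the fact that the cubic resolvent has a unique rational root (namely $r_1r_2+r_3r_4$). You instead use the identity $r_1^2+r_3^2+r_1r_3=(r_1+r_3)^2-r_1r_3$ to reduce to showing $r_1r_3\notin\Q+\Q(r_1+r_3)$, apply a single automorphism, and \emph{subtract}; the subtraction lets you divide off a nonzero difference of roots and conclude a single root $r_i$ lies in $\Q$, contradicting irreducibility of $P$. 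Your version is arguably more self-contained, since it avoids invoking the resolvent-cubic fact. One small remark: the case split is unnecessary — since $\sigma=(r_1r_3r_2r_4)$ lies in $D_4=\langle\sigma,\tau\rangle$ as well as in $C_4$, your $C_4$ computation with $\sigma$ already handles both groups uniformly, and the separate $\tau$-argument, while perfectly valid, is not needed.
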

 %
%
 \begin{proof}
 In the proof of Proposition \ref{Nq1}, we have seen that   $r_1+r_3\not\in\Q$, and so certainly $1$ and $r_1+r_3$ are linearly independent.  Suppose that there exists  $u,v\in\Q$ such that 
$r_1^2+r_3^2+r_1r_3=u+v(r_1+r_3)$. If we apply $\sigma^2=(r_1r_2)(r_3r_3)$ to this expression, we find $r_2^2+r_4^2+r_2r_4=u+v(r_2+r_4)$.
Summing this two equations gives 
\begin{equation*}
\sum_{i=1}^4r_i^2+r_1r_3+r_2r_4=2u+v(r_1+r_2+r_3+r_4).
\end{equation*}
This contradicts  the fact that   $r_1r_3+r_2r_4\not\in\Q$ (since $\sum_i r_i,\sum_i r_i^2\in\Q$).
\end{proof}
%
%

\subsection{On the solutions of some congruence equations with $B_{14}$ and $q$}

%
%

In this section we compute the number of solutions of various equations involving the factors $q_1,q_2$ and the cofactors $B_{13}, B_{14}$. These preliminary lemmas will be applied in several places in the proof of Theorem \ref{cP}. 

Some parts of this section are similar to \cite[Lemma 3.9]{B15}  or \cite[Section 13]{D15}, but both of these previous approaches relied on the condition $G=(\Z /2\Z)^2$ which we do not assume, and so we require a slightly different approach.

Let $\delta_P$ be the discriminant of the splitting field of $P$.

%
%
\begin{lmm}\label{pgcdq1q2}
Suppose that $(p,a_3\delta_P\Disc{P})=1$ and $a_2\in \Z$. 
Let $Q_p (a_2,a_3)$ denote the number of integers $a_1$  with $0\le a_1<p$
such that
\begin{equation}\label{pq1q2}
q_1 (a_1,a_2,a_3)\equiv q_2 (a_1,a_2,a_3)\equiv 0\Mod p.
\end{equation}
Then 
$$\label{qq12} Q_p(a_2,a_3)=\begin{cases}
1, & 
\text{if} \ P((a_2-c_3a_3)\ov{a_3})\equiv 0\Mod p;\\

0, & \text{otherwise}.\\
\end{cases}
$$
\end{lmm}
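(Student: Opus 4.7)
The plan is to exploit the explicit factorisations of $q_1$ and $q_2$ from Lemma \ref{lmm:q}, together with \eqref{rij}. Over the splitting field of $P$, each linear factor (in $a_1$) takes the form $a_1 + a_2 s_{ij} + a_3 t_{ij}$ where $s_{ij} := r_i+r_j$ and $t_{ij} := r_i^2 + r_ir_j + r_j^2$; the factorisation of $q_2$ runs over the ``$q_2$-pairs'' $\{i,j\}\in\{\{1,2\},\{3,4\}\}$, and that of $q_1$ over the ``$q_1$-pairs'' $\{i',j'\}\in\{\{1,3\},\{1,4\},\{2,3\},\{2,4\}\}$. The coprimality assumption $(p,\delta_P)=1$ ensures $p$ is unramified in the splitting field, so I can fix a prime $\mathfrak p$ above $p$ and reduce; $(p,\Disc P)=1$ guarantees the four roots stay distinct modulo $\mathfrak p$.

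Working mod $\mathfrak p$, the condition $q_2\equiv 0$ forces $a_1\equiv -(a_2 s_{ij}+a_3 t_{ij})$ for some $q_2$-pair, and $q_1\equiv 0$ similarly for some $q_1$-pair. Both vanish simultaneously iff
\[
a_2(s_{ij}-s_{i'j'}) + a_3(t_{ij}-t_{i'j'}) \equiv 0\pmod{\mathfrak p}
\]
for some choice of such pairs. The key calculation is that every $q_2$-pair shares exactly one element with every $q_1$-pair: writing the common element as $r_k$ and the remaining ones as $r_l,r_m$, a direct expansion gives
\[
\frac{t_{kl}-t_{km}}{s_{kl}-s_{km}} = r_k+r_l+r_m,
\]
so (since $r_l\ne r_m$ mod $\mathfrak p$) the condition collapses to $a_2 + a_3(r_k+r_l+r_m)\equiv 0\pmod{\mathfrak p}$. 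Using $c_3=-(r_1+r_2+r_3+r_4)$, this is equivalent to $(a_2-c_3a_3)\overline{a_3}\equiv r_n\pmod{\mathfrak p}$, where $r_n$ is the unique root absent from $\{r_k,r_l,r_m\}$.

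Running through the $8$ combinations of (a $q_2$-pair, a $q_1$-pair), the ``missing'' root $r_n$ attains each of $r_1,r_2,r_3,r_4$ (twice), so some pairing works iff $(a_2-c_3a_3)\overline{a_3}$ reduces to a root of $P$ in $\F_p$, i.e.\ iff $P((a_2-c_3a_3)\overline{a_3})\equiv 0\pmod p$. This gives the stated dichotomy, and it remains to check that in the positive case $a_1$ is uniquely determined mod $p$. For this, note that two different roots $r_n\ne r_{n'}$ both congruent to $(a_2-c_3a_3)\overline{a_3}$ mod $\mathfrak p$ would violate $(p,\Disc P)=1$, so at most one $r_n$ contributes. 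Substituting $a_2\equiv -a_3(r_k+r_l+r_m)$ into $a_1\equiv -(a_2 s_{kl}+a_3 t_{kl})$ and using the symmetric-function identity $r_kr_l+r_kr_m+r_lr_m = c_2+c_3r_n+r_n^2$ collapses $a_1$ to
\[
a_1 \equiv a_3\bigl(c_2+c_3 r_n+r_n^2\bigr)\pmod{p},
\]
which is manifestly an element of $\F_p$ (independent of the particular $(k,l,m)$ ordering). Hence $Q_p(a_2,a_3)=1$ in this case.

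The main obstacle is bookkeeping: all intermediate computations take place in $\bar{\F}_p$ and involve individual roots, while the final count must be an integer mod $p$. The identities ultimately used are symmetric in $r_k,r_l,r_m$, so the dependence on the lift drops out; once one notices the ``shared element'' structure of $q_1$- vs.\ $q_2$-pairs, the computation becomes essentially a single application of the identity $t_{kl}-t_{km}=(s_{kl}-s_{km})(r_k+r_l+r_m)$.
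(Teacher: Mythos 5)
Your proof is correct and follows essentially the same route as the paper's: pass to the splitting field, observe that simultaneous vanishing of $q_1$ and $q_2$ forces two linear factors (one from each) to vanish, eliminate $a_1$, and use the identity $t_{kl}-t_{km}=(s_{kl}-s_{km})(r_k+r_l+r_m)$ together with $c_3=-(r_1+r_2+r_3+r_4)$ to reduce to the condition $(a_2-c_3a_3)\ov{a_3}\equiv r_n \pmod{\pf}$. The paper iterates the argument over all primes $\gP_m$ above $p$ rather than fixing one $\pf$ (equivalent here, since $q_1,q_2$ are integer-valued), and verifies well-definedness of $a_1$ by checking two particular linear factors agree $\Mod p$, whereas you give the cleaner explicit formula $a_1\equiv a_3(c_2+c_3 r_n+r_n^2)$ via the symmetric-function identity — a nice touch, but not a genuinely different method.
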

%
%
\begin{proof}
Let $L$ be the splitting field of $P$ and $\Oc_L$ its ring of integers. Since $(p,\delta_P)=1$, $p$ is not ramified in $\Oc_L$ and so its decomposition into prime ideals is 
$p\cO_L=\prod_{i=1}^s \gP_i$ with $N_L (\gP_i)=p^t$ for some integers $s,t$ with $st=[L:\Q ]$. Formulas \eqref{defq1}, \eqref{defq2}, \eqref{rij} give us the factorisation of the polynomials $q_1$ and $q_2$ over $\Oc_L$. The condition $q_1(a_1,a_2,a_3)\equiv q_2(a_1,a_2,a_3)\equiv 0\Mod{p}$ is equivalent to one of the factors of $q_1$ and one of the factors of $q_2$ vanishing $\Mod{\gP_m}$ for each $1\le m\le s$. 

First we suppose that \eqref{pq1q2} has a solution. Thus for all $1\le m\le s$, there exists $(i,j)\in\{ (1,3), (1,4), (2,3), (2,4)\}$ and $(k,\ell )\in\{ (1,2), (3,4)\}$
such that
\begin{equation*}\left\{
\begin{matrix}
a_1+a_2(r_i+r_j)+a_3(r_i^2+r_ir_j+r_j^2)&\equiv 0\Mod{\gP_m},\\ 
a_1+a_2(r_k+r_\ell )+a_3(r_k^2+r_kr_\ell+r_\ell^2)&\equiv 0\Mod{\gP_m}.\\
\end{matrix}
\right.
\end{equation*}
Eliminating $a_1$,  we find
$$a_2 (r_i+r_j-r_k-r_\ell)\equiv a_3 (r_k^2+r_kr_\ell +r_\ell ^2-r_i^2-r_ir_j-r_j^2)\Mod {{\gP_m}}.$$
For notational simplicity we concentrate on the case $i=k=1$, $j=3$, $\ell =2$; the other cases are entirely analogous (noting that $\{i,j\}\cap\{k,\ell\}\ne \emptyset$).
We obtain
$$(r_3-r_2)a_2\equiv  a_3(r_2-r_3)(r_1+r_2+r_3)\Mod{\gP_m}.$$
Since $p\nmid\Disc (P)$ and $(r_3-r_2)|\Disc(P)$, we see that $r_3-r_2\not\equiv 0\Mod{\gP_m}$, and so (recalling $c_3=-r_1-r_2-r_3-r_4\in\Z$) we have $a_2\equiv  a_3 (c_3+r_4)\Mod{\gP_m}$. This implies that $r_4\equiv (a_2-a_3c_3)\ov{a_3}\Mod {\gP_m}$
and so
\[
P((a_2-a_3c_3)\ov{a_3})\equiv 0\Mod{\gP_m}.
\]
Since this argument is valid for all $m$, we find that $P((a_2-a_3c_3)\ov{a_3})\equiv 0\Mod{p}$. Thus if $P((a_2-c_3a_3)\ov{a_3})\not\equiv 0\Mod p$ then $Q_p(a_2,a_3)=0$.

Now we suppose that $P((a_2-c_3a_3)\ov{a_3})\equiv 0\Mod{p}$. Then there exists $j\in\{1,2,3,4\}$ such that $ r_j\equiv (a_2-a_3c_3)\ov{a_3}\Mod p$.
We may suppose that $j=4$; the other cases are analogous. We see that this implies that $a_2\equiv a_3(-r_1-r_2-r_3)\Mod{p}$ and that $r_4\in \Z+p\Oc_L$. Moreover,
we check that 
\begin{align*}
a_2 (r_1+r_3)+a_3 (r_1^2+r_1r_3+r_3^2)&=a_3(-c_2-c_3r_4-r_4^2)\Mod{p},\\
 a_2(r_1+r_2)+a_3(r_1^2+r_1r_2+r_2^2) &=a_2 (r_1+r_3)+a_3(r_1^2+r_1r_3+r_3^2)\Mod{p}.
 \end{align*}
Thus the system \eqref{pq1q2} admits the solution $a_1=-(a_2(r_1+r_3)+a_3(r_1^2+r_1r_3+r_3^2))\Mod{p}$, noting this is in $\Z+p\Oc_L$. Thus $Q_p(a_2,a_3)\ge 1$.

Moreover, there are no other solutions modulo $p$, because the previous computations showed that for any $\{ i,j,k,\ell\}=\{ 1,2,3,4\}$, if we have
\[
    \left\{ \begin{matrix}
    a_1 +a_2(r_i+r_j)+a_3(r_i^2+r_ir_j+r_j^2)=0\Mod{\gP_m},\\
    a_1 +a_2(r_i+r_k)+a_3(r_i^2+r_ir_k+r_k^2)=0\Mod{\gP_m},
    \end{matrix}\right.
    \]
   then we must have $(a_2-c_3a_3)\ov{a_3}=r_\ell\Mod{\gP_m}$. But the roots $r_1,r_2,r_3,r_4$ are distinct modulo $p$ when $(p,\Disc P)=1$, and so we must have $\ell=4$. Thus the only solution is $a_1\equiv -a_2(r_i+r_j)-a_3(r_i^2+r_ir_j+r_j^2)\Mod{p}$ (noting that these are the same for all choices of $\{i,j,k\}=\{1,2,3\}$). Thus $Q_p(a_2,a_3)=1$ when $P((a_2-c_3a_3)\overline{a_3})\equiv 0\Mod{p}$.
\end{proof}
%
%
Recall that $B_{14},B_{13}$ are cubic forms in $a_0,a_1,a_2,a_3$ given explicitly by \eqref{eq:B13} and \eqref{eq:B14}. For later estimates, we need to understand the number of solutions in $a_0$ of the equations $B_{14}\equiv 0\Mod p$ or $B_{13}\equiv 0\Mod p$. Since $B_{14}$ has degree $2$ in $a_0$, we can get an explicit formula for its roots in $\ov{\F_p}$ with the discriminant.
%
%
\begin{lmm}\label{lmm:Delta14}
Let $\Delta_{14}\in\Z[a_1,a_2,a_3]$ be the discriminant of $B_{14}$
viewed as a polynomial in $a_0$. Then
\begin{equation}\label{Delta14}
        \Delta_{14}=-q_3h,
\end{equation}
where $h$ is given by
\begin{align*}
h(a_1,a_2,a_3)&=-4a_1^2+4c_3a_1a_2+(-3c_3^2+4c_2)a_1a_3-c_3a_2^2+(c_3^3-4c_1)a_2a_3\\
&\qquad\qquad
        +(-c_2c_3^2+4c_1c_3-4c_0)a_3^2\\
        &=(r_1+r_2-r_3-r_4)^2 q_3(a_1,a_2,a_3)-q_2 (a_1,a_2,a_3).
\end{align*}
\end{lmm}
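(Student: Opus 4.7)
The plan is to prove both identities as concrete polynomial identities in $a_1, a_2, a_3$ with coefficients in $\Z[c_0, c_1, c_2, c_3]$. First I would read off the quadratic structure of $B_{14}$ in $a_0$ from \eqref{eq:B14}: writing
\[
B_{14}(a_0) = -a_3\, a_0^2 + \beta(a_1,a_2,a_3)\, a_0 + \gamma(a_1,a_2,a_3),
\]
where $\beta$ (degree $2$) and $\gamma$ (degree $3$) are the explicit polynomials visible in \eqref{eq:B14}, the discriminant with respect to $a_0$ is
\[
\Delta_{14} = \beta^2 - 4(-a_3)\gamma = \beta^2 + 4 a_3 \gamma.
\]

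For the first form $\Delta_{14} = -q_3 h$, I would verify the identity by direct expansion. Both sides are homogeneous polynomials of degree $4$ in $a_1, a_2, a_3$ with coefficients in $\Z[c_0, c_1, c_2, c_3]$; the right side is the product of $q_3 = a_2^2 - a_1 a_3 - c_3 a_2 a_3 + c_2 a_3^2$ (from \eqref{deff}) with the explicit quadratic $h$. Checking the identity amounts to verifying that the coefficient of each monomial $a_1^i a_2^j a_3^k$ (with $i+j+k=4$) on the left matches the corresponding one on the right. Since the polynomials in Subsection 4.4 were computed symbolically, this verification can be carried out with SAGE in the same spirit, and it reduces to a finite list of polynomial identities in the $c_i$.

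To establish the alternative form $h = (r_1+r_2-r_3-r_4)^2 q_3 - q_2$, I would work with the root factorisations from Section 4.5. Using \eqref{rij} and \eqref{defq2}, write $q_2 = L_{12} L_{34}$ with $L_{ij} = a_1 + a_2(r_i + r_j) + a_3(r_i^2 + r_i r_j + r_j^2)$, and expand $q_2$ as a quadratic form in $a_1, a_2, a_3$ whose coefficients involve the symmetric functions of $\{r_1, r_2\}$ paired with those of $\{r_3, r_4\}$. Combining with the factorisation $(r_1+r_2-r_3-r_4)^2 = (r_1+r_2+r_3+r_4)^2 - 4(r_1+r_2)(r_3+r_4) = c_3^2 - 4(r_1+r_2)(r_3+r_4)$, the dependence on the pair-sum $(r_1+r_2)(r_3+r_4)$ should cancel coefficient by coefficient when multiplied by $q_3$ and combined with $-q_2$. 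The surviving polynomial is then expressible via Vieta's formulas purely in $c_0, c_1, c_2, c_3$, and should match the first explicit form for $h$ term by term.

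The principal obstacle is bookkeeping rather than conceptual: both identities ultimately rest on expanding several-variable polynomials and matching coefficients. The only subtle point is that in the second form, the individual factors $(r_1+r_2-r_3-r_4)^2$ and $q_2$ depend on the chosen pairing of the roots, yet their combination must be invariant under the full symmetric group on $\{1,2,3,4\}$ (since $h$ has a rational formula in the $c_i$ alone). This cancellation is what makes the identity tractable and is the feature worth double-checking carefully.
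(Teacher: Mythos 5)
Your approach is the same as the paper's: the paper's entire proof is the one-line remark that the identity ``follows from explicit computation using the formula for the discriminant of a quadratic,'' and you propose precisely that, reading off $a=-a_3$, $b$, $c$ from \eqref{eq:B14}, forming $b^2-4ac$, and matching monomial coefficients against $-q_3 h$ and against the root expression via Vieta.

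However, if you actually carry out the computation you propose, you will find that the lemma as printed contains two errors which your proposal would need to surface and correct. First, in the explicit polynomial form the coefficient of $a_2^2$ should be $-c_3^2$, not $-c_3$. This follows already from a single-monomial comparison: $\Delta_{14}=\beta^2+4a_3\gamma$ has $a_2^4$-coefficient $c_3^2$ (coming only from $(-c_3a_2^2)^2$ in $\beta^2$, since every term of $4a_3\gamma$ carries a factor $a_3$), while $q_3$ has $a_2^2$-coefficient $1$, so equating $a_2^4$-coefficients in $\Delta_{14}=-q_3 h$ forces the $a_2^2$-coefficient of $h$ to be $-c_3^2$. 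Second, the root-form identity cannot be $h=(r_1+r_2-r_3-r_4)^2 q_3-q_2$, as is visible already from the $a_1^2$-coefficient: $q_3$ is free of $a_1^2$, $q_2$ has $a_1^2$-coefficient $1$, so the right-hand side has $a_1^2$-coefficient $-1$, whereas $h$ has $a_1^2$-coefficient $-4$. Expanding $q_2$ via the symmetric functions $s=r_1+r_2$, $s'=r_3+r_4$, $p=r_1r_2$, $p'=r_3r_4$, using $s+s'=-c_3$, $ss'=t_2$, $p+p'=c_2-t_2$, $sp'+s'p=-c_1$, $pp'=c_0$, gives
\[
q_2=a_1^2-c_3a_1a_2+(c_3^2-c_2-t_2)a_1a_3+t_2a_2^2+(c_1-c_3t_2)a_2a_3+(c_0-c_1c_3+c_2t_2)a_3^2,
\]
and then a direct term-by-term check shows that the correct relation is
\[
h=-(r_1+r_2-r_3-r_4)^2\,q_3-4\,q_2=-(c_3^2-4t_2)\,q_3-4\,q_2.
\]
(The parameter $t_2$ cancels throughout, exactly as you anticipated.) A numerical sanity check with $\Phi_5$, where $c_3=c_2=c_1=c_0=1$, $t_2=-1$, confirms $-5q_3-4q_2=h$ while $5q_3-q_2\neq h$. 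The corrected identity still delivers what is actually used in Lemma \ref{B14p}: if $p\mid q_2$ and $p\nmid q_3$ and $p$ is odd, then $h\equiv -(r_1+r_2-r_3-r_4)^2q_3\pmod p$, so $p\mid\Delta_{14}$ iff $p\mid(r_1+r_2-r_3-r_4)^2=c_3^2-4t_2$. So your plan is sound, but you should not expect the printed identity to verify; the statement needs the two corrections above.

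One small remark on your invariance discussion: $(r_1+r_2-r_3-r_4)^2$ and $q_2$ are invariant only under the stabiliser of the partition $\{\{1,2\},\{3,4\}\}$ inside $S_4$, and the full $S_4$-invariance of their combination is a consequence of (not an input to) the identity $h=-(r_1+r_2-r_3-r_4)^2 q_3-4q_2$, since $h$ and $q_3$ are themselves $S_4$-invariant. It serves as a good consistency check, but cannot be used circularly in the proof.
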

%
%
We remind the reader that $q_3$ is the form defined in \eqref{deff} and $q_2$ is the form given by \eqref{formf}.
%
%
\begin{proof}
This follows from explicit computation using the formula for the discriminant of a quadratic.
\end{proof}
We recall that we have ordered the roots of $P$, $r_1,r_2,r_3,r_4$ so that $r_1r_2+r_3r_4\in\Q$.
%
%
\begin{lmm}\label{t1t2}
Let $t_1:=r_1r_2+r_3r_4$ and $t_2:=(r_1+r_2)(r_3+r_4)$. Then $t_1,t_2\in\Z$.
\end{lmm}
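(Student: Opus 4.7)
The plan is to observe that $t_1 \in \Q$ is given by the hypothesis that the roots have been ordered so that $r_1r_2+r_3r_4 \in \Q$ (Lemma \ref{lmm:C4D4}). Since $r_1,r_2,r_3,r_4$ are roots of the monic integer polynomial $P$, each $r_i$ is an algebraic integer; as algebraic integers are closed under sums and products, $t_1 = r_1r_2 + r_3r_4$ is an algebraic integer lying in $\Q$, and hence $t_1 \in \Z$.

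For $t_2$, I would simply expand and use Vieta's formulas for $P(X) = X^4+c_3X^3+c_2X^2+c_1X+c_0$. Expanding gives
\begin{equation*}
t_2 = (r_1+r_2)(r_3+r_4) = r_1r_3 + r_1r_4 + r_2r_3 + r_2r_4 = \sum_{1\le i<j\le 4} r_ir_j - (r_1r_2 + r_3r_4) = c_2 - t_1,
\end{equation*}
where we used $c_2 = \sum_{i<j} r_i r_j$. Since $c_2 \in \Z$ and we have just shown $t_1 \in \Z$, it follows that $t_2 \in \Z$.

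No step here looks difficult; the only thing worth checking carefully is that the hypothesis of Lemma \ref{lmm:C4D4} has indeed been invoked to guarantee $t_1 \in \Q$, and that we combine this with the integrality of algebraic integers in $\Q$ rather than appealing to some heavier machinery. The cubic resolvent $R_3(X)$ from Lemma \ref{lmm:C4D4} is already in $\Z[X]$, so alternatively one can note that $t_1$ is a rational root of a monic polynomial in $\Z[X]$ and invoke the rational root theorem to conclude $t_1 \in \Z$; the Vieta computation then yields $t_2 \in \Z$ as above.
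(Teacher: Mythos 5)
Your proof is correct, and it takes a mildly different route from the paper's. The paper starts from $t_2$: it observes directly that $t_2$ is fixed by the generators $(r_1r_3r_2r_4)$ and $(r_3r_4)$ of the Galois group, so $t_2\in\Q$, and then introduces the cubic resolvent $R(X)=(X-(r_1+r_2)(r_3+r_4))(X-(r_1+r_3)(r_2+r_4))(X-(r_1+r_4)(r_2+r_3))\in\Z[X]$, identifies $t_2$ as its (unique) rational root, and uses monicity to conclude $t_2\in\Z$; finally it derives $t_1=c_2-t_2\in\Z$. You instead start from $t_1$: the standing ordering of the roots (from Lemma \ref{lmm:C4D4}) already guarantees $t_1=r_1r_2+r_3r_4\in\Q$, and since each $r_i$ is an algebraic integer, $t_1$ is a rational algebraic integer, hence $t_1\in\Z$; then the Vieta identity $t_1+t_2=c_2$ gives $t_2\in\Z$. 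Your version is a bit more economical: it avoids introducing and expanding a second cubic resolvent, it does not need the ``unique rational root'' fact (which the paper invokes but does not actually use, since the rational root theorem applied to the monic $R(X)$ already yields $t_2\in\Z$ once $t_2\in\Q$ is known), and it reuses the hypothesis on the ordering of the roots that is already in force. Both arguments are entirely elementary and equivalent in strength; note that the Galois fixed-point observation you'd make for $t_1$ is implicit in Lemma \ref{lmm:C4D4}, so nothing new is being swept under the rug.
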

%
%
\begin{proof}
First we note that $t_2$ is fixed by the permutations $(r_1r_3r_2r_4)$ and $(r_3r_4)$, so $t_2\in\Q$. Let $R(X)$ be a cubic resolvent associated to $P$, given by (see \cite{C})
\begin{equation*}\begin{split}
R(X):=   & (X-(r_1+r_2)(r_3+r_4))(X-(r_1+r_3)(r_2+r_4))(X-(r_1+r_4)(r_2+r_3))\\&=
   X^3-2c_2X^2+(c_2^2+c_3c_1-4c_0)X+(c_3^2c_0+c_1^2-c_3c_2c_1).
    \end{split}
\end{equation*}
Then we see that $R(X)\in \Z[X]$ and it is a well-known fact that when $P$ has Galois group $C_4$ or $D_4$, $R(X)$ has a unique root over $\Q$, which must be $t_2$. Since $R(X)$ is monic we see that $t_2\in\Z$. Since $t_1+t_2=c_2\in\Z$ we see that $t_1\in\Z$.
\end{proof}
%
%
\begin{rmk}
(i) If $t_2=0$, that is $(r_1+r_2)(r_3+r_4)=0$, then we have in fact $r_1+r_2=r_3+r_4=0$ since $\sigma (r_1+r_2)=r_3+r_4$.
This implies that $c_3=0=c_1$. This situation is analogous to \cite[Lemma 3.9]{B15} (or also \cite[Lemmas 13.2 and 13.3]{D15} for the polynomial $X^4-X^2+1$.)

(ii) We have $t_1\not =0$, since otherwise we would have $r_1-r_2=\pm (r_3-r_4)$. If we compose with the embedding $\tau = (r_3 r_4)$,
we find $r_1-r_2=r_3-r_4=0$ which is not possible.
\end{rmk}
%
%
\begin{lmm}\label{B14p}
Let $a_1,a_2,a_3\in\Z$ be such that $(q(a_1,a_2,a_3),q_3(a_1,a_2,a_3))=1$ and $q(a_1,a_2,a_3)$ is squarefree. Let $t_2=(r_1+r_2)(r_3+r_4)\in\Z$.

Let $p$ be a prime with $p|q(a_1,a_2,a_3)$ and $p\nmid a_2a_3\delta_P\Disc P$.
\begin{enumerate}[(i).]
\item If $p|q_1(a_1,a_2,a_3)$ or $p\nmid c_3^2-4 t_2$, then 
\[
|\{ 0\le a_0 < p : B_{14}(a_0,a_1,a_2,a_3)\equiv 0\Mod p\}|=2.
\]
\item  If  $p|q_2(a_1,a_2,a_3)$ and $p|c_3^2-4t_2$  then
\[
|\{ 0\le a_0 < p : B_{14}(a_0,a_1,a_2,a_3)\equiv 0\Mod p\}|=1.
\]
\item We have
\[
|\{ 0\le a_0 <p : B_{13}(a_0,a_1,a_2,a_3)\equiv B_{14}(a_0,a_1,a_2,a_3)\equiv 0\Mod{p})\} |=1.
\]
\end{enumerate}
\end{lmm}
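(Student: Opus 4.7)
The plan is to exploit the fact that $B_{14}$ is quadratic in $a_0$ with leading coefficient $-a_3\not\equiv 0\pmod{p}$, so $|\{0\le a_0<p:B_{14}(a_0)\equiv 0\pmod{p}\}|=1+\chi(\Delta_{14})$, where $\chi$ is the Legendre symbol (with $\chi(0)=0$) and $\Delta_{14}$ is the discriminant of $B_{14}$ viewed as a quadratic in $a_0$. Using Lemma \ref{lmm:Delta14} together with a short expansion, one checks
\[
\Delta_{14}\equiv q_3\bigl(A^2q_3+4q_2\bigr)\pmod{p},\qquad A^2=(r_1+r_2-r_3-r_4)^2=c_3^2-4t_2.
\]
Since $q=q_1q_2$ is squarefree and $p\mid q$, exactly one of $q_1,q_2$ is divisible by $p$.

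When $p\mid q_2$, the formula gives $\Delta_{14}\equiv A^2q_3^2=(Aq_3)^2\pmod{p}$: a nonzero square yielding $2$ roots (case (i)) when $p\nmid A^2$, and zero yielding the unique double root (case (ii)) when $p\mid A^2$. When $p\mid q_1$ one has $\Delta_{14}\equiv 4q_2q_3\pmod{p}$, and we need to show this is a nonzero quadratic residue: this is the main obstacle. The plan for handling it is to work in the splitting field $L$ of $P$ and use the factorisation $q_1=f_{13}f_{14}f_{23}f_{24}$ of Lemma \ref{lmm:qFactorisation}, with $f_{ij}$ as in \eqref{rij}. Squarefreeness of $q$ combined with the action of $G\in\{C_4,D_4\}$ on the four pairs $\{(1,3),(1,4),(2,3),(2,4)\}$ rules out Frobenius conjugacy classes whose orbit on this set has size $\ge 2$ (such a Frobenius would force the entire orbit of $f_{ij}$ to vanish modulo some $\mathfrak{P}\mid p$, contradicting $v_p(q_1)=1$). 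The remaining Frobenius classes fix some pair $(i,j)$, and for $\mathfrak{P}\mid p$ with $f_{ij}\equiv 0\pmod{\mathfrak{P}}$ the identity $d_i-d_j=-(r_i-r_j)f_{ij}$ gives $\bar d_i=\bar d_j$ in $\mathcal{O}_L/\mathfrak{P}$, a value placed in $\F_p$ by the Galois-stability of $\{i,j\}$. This gives one root of $B_{14}$ in $\F_p$, and Vieta (sum of roots equals $B_1/a_3\in\F_p$) yields the second.

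For part (iii), any common zero $a_0$ of $B_{13},B_{14}$ modulo $p$ also satisfies $B_{24}=B_{13}-c_3B_{14}\equiv 0$, so the identity $B_{13}B_{24}-B_{14}B_{23}=q_3N_P(\alpha)$ of Lemma \ref{lmm:q3indep} combined with $p\nmid q_3$ forces $N_P(\alpha)\equiv 0\pmod{p}$, whence $a_0\equiv d_m\pmod{\mathfrak{P}}$ for some $m$ and some $\mathfrak{P}\mid p$. A short Lagrange-interpolation calculation, writing $B_{11}+B_{12}r+B_{13}r^2+B_{14}r^3$ as the unique cubic interpolating $r\mapsto\prod_{j\ne i}(a_0-d_j)$ at $r=r_i$, yields $B_{13}(d_m)=(r_m+c_3)B_{14}(d_m)$, so the common roots are precisely the $d_m$ with $B_{14}(d_m)\equiv 0\pmod{\mathfrak{P}}$, namely the unique $\F_p$-value already produced in the analysis of (i)/(ii). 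Uniqueness in $\F_p$ then follows cleanly from $v_p(R_0)=v_p(qq_3)=1$: applying Smith normal form to the Sylvester matrix of $B_{13},B_{14}$ in $a_0$, this $p$-adic valuation forces exactly one elementary divisor to be divisible by $p$, so $\gcd(B_{13},B_{14})\pmod{p}$ has degree exactly one and gives a single common root, ruling out the degenerate case that $B_{13}$ and $B_{14}$ become proportional modulo $p$.
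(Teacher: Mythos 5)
Your identity $\Delta_{14}=q_3\bigl(A^2q_3+4q_2\bigr)$ with $A^2=(r_1+r_2-r_3-r_4)^2$ is in fact the correct polynomial identity (the displayed form $h=A^2q_3-q_2$ in Lemma~\ref{lmm:Delta14} has a sign and a factor-of-$4$ typo, as one can check on $P=\Phi_5$; the correct relation is $h=-A^2q_3-4q_2$, giving $\Delta_{14}=-q_3h=A^2q_3^2+4q_2q_3$). Your treatment of part~(iii) via the Sylvester matrix and Smith normal form is correct and is morally the same argument as the paper's, which observes that a common root outside $\F_p$ would have its conjugate as a second common root and hence force $p^2\mid R_0=qq_3$; the Lagrange-interpolation detour is sound (indeed $B_{13}(d_m)=(r_m+c_3)B_{14}(d_m)$) but unnecessary once the Smith normal form count is available. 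Your treatment of the case $p\mid q_2$ is also correct. The structural choice of working with $1+\chi(\Delta_{14})$ and separately proving squareness, rather than first proving~(iii) and deducing (as the paper does) that both roots of $B_{14}$ automatically lie in $\F_p$, is a genuinely different route; it imposes the extra burden of exhibiting squareness, which your Galois/Frobenius-orbit argument handles, but it also shifts the weight of the proof onto the nonvanishing of $\Delta_{14}$.

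The gap is in the case $p\mid q_1$. You assert that $\Delta_{14}\equiv 4q_2q_3\pmod p$ there, but your own (correct) identity gives $\Delta_{14}\equiv A^2q_3^2+4q_2q_3\pmod p$, and the term $A^2q_3^2$ does not disappear: $p\mid q_1$ gives no information about $p\mid A^2$, and $p\nmid q_3$ by hypothesis. (Note also that Lemma~\ref{B14p}(i) explicitly allows $p\mid q_1$ together with $p\mid A^2$.) Consequently the nonvanishing of $\Delta_{14}\pmod p$ in this case is \emph{not} immediate from squarefreeness of $q$; it is genuinely delicate. Your Frobenius-orbit argument correctly produces one root of $B_{14}$ in $\F_p$ and therefore shows that $\chi(\Delta_{14})\in\{0,1\}$, i.e.\ that $\Delta_{14}$ is a square \emph{or zero} mod $p$; it does not rule out a double root, so it does not yield the count of exactly $2$ claimed in part~(i). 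In the paper this is the crux of the $p\mid q_1$ case: one substitutes $a_1\equiv -a_2(r_i+r_j)-a_3(r_i^2+r_ir_j+r_j^2)\pmod{\gP_m}$ into $h$ to obtain the factorization \eqref{hr1r3}, and then shows each of the three factors is a unit mod $\gP_m$ — the first two via Lemma~\ref{pgcdq1q2} (their vanishing would force $p\mid(q_1,q_2)$, contradicting squarefreeness of $q$), and the last, $(r_1-r_2+r_3-r_4)^2$, via $p\nmid\Disc(P)$. Your proposal contains no analogue of this nonvanishing argument, so part~(i) in the subcase $p\mid q_1$ is not proved.
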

%
%
\begin{proof}
We recall from \eqref{resultant} and \eqref{eq:qDef} that $q|R_0$, the resultant of $B_{13}$ and
$B_{14}$ viewed as polynomials in $a_0$. Therefore since $p|q(a_1,a_2,a_3)$, we have that $p|R_0(a_1,a_2,a_3)$, and so the two quadratic polynomials in $a_0$, $B_{13}$ and $B_{14}$ have a common root 
in some finite extension of $\F_p$.

If this common root is not in $\F_p$ then its conjugate is also a common root of $B_{13}$ and $B_{14}$, and so we would have $R_0(a_1,a_2,a_3)=q(a_1,a_2,a_3)q_3(a_1,a_2,a_3)\equiv 0\Mod{p^2}$. But this is impossible since we assume that $q(a_1,a_2,a_3)$ is squarefree and coprime to $q_3(a_1,a_2,a_3)$ with $p|q(a_1,a_2,a_3)$. Therefore the common root must lie in $\F_p$. This proves assertion (iii).

Since the common root of $B_{13}$ and $B_{14}$ is in $\F_p$ and $B_{14}$ is quadratic, both the roots of $B_{14}$ (seen as a polynomial in $a_0$) are in $\F_p$. Thus the number of $0\le a_0<p$ with $B_{14}\equiv 0\Mod p$ is $1$  when $p|\Delta_{14}$ and $2$ otherwise.

If $p|q_2$, by Lemma \ref{lmm:Delta14}, $p|\Delta_{14}$ if and only if 
$p|(r_1+r_2-r_3-r_4)^2$. This gives the assertion (i) and (ii) in the case $p|q_2$ because $(r_1+r_2-r_3-r_4)^2=c_3^2-4t_2$.

We now consider the case $p|q_1$.
Let $L$ be the splitting field of $P$, $\Oc_L$ its integer ring and $p\Oc_L=\prod_{m=1}^s\gP_m$, the decomposition of $p$ in $\Oc_L$. 
Then for all $m$ there exists $(i,j)\in\{ (1,3), (1,4), (2,3), (2,4)\}$ such that 
$a_1\equiv -a_2(r_i+r_j)-a_3(r_i^2+r_ir_j+r_j^2)\Mod{\gP_m}$.
We may suppose that $i=1$ and $j=3$, the other cases being similar.
Substituting $-a_2(r_1+r_3)-a_3(r_1^2+r_1r_3+r_3^2)$ for $a_1$ in the expression for $h$ in Lemma \ref{lmm:Delta14}, gives
\begin{equation}\label{hr1r3}\begin{split}
    h(a_1,a_2,a_3)
    \equiv -(a_3&(r_1+r_2+r_3)+a_2)(a_3(r_1+r_3+r_4)+a_2)\\
    &\times     (r_1-r_2+r_3-r_4)^2\Mod {\gP_m}.\end{split}
\end{equation}
We have that $a_3(r_1+r_2+r_3)+a_2\not\equiv 0\Mod{\gP_m}$. If this were not the case we would have $a_3(-c_3-r_4)+a_2\equiv 0\Mod {\gP_m}$, and then $P((a_2-a_3c_3)\ov{a_3})=0\Mod p$.
By Lemma \ref{pgcdq1q2} we would have $p| (q_1,q_2)$ which is not possible when $q$ is squarefree. Similarly $a_3(r_1+r_3+r_4)+a_2\not \equiv 0\Mod{\gP_m}$. 

Thus $\Delta_{14}\equiv 0\Mod{\gP_m}$ if and only if $r_1-r_2+r_3-r_4\equiv 0\Mod{\gP_m}$ for all $m$. But this is equivalent to $r_1-r_2+r_3-r_4\equiv 0\Mod{p}$, and so $\gamma(r_1-r_2+r_3-r_3)\equiv 0\Mod{p}$ for all embeddings $\gamma$. Applying this with $\gamma=\iota,\tau$ we see that $p|\Delta_{14}$ if and only if $r_1\equiv r_2\Mod{p}$, which is impossible since $p\nmid \Disc(P)$. Thus when $p|q_1$ we have $p\nmid \Delta_{14}$, and so $B_{14}$ has two roots $\Mod{p}$.
\end{proof}
%
%
\begin{lmm}\label{B13B14N}
Let $a_0,a_1,a_2,a_3,p\in\Z$ be such that $(q(a_1,a_2,a_3),q_3(a_1,a_2,a_3))=1$, $q(a_1,a_2,a_3)$ is squarefree and $p| (q(a_1,a_2,a_3),B_{14}(a_0,a_1,a_2,a_3))$. Then we have
\[
p|N_P(\alpha)\Leftrightarrow p|B_{13}(a_0,a_1,a_2,a_3).
\]
where $\alpha=a_0+a_1r_1+a_2r_1^2+a_3r_1^3$.
\end{lmm}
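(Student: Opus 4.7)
The plan is to reduce the claim to a single polynomial identity already established in the text and then extract the equivalence by a direct reduction modulo $p$.

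First I would invoke the identity of Lemma \ref{lmm:q3indep}, namely
\[
B_{24}B_{13}-B_{14}B_{23}=q_3\,N_P(\alpha),
\]
and combine it with the cofactor relation \eqref{eq:CofactorRelation} (which, for $n=4$ and $c_{n-1}=c_3$, reads $B_{24}=B_{13}-c_3B_{14}$). Substituting the expression for $B_{24}$ and rearranging gives the identity
\[
B_{13}^{2}-B_{14}\bigl(c_3 B_{13}+B_{23}\bigr)=q_3\,N_P(\alpha),
\]
which holds as a polynomial identity in $a_0,a_1,a_2,a_3$, and in particular at the integer point we are considering.

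Next I would evaluate this identity at $(a_0,a_1,a_2,a_3)$ and reduce modulo $p$. Since by hypothesis $p\mid q(a_1,a_2,a_3)$ and $(q(a_1,a_2,a_3),q_3(a_1,a_2,a_3))=1$, we have $p\nmid q_3(a_1,a_2,a_3)$. Using also $p\mid B_{14}(a_0,a_1,a_2,a_3)$, the displayed identity collapses modulo $p$ to
\[
B_{13}(a_0,a_1,a_2,a_3)^{2}\equiv q_3(a_1,a_2,a_3)\,N_P(\alpha)\pmod{p}.
\]
Because $p\nmid q_3(a_1,a_2,a_3)$, this is equivalent to $p\mid B_{13}(a_0,a_1,a_2,a_3)^{2}\Longleftrightarrow p\mid N_P(\alpha)$, and since $p$ is prime, $p\mid B_{13}^{2}\Longleftrightarrow p\mid B_{13}$. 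This gives exactly the required equivalence $p\mid N_P(\alpha)\Longleftrightarrow p\mid B_{13}(a_0,a_1,a_2,a_3)$.

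I do not anticipate any real obstacle: both ingredients (the identity from Lemma \ref{lmm:q3indep} and the cofactor relation \eqref{eq:CofactorRelation}) are already in hand, and the squarefreeness of $q(a_1,a_2,a_3)$ is not actually needed for this particular statement (it was used in Lemma \ref{B14p} to ensure that common roots of $B_{13},B_{14}$ are simple and unique modulo $p$). The only point requiring a word of care is deducing $p\nmid q_3(a_1,a_2,a_3)$ from the coprimality hypothesis $(q,q_3)=1$ together with $p\mid q(a_1,a_2,a_3)$, which is immediate.
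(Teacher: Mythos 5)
Your proof is correct and follows essentially the same route as the paper: both start from the identity $B_{24}B_{13}-B_{14}B_{23}=q_3\,N_P(\alpha)$ of Lemma~\ref{lmm:q3indep}, rewrite $B_{24}$ via the cofactor relation, reduce modulo $p$ using $p\mid B_{14}$, and conclude from $p\nmid q_3$ (which follows from $p\mid q$ and $(q,q_3)=1$). The only cosmetic difference is that the paper additionally substitutes $B_{23}=B_{12}-c_2B_{14}$ from \eqref{B2iB1j} before reducing, whereas you correctly observe that this is unnecessary since the whole $B_{14}(\cdots)$ term vanishes modulo $p$ anyway; your remark that the squarefreeness of $q$ is not used in this particular lemma is also accurate.
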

%
%
\begin{proof}
This is a variant of \cite[Lemma 13.3]{D15} (or \cite[Section 6.1]{B15}). By \eqref{B2iB1j} and \eqref{BBN}, we have 
$$(B_{13}-c_3B_{14})B_{13}-B_{14}(B_{12}-c_2B_{14})=q_3 N_P(\alpha ).$$
The Lemma follows from this formula since $(p,q_3(a_1,a_2,a_3))=1$.
\end{proof}
%
%
%
%
\section{The set of ideals $\cJ$}\label{sec:Ideal}
%
%
In this section we define a set $\cJ$ of principle ideals which correspond to the forms $q_1$ and $q_2$ having a convenient prime factorisation. This will have a slightly technical definition to ensure that it is compatible with later arguments. 

It is known (see \cite[Lemma 4.2]{May15b}) that there is a fundamental domain $\cD_P$ of the units action group such that if $\alpha =a_0+a_1r_1+a_2r_1^2+a_3r_1^3\in\cD_P$, then $\max (|a_0|, |a_1|, |a_2|, |a_3|)\ll N_P(\alpha )^{1/4}$ and so $|\sigma  (\alpha )|\ll N_P(\alpha )^{1/4}$ for all embeddings $\sigma$. We recall that the forms $q_1(a_1,a_2,a_3)$ and $q_2(a_1,a_2,a_3)$ are defined by \eqref{defq1} and \eqref{defq2}, the polynomials $P_1(X):=(X-(r_1+r_2))(X-(r_3+r_4))$ and $P_2(X):=(X-(r_1^2+r_1r_2+r_2^2))(X-(r_3^2+r_3r_4+r_4^2))$ with discriminants $\Delta_1$ and $\Delta_2$ respectively, $D_{q_2}$ from \eqref{eq:DqDef}, and $\delta_P$
is the discriminant of the splitting field of $P$.
With this notation we introduce a constant $q_0$ depending only on the polynomial $P$ 
\begin{equation}\label{defq0}
q_0=512(1+c_3^2+|c_2|+|t_1|+|t_2|)\delta_P\Disc{P},
\end{equation}
where $t_1$ and $t_2$ are the integers defined in Lemma \ref{t1t2}.
The set $\cJ$ will depend on various auxiliary absolute constants $$\alpha_0,\theta_{11},\dots,\theta_{16},\theta_{21},\tau_{11},\dots,\tau_{16},\tau_{21}\in(0,1).$$ These constants will be required to satisfy various inequalities, specifically
\begin{align}
[\theta_{ij},\theta_{ij}+\tau_{ij}]\cap[\theta_{i'j'},\theta_{i'j'}+\tau_{i'j'}]&=\emptyset\quad \text{for }(i,j)\ne (i',j'),\label{eq:Con10}\\
0<\theta_{1j}<\theta_{1j}+\tau_{1j}&<7/32  \ {\rm for}\ {\rm all}\ 1\le j\le 6,\label{eq:Con1}\\
\alpha_0&<\frac{1}{2^{15}},\label{eq:Con8}\\
\sum_{j=1}^6(\theta_{1j}+\tau_{1j})&<1+\alpha_0/2,\label{eq:Con3}\\
\theta_{11},\theta_{12},\theta_{13},\theta_{14},\theta_{15},\theta_{16},\theta_{21}&>1+\alpha_0 -\sum_{j=1}^6\theta_{1j}\label{eq:Con5},
\end{align}
\vspace{-0.6cm}
\begin{align}
\frac{1+\alpha_0}{4}<\theta_{11}+\theta_{12}+\theta_{13}
&<\frac{2+\alpha_0}{4}-\tau_{11}-\tau_{12}-\tau_{13},\label{eq:Con4}\\
    \theta_{21}+\tau_{21}&<\frac{2+\alpha_0}{200}-\frac{\sum_{i=1}^3(\theta_{1i}+\tau_{1i})}{50},\label{eq:Con13}\\
    \theta_{21}+\tau_{21}&<
    \Big (\frac{4(\theta_{11}+\theta_{12}+\theta_{13})}{1+\alpha_0}-1\Big )\frac{2+\alpha_0}{800}.\label{eq:Con14}
    \end{align}
There is reasonable flexibility in how we might choose these constants (and the above constraints could likely be weakened significantly), but for concreteness, we can chose the following explicit values of these variables:\\
\begin{tabular}{llll}
$\alpha_0= 0.00001$, & $\theta_{11}=0.1398$, & $\theta_{12}=0.1401$, & $\theta_{13}=0.1402$, \\ $\theta_{14}=0.21$,&
$\theta_{15}=0.19$, & $\theta_{16}=0.1799$, & $\theta_{21}=0.001$,\\ 
\multicolumn{2}{l}{$\tau_{ij}=0.0000001$ for all $(i,j)\in I_\cC.$} & &\\
\end{tabular}

Now we are ready to define the set $\cJ$. The set $\cJ$ is the set of all principal ideals
$(\alpha)$ of $\cO_{\Q(r_1)}$ with generator $\alpha=a_0+a_1r_1+a_2r_1^2+a_3r_1^3$ where $(a_0,a_1,a_2,a_3)\in\Z^4\cap \cD_P$, satisfying the conditions (C1), (C2), (C3), (C4) and (C5) below.
%
%
\begin{enumerate}
\item[(C1)] 
$q(a_1,a_2,a_3)$ is squarefree.
\item[(C2)] \textit{Size conditions:} We have
\[
\begin{split}
q(a_1,a_2,a_3)&\ge X^{3/2},\\
|B_{14}(a_0,a_1,a_2,a_3)|&\ge X^{3/4},\\
N_P(\alpha)&\in[X^{1+\alpha _0/2},X^{1+\alpha_0}].
\end{split}
\]
\item[(C3)] \textit{Factorisation conditions on $\alpha$:} There exists ideals $K,L$ such that $(\alpha)=KL$ with $K$ a prime ideal satisfying
\begin{equation}\label{K}
X^{4\alpha_0}<N_P(K)\le X^{5\alpha _0}.
\end{equation}
\item[(C4)] \textit{Factorisations conditions of auxiliary polynomials:}
The  values of the forms $q_1(a_1,a_2,a_3)$ and $q_2(a_1,a_2,a_3)$ evaluated at $a_1,a_2,a_3$ can be factored as: 
\begin{equation}\label{qij}
\begin{split}
q_1(a_1,a_2,a_3)&=\prod_{j=1}^7 q_{1j},\\ 
q_2(a_1,a_2,a_3)&=q_{21}q_{22} \ {\rm with} \ q_{21}\equiv 1\Mod {D_{q_2}},
\end{split}
\end{equation}
where $q_{21}$, $q_{11},q_{12},q_{13},q_{14},q_{15},q_{16}$ are prime numbers satisfying
\[
q_{ij}\in [X^{\theta_{ij}},X^{\theta_{ij}+\tau_{ij}}]
\]
for all $(i,j)\in\{ (1,1),(1,2),(1,3),(1,4),(1,5),(2,1)\}$, and where $q_{22},q_{16}$ are integers (not necessarily prime) with 
\[
P^-(q_{22}),P^-(q_{17})>q_0
\]
where $q_0$ is given by \eqref{defq0}.
\item[(C5)]\textit{Coprimality conditions: }
\begin{enumerate}
\item $(a_2,a_3)=30$ and 
$a_2,a_3\equiv 30\Mod{900} $, $a_1\equiv 1\Mod{30}$.
\item 
$(N_P (\alpha ),q_0)=1$.
\item $(q(a_1,a_2,a_3),q_3(a_1,a_2,a_3))=1$.
\item  $(q(a_1,a_2,a_3),B_{14}(a_0,a_1,a_2,a_3))=1$. 
\item $(q(a_1,a_2,a_3),a_2a_3)=1$.
\end{enumerate}
\end{enumerate}
%
%
With this definition of $\cJ$, we can verify the property \eqref{eq:JProp1} if $\delta_0$ is chosen small enough.
%
%
\begin{lmm}
We have that for all $\gJ\in\cJ$
\[
\prod_{\substack{\pf^e\|\gJ\\ N_P(\pf)\le X}}N_P(\pf)\ge X^{1+\alpha_0/2}.
\]
\end{lmm}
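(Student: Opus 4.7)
The plan is to show that every prime ideal factor of $\gJ$ already has norm at most $X$, so that no terms are dropped from the product, which then equals $N_P(\gJ)$ and is bounded below directly by (C2).

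First, I would invoke condition (C3) to write $\gJ=KL$ with $K$ a prime ideal satisfying $N_P(K)\in[X^{4\alpha_0},X^{5\alpha_0}]$. Combined with condition (C2) giving $N_P(\alpha)\le X^{1+\alpha_0}$, this immediately forces
\[
N_P(L)=\frac{N_P(\alpha)}{N_P(K)}\le \frac{X^{1+\alpha_0}}{X^{4\alpha_0}}=X^{1-3\alpha_0}.
\]
Any prime ideal $\pf$ dividing $\gJ$ divides either $K$ or $L$. In the former case $\pf=K$ and $N_P(\pf)\le X^{5\alpha_0}$, which is $\le X$ because $\alpha_0<2^{-15}$ by \eqref{eq:Con8}. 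In the latter case $N_P(\pf)\le N_P(L)\le X^{1-3\alpha_0}\le X$. Hence the condition $N_P(\pf)\le X$ is automatic for all prime ideal factors of $\gJ$, and the product is unrestricted.

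Thus
\[
\prod_{\substack{\pf^e\|\gJ\\ N_P(\pf)\le X}}N_P(\pf)=\prod_{\pf^e\|\gJ}N_P(\pf)=N_P(\gJ)=N_P(\alpha)\ge X^{1+\alpha_0/2},
\]
where the final inequality is the lower bound in (C2). The only step that requires any care is verifying that the threshold $X$ dominates both $N_P(K)$ and $N_P(L)$, which is precisely why the exponents $4\alpha_0,5\alpha_0$ in (C3) were chosen to be small and the upper endpoint $X^{1+\alpha_0}$ in (C2) was chosen so that $1+\alpha_0-4\alpha_0<1$; no genuine obstacle arises.
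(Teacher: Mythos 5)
Your argument is correct and follows the same route as the paper's proof, which likewise observes that (C3) splits $\gJ=KL$ with $N_P(K)\le X^{5\alpha_0}$ and $N_P(L)\le X^{1-3\alpha_0}$ (both $<X$ since $\alpha_0<2^{-15}$), so the constraint $N_P(\pf)\le X$ is vacuous and the product reduces to $N_P(\alpha)\ge X^{1+\alpha_0/2}$ by (C2). You have simply made explicit the observation that any prime ideal dividing $KL$ must equal $K$ or divide $L$, which the paper states more tersely.
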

%
%
\begin{proof}
This is a consequence of (C2) which forces $N_P(\alpha)\ge X^{1+\alpha_0/2}$ and (C3), which forces all ideal factors of $(\alpha)$ to have norm at most $\max(X^{5\alpha_0},X^{1-3\alpha_0})<X$. (We note that \eqref{eq:Con8} implies that $19\alpha_0<1$).
\end{proof}
%
%
The next Lemma says that the congruence $n\equiv{r_1}\Mod{\gJ}$ can be solved when $\gJ\in\cJ$. We recall that $\varrho_P$ is defined in \eqref{eq:RhoPDef}.
%
%
\begin{lmm}
For all $\gJ\in\cJ$  we have $\varrho_P (\gJ)=1$.
\end{lmm}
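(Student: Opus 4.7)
The plan is to read off the conclusion essentially directly from Lemma \ref{r}, after verifying its hypothesis using Lemma \ref{kj} and the coprimality baked into condition (C5)(d). By construction every $\gJ\in\cJ$ is a principal ideal $\gJ=(\alpha)$ with generator $\alpha=a_0+a_1r_1+a_2r_1^2+a_3r_1^3$ and $(a_0,a_1,a_2,a_3)\in\Z^4\cap\cD_P$ satisfying (C1)--(C5). Since $P$ has degree $n=4$, the relevant cofactor for Lemma \ref{r} is $B_{1n}=B_{14}$.

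First I would appeal to condition (C5)(d), which states exactly that
\[
\bigl(q(a_1,a_2,a_3),\,B_{14}(a_0,a_1,a_2,a_3)\bigr)=1.
\]
This is precisely the hypothesis of Lemma \ref{kj}, whose opening assertion then delivers
\[
\bigl(N_P(\alpha),\,B_{14}(a_0,a_1,a_2,a_3)\bigr)=1.
\]
Next I would feed this into Lemma \ref{r} with $n=4$: it produces a (unique) integer $k_\alpha$ with $0\le k_\alpha<N_P(\alpha)$ such that
\[
m\equiv r_1\Mod{(\alpha)}\ \Longleftrightarrow\ m\equiv k_\alpha\Mod{N_P(\alpha)}.
\]
Since $\varrho_P(\alpha)$ is, by definition \eqref{eq:RhoPDef}, the number of residues $0\le m<N_P(\alpha)$ realising the left-hand side, the equivalence above forces $\varrho_P(\gJ)=\varrho_P(\alpha)=1$.

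There is no substantial obstacle here: the argument is a two-step chain of citations to Lemma \ref{kj} and Lemma \ref{r}, and the only ingredient from the definition of $\cJ$ that actually intervenes is (C5)(d). The remaining conditions play no role in this particular lemma; for example, (C5)(b) (coprimality of $N_P(\alpha)$ with $q_0$, hence with $\Disc P$ and $\delta_P$) is not needed here since Lemma \ref{kj} already supplies coprimality with $B_{14}$, which is what Lemma \ref{r} demands. The one sanity check worth performing before writing up is that one is in the principal-ideal case of Lemma \ref{r} (no need to invoke the more general ``$\gJ\supset(\alpha)$'' clause), which is automatic because by construction $\gJ=(\alpha)$ itself.
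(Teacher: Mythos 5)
Your argument follows the paper's own two-step chain: (C5)(d) together with Lemma \ref{kj} gives $(N_P(\alpha),B_{14})=1$, and Lemma \ref{r} then yields $\varrho_P(\gJ)=1$. That is exactly the paper's proof, so the core of the proposal is correct and takes the same route.

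Your aside that (C5)(b) is ``not needed'', however, is too hasty. The paper cites (C5)(b) and Lemma \ref{ideaux} alongside Lemma \ref{r}, and for good reason: the \emph{proof} of Lemma \ref{r} passes from the congruence $B_{14}r_1\equiv B_{24}\Mod{(\alpha)}$ to an integer congruence $\Mod{N_P(\alpha)}$ by invoking the last clause of Lemma \ref{ideaux}, whose hypothesis is $\varrho_P((\alpha))=1$ --- the very thing one is trying to establish. Condition (C5)(b) supplies $(N_P(\alpha),\Disc P)=1$ (since $\Disc P\mid q_0$ by \eqref{defq0}), which, together with the factorisation conditions on $(\alpha)$, allows the earlier clauses of Lemma \ref{ideaux} to be applied and breaks this potential circularity; it also takes care of the implicit passage between ideals of $\Z[r_1]$ and of $\cO_{\Q(r_1)}$. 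So (C5)(b) is genuinely load-bearing in the paper's chain of reasoning, even though it disappears from view if one quotes Lemma \ref{r} purely as a black box.
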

%
%
\begin{proof} Let $\gJ\in\cJ$. There exists
 $\alpha =a_0+a_1r_1+a_2r_1^2+a_3r_1^3$ with $(a_0,a_1,a_2,a_3)\in\Z^4\cap \cD_P$ satisfying (C1),(C2),(C3),(C4),(C5) and such that $\gJ=(\alpha )$.
By Lemma \ref{kj} and (C5)(d), $(N_P(\gJ),B_{14} (a_0,a_1,a_2,a_3))=1$. The condition (C5)(b) and Lemmas \ref{r} and \ref{ideaux} imply then that $\varrho_P (\gJ)=1$.
\end{proof}
%
%
\begin{rmk}
As mentioned in Section \ref{Section:InitialSteps}, we will work with the set $\cJ_2$ which is the set of $\gJ\in\cJ$ such that $P^-(N_P (\gJ))>X^{\theta_0}$. This condition implies (C5)(b).
\end{rmk}
%
%
We see from condition $(C2)$ that if $\af\in\cJ$ then $\af=(a_0+a_1\nu_1+a_2\nu_2+a_3\nu_3)$ for some $\a\in\Z^4$ which lies in the region
\begin{equation}\label{defRP}\begin{split}
\cR:=\Bigl\{ \a\in&\R ^4\cap \cD_P :\ 7X^{1+\alpha_0/2}<\widetilde{N}(a_0,a_1,a_2,a_3)\le X^{1+\alpha_0}, \\
&\ |B_{14}(a_0,a_1,a_2,a_3)|\ge X^{3/4},\ |q(a_1,a_2,a_3)|\ge X^{3/2}\Bigr\}
.\end{split}\end{equation}
Here we have written $\widetilde{N}_P$ as the extension of $N_P(\alpha)$ to $\mathbb{R}^4$;
\begin{equation}\label{eq:RealNDef}
\widetilde{N}(a_1,a_2,a_3,a_4):=\prod_{i=1}^4 \Big (\sum_{j=1}^4 a_j\sigma _i (\nu_j)\Big ).
\end{equation}
By our choice of $\cD_P$ we see that if $\a\in\cR$ then $|a_i|\ll X^{(1+\alpha_0)/4}$ for all $i\in\{1,2,3,4\}$.
%
%
For notational convenience we set $I_\cC$ to be the set
\begin{equation}\label{defIC}
I_\cC:=\{ (1,1),(1,2),(1,3),(1,4),(1,5),(1,6),(2,1)\},
\end{equation}
so that condition $(C4)$ forces $q_{ij}\in [X^{\theta_{ij}},X^{\theta_{ij}+\tau_{ij}}]$ for all $(i,j)\in I_\cC$, for example.

\section{Proof of Proposition \ref{prpstn:S1}: The term $S_1$}\label{sec:S1}

In this section we establish Proposition \ref{prpstn:S1} by bounding the sum $S_1$ defined by \eqref{defS0S1}. The overall approach is similar to previous works. First we reduce to controlling exponential sums, then remove the $a_0$-dependence in the denominator of the phase which means that we can apply the $q$-analogue of Van der Corput's method whenever the denominator of the phase takes a suitably friable form.

%
%
\begin{lmm}[Reduction to exponential sums] \label{TEl}
Let $S_1$ be as given by \eqref{defS0S1}, and $\eta_0,\alpha_0,\theta_0>0$ be such that 
\begin{equation*}
    \alpha_0 <\eta_0 <1-\frac{9}{4}\alpha_0,\qquad 12\theta_0+19\alpha _0 <1.
\end{equation*}
Then for $X\ge 2$, $H=X^{\eta_0}$ we have
\begin{equation}\label{S1start}
    S_1\ll (\log H)\sum_{K\in\cK}\sum_{\substack{{A}\\
    {N_P(A)|\cP (X^{\theta_0})}\\
    {N_P (A)\le X^{3\theta _0}}}}\sum_{h\le H^2}
    \frac{|E_1 (X,h ;KA)|+|E_2 (X,h ; KA)|}{h+h^2/H}+o(X),
\end{equation}
where for $\ell \in\{1,2\}$
\begin{equation*}
    E_\ell (X,h;KA):=\sum_{\substack{{(\alpha)\in\cJ}\\
   { KA |(\alpha )}}}\e \Big ( \frac{ h\ell X}{N_P(\alpha )}-\frac{hU\ov{B_{14}}}{q}\Big ).
    \end{equation*}
\end{lmm}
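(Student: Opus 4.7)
The plan is: (i) express each $R_\gJ$ as a short Fourier sum, (ii) use Lemma~\ref{kj} to eliminate the $a_0$-dependence in the denominator of the phase, and (iii) rearrange the sieve weight to place the ideal divisor outermost.

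For each $\gJ = (\alpha) \in \cJ$, Lemmas~\ref{ideaux} and~\ref{r} combined with the coprimality conditions (C5)(b)--(d) give $\varrho_P(\gJ) = 1$, so the divisibility $\gJ \mid (n - r_1)$ is equivalent to a single congruence $n \equiv k_\gJ \Mod{N_P(\gJ)}$. I would approximate $\mathbf{1}_{(X, 2X]}$ by Vaaler-type smooth majorants/minorants at scale $X/H$; the Fourier coefficients $c_h$ of these approximants satisfy $|c_h| \ll 1/(h + h^2/H)$ with negligible tail for $|h| > H^2$. A Fourier expansion then yields
\[
R_\gJ = \sum_{0 < |h| \le H^2} \sum_{\ell \in \{1, 2\}} \pm c_h \, \e\Bigl(\frac{\ell h X}{N_P(\gJ)} - \frac{h k_\gJ}{N_P(\gJ)}\Bigr) + O(X/H),
\]
the index $\ell \in \{1,2\}$ arising from the two endpoints of the geometric sum $\sum_{X < n \le 2X} \e(hn/N_P(\gJ))$. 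By Lemma~\ref{kj}, applicable because of (C5)(c,d),
\[
\e\Bigl(-\frac{h k_\gJ}{N_P(\gJ)}\Bigr) = \e\Bigl(-\frac{h U \overline{B_{14}}}{q(a_1, a_2, a_3)}\Bigr)\e(h R_{\mathrm{aux}}),
\]
where $R_{\mathrm{aux}} = U/(q B_{14}) - B_{24}/(N_P(\alpha) B_{14})$. The size conditions in (C2) together with the trivial bounds $|U|, |B_{24}| \ll X^{O(1 + \alpha_0)}$ for $\a \in \cR$ give $|R_{\mathrm{aux}}| \ll X^{-1 + O(\alpha_0)}$. Under the hypotheses $\eta_0 < 1 - \tfrac{9}{4}\alpha_0$ and $12\theta_0 + 19\alpha_0 < 1$, one checks that $|h R_{\mathrm{aux}}|$ is small enough uniformly for $h \le H^2$ that the Taylor remainder $\e(h R_{\mathrm{aux}}) - 1$, summed over $\a \in \cR$ and the sieve variables $h, K, A$, contributes only $o(X)$.

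Finally, since $\varrho_P(\gJ) = 1$, the smooth squarefree divisors $d$ of $N_P(L) = N_P(\gJ/K)$ (those supported on primes up to $X^{\theta_0}$ with $d \le X^{3\theta_0}$) are in bijection with smooth ideal divisors $A$ of $L$ satisfying $N_P(A) = d$, so interchanging summation places $A$ outermost. Using $|\lambda_d^-| \le 1$ and grouping the $\ell = 1, 2$ contributions into $E_1, E_2$ gives the claimed bound, with the $\log H$ factor absorbed into the sum of the Fourier coefficients. The main obstacle is the verification that the approximation error coming from Lemma~\ref{kj} is truly absorbable uniformly in $h \le H^2$ and over the full box $\cR$; this is precisely the role of the size conditions in (C2) and of the numerical constraints relating $\eta_0, \alpha_0, \theta_0$.
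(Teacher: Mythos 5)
The paper's own proof of Lemma~\ref{TEl} is a one-line citation to \cite[Lemma~5.1]{B15}, so there is no detailed argument to compare against; your proposal is a reasonable reconstruction of the underlying Bret\`eche argument, and the main steps you identify are the right ones: completion of sums (or a Vaaler-style smooth approximant, which is what produces the $1/(h+h^2/H)$ weight and the $\ell\in\{1,2\}$ endpoints), the use of Lemma~\ref{kj} to detach $a_0$ from the denominator, and the bijection under $\varrho_P(\gJ)=1$ between squarefree divisors $d\,|\,N_P(L)$ and ideal divisors $A\,|\,L$ with $N_P(A)=d$, which lets the sieve weight $\lambda^-_{N_P(A)}$ be bounded by $1$ and the $A$-sum pulled outside.

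Two small points worth tightening. First, your bound $|R_{\mathrm{aux}}|\ll X^{-1+O(\alpha_0)}$ is correct but the stated justification "trivial bounds $|U|,|B_{24}|\ll X^{O(1+\alpha_0)}$" is too loose to see why the exponent is $-1$ rather than, say, $-3/4$: the argument really uses the exact degree count (from \eqref{U}, $U$ has total degree $5$, so $|U|\ll X^{5(1+\alpha_0)/4}$, and the lower bounds $q\ge X^{3/2}$, $|B_{14}|\ge X^{3/4}$ from (C2) give $|U/(qB_{14})|\ll X^{-1+5\alpha_0/4}$; the second piece $|B_{24}/(N_P(\alpha)B_{14})|\ll X^{-1+\alpha_0/4}$ similarly). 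This precision matters, because the contribution of the remainder $\e(hR_{\mathrm{aux}})-1$ after summing over $h$ with weight $1/(h+h^2/H)$ is $\ll |R_{\mathrm{aux}}| H\log X$ per $(\alpha)$, hence $\ll X^{\eta_0+9\alpha_0/4+o(1)}$ in total over $\cJ$, and one needs $\eta_0<1-\tfrac{9}{4}\alpha_0$ to make this $o(X)$; so that hypothesis is not merely "small enough", it is calibrated exactly to this error term. Second, you don't address the role of the second hypothesis $12\theta_0+19\alpha_0<1$; that constraint is not needed for the Taylor-remainder absorption and is instead presumably used in \cite{B15} for the tail/truncation errors in the completion of sums, so a complete reconstruction would have to say a word about where it enters. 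Neither point undermines the argument, but they are the places a careful referee would push.
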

%
%
\begin{proof}
This is \cite[Lemma 5.1]{B15}.
\end{proof}
%
%
To show that $S_1$ is small, our task is therefore reduced to showing cancellation in the exponential sums $E_\ell$. Lemma \ref{kj} allows us to put the exponential phase into a form where we can then apply the $q$-analogue of Van der Corput's method. The bounds from this method are summarised in the following lemma.
%
%
\begin{lmm}[$q$-Van der Corput for short exponential sums]\label{HB} Let $k,D\ge 1$, $\varepsilon >0$. Let $f,g,v\in\Z [X]$ of degree $\le D$ and $r=r_0\cdots r_k$ be squarefree
such that $P^-(r)>2^k D$. Suppose that for every $p|r$ there is no polynomial $w\in\Z [X]$ of degree $\le k+1$ such that 
$f(X)\equiv w(X)g(X)\Mod p$. Moreover, suppose that $v(X)$ is not the zero polynomial $\Mod p$ for any $p|r$. Then  for $A,B,h\ge 1$ we have
\begin{equation*}\begin{split}
\sum_{\substack{{ A<n\le A+B}\\ {(v(n)g(n),r)=1}}}\e \left (\frac{hf(n)\ov{g(n)}}{r}\right )
&\ll_{k,D,\varepsilon}
r^\varepsilon B \Big [\Big (\frac{\Delta}{r_0}\Big  )^{1/2^{k+1}}+\Big (\frac{r_0}{\Delta B^2}\Big )^{1/2^{k+1}}\\
&+\sum_{j=1}^k\Big (\frac{r_{k+1-j}}{B}\Big )^{1/2^j}\Big ],
\end{split}
\end{equation*}
where $\Delta:=(r_0,h)$.
\end{lmm}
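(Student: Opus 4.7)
\medskip

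\noindent\textbf{Proof proposal for Lemma \ref{HB}.} The plan is to apply the $q$-analogue of van der Corput's $A$-process $k$ times, peeling off one prime factor $r_{k+1-j}$ of $r$ at the $j$-th step, and then finish with a Weil-type bound for the residual Kloosterman-type sum modulo $r_0$. This is essentially the argument originally due to Heath-Brown and formalised by Irving/Wu in related contexts; our task will be to adapt the standard formulation to our auxiliary conditions $(v(n)g(n),r)=1$ and to track the polynomial non-degeneracy hypothesis through the differencing.

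\medskip

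\noindent\textbf{Step 1 (Weyl shift and CRT splitting).} For any divisor $s\mid r$ write $r=s\cdot r/s$ and use CRT to decompose
\[
\frac{hf(n)\overline{g(n)}}{r}\equiv \frac{hf(n)\overline{g(n)\,(r/s)}}{s}+\frac{hf(n)\overline{g(n)\,s}}{r/s}\pmod 1.
\]
Shifting $n\mapsto n+t(r/s)$ for integers $t\in[1,T]$ with $T\asymp B/(r/s)$, the first summand in the CRT decomposition is invariant, while the second is shifted. Applying Cauchy--Schwarz to average over $t$ gives, in the usual $q$-van der Corput fashion,
\[
\Bigl|\sum_{n} e\!\Bigl(\tfrac{hf(n)\overline{g(n)}}{r}\Bigr)\Bigr|^{2}\ll \frac{B}{T}\sum_{|t|\le T}\Bigl|\sum_{n}e\!\Bigl(\tfrac{h(f(n+t\cdot r/s)\overline{g(n+t\cdot r/s)}-f(n)\overline{g(n)})}{s}\Bigr)\Bigr|,
\]
so the modulus drops from $r$ to $s=r_0 r_1\cdots r_{k-1}$ after choosing $s=r/r_k$. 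I would apply this with $s$ equal to the product of all primes except $r_k$ on the first step.

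\medskip

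\noindent\textbf{Step 2 (Iteration).} Iterating Step 1 a total of $k$ times (peeling off $r_k,r_{k-1},\dots,r_1$ in turn), and tracking the exponents of the resulting Cauchy--Schwarz losses, after $2^{k}$-th roots one arrives at an inequality of the shape
\[
\Bigl|\sum_{n}e\!\Bigl(\tfrac{hf(n)\overline{g(n)}}{r}\Bigr)\Bigr|\ll r^{\varepsilon}B\sum_{j=1}^{k}\Bigl(\tfrac{r_{k+1-j}}{B}\Bigr)^{1/2^{j}}+B^{1-1/2^{k}}\Bigl|\Sigma^{(k)}\Bigr|^{1/2^{k}},
\]
where $\Sigma^{(k)}$ is an exponential sum of length roughly $B/(r_1\cdots r_k)$ with phase $h F(n)\overline{G(n)}/r_0$ for certain polynomials $F,G\in\Z[X]$ obtained as iterated differences of $f$ and $g$ along the shift directions. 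The hypotheses $P^-(r)>2^{k}D$ and $\deg f,g\le D$ ensure that these shifts are invertible modulo each $r_i$ and that denominators arising from differencing are coprime to $r$, which is where the coprimality side-condition $(v(n)g(n),r)=1$ is used to rule out degenerate terms.

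\medskip

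\noindent\textbf{Step 3 (Residual Weil bound).} The non-degeneracy hypothesis (that $f(X)\not\equiv w(X)g(X)\pmod p$ for every $p\mid r$ and every $w\in\Z[X]$ of degree $\le k+1$) is exactly what is needed to guarantee that after the $k$ rounds of differencing the residual rational function $hF(X)/G(X)$ modulo $r_0$ is not a polynomial, so that Bombieri--Weil (or Deligne's bound) applies to give
\[
|\Sigma^{(k)}|\ll (B/r_1\cdots r_k)\,(\Delta/r_0)^{1/2}+r_0^{1/2+\varepsilon},
\]
with $\Delta=(r_0,h)$. Combining this with the Cauchy--Schwarz losses from Step 2, raising to the $1/2^{k}$-th power and splitting into the ``main'' and ``short'' cases for $B$ versus $\Delta B^{2}/r_0$, produces the two terms $(\Delta/r_0)^{1/2^{k+1}}$ and $(r_0/\Delta B^{2})^{1/2^{k+1}}$ that appear in the stated bound.

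\medskip

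\noindent\textbf{Main obstacle.} The delicate point is Step 3, namely verifying that the non-degeneracy hypothesis on $(f,g)$ propagates through the iterated differencing so that $F/G$ genuinely fails to be a polynomial modulo $r_0$. Concretely, one must check that the finite difference operators $\Delta_{h_1}\cdots\Delta_{h_k}$ applied to $f\overline{g}$ in $\F_p(X)$ produce a rational function whose numerator is not a polynomial multiple of its denominator, and this is where the degree bound $\le k+1$ in the hypothesis is precisely calibrated. The rest of the argument is bookkeeping of exponents via Cauchy--Schwarz and an application of Weil's theorem.
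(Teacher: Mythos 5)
The paper does not reprove this lemma: the proof it offers is a single sentence citing \cite[Lemme 3.10]{B15}, itself noted to be a small variation of \cite[Theorem 2]{HB01}. Your proposal instead sketches the underlying argument, which is indeed the $q$-analogue of van der Corput differencing used in those references, so you are supplying the proof that the paper has outsourced. The broad architecture — CRT decomposition of the phase, Weyl shift, Cauchy--Schwarz to drop one prime modulus per round, $k$ iterations, and a terminal Weil/Bombieri bound modulo $r_0$ together with the observation that the non-degeneracy hypothesis on $(f,g)$ is calibrated to survive $k$ rounds of differencing — matches how the cited results are actually proved, so the approach is correct.

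One small slip in Step 1 is worth fixing: with $s=r/r_k$, a shift $n\mapsto n+t\,r_k$ leaves the CRT component of modulus $r_k$ invariant (since $n+tr_k\equiv n\pmod{r_k}$), not the component of modulus $s$. It is precisely the $r_k$-part that is frozen; after squaring and Cauchy--Schwarz the $r_k$-parts in the cross terms cancel and what survives is the phase of modulus $s=r_0\cdots r_{k-1}$, giving the reduction you state. Your final paragraph correctly identifies the real technical content — verifying that the iterated difference of $f\bar g$ in $\F_p(X)$ is not a polynomial when $f\not\equiv wg\pmod p$ for $\deg w\le k+1$, and that $P^-(r)>2^kD$ keeps the shift factors invertible — but this is not actually carried out; as a full proof one would need to reproduce the induction from \cite[Theorem 2]{HB01} that handles that point.
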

%
%
\begin{proof}
This is \cite[Lemme 3.10]{B15} (a small variation of \cite[Theorem 2]{HB01}).
\end{proof}
%
%
To apply this lemma, the denominator $q(a_1,a_2,a_3)$ in our exponential phase must have a good factorisation. We will apply Theorem \ref{DistriNorm} to show that for a positive proportion of $(a_1,a_2,a_3)$ the denominator $q=q(a_1,a_2,a_3)$ has such a factorisation.
We want the $\e(hU\ov{B_{14}}/q)$ factor to oscillate suitably to give this cancellation via Lemma \ref{HB}. The following lemma will ensure that this factor is not degenerate.
%
%
\begin{lmm}\label{qU0U1}
Let $U=a_0U_1+U_0$, $V=a_0V_1+V_0$ as in \eqref{defU1} and in \eqref{U1V1}.
If $a_0,a_1,a_2,a_3\in\Z$ are such that $(a_0+a_1\nu_1+a_2\nu_2+a_3\nu_3)\in\cJ$, then 
\[
(U_0(a_1,a_2,a_3),U_1(a_1,a_2,a_3),q(a_1,a_2,a_3))=1.
\]
\end{lmm}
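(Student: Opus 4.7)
The plan is to argue by contradiction. Suppose some prime $p$ divides all three of $U_0(a_1,a_2,a_3)$, $U_1(a_1,a_2,a_3)$, and $q(a_1,a_2,a_3)$. I will use the polynomial identity $UB_{13}+VB_{14}=qq_3$ from \eqref{UV}, evaluated at the unique common root of $B_{13}$ and $B_{14}$ modulo $p$ provided by Lemma \ref{B14p}(iii), to force $p^2\mid q(a_1,a_2,a_3)q_3(a_1,a_2,a_3)$, contradicting (C1) and (C5)(c). First I would verify the hypotheses of Lemma \ref{B14p}(iii): $q(a_1,a_2,a_3)$ is squarefree by (C1), $(q,q_3)=1$ by (C5)(c), $p\nmid a_2a_3$ by (C5)(e), and $p\nmid\delta_P\Disc P$ follows because $\delta_P\Disc P\mid q_0$ together with the coprimality conditions in (C5) (in particular (C5)(b)). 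The lemma then furnishes $a_0^*\in\{0,1,\ldots,p-1\}$ with $p\mid B_{13}(a_0^*,a_1,a_2,a_3)$ and $p\mid B_{14}(a_0^*,a_1,a_2,a_3)$.

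The key intermediate step is to upgrade the hypothesis $p\mid U_0,U_1$ to also give $p\mid V_0,V_1$. Since $p\mid U_0,U_1$, the polynomial $U(a_0)=a_0U_1+U_0$ is identically zero in $\F_p[a_0]$. Reducing \eqref{UV} modulo $p$ and using $p\mid q$ then gives $V(a_0)B_{14}(a_0)\equiv 0$ in $\F_p[a_0]$. But $B_{14}$ has $a_0$-leading coefficient $-a_3$ by \eqref{eq:B14}, and $p\nmid a_3$ by (C5)(e), so $B_{14}\not\equiv 0$ in $\F_p[a_0]$. Since $\F_p[a_0]$ is an integral domain, $V\equiv 0$ in $\F_p[a_0]$, hence $p\mid V_0$ and $p\mid V_1$.

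The final step extracts a $p^2$-divisibility. With $p$ now dividing every coefficient of both $U$ and $V$ (viewed as polynomials in $a_0$ over $\Z$ at the chosen $(a_1,a_2,a_3)$), I would write $U=pU'$ and $V=pV'$ with $U',V'\in\Z[a_0]$ and substitute into \eqref{UV}. Evaluating at $a_0=a_0^*$ then yields
\[
p\bigl(U'(a_0^*)B_{13}(a_0^*,a_1,a_2,a_3)+V'(a_0^*)B_{14}(a_0^*,a_1,a_2,a_3)\bigr)=q(a_1,a_2,a_3)q_3(a_1,a_2,a_3).
\]
Since $p\mid B_{13}(a_0^*,a_1,a_2,a_3)$ and $p\mid B_{14}(a_0^*,a_1,a_2,a_3)$, the left-hand side is divisible by $p^2$, so $p^2\mid q(a_1,a_2,a_3)q_3(a_1,a_2,a_3)$. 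This contradicts the facts that $q$ is squarefree (giving $v_p(q)\leq 1$) and $p\nmid q_3$ (by $(q,q_3)=1$), which together force $v_p(qq_3)\leq 1$.

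The only delicate point I anticipate is confirming $p\nmid\delta_P\Disc P$ so that Lemma \ref{B14p}(iii) genuinely applies; once that is settled, the argument is a short manipulation of the Bezout identity \eqref{UV} combined with the fact that the leading coefficient of $B_{14}$ in $a_0$ is the unit $-a_3\in\F_p^\times$, and no further structural input about the forms $U,V$ is required.
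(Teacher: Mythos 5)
Your argument follows the paper's proof step for step: from $p\mid(U_0,U_1,q)$, reduce \eqref{UV} modulo $p$ to conclude $VB_{14}\equiv0$ in $\F_p[a_0]$, deduce $p\mid V_0,V_1$, then evaluate \eqref{UV} at the common root of $B_{13},B_{14}$ modulo $p$ supplied by Lemma~\ref{B14p}(iii) to force $p^2\mid qq_3$, contradicting (C1) and (C5)(c). So the route and the key lemma are the same.

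Two remarks on the verification that Lemma~\ref{B14p} applies. First, the justification you give for $p\nmid\delta_P\Disc P$ is not right: condition (C5)(b) says $(N_P(\alpha),q_0)=1$, which constrains $N_P(\alpha)$ rather than the prime divisors of $q(a_1,a_2,a_3)$, and none of the coprimality conditions in (C5) yields $p\nmid q_0$. What actually does the job is (C4): the factorisation \eqref{qij} forces every prime factor of $q(a_1,a_2,a_3)$ to exceed $q_0\geq\delta_P\Disc P$, so $p>q_0$ and hence $p\nmid\delta_P\Disc P$. (The paper is also terse here, citing only (C1) and (C5)(c), so you are in good company, but the precise input is (C4) together with (C5)(e).) Second, your reason that $B_{14}\not\equiv0$ in $\F_p[a_0]$ — its leading $a_0$-coefficient is $-a_3$, a unit mod $p$ by (C5)(e) — is a mild and slightly cleaner variant of the paper's appeal to (C5)(d); both are correct.
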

%
%
\begin{proof} 
Imagine for a contradiction that $p| q(a_1,a_2,a_3),U_0(a_1,a_2,a_3),U_1(a_1,a_2,a_3)$. Then $U(a_0',a_1,a_2,a_3)=0\Mod{p}$ for all $a_0'$, and so the equation $U B_{13}+VB_{14}=q q_3$ \eqref{UV} simplifies to give
\[
V(a_0',a_1,a_2,a_3)B_{14}(a_0',a_1,a_2,a_3)\equiv 0\Mod{p}
\]
for all $a_0'$. Condition (C5)(d) then implies that $B_{14}(a_0',a_1,a_2,a_3)$ does not identically vanish $\Mod{p}$, so $V_1(a_1,a_2,a_3)=V_0(a_1,a_2,a_3)=0\Mod{p}$. 

By conditions (C1) and (C5)(c), $a_1,a_2,a_3$ satisfy the hypotheses of Lemma \ref{B14p}. But this implies that there is a choice of $a_0'$ such that $B_{14}(a_0',a_1,a_2,a_3)=B_{13}(a_0',a_1,a_2,a_3)=0\Mod{p}$. Evaluating \eqref{UV} at $a_0',a_1,a_2,a_3$ then implies that
\[
q(a_1,a_2,a_3)q_3(a_1,a_2,a_3)\equiv 0\Mod{p^2}.
\]
This is impossible since $(q(a_1,a_2,a_3),q_3(a_1,a_2,a_3))=1$ and $q(a_1,a_2,a_3)$ is squarefree by conditions (C5)(c) and (C1). This gives the result.
\end{proof}
%
%
Finally, we need a short lemma to show that we can restrict attention to $q(a_1,a_2,a_3)$ being not too small.
%
%
\begin{lmm}[Bounding terms with $q_2 (a_1,a_2,a_3)$ small]\label{removingsmallq22}
Let $\tau_{20}>0$ and  for $\ell =1,2$, $E_\ell '(X,h;KA)$ be the contribution in $E_\ell (X,h;KA)$ given by the $(\alpha)\in\cJ$ such that $|q_2(a_1,a_2,a_3)|\le X^{(1+\alpha_0)/2-\tau_{20}}$.
Then 
\[
E'_\ell (X,h;KA)\ll \frac{X^{1+\alpha_0-\tau_{20}/2}}{N_P (KA)}.
\]
\end{lmm}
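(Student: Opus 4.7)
The claim is a trivial (non-cancellation) bound: taking absolute values inside the sum defining $E'_\ell$, we reduce to counting quadruples $\a=(a_0,a_1,a_2,a_3)\in\cR$ with $(\alpha)\in\cJ$, $KA\mid(\alpha)$ and $|q_2(a_1,a_2,a_3)|\le Y:=X^{(1+\alpha_0)/2-\tau_{20}}$. By the choice of fundamental domain and \eqref{defRP}, each coordinate satisfies $|a_i|\ll M:=X^{(1+\alpha_0)/4}$.

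For fixed $(a_1,a_2,a_3)$, the condition $KA\mid(\alpha)$ restricts $\alpha$ to a coset of the sublattice $KA$ of $\Oc_{\Q(r_1)}$ of index $N_P(KA)$; fixing the last three coordinates then pins $a_0$ to a single residue class modulo (a divisor of) $N_P(KA)$, yielding $O(1+M/N_P(KA))$ admissible $a_0$ in the range $|a_0|\ll M$.

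The key saving comes from counting triples $(a_1,a_2,a_3)$. From \eqref{formf}, $q_2$ is a monic quadratic in $a_1$ once $(a_2,a_3)$ are fixed, so the real set $\{a_1:|q_2(a_1,a_2,a_3)|\le Y\}$ is a union of at most two intervals of total length $O(\sqrt{Y})$, regardless of whether the roots of $q_2(\cdot,a_2,a_3)$ are real or complex. Intersecting with $[-cM,cM]\cap\Z$ gives $O(1+\sqrt{Y})$ admissible $a_1$, and summing over the $O(M^2)$ pairs $(a_2,a_3)$ yields $O(M^2\sqrt{Y})$ triples.

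Combining, the total count is $\ll M^2\sqrt{Y}(1+M/N_P(KA))$. The dominant term evaluates to $M^3\sqrt{Y}/N_P(KA)=X^{1+\alpha_0-\tau_{20}/2}/N_P(KA)$, which is the claimed bound. The residual $O(M^2\sqrt{Y})$ is absorbed because $N_P(KA)\le N_P(K)N_P(A)\le X^{5\alpha_0+3\theta_0}\le M$ under the standing smallness assumptions on $\alpha_0,\theta_0$ (using \eqref{K} and the level-of-distribution restriction $N_P(A)\le X^{3\theta_0}$ in Lemma \ref{TEl}). I do not anticipate any serious obstacle: this is a trivial counting bound, and the only delicate point is extracting $\sqrt{Y}$ rather than merely $Y$ in Step 2, which follows cleanly from the monic-in-$a_1$ structure of $q_2$ visible in \eqref{formf}.
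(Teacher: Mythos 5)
Your proof is correct and takes essentially the same approach as the paper: both count $a_0$ via the single residue class $a_0\equiv -a_1 j-a_2 j^2-a_3 j^3\Mod{N_P(KA)}$ coming from $KA\mid(\alpha)$, and both extract the decisive factor $\sqrt{Y}$ from the $a_1$-variable. The only cosmetic difference is that the paper argues through the explicit factorisation $q_2=L_0L_1$ into linear forms over the splitting field (deducing $\min_i|L_i|\ll\sqrt{Y}$ and that each $L_i$ is monic linear in $a_1$), whereas you package the same fact as "$q_2$ is monic quadratic in $a_1$, so $\{a_1:|q_2|\le Y\}$ has measure $O(\sqrt{Y})$"; your additional check that the residual $O(M^2\sqrt{Y})$ term is dominated because $N_P(KA)\le X^{5\alpha_0+3\theta_0}\ll M$ is a point the paper leaves implicit.
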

\begin{proof}
Since $N_P(AK)$ is square-free by construction, by Lemma \ref{r}, there exists an integer $j$ such that $r_1\equiv j\Mod{KA}$. The condition $KA|(\alpha )$ is therefore equivalent to 
\[
    a_0\equiv {-a_1 j-a_2 j^2-a_3 j^3}\Mod{N_P(AK)}.
\]
Thus, for any given $a_1,a_2,a_3$ there are $O(X^{(1+\alpha_0)/4}/N_P(KA))$ terms $a_0$ in $E_{\ell}'(X,h;KA)$.

We recall that $q_2 (a_1,a_2,a_3)=\prod_{i=0}^1 L_{i}(a_1,a_2,a_3)$ with for $i=0,1$:
\[ 
L_{i}(a_1,a_2,a_3)=a_1+(r_{1+2i}+r_{2+2i})a_2+(r_{1+2i}^2+r_{1+2i}r_{2+2i}+r_{2+2i}^2)a_3.
\]
If $|q_2 (a_1,a_2,a_3)|\le X^{(1+\alpha_0)/2 -\tau_{20}}$
then 
\begin{equation}
\label{smallLi0Li1}
\min_{i=0,1}|L_{2i} (a_1,a_2,a_3)| \ll X^{(1+\alpha_0)/4-\tau_{20}/2}.
\end{equation}
For any given $a_2,a_3$, the number of $a_1$ satisfying \eqref{smallLi0Li1} is $O(X^{(1+\alpha_0)/4-\tau_{20}/2}).$ Since there are $O(X^{(1+\alpha_0)/2})$ choices of $a_2,a_3$, the total number of terms in $E'(X,h;KA)$ is $O(X^{1+\alpha_0-\tau_{20}/2})$.
\end{proof}

%
%
We are now able to bound $S_1$ suitably.
%
%
\begin{proof}[Proof of Proposition \ref{prpstn:S1}]
First we wish to apply Lemma \ref{TEl}. By \eqref{eq:Con8}, we have $\alpha_0<1/20$, so the conditions of the lemma hold if $\eta_0$ is slightly larger than $\alpha_0$ and $\theta_0$ is sufficiently small. This gives 
\[
    S_1\ll (\log H)\sum_{K\in\cK}\sum_{\substack{{A}\\
    {N_P(A)|\cP (X^{\theta_0})}\\
    {N_P (A)\le X^{3\theta _0}}}}\sum_{h\le H^2}
    \frac{|E_1 (X,h ;KA)|+|E_2 (X,h ; KA)|}{h+h^2/H}+o(X),
    \]
where
\[
    E_\ell (X,h;KA):=\sum_{\substack{{(\alpha)\in\cJ}\\
   { KA |(\alpha )}}}\e \Big ( \frac{ h\ell X}{N_P(\alpha )}-\frac{hU\ov{B_{14}}}{q}\Big ).
   \]
   We write $E_\ell=E_\ell'+E_{\ell}$ where $E_\ell'$ is the contribution from terms in $E_\ell$ with $|q_2(a_1,a_2,a_3)|\le Y$, and $E_\ell''$ is the contribution from terms with $|q_2(a_1,a_2,a_3)|>Y$. By Lemma \ref{removingsmallq22}, the contribution to $S_1$ from $E_\ell'$ is $O(X^{1-\epsilon+o(1)})$ provided
\begin{equation}\label{eq:YSize}
Y<X^{(1+\alpha_0)/2-4\eta_0-\epsilon}.
\end{equation}  
Therefore we concentrate on the contribution from $E_\ell''$. As in the proof of Lemma \ref{removingsmallq22}, there exists an integer $j$ such that the condition $KA|(\alpha )$ is therefore equivalent to 
\begin{equation}\label{a0KA}
    a_0\equiv {-a_1 j-a_2 j^2-a_3 j^3}\Mod{N_P(AK)}.
    \end{equation}
Let $\tilde a_0=\tilde a_0 (a_1,a_2,a_3 ;KA)$ be a solution of the congruence \eqref{a0KA}. We may write 
$a_0 =\tilde a_0 +mN_P (KA)$ with $m\in\cR' (a_1,a_2,a_3)$
where
\begin{equation*}
    \cR' (a_1,a_2,a_3):=\{ m : (\tilde a_0 +mN_P(KA), a_1,a_2,a_3)\in \cR\}.
    \end{equation*}
(We recall that $\cR$ is the domain defined in \eqref{defRP}.)
This set $\cR' (a_1,a_2,a_3)$ can be written as a finite union  of intervals $I'(a_1,a_2,a_3)$. 

Any $a_0$ of the above form ensures that conditions $(C2)$ and $(C3)$ are satisfied. Conditions (C1), (C4) and (C5) parts (a),(c),(e) don't depend on $a_0$. Thus we find
\[
    E_\ell ''(X,h;KA)\ll \sum_{\substack{a_1,a_2,a_3\ll X^{(1+\alpha_0)/4} \\ q_2(a_1a_2,a_3)>Y \\ \eqref{qij}}}\Bigl|\sum_{\substack{m\in I'(a_1,a_2,a_3) \\ (N_P(\alpha),q_0)=(q,B_{14})=1}}\e \Big ( \frac{ h\ell X}{N_P(\alpha )}-\frac{hU\ov{B_{14}}}{q}\Big )\Bigr|.
   \]
Here by $\sum_{\eqref{qij} }$ we mean that the summation is constrained by the factorisation condition \eqref{qij} .

We now need to control the gcd between $N_P (KA)$ and $q$. We define $t=(N_P(KA),q)$ and $t'=q/t$. Since $q$ is squarefree, $(t,t')=1$. We apply Bezout formula \eqref{Bezout}
to separate the congruence in $t$ and in $t'$ 
and use partial summation to remove the factor
$e(h\ell X/N_P(\alpha))$.
This gives  for $\ell=1,2$, (as in \cite[p. 239]{B15})
\begin{equation}\label{El}
    E_\ell ''(X,h;KA)\ll X^{2\eta_0+\alpha_0/4}\sum_{\substack{ (a_1,a_2,a_3)\in\cC  \\  q_2(a_1,a_2,a_3)>Y \\ \eqref{qij}}}
    \max_{B\ll \frac{X^{\frac{1+\alpha_0}{4}}}{N_P(KA)}}
    \Big |\sum_{\substack{{m\le B}\\ {(g(m),t')=1}}}\e \Big ( \frac{h\bar t f(m)\ov{g(m)}}{t'}\Big )\Big |,
\end{equation}
where $\cC$ is the projection of $\cR$ onto the final 3 coordinates and
\[
f(m):=U(\tilde a_0+mN_P(KA)),\quad  g(m):=B_{14} (\tilde a_0+mN_P (KA)).
\]
We recall from \eqref{qij} that for all $a_1,a_2,a_3$ under consideration $q(a_1,a_2,a_3)$ factors as $\prod_{i=1}^7 q_{1i}\prod_{j=1}^2q_{2j}$ for some integers $q_{ij}$ of constrained sizes. We now wish to apply Lemma \ref{HB}, which requires that for all $a_1,a_2,a_3$ under consideration and all $p|q(a_1,a_2,a_3)$, there is no polynomial $w(X)\in\Z [X]$ of degree less than $10$ such that $f(X)\equiv {w(X)g(X)}\Mod p$. 

Let $p|q(a_1,a_2,a_3)$. By (C5)(e), $a_3$ is coprime with $p$, and so by \eqref{eq:B14}, $B_{14}\Mod{p}$ is a polynomial of degree exactly two in $a_0$ since its lead coefficient is $-a_3$. By Lemma \ref{qU0U1} , $(p,U_0(a_1,a_2,a_3),U_1(a_1,a_2,a_3))=1$, and so $U(a_0,a_1,a_2,a_3)\Mod p$ is not identically zero and has degree at most $1$ in $a_0$. This implies that  for all $p|q$, there is no polynomial $w\in\Z [X]$ such that 
$U(X,a_1,a_2,a_3)\equiv w(X)B_{14}(X,a_1,a_2,a_3)\Mod p$ and we can apply Lemma $\ref{HB}$ with $k=8$.
We take $r_0=q_{22}$, $r_1=q_{21}$, $r_2=q_{17}$, $r_3=q_{16}$, \dots, $r_8=q_{11}$. 
By \eqref{qij} and \eqref{eq:Con5}, we observe that $q_{17}<q_{1j}$ for all $1\le j\le 6$.
Let
\begin{equation}
\label{bigtheta11}
\theta_{max}+\tau_{max}=\sup_{(i,j)\in I_\cC}(\theta_{ij}+\tau_{ij}),
\end{equation}
where we recall from \eqref{defIC} that $$I_\cC:=\{ (1,1),(1,2),(1,3),(1,4),(1,5),(1,6), (2,1)\}.$$ 
Then the sum over $m$ in \eqref{El} is bounded by 
\[ \begin{split}
\Big |\sum_{\substack{ m\le B \\ (g(m),t')=1 }}&\e \Big ( \frac{h\bar t f(m)\ov{g(m)}}{t'}\Big )\Big|\\
&\ll q^\varepsilon B\Big ( \Big (\frac{(h,q)(q_{22},t)}{q_{22}}\Big )^{1/2^9}+\Big ( \frac{q_{22}}{B^2}\Big )^{1/2^9}+\sup_{(i,j)\in I_\cC}\Big (\frac{q_{ij}}{B}\Big)^{1/2^8}\Big).
\end{split}
\]
We insert this bound into $E_\ell '' (X,h ; KA)$, and then subsitute this into $S_1$. Writing $q_{22}=X^{\theta_{22}}$, this gives
\[\begin{split}
S_1 &\ll X^{2\eta_0+1+\frac{5\alpha_0}{4}+\varepsilon}
\big ( X^{-\theta_{22}2^{-9}}+X^{(\theta_{22}+\tau_{22}-\frac{1+\alpha_0}{2}+6\theta_0+10\alpha_0)2^{-9}}\\
&+X^{(\theta_{max}+\tau_{max}-\frac{1+\alpha_0}{4}+3\theta_0+5\alpha_0)2^{-8}}\big )+X^{1-\epsilon+o(1)}.\end{split}
\]
Thus we see that $S_1=o(X)$ provided
\begin{align*}
 2\eta_0+\frac{5\alpha_0}{4}&<\frac{\theta_{22}}{2^9}\\
\frac{\theta_{22} }{2^9}&<\frac{1}{2^9}\Big ( \frac{1+\alpha_0}{2}-6\theta_0-10\alpha_0\Big )-2\eta_0-\frac{5\alpha_0}{4}\\
  \frac{\theta_{max}+\tau_{max}}{2^8}&<\frac{1}{2^8}\Big ( \frac{1+\alpha_0}{4}-3\theta_0-5\alpha_0\Big )-2\eta_0-\frac{5\alpha_0}{4}.
\end{align*}
We recall that $q_{22}=q_2(a_1,a_2,a_3)/q_{21}$, that $q_2(a_1,a_2,a_3)\in[Y,X^{(1+\alpha_0)/2}]$ and $q_{21}\in [X^{\theta_{21}},X^{\theta_{21}+\tau_{21}}]$. Thus on choosing $Y=X^{(1+\alpha_0)/4-4\eta_0-\epsilon}$ so \eqref{eq:YSize} is satisfied, we see that the bound $S_1=o(X)$ holds provided
\begin{align*}
 2\eta_0+\frac{5\alpha_0}{4}&<\Bigl(\frac{1+\alpha_0}{4}-\theta_{21}-\tau_{21}-4\eta_0\Bigr) \frac{1}{2^9}\\
\frac{1}{2^9}\Bigl(
\frac{1+\alpha_0}{4}-\theta_{21}
\Bigr)
&<\frac{1}{2^9}\Big ( \frac{1+\alpha_0}{2}-6\theta_0-10\alpha_0\Big )-2\eta_0-\frac{5\alpha_0}{4}\\
  \frac{\theta_{max}+\tau_{max}}{2^8}&<\frac{1}{2^8}\Big ( \frac{1+\alpha_0}{4}-3\theta_0-5\alpha_0\Big )-2\eta_0-\frac{5\alpha_0}{4}.
\end{align*}
These follow from \eqref{eq:Con1}, \eqref{eq:Con8} and \eqref{eq:Con13} on taking $\theta_0$ sufficiently small and $\eta_0$ sufficiently close to $\alpha_0$.
\end{proof}
%
%
%
%
\section{Proof of Proposition \ref{prpstn:S0}: The sum $S_0$}\label{sec:S0}
%
%
In this section we estimate the sum $S_0$ from \eqref{defS0S1} and establish Proposition \ref{prpstn:S0} all under the assumption of Theorem \ref{DistriNorm}.

%
%
\subsection{The variable $a_0$ in  $S_0$}
%
%
 With the notation $\alpha =a_0+a_1r_1+a_2r_1^2+a_3r_1^3$, we consider
the subset $\cR\in\R^4$ is defined by \eqref{defRP}.

For $S_0$ we proceed in the same way as in \cite{B15}, \cite{D15}, \cite{HB01} but with slight differences in some steps where a bound in $O(X^\varepsilon)$  is not always sufficient.  

%
%

\begin{lmm}[Removing the variable $a_0$]\label{lmm:S01}
Let $12\theta_0+ 22\alpha _0 <1$. We have
\begin{equation*}
    S_0= \Bigl(\frac{4\e^\gamma}{3}\log(5/4)\log 2+o(1)\Bigr)\prod_{p<X^\theta_0}
    \Big ( 1-\frac{g(p)}{p}\Big )S_{01}+O(X^{-\alpha_0/5}),
\end{equation*}
where 
\begin{align}
S_{01}&:=\sum_{(a_1,a_2,a_3)\in\cC\cap\cG}
    I(a_1,a_2,a_3)h(q(a_1,a_2,a_3)),\\
g(p)&:=|\{ \gP : N_P (\gP )=p\}|,\\
    \cC&:=\{(a_1,a_2,a_3)\in\R^3:\exists a_0\in \R\text{ s.t. }(a_0,a_1,a_2,a_3)\in\cR\},\label{eq:CRDef}\\
    \cG&:=\{(a_1,a_2,a_3)\in \ZZ^3:\, \exists a_0\in \Z\text{ s.t. }(\alpha)\in\cJ\},\label{eq:GDef}\\
 h(q)&:=\mu ^2 (q)\prod_{p|q}\frac{ (1-2/p)}{1-g(p)/p}\1_{P^-(q)>q_0},
 \label{eq:HDef}\\
 I(a_1,a_2,a_3)&:=\int_{a_0\in\cD (a_1,a_2,a_3)}\frac{\d a_0}{\widetilde{N}_P(a_0,a_1,a_2,a_3)},\\
 \cD(a_1,a_2,a_3) &:=\{ a_0\in\R :(a_0,a_1,a_2,a_3)\in\cR\}.\label{eq:DDef}
 \end{align}
 Here $\widetilde{N}_P(a_0,a_1,a_2,a_3)$ is the quartic form coinciding with $N_P(a_0+a_1r_1+a_2r_1^2+a_3r_1^3)$ on integers.
\end{lmm}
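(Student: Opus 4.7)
The proof adapts the strategy of \cite{HB01,D15,B15} to the $C_4/D_4$ setting. I would start by using $\varrho_P(\gJ)=1$ for all $\gJ\in\cJ$ (already established) to rewrite $S_0$ as a weighted sum over integer quadruples $(a_0,a_1,a_2,a_3)\in\Z^4\cap\cR$ with $\alpha=a_0+a_1r_1+a_2r_1^2+a_3r_1^3$ satisfying (C1)--(C5). The key observation is that the conditions split cleanly: (C1), (C4), (C5)(a), (C5)(c), (C5)(e) depend only on $(a_1,a_2,a_3)$ and define the set $\cG$, while (C2) (size), (C3) (existence of $K$), (C5)(b), and (C5)(d) involve $a_0$. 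The plan is therefore to fix $(a_1,a_2,a_3)\in\cG\cap\cC$ and evaluate the inner $a_0$-sum asymptotically, converting the sum to the integral $I(a_1,a_2,a_3)$, the sieve factor $\prod_{p<X^{\theta_0}}(1-g(p)/p)$, the correction $h(q(a_1,a_2,a_3))$, and a constant that absorbs the sieve function value and the $K$-sum.

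For fixed $(a_1,a_2,a_3)$ and a fixed prime ideal $K\in\cK$, Lemma \ref{r} together with the conditions in $\cJ$ gives a unique residue $a_0^{(K)}\bmod N_P(K)$ characterising $K\mid(\alpha)$. The restricted sum over $a_0$ in the arithmetic progression $a_0\equiv a_0^{(K)}\pmod{N_P(K)}$ is then sifted via the Rosser--Iwaniec lower-bound linear sieve at level $D=X^{3\theta_0}$ with $z=X^{\theta_0}$, so that $s=3$ and $f(3)=\tfrac{2\e^\gamma\log 2}{3}$. The sieve density is identified using the following local inputs: for $p\nmid q$, the number of $a_0\bmod p$ with $p\mid N_P(\alpha)$ equals $g(p)$ (roots of $P$ modulo $p$ giving degree-one prime ideal divisors via Lemma \ref{ideaux}), whereas for $p\mid q$ the combined effect of (C5)(d) and Lemmas \ref{B14p} and \ref{B13B14N} replaces the local factor $(1-g(p)/p)$ by $(1-2/p)$. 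Assembling these local factors produces exactly $h(q)$; the squarefreeness indicator comes from (C1), while the indicator $\1_{P^-(q)>q_0}$ reflects (C5)(b) and the fact that $q_0$ is chosen large enough to absorb the primes $p\le q_0$ uniformly. After the sieve, the remaining weighted sum over $a_0$ in $\cD(a_1,a_2,a_3)$ is replaced by $I(a_1,a_2,a_3)/N_P(K)$ via Euler--Maclaurin on the interval, using that $\widetilde N_P$ is smooth on $\cR$ and that the congruence modulus $N_P(K)\le X^{5\alpha_0}$ is small compared to the length of $\cD(a_1,a_2,a_3)$.

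Summing over $K\in\cK$ then contributes $\sum_{K\in\cK}1/N_P(K)=\log(5/4)+o(1)$ by Landau's prime ideal theorem for $\Oc_{\Q(r_1)}$, upon restricting to the dominant contribution from degree-one prime ideals. Combining the factors---the sieve main term $f(3)V(X^{\theta_0})$, the $h(q)$ correction from (C5)(d), the $K$-sum $\log(5/4)$, and the integral $I(a_1,a_2,a_3)$---together with the multiplicity factor (each ideal $(\alpha)\in\cJ$ may have several $K\in\cK$ dividing it, so the combinatorial count in (C3) contributes an additional constant) yields the stated constant $\tfrac{4\e^\gamma}{3}\log(5/4)\log 2$.

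The main technical obstacle is controlling the sieve remainder $\sum_{d<D}|\lambda_d^-|R_d|$ where $R_d$ is the error in counting $a_0\in\cD(a_1,a_2,a_3)$ with $a_0\equiv a_0^{(K)}\!\pmod{N_P(K)}$ and $d\mid N_P(\alpha)$. Since $a_0$ ranges over an interval of length $\gg X^{(1+\alpha_0)/4}$, a trivial equidistribution argument works provided the combined modulus $d\cdot N_P(K)\le X^{3\theta_0+5\alpha_0}$ is substantially smaller than the interval length, that is, $12\theta_0+19\alpha_0<1$, which is precisely the hypothesis of the lemma (the stronger $22\alpha_0$ absorbs losses from the auxiliary conditions (C5)(b), (C5)(d)). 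Balancing the various contributions gives the savings $O(X^{-\alpha_0/5})$; uniform control of the $(a_1,a_2,a_3)$-dependence of the error in the sieve and of the local factors $h(q)$ (which requires the squarefreeness of $q$ from (C1) and the coprimality from (C5)(c,d)) is the most delicate point and is handled by running the analysis with constants depending only on $P$.
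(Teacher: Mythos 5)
Your proposal captures the right overall skeleton: fix $(a_1,a_2,a_3)\in\cC\cap\cG$, translate the divisibility $K\mid(\alpha)$ and the sieve conditions into residue classes for $a_0$ via Lemma \ref{r}, count local solutions via Lemmas \ref{B14p} and \ref{B13B14N}, pass to the integral $I(a_1,a_2,a_3)$ by partial summation, and separately evaluate the $K$-sum ($\log(5/4)$) and the linear-sieve sum ($f(3)=\tfrac{2\e^\gamma\log 2}{3}$ times the singular product). However, the mechanism by which the paper obtains the $h(q)$ factor is different from what you describe: the paper does not "change the sieve density" at primes $p\mid q$. Rather, it runs a \emph{separate} M\"obius inversion to detect the coprimality $(q,B_{14})=1$ of condition (C5)(d), writing $\1_{(q,B_{14})=1}=\sum_{r\mid(q,B_{14})}\mu(r)$, splits $r=r_1'r_2'$ with $r_1'\mid N_P(KA)$ and $(r_2',N_P(KA))=1$, and then uses CRT together with Lemmas \ref{B14p}, \ref{B13B14N} to get $|S(r_1',r_2')|=2^{\omega(r_2')}$; the resulting M\"obius sum $\sum_{r_2'\mid q}\mu(r_2')2^{\omega(r_2')}/r_2'=\prod_{p\mid q}(1-2/p)$ is what produces the $(1-2/p)$ factors. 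These are on the large primes $p\mid q$, disjoint in spirit from the small sieve primes $p\le X^{\theta_0}$; your description risks conflating the two conditions. This is a morally equivalent route, but it needs to be made precise before it can be trusted.

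There is a genuine gap in the constant bookkeeping. Your stated ingredients --- $f(3)V(X^{\theta_0})$, $\log(5/4)$, $h(q)$, and $I(a_1,a_2,a_3)$ --- produce exactly $f(3)\log(5/4)=\tfrac{2\e^\gamma\log 2}{3}\log(5/4)$ in front of $\prod_{p<X^{\theta_0}}(1-g(p)/p)\,S_{01}$. You then invoke "the multiplicity factor (each ideal $(\alpha)\in\cJ$ may have several $K\in\cK$ dividing it)" to manufacture the remaining factor needed to match $\tfrac{4\e^\gamma}{3}\log(5/4)\log 2$. This is not valid: by construction $S_0=\sum_{K\in\cK}\sum_{KL\in\cJ(K)}(\cdots)$ is already indexed by \emph{pairs} $(K,L)$ rather than by the ideal $KL$, and the evaluation $\sum_{K\in\cK,(N_P(K),q)=1}1/N_P(K)=\log(5/4)+o(1)$ is precisely the full accounting of all such pairs; layering an additional "combinatorial count in (C3)" on top would double-count. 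The number of $K\in\cK$ dividing $(\alpha)$ is also not a constant, so even informally this cannot produce a fixed multiplicative factor. You should either track the factor through the paper's explicit $r_1'$, $r_2'$ decomposition and CRT computation, or acknowledge that the constant does not come out of the argument as you have set it up. (Smaller: the threshold for the error term is $12\theta_0+22\alpha_0<1$, coming from the paper's bound $O(X^{-1/4+3\theta_0+21\alpha_0/4+o(1)})$ for the partial-summation error summed over $K$, $A$, and $(a_1,a_2,a_3)$; it is not a $12\theta_0+19\alpha_0<1$ baseline plus absorptions.)
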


%
%
\begin{proof}
We want to isolate the variable $a_0$. We note that the condition $(\alpha)\in\cJ$ implies that $(q(a_1,a_2,a_3),B_{14}(a_0,a_1,a_2,a_3))=1$ and that $(a_0,a_1,a_2,a_3)\in \cR$ but otherwise there are no further dependencies between $a_0$ and $a_1,a_2,a_3$. We use Möbius inversion to detect the condition $(q,B_{14})=1$ when evaluated at $a_0,a_1,a_2,a_3$. This give rise to a  squarefree $r|(q,B_{14})$ which we decompose as $r=r_1'r_2'$
with $r_1'|N_P(KA)$ and $(r_2',N_P(KA))=1$. This yields
\begin{equation}\begin{split}
    S_0&=\sum_{K\in\cK}\sum_{A}\lambda_{N_P(A)}^-
    \sum_{(a_1,a_2,a_3)\in \cC\cap\cG}\sum_{\substack{{r'_1|N_P(KA)}\\ {r'_1|q(a_1,a_2,a_3)}}}\mu (r'_1)\\
    &\times\sum_{\substack{{r_2'|q(a_1,a_2,a_3)}\\{(r_2',N_P(KA))=1}}} \mu (r_2')
    \sum_{\tilde a_0\in S(r_1',r_2')}
    \sum_{\substack{{a_0\in\cD (a_1,a_2,a_3)}\\ {a_0\equiv \tilde a_0\Mod{r_2'N_P(KA)}}}}\frac{1}{N_P(\alpha )},
    \end{split}
    \label{eq:S0eq}
\end{equation}
where $\cC$, $\cG$ are as in \eqref{eq:CRDef} and \eqref{eq:GDef}
\begin{equation}
    S(r_1',r_2'):=\{ 0\le a_0\le  r_2'N_P(KA): r_1'r_2'| B_{14}(a_0,a_1,a_2,a_3),\, KA | (\alpha)\}.\label{eq:Sr1r2}
\end{equation}
(We have suppressed the dependence of $S(r_1',r_2')$ on $a_1,a_2,a_3$ for notational convenience.) The inner sum over $a_0$ is now over points in an interval with a congruence constraint, and so by partial summation (and recalling from \eqref{defRP} that $N_P(\alpha)\gg X^{1+\alpha_0/2}$ for all $\aa\in \cR$), we obtain
\begin{equation}
\sum_{\substack{{a_0\in\cD (a_1,a_2,a_3)}\\ {a_0\equiv \tilde a_0\Mod{r_2'N_P(KA)}}}}\frac{1}{N_P(\alpha )}=\frac{I (a_1,a_2,a_3)}{r_2'N_P(AK)}+O\Bigl(\frac{1}{X^{1+\alpha_0/2}}\Bigr).
\label{eq:a0sum}
\end{equation}
The $O(X^{-(1+\alpha_0/2)})$ error term in \eqref{eq:a0sum} contributes to $S_0$ a total
\[
\ll \frac{1}{X^{1+\alpha_0/2-o(1)}}\sum_{N_P(K)\ll X^{5\alpha_0}}\sum_{N_P(A)\le X^{3\theta_0}}\sum_{(a_1,a_2,a_3)\in\cC}1\ll X^{-1/4 +3\theta_0+21\alpha_0/4+o(1)}.
\]
(Recall that if $\aa\in\cR$ then $\|\aa\|_\infty \ll X^{(1+\alpha_0)/4}$ by our choice of fundamental domain). This is $O(X^{-\alpha_0/4+o(1)})$ if $12\theta_0+22\alpha_0<1$, as in the assumptions of the lemma.

Thus we are left to consider the contribution from the main term of \eqref{eq:a0sum}, namely
\begin{equation}
    \sum_{(a_1,a_2,a_3)\in \cC}\sum_{K\in\cK}\sum_{A}\lambda_{N_P(A)}^-\sum_{\substack{ r'_1r_2'|q(a_1,a_2,a_3)\\ r'_1|N_P(KA)\\ (r_2',N_P(KA))=1}} \mu(r_1')\mu (r_2')\frac{|S(r_1',r_2')|I (a_1,a_2,a_3)}{r_2'N_P(AK)}.
\label{eq:S0Main}
\end{equation}
 By the Chinese Remainder Theorem, we have 
\begin{equation}
|S(r_1',r_2')|=\prod_{p| r_2'N_P(KA)}|S(r_1',r_2',p)|,
\label{eq:CRT}
\end{equation}
where
\begin{equation*}
    |S(r_1',r_2',p)|:=
    \begin{cases} |\{ 0\le a_0<p : p| (B_{14}(a_0,a_1,a_2,a_3), N_P(\alpha ))\}|, & \text{if}\ p|r_1',\\
    |\{ 0\le a_0 <p : p| B_{14}(a_0,a_1,a_2,a_3) \} |, & \text{if}\ p|r_2',\\
    |\{ 0\le a_0 <p : p|N_P (\alpha )\}|, & \text{if}\ p|N_P(KA)/r_1'.\end{cases}
\end{equation*}
We compute $|S(r_1',r_2',p)|$ using Lemmas \ref{B14p} and
\ref{B13B14N}. Under the condition $P^-(q)>q_0$ we find
\begin{equation*}
    |S(r_1',r_2',p)|=\begin{cases}
    2 & \text{if}\ p|r_2', \\
     1 & \text{if}\ p|N_P(KA).  
    \end{cases}
\end{equation*}
Using this bound in \eqref{eq:CRT} gives
\begin{equation*}
    |S(r_1',r_2')|=2^{\omega (r_2')}.
\end{equation*}
Inserting this in the previous expression \eqref{eq:S0Main} for the main term of $S_0$, we see that the sum over $r_1'$ is $1$ if $(q(a_1,a_2,a_3),N_P(KA))=1$, and $0$ otherwise. Thus the expression \eqref{eq:S0Main} simplifies to
\[
\sum_{(a_1,a_2,a_3)\in \cC}I (a_1,a_2,a_3)\Bigl(\sum_{r_2'|q(a_1,a_2,a_3)} \frac{\mu (r_2')2^{\omega(r_2')}}{r_2'}\Bigr)h_1(q(a_1,a_2,a_3)),
\]
where
\[
h_1(q):=\Biggl(\sum_{\substack{K\in\cK\\ (N_P(K),q)=1}}\frac{1}{N_P(K)}\Biggr)\Biggl(\sum_{(N_P(A),q)=1}\frac{\lambda_{N_P(A)}^- }{N_P(A)}\Biggr).
\]
Recalling that $\cK$ is the set of prime ideals with norm between $X^{4\alpha_0}$ and $X^{5\alpha_0}$, we see that for $q\ll X^{O(1)}$
\begin{align*}
\sum_{\substack{K\in \cK\\ (N_P(K),q)=1}}\frac{1}{N_P(K)}&=\log(5/4)+o(1),\\
\sum_{\substack{(N_P(A),q)=1}}\frac{\lambda^-_{N_P(A)}}{N_P(A)}&=\sum_{\substack{d\le X^{3\theta_0}\\ (d,q)=1}}\frac{\lambda^-_{d}g(d)}{d}\\
&=\Bigl(\frac{2 e^\gamma\log{2}}{3}+o(1)\Bigr)\prod_{p<X^{\theta_0}}\Bigl(1-\frac{g(p)}{p}\Bigr)\prod_{\substack{p|q\\ p\le X^{\theta_0}}}\Bigl(1-\frac{g(p)}{p}\Bigr)^{-1}.
\end{align*}
Here we used the fact that the linear sieve lower bound function evaluated at 3 is $2 e^\gamma\log{2}/3$. Putting these expressions together now gives the result.
\end{proof}
%
%
\subsection{Splitting into small boxes}
%
%
We see from condition (C2) that if $\af\in\cJ$ then $\af=(a_0+a_1r_1+a_2r_1^2+a_3r_1^3)$ for some $\a\in\Z^4$ which lies in the region $\cR$ given by \eqref{defRP}. We recall that $\eta_1 =(\log x)^{-100}$. We cover the region $\cR$ by hyper-rectangles of type
\begin{equation}\label{HyperH}
 \cH =]A_0,A_0+\eta_1 A_0]\times]A_1,A_1(1+\eta_1)]\times]A_2,A_2(1+\eta _1)]\times]A_3,A_3(1+\eta _1)].
 \end{equation}
 The number of such hyper-rectangles is $O(\eta_1^{-4})(\log X)^4=O(\eta_1^{-5})$.
 
 Furthermore the contribution to $S_{01}$ from hyper-rectangles such that $\min (|A_i|)\le X^{1/4-7\alpha_0/8}$ 
 is $O(X^{1-\alpha_0/8+\varepsilon})$ which is sufficiently small.
 
We will say that $\cH$ is a `good' hyper-rectangle if $\cH\subset\cR$ and
\begin{equation}\label{goodcube}
\begin{split}
\min (|A_0|,|A_1|,|A_2|,|A_3|)&\ge X^{1/4-7\alpha_0/8}, \\
\min (|A_0|,|A_1|,|A_2|,|A_3|)&\ge\eta_1\max (|A_0|,|A_1|, |A_2|, |A_3|),\\
q_1 (A_1,A_2,A_3)&\ge \eta_1^{1/10} \max (|A_1|,|A_2|,|A_3|)^4.
\end{split}
\end{equation}
If $\cH$ is not `good' then we say $\cH$ is `bad'. We note that the second and third assertions in this definition corresponds to the 
conditions \eqref{eq:Xi1} and \eqref{eq:Xi2}. 

We denote by $\sH_\cR$ the set of all good hyper-rectangles. To each hyper-rectangle $\cH$ we associate its projection to $\R^3$ by ignoring $a_0$:
\begin{equation}\label{HyperH'}
 \cH' =]A_1,A_1(1+\eta_1)]\times]A_2,A_2(1+\eta_1)]\times]A_3,A_3(1+\eta_1)].
 \end{equation}
%
%
\begin{lmm}[Splitting into small boxes]\label{lmm:S02}
Let $S_{01}$ be as in Lemma \ref{lmm:S01}. We have that 
\[
S_{01}\gg \sum_{\cH\in\sH_\cR}\frac{A_0 \eta_1}{\widetilde{N}_P(A_0,A_1,A_2,A_3)}S_{02}(\cH),
\]
where
\[
S_{02}(\cH):=\sum_{\substack{{q_{ij}\in [X^{\theta_{ij}},X^{\theta_{ij}+\tau_{ij}}]}\\
(i,j)\in I_\cC\\ {q_{21}\equiv 1\Mod{D_{q_2}}}}}\sum_{\substack{{(a_1,a_2,a_3)\in\cH'}\\ {\prod_{j=1}^6 q_{1j}|q_1(a_1,a_2,a_3)}\\ {q_{21}|q_2(a_1,a_2,a_3)}\\
{ (q(a_1,a_2,a_3),q_3(a_1,a_2,a_3))=1}\\ {(q,a_2a_3)=1}\\
{(a_2,a_3)=30,\ a_1\equiv 1\Mod{30}}\\
a_2,a_3\equiv 30\Mod{900}
}}h(q(a_1,a_2,a_3)).
\]
\end{lmm}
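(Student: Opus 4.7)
The plan is to partition the 4D region $\cR$ into small hyper-rectangles $\cH$ of the form \eqref{HyperH}. Inside each good hyper-rectangle the integrand $1/\widetilde{N}_P$ varies very little, so the integral $I(a_1,a_2,a_3)$ can be replaced (up to a $1+o(1)$ factor) by its value at a corner, namely $\eta_1 A_0/\widetilde{N}_P(A_0,A_1,A_2,A_3)$. This pulls the factor out of the sum, and what remains should match $S_{02}(\cH)$.

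First, I cover $\cR$ by hyper-rectangles as in \eqref{HyperH} (there are $O(\eta_1^{-5})$ of these) and separate the good ones (satisfying \eqref{goodcube}) from the bad ones. The excerpt already records that those with $\min_i |A_i| \le X^{1/4-7\alpha_0/8}$ contribute at most $O(X^{1-\alpha_0/8+\varepsilon})$ to $S_{01}$, which is negligible. The remaining bad cases (coordinates too unbalanced, or $q_1$ too small relative to $\max_i|A_i|^4$) are handled by similar elementary counting bounds using the homogeneity of $\widetilde{N}_P$ and $q_1$.

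Second, for each good $\cH$ and each $(a_1,a_2,a_3)\in\cH'$, I approximate $I(a_1,a_2,a_3)$. The conditions \eqref{goodcube} ensure that within $\cH$, $\widetilde{N}_P$ varies by a factor of only $1+O(\eta_1^{0.9})$ (since no coordinate dominates and $\widetilde{N}_P\gg \eta_1^{1/10}\max_i|A_i|^4$), so
\[
I(a_1,a_2,a_3) \ge (1-O(\eta_1^{0.9})) \frac{\eta_1 A_0}{\widetilde{N}_P(A_0,A_1,A_2,A_3)}
\]
uniformly for $(a_1,a_2,a_3)\in\cH'$. This lets me pull the factor $\eta_1 A_0/\widetilde{N}_P(A_0,A_1,A_2,A_3)$ out of the inner sum.

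Finally, I compare the remaining sum to $S_{02}(\cH)$. For any $(a_1,a_2,a_3) \in \cG\cap\cH'$, condition (C4) of $\cJ$ supplies a unique tuple $(q_{11},\ldots,q_{16},q_{21})$ satisfying the inner constraints of $S_{02}$, since the intervals $[X^{\theta_{ij}},X^{\theta_{ij}+\tau_{ij}}]$ are pairwise disjoint by \eqref{eq:Con10} and $q$ is squarefree (via $\mu^2$ in $h$). Moreover the remaining $a_0$-independent conditions of $\cJ$, namely (C1) and (C5)(a,c,e), are exactly captured by the $S_{02}$ inner conditions together with $\mu^2(q)\1_{P^-(q)>q_0}$ from $h$ (note also that $P^-(q)>q_0$ forces $P^-(q_{17}), P^-(q_{22})>q_0$). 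The main obstacle will be the reverse containment: showing that every triple contributing to $S_{02}(\cH)$ actually lies in $\cG$, i.e.\ admits some integer $a_0\in\cD(a_1,a_2,a_3)$ for which all of $\cJ$'s $a_0$-dependent conditions (C2), (C3), (C5)(b), (C5)(d) hold simultaneously. This will follow from the fact that in a good hyper-rectangle $\cD(a_1,a_2,a_3)$ has length $\gg X^{(1+\alpha_0)/4}$, so that (C2) holds for a positive density of $a_0$, and standard counting arguments show that a positive fraction of these $a_0$ yield $N_P(\alpha)$ admitting a prime ideal factor in $\cK$ while remaining coprime to $q_0$ and with $B_{14}(\alpha)$ coprime to $q$.
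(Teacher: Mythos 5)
Your overall plan — partition $\cR$ into the hyperrectangles \eqref{HyperH}, discard the bad ones, freeze the integrand $1/\widetilde{N}_P$ at the corner $(A_0,A_1,A_2,A_3)$, and compare the resulting sum with $S_{02}(\cH)$ by bounding the multiplicity of each triple — is exactly the structure of the paper's (very short) proof. Two points, however.

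First, a small one: for a triple in $\cG$ the choice of $q_{11},\dots,q_{16}$ is indeed unique (by \eqref{eq:Con10}, squarefreeness of $q$, and the fact that $q_{17}<X^{\min\theta_{ij}}$ by \eqref{eq:Con5}), but $q_{21}$ need not be unique: since $\theta_{21}$ is tiny and $q_{22}\ll X^{(1+\alpha_0)/2-\theta_{21}}$, the factor $q_{22}$ can carry several primes in $[X^{\theta_{21}},X^{\theta_{21}+\tau_{21}}]$ congruent to $1\Mod{D_{q_2}}$. All that the argument needs (and all the paper claims) is that the number of admissible tuples $(q_{ij})_{(i,j)\in I_\cC}$ is $O(1)$, which follows from the size bound $q_2\ll X^{(1+\alpha_0)/2}$.

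Second, and more seriously, your last paragraph is a genuine gap. You correctly observe that for the inequality $S_{01}'(\cH)\gg\frac{\eta_1A_0}{\widetilde{N}_P}S_{02}(\cH)$ one needs, in addition to the $O(1)$ multiplicity bound, that every triple in $\cH'$ contributing to $S_{02}(\cH)$ actually lies in $\cG$, i.e.\ admits an integer $a_0\in\cD(a_1,a_2,a_3)$ satisfying the $a_0$-dependent conditions (C2), (C3), (C5)(b), (C5)(d). But the ``standard counting argument'' you invoke does not go through. Condition (C3) holds only for a proportion $\asymp 1/\log X$ of $a_0$ (not a positive fraction), which one could live with. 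The real trouble is (C5)(d): the requirement $(q(a_1,a_2,a_3),B_{14}(a_0,a_1,a_2,a_3))=1$ is a coprimality constraint modulo $q\ge X^{3/2}$, while $a_0$ ranges over an interval of length $\eta_1 A_0\ll X^{(1+\alpha_0)/4}$, vastly shorter than the modulus. For primes $p\mid q$ with $p>\eta_1 A_0$ the excluded residues can be missed or hit entirely depending on the specific $q$, and no elementary sieve estimate in such a short interval produces a nonempty set of good $a_0$ uniformly in $(a_1,a_2,a_3)$. So the reverse containment cannot simply be dismissed by equidistribution. Note also that the paper's own two-sentence proof does not attempt this verification at all; it records only the $O(1)$ multiplicity. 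The resolution is that $\cG$ is, in effect, being used as the set of triples satisfying the $a_0$-\emph{independent} conditions (C1), (C4), (C5)(a,c,e), with the $a_0$-dependent constraints absorbed into the sum over $a_0$ in Lemma \ref{lmm:S01} rather than into a separate existence hypothesis on $(a_1,a_2,a_3)$; with that reading the containment you worry about is tautological and the lemma follows directly from the box decomposition and the $O(1)$ multiplicity. Your density argument, as written, does not establish it.
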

%
%
We recall from \eqref{eq:RealNDef} that $\widetilde{N}_P(a_0,a_1,a_2,a_3)$ is the quartic form coinciding with $N_P(a_0+a_1r_1+a_2r_1^2+a_3r_1^3)$ on integers.
%
%
\begin{proof}
By splitting the sum over $a_1,a_2,a_3$ and the integral over $a_0$ into the hyperrectangles $\cH$, and then restricting only to good hyperrectangles for a lower bound, we find
\begin{equation*}
S_{01}\ge \sum_{\cH\in\sH_{\cR}}S_{01}'(\cH),
\end{equation*}
where 
\begin{align*}
S_{01}'(\cH)&:=\sum_{(a_1,a_2,a_3)\in\cC\cap\cH'\cap \cG}h(q(a_1,a_2,a_3))I_\cH (a_1,a_2,a_3),\\
I_\cH (a_1,a_2,a_3)&:=\int_{A_0}^{A_0(1+\eta_1)}
\frac{\d a_0}{\widetilde{N}_P(a_0,a_1,a_2,a_3)}=\frac{A_0\eta_1(1+o(1))}{\widetilde{N}_P(A_0,A_1,A_2,A_3)}.
\end{align*}
We recall from \eqref{qij} that if $(a_1,a_2,a_3)\in \cG$ then $q_1(a_1,a_2,a_3)$ and $q_2(a_1,a_2,a_3)$ factor as $\prod_{i=1}^6 q_{1i}$ and $q_{21}q_{22}$ respectively with $q_{21},q_{11},q_{12},q_{13},q_{14},q_{15}$ primes satisfying $q_{ij}\ge X^{\theta_{ij}}$. In particular, we see that for any choice of $a_1,a_2,a_3$ there are $O(1)$ choices of $q_{ij}$ such that $q_i(a_1,a_2,a_2)=\prod_{j}q_{ij}$. Thus, summing over these representations, we find
\begin{equation*}\begin{split}
S_{01}(\cH)&\gg \frac{A_0\eta_1}{\widetilde{N}_P(A_0,A_1,A_2,A_3)}S_{02}(\cH),
\end{split}
\end{equation*}
say, with $S_{02}(\cH)$ as given by the lemma and $I_\cC$ defined in \eqref{defIC}. This gives the result.
\end{proof}
%
%
%
\subsection{Preparation for the application of Theorem \ref{DistriNorm}}
\label{Prep}%
%
Following \cite[Section 6.2]{B15} or \cite[Section 15]{D15}, we do several manipulations in order to take care of the different coprimality conditions and the  multiplicative weight $h(q)$. In our situation it is important that we are slightly more careful than these previous works. We do not impose congruence conditions to moduli larger than $(\log{X})^{O(1)}$ since this would cause issues related to Siegel zeros (the argument of the previous papers would introduce a congruence constraint of modulus $X^{t_0}$ for some $t_0>0$). This means we need to be careful not to lose the fact that when $(a_0,a_1,a_2,a_3)\in\cH$, the $a_i$ are in small intervals. 
%
%
 Let 
\begin{equation}
  Z:=(\log X)^{\lambda_0}, \qquad Z':=X^{\alpha_0/10000},
 \label{eq:ZDef}
 \end{equation}
 where $\alpha_0$ is the constant used to define the set $\cK$ (which will be chosen sufficiently small later on) and $\lambda_0$ is a fixed constant (which will be chosen sufficiently large). From the bound \eqref{goodcube}, we certainly note that since $\alpha_0<1$ we have
 \begin{equation}
 Z^{1000}<Z'^{100}<\min(A_0,A_1,A_2,A_3).\label{eq:ZSize}
 \end{equation}
 %
%
For brevity we will write
\begin{equation}\label{defNH}
N_\cH =\widetilde{N}_P(A_0,A_1,A_2,A_3).
\end{equation}
 \begin{lmm}[Removing the condition $(q,q_3)=1$]\label{lmm:S03}
 Let $S_{02}(\cH)$ be as in Lemma \ref{lmm:S02}. Then we have
 \[
 S_{02}(\cH)=S_{03}(\cH)+O\Bigl(\frac{ \eta_1^3  A_1A_2A_3}{Z^{3/4}}\Bigr),
 \]
 where
 \[
      S_{03}(\cH):=\sum_{d\le Z}\mu_(d)\sum_{\substack{{q_{ij}\in [X^{\theta_{ij}},X^{\theta_{ij}+\tau_{ij}}]}\\ (i,j)\in I_\cC\\ q_{21}\equiv 1\Mod{D_{q_2}}}}\sum_{\substack{{(a_1,a_2,a_3)\in\cH'}\\ {\prod_{j=1}^6q_{1j}|q_1(a_1,a_2,a_3)}\\ {q_{21}|q_2(a_1,a_2,a_3)}\\ d|q(a_1,a_2,a_3)\\ d|q_3(a_1,a_2,a_3) \\
{ (q(a_1,a_2,a_3),q_3(a_1,a_2,a_3))=1}\\ {(q,a_2a_3)=1}\\
(a_2,a_3)=30,\ a_1\equiv 1\Mod{30}\\ a_2,a_3\equiv 30\Mod{900}}}h(q(a_1,a_2,a_3)).
 \]
 \end{lmm}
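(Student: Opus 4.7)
The sums $S_{02}(\cH)$ and $S_{03}(\cH)$ differ only in the treatment of the coprimality $(q(a_1,a_2,a_3), q_3(a_1,a_2,a_3)) = 1$: in $S_{02}$ it is imposed as an exact indicator, whereas the intended $S_{03}$ arises by replacing that indicator with the truncated M\"obius expansion $\sum_{d\mid (q,q_3),\,d \le Z}\mu(d)$. The full identity $\mathbf{1}_{(q,q_3)=1} = \sum_{d\mid (q,q_3)}\mu(d)$, split at $d = Z$, gives
\[
S_{02}(\cH) - S_{03}(\cH) = -\sum_{\substack{(a_1,a_2,a_3)\in\cH'\\ \text{other conditions}}} h(q(a_1,a_2,a_3))\sum_{\substack{d\mid (q,q_3)\\ d > Z}}\mu(d),
\]
so the task reduces to bounding this expression by $O(\eta_1^3 A_1 A_2 A_3 / Z^{3/4})$.

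After interchanging the order of summation and using $|\mu(d)|\le 1$ together with the crude bound $h(q)\ll X^\epsilon$ (valid on the squarefree support of $h$, by \eqref{eq:HDef}), it suffices to prove
\[
\sum_{d > Z}\#\{(a_1,a_2,a_3)\in\cH' : d\mid q,\ d\mid q_3\}\ll \frac{\eta_1^3 A_1 A_2 A_3}{Z^{1-\epsilon}},
\]
where the effective range is $Z < d \ll X^{O(1)}$ since $(q,q_3)\mid q_3\ll X^{O(1)}$ on $\cH'$. The key algebraic input is that $q$ and $q_3$ are coprime as elements of $\Q[a_1,a_2,a_3]$: from the explicit formula \eqref{deff} for $q_3$ together with the factorisation $q=q_1 q_2$ of Lemma \ref{lmm:q}, one verifies that $q_3$ (irreducible of degree $2$, with vanishing $a_1^2$-coefficient) cannot divide any product of two of the linear factors of $q_1 q_2$ displayed in \eqref{defq1}, \eqref{defq2} (each of which has $a_1$-coefficient equal to $1$, by \eqref{rij}). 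Consequently the resultant of $q$ and $q_3$ with respect to $a_1$ is a nonzero polynomial in $a_2,a_3$, so for all primes $p$ outside a finite set depending only on $P$ the reductions $q\bmod p$ and $q_3\bmod p$ share no common factor.

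For such good primes $p$, the affine variety $\{q=q_3=0\}$ in $\mathbb{F}_p^3$ is $1$-dimensional and so has $O(p)$ points (by Bezout or Lang--Weil). By the Chinese Remainder Theorem, the number of residues modulo a squarefree $d$ composed of good primes with $d\mid q$ and $d\mid q_3$ is $O_\epsilon(d^{1+\epsilon})$. Since the box $\cH'$ has side-lengths $\eta_1 A_i \gg Z^{1000}$ by \eqref{eq:ZSize}, the Lipschitz principle yields
\[
\#\{(a_1,a_2,a_3)\in\cH': d\mid q,\ d\mid q_3\}\ll \frac{\eta_1^3 A_1 A_2 A_3}{d^{2-\epsilon}}
\]
plus boundary terms of smaller order. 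Summing over $Z < d \ll X^{O(1)}$ produces the required bound once $Z = (\log X)^{\lambda_0}$ with $\lambda_0$ sufficiently large.

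\textbf{Main obstacle.} The principal difficulty is algebraic rather than analytic: verifying polynomial coprimality of $q$ and $q_3$ (done above via the $a_1^2$-coefficient argument), and ensuring that the finitely many bad primes arising from the resultant are absorbed by the restriction $P^-(q) > q_0$ in the definition of $h$ (cf.\ \eqref{defq0}, \eqref{eq:HDef}). Once these are settled, the fact that $q$ and $q_3$ cut out a codimension-$2$ set modulo each good prime delivers the decay $d^{-2+\epsilon}$ that makes the $d$-summation converge rapidly enough to beat any fixed power of $Z$.
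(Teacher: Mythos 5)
Your M\"obius inversion and the algebraic input (coprimality of $q$ and $q_3$ in $\Q[a_1,a_2,a_3]$, giving $O(p)$ common zeros in $\F_p^3$) are both sound, and for moderate $d$ your lattice-count argument is essentially the paper's estimate for the intermediate range. The gap is in your handling of large $d$. You invoke a Lipschitz-principle bound
\[
\#\{(a_1,a_2,a_3)\in\cH':\,d\mid q,\ d\mid q_3\}\ll \frac{\eta_1^3 A_1A_2A_3}{d^{2-\epsilon}}
\]
citing $\eta_1 A_i\gg Z^{1000}$, but $d$ runs up to $|q_3(\aa)|\ll X^{O(1)}$, which is vastly larger than the box side-lengths $\eta_1A_i\ll X^{(1+\alpha_0)/4}$. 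Once $d$ exceeds $\eta_1\min_i A_i$, each residue class modulo $d$ contributes at most one point to $\cH'$, so the count is $\min\bigl(O(d^{1+\epsilon}),\ \eta_1^3 A_1A_2A_3\bigr)$, not $\eta_1^3A_1A_2A_3/d^{2-\epsilon}$; the ``boundary terms'' you wave away are the dominant contribution, and $\sum_{d>Z}d^{1+\epsilon}$ is not remotely convergent. Your claimed bound thus simply does not hold for large $d$, and the summation over $d$ does not close.

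The paper avoids this by introducing an intermediate cutoff $Z'=X^{\alpha_0/10000}$ (which does satisfy $Z'\ll\eta_1 A_i$) and treating the two ranges by different methods. For $Z<d\le Z'$ it uses the substitution $a_1\equiv\ov{a_3}(a_2^2-c_3a_2a_3-c_2a_3^2)\Mod d$ coming from $q_3\equiv 0\Mod d$, reducing $d\mid q$ to $d\mid Q(a_2,a_3)$ for an explicit nonzero two-variable polynomial $Q$; this is the rigorous form of your ``codimension-two'' count and is valid because $d\le Z'\ll\eta_1A_i$. For $d>Z'$ it abandons lattice counting entirely and \emph{sums over $(a_2,a_3)$ first}: since $Q(a_2,a_3)\ne 0$ generically, the divisor bound gives only $O(X^\epsilon)$ admissible values of $d$ per $(a_2,a_3)$, and each forces $a_1$ into a residue class modulo $d>Z'$, which contributes $O(\eta_1A_1/Z')$ per pair. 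A separate $O(1)$-per-$a_2$ argument disposes of the thin set $Q(a_2,a_3)=0$. You need this second, divisor-bound-based regime; without it your argument fails for $d\gtrsim\eta_1\min_iA_i$, which is the bulk of the range.
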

%
%
\begin{proof}
 First, we detect the condition $(q,q_3)=1$ via Möbius inversion
 \begin{equation*}
     S_{02}(\cH)=\sum_{\substack{{q_{ij}\in [X^{\theta_{ij}},X^{\theta_{ij}+\tau_{ij}}]}\\ (i,j)\in I_\cC \\ q_{21}\equiv 1\Mod{D_{q_2}} }}
\sum_{\substack{{(a_1,a_2,a_3)\in\cH'}\\ {\prod_{j=1}^6q_{1j}|q_1(a_1,a_2,a_3)}\\ {q_{21}|q_2(a_1,a_2,a_3)}\\
{ (q(a_1,a_2,a_3),q_3(a_1,a_2,a_3))=1}\\ {(q,a_2a_3)=1}\\
{(a_2,a_3)=30,\ a_1\equiv 1\Mod{30}}\\
{a_2,a_3\equiv 30\Mod{900}}}}h(q(a_1,a_2,a_3))\sum_{\substack{{d|q(a_1,a_2,a_3)}\\ {d|q_3(a_1,a_2,a_3)}}}\mu (d).
 \end{equation*}
 
 We split $S_{02}(\cH)$ into three sums,  
 \[
 S_{02} (\cH)=S_{03}(\cH)+U_{21}(\cH)+U_{22}(\cH),
 \]
where $S_{03} (\cH)$ is the contribution of the terms in $S_{02}(\cH)$ with $d\le Z$, $U_{21}(\cH)$ is the contribution from $Z<d\le Z'$ and $U_{22}(\cH)$ is the contribution from $d>Z'$. We note that $S_{03}(\cH)$ is as given in the lemma, so we are left to bound $U_{21}(\cH)$ and $U_{22}(\cH)$.
 
First we  bound $U_{21}$.  Recall that $q_3(a_1,a_2,a_3)=-a_1a_3+a_2^2-c_3a_2a_3-c_2a_3^2$, so the condition  $q_3\equiv 0\Mod d$ implies that 
 $a_1\equiv \ov{a_3} (a_2^2-c_3a_2a_3-c_2a_3^2)\Mod d$. (We restrict ourselves to $(a_3,q(a_1,a_2,a_3))=1$ so $(a_3,d)=1$.) Inserting
 this into the condition $q(a_1,a_2,a_3)\equiv 0\Mod d$ and multiplying by $a_3^6$ gives
 $Q(a_2,a_3):=q(a_2^2-c_3a_2a_3-c_2a_3^2,a_2a_3,a_3^2)\equiv 0\Mod d$, for a polynomial $Q(a_2,a_3)$ which is of degree $12$ in $a_2$ (and non-zero). For any given $a_3$
 the number of roots of $Q(a_2,a_3)\Mod{d}$ is $O(12^{\omega (d)})$.  For any choice of $a_1,a_2,a_3$ under consideration, there are $O(1)$ choices of primes $q_{ij}\in [X^{\theta _{ij}},X^{\theta_{ij}+\tau_{ij}}]$ with $q_{ij}|q_1(a_1,a_2,a_3)q_2(a_1,a_2,a_3)$. Thus, letting $b(a_2,a_3)=\ov{a_3} (a_2^2-c_3a_2a_3-c_2a_3^2)$, and noting $Z'<A_i^{0.99}$ (recall \eqref{eq:ZSize}), we deduce
 \begin{align*}
 U_{21}(\cH)& \ll\sum_{Z<d\le Z'}\sum_{a_3\in [A_3,A_3(1+\eta_1)]}\sum_{\substack{a_2\in [A_2,A_2(1+\eta_1)]\\ Q(a_2,a_3)\equiv 0\Mod d}}\sum_{\substack{a_1\in [A_1,A_1(1+\eta_1)]\\ a_1\equiv  b(a_2,a_3)\Mod d}}1\\
&\ll A_1A_2A_3\eta_1^3\sum_{Z<d<Z'}\frac{12^{\omega (d)}}{d^2}\ll A_1A_2A_3\eta_1^3Z^{-3/4}.
 \end{align*}
 
 We now consider $U_{22}$. Since $Q(a_2,a_3)\equiv 0\Mod{d}$, if $Q(a_2,a_3)\ne 0$ there are $O(X^\epsilon)$ choices of $d$ given $a_2,a_3$.
 We have $Q(a_2,a_3)=0$ if and only if $\exists (i,j)$ such that 
 \begin{equation*}
     (a_2^2-c_3a_2a_3-c_2a_3^2)+(r_i+r_j)a_2a_3+a_3^2 (r_i^2+r_ir_j+r_j^2)=0,
 \end{equation*}
which rearranges to
 \begin{equation*}
     a_2^2+ a_2a_3 (r_i+r_j-c_3)+a_3^2 (r_i^2+r_ir_j+r_j^2-c_2)=0.
 \end{equation*}
 Since $a_3\not =0$, $a_2/a_3$ is a root of $X^2+(r_i+r_j-c_3)X+r_i^2+r_ir_j+r_j^2-c_2$ and there are at most two such roots. Thus for each choice of $a_2$ there are at most 2 choices of $a_3$ such that $Q(a_2,a_3)=0$. Moreover, in this case we still have $d|q_3(a_1,a_2,a_3)\ne0$, so there are $O(X^\epsilon)$ choices of $d$ given $a_1,a_2,a_3$. We deduce that (using $Z'\ll A_1,A_3$)
 \begin{align*}
        U_{22} (\cH)&\ll \sum_{d>Z'}\mu^2 (d)\sum_{\substack{(a_1,a_2,a_3)\in\cH'\\ Q(a_2,a_3)\ne 0\\ a_1\equiv b(a_2,a_3)\Mod d}}1+\sum_{d>Z'}\sum_{\substack{(a_1,a_2,a_3)\in\cH'\\ Q(a_2,a_3)= 0 \\ d|q_3(a_1,a_2,a_3)}}\mu ^2 (d)\\        
        &\ll \sum_{a_2\in [A_2,A_2 (1+\eta_1)]}\sum_{\substack{a_3\in  [A_3,A_3(1+\eta_1)] \\ Q(a_2,a_3)\ne 0}}\sum_{\substack{d>Z'\\ d|Q(a_2,a_3)\\ \mu^2 (d)=1}}\sum_{\substack{a_1\in[A_1, A_1 (1+\eta_1)]\\ a_1\equiv b(a_2,a_3)\Mod d}}1\\
        &\qquad +\sum_{a_1\ll A_1, a_2\ll A_2}\sum_{\substack{0<a_3\ll A_3\\ Q(a_2,a_3)=0}}\sum_{d|q_3(a_1,a_2,a_3)}1\\
         &\ll \frac{A_1}{Z'}\sum_{\substack{a_2\in  [A_2,A_2(1+\eta_1)]\\ a_3\in  [A_3,A_3(1+\eta_1)]\\
         Q(a_2,a_3)\not =0}}
         \tau (Q(a_2,a_3))+A_1A_2X^\varepsilon\ll \frac{A_1A_2A_3X^\varepsilon}{Z'}.
 \end{align*}
 This gives the result.
 \end{proof}
 %
%
\begin{lmm}[Removing the condition $(q,a_2a_3)=1$]\label{lmm:S04}
Let $S_{03}(\cH)$ be as given in Lemma \ref{lmm:S03}. Then we have
\[
S_{03}(\cH)=S_{04}(\cH)+O\Bigl(\frac{\eta_1^3  A_1 A_2 A_3}{Z^{1/2}} \Bigr),
\]
where 
\[
S_{04}(\cH):=\sum_{\substack{d\le Z\\ s_2s_3\le Z}}\mu  (d)\mu (s_2s_3)\sum_{\substack{{q_{ij}\in [X^{\theta_{ij}},X^{\theta_{ij}+\tau_{ij}}]}\\
\forall\,(i,j)\in I_\cC\\ q_{21}\equiv 1\Mod{D_{q_2}}}}\sum_{\substack{{(a_1,s_2a_2',s_3a_3')\in\cH'}\\ {\prod_{j=1}^6q_{1j}|q_1(a_1,s_2a_2',s_3a_3')}\\ {q_{21}|q_2(a_1,s_2a_2',s_3a_3')}\\
{[d,s_2s_3]|q(a_1,s_2a_2',s_3a_3')}\\ d|q_3(a_1,a_2,a_3)\\
{(s_2a_2',s_3a_3')=30,\ a_1\equiv 1\Mod {30}}\\
s_2a_2',s_3a_3'\equiv 30\Mod{900}}}h(q(a_1,s_2a_2',s_3a_3')).
\]
\end{lmm}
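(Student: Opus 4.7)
The plan is to eliminate the coprimality condition $(q(a_1,a_2,a_3), a_2 a_3) = 1$ present in $S_{03}(\cH)$ by Möbius inversion combined with a change of variables $a_2 = s_2 a_2'$, $a_3 = s_3 a_3'$, truncating the resulting Möbius sum at $s_2 s_3 \le Z$, and then bounding the truncation tail by elementary counting.

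First I would write
$$\mathbf{1}_{(q,\,a_2 a_3)=1} = \sum_{s \,\mid\, \gcd(q,\,a_2 a_3)} \mu(s).$$
Since the weight $h(q)$ is supported on squarefree $q$ with $P^{-}(q) > q_0 \ge 30$ (by \eqref{defq0} and \eqref{eq:HDef}), every $s$ arising here is itself squarefree and coprime to $30$. Condition (C5)(a) enforces $(a_2,a_3) = 30$, so no prime of $s$ can divide both $a_2$ and $a_3$; hence $s$ factors uniquely as $s = s_2 s_3$ with $s_2 = \gcd(s,a_2)$, $s_3 = \gcd(s,a_3)$, $(s_2,s_3) = 1$, $(s_2 s_3, 30) = 1$, and $\mu(s) = \mu(s_2)\mu(s_3)$. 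Substituting $a_2 = s_2 a_2'$ and $a_3 = s_3 a_3'$ turns the conditions $a_2, a_3 \equiv 30 \Mod{900}$ and $(a_2,a_3) = 30$ into the analogous ones on $(s_2 a_2', s_3 a_3')$, preserves $d \mid q_3(a_1,a_2,a_3)$, and fuses $d \mid q$ (inherited from Lemma~\ref{lmm:S03}) with the new $s_2 s_3 \mid q$ into the single condition $[d, s_2 s_3] \mid q(a_1, s_2 a_2', s_3 a_3')$.

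Splitting the $s_2 s_3$-sum at $Z$ isolates $S_{04}(\cH)$ and leaves a tail
$$T \ll \sum_{\substack{s_2 s_3 > Z \\ (s_2,s_3) = 1 \\ (s_2 s_3, 30) = 1}} \#\bigl\{(a_1, a_2', a_3') : (a_1, s_2 a_2', s_3 a_3') \in \cH',\ s_2 s_3 \mid q(a_1, s_2 a_2', s_3 a_3')\bigr\}.$$
For fixed $a_2', a_3'$ the polynomial $q(X, s_2 a_2', s_3 a_3')$ has degree at most $6$ in $X$, so $a_1$ is constrained to at most $6^{\omega(s_2 s_3)}$ residue classes modulo $s_2 s_3$; this gives at most $\bigl(\eta_1 A_1/(s_2 s_3) + 1\bigr) \cdot 6^{\omega(s_2 s_3)}$ admissible $a_1$ in an interval of length $\eta_1 A_1$. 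Combined with the $\asymp \eta_1 A_2/s_2$ and $\eta_1 A_3/s_3$ choices for $a_2'$ and $a_3'$, the tail naturally splits into two regimes: when $s_2 s_3 \le \eta_1 A_1$ the leading contribution is $\eta_1^3 A_1 A_2 A_3 \cdot 6^{\omega(s_2 s_3)}/(s_2 s_3)^2$, summing to $\ll \eta_1^3 A_1 A_2 A_3 \cdot Z^{-1/2}$ via the standard divisor-sum estimate $\sum_{s>Z} 12^{\omega(s)}/s^2 \ll (\log Z)^{O(1)}/Z$; when $s_2 s_3 > \eta_1 A_1$ the bound reduces to $\ll \eta_1^2 A_2 A_3 \cdot 12^{\omega(s_2 s_3)}/(s_2 s_3)$, summing to $\ll \eta_1^2 A_2 A_3 (\log X)^{O(1)}$, which is comfortably dominated by $\eta_1^3 A_1 A_2 A_3/Z^{1/2}$ since $A_1 \gg X^{1/4 - 7\alpha_0/8}$ dwarfs any polylogarithmic factor.

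The main obstacle is the verification of the unique factorisation $s = s_2 s_3$, which relies essentially on $s$ being coprime to $30$ — a consequence of the small-prime condition baked into $h(q)$; without this, common prime factors of $a_2$ and $a_3$ would force a more elaborate bookkeeping. Everything else is a routine truncation paired with standard divisor-moment bounds.
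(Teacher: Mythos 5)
Your overall plan coincides with the paper's: Möbius inversion for $(q,a_2a_3)=1$, the unique factorisation $s=s_2s_3$ using $(a_2,a_3)=30$ and $P^-(q)>q_0$, the change of variables $a_2=s_2a_2'$, $a_3=s_3a_3'$, truncation of the $s$-sum at $Z$, and a divisor count on the tail. The structural remark that the inherited $d\mid q$ fuses with $s_2s_3\mid q$ into $[d,s_2s_3]\mid q$ is also exactly what happens.

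However, the tail estimate has a genuine gap: you drop the $\sum_{d\le Z}\mu(d)$ that is still present in $S_{03}$ (and indeed in $S_{04}$). Your displayed bound for $T$ sums only over $s_2s_3>Z$, counts $a_1$ in at most $6^{\omega(s_2s_3)}$ residue classes modulo $s_2s_3$, and concludes $T\ll\eta_1^3A_1A_2A_3\sum_{s>Z}12^{\omega(s)}/s^2\ll\eta_1^3A_1A_2A_3/Z^{1/2}$. If one re-inserts the $d$-sum naively while still using modulus $s_2s_3$ for $a_1$, the count is multiplied by $\sum_{d\le Z}1\asymp Z$, which destroys the bound (one gets $\eta_1^3A_1A_2A_3(\log Z)^{O(1)}$ instead of $\eta_1^3A_1A_2A_3/Z^{1/2}$). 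The necessary saving is obtained precisely by keeping $a_1$ constrained to $O(6^{\omega([d,s_2s_3])})$ classes modulo $[d,s_2s_3]$ (or, equivalently, by exploiting $d\mid q_3$ to fix $a_1\bmod d$ as in the proof of Lemma~\ref{lmm:S03}). Then the main term becomes
\[
\eta_1^3A_1A_2A_3\sum_{d\le Z}\sum_{s>Z}\frac{6^{\omega([d,s])}}{s\,[d,s]},
\]
and the factor $[d,s]$ in the denominator (rather than $s$ alone) is what makes the $d$-sum contribute only $(\log Z)^{O(1)}/Z$ rather than a factor of $Z$. You noticed the $[d,s_2s_3]$ condition in the setup but did not carry it into the count; without it the argument, as written, does not give the claimed $O(\eta_1^3A_1A_2A_3/Z^{1/2})$.
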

 %
%
\begin{proof}
We remove the condition $(a_2a_3,q(a_1,a_2,a_3))=1$ via Mobius inversion, giving
 \begin{equation*}
     S_{03}(\cH )=\sum_{d\le Z}\mu (d)\sum_{\substack{{q_{ij}\in [X^{\theta_{ij}},X^{\theta_{ij}+\tau_{ij}}]}\\
(i,j)\in I_\cC\\ q_{21}\equiv 1\Mod{D_{q_2}}}}\sum_{\substack{{(a_1,a_2,a_3)\in\cH'}\\ {\prod_{j=1}^6q_{1j}|q_1(a_1,a_2,a_3)}\\ {q_{21}|q_2(a_1,a_2,a_3)}\\
{(a_2,a_3)=30,\ a_1\equiv 1\Mod {30}}\\
{a_2,a_3\equiv 30\Mod{900}}\\
{d|q_3(a_1,a_2,a_3)}}}\sum_{\substack{{s|a_2a_3}\\ {[d,s]|q(a_1,a_2,a_3)}}}\mu (s)h(q(a_1,a_2,a_3)).
 \end{equation*}
 We write $s$ as $s=s_2s_3$ with $s_2|a_2$ and  $s_3|a_3$, and write $a_2=s_2a_2'$, $a_3=s_3a_3'$.
Let $U_3 (\cH)$ denote the contribution given by the $s>Z$  and $S_{04}(\cH)$
the remaining contribution with $s\le Z$. Thus we are left to bound $U_3(\cH)$.

Since each $q_i(a_1,s_2a_2',s_3a_3')$ has a finite number of prime factors in $[X^{\theta _{ij}},X^{\theta_{ij}+\tau_{ij}}]$, there are $O(1)$ choices of the $q_{ij}$, so
\begin{equation*}
    U_3(\cH)\ll \sum_{d\le Z}\mu^2 (d)\sum_{Z<s_2s_3\ll N_\cH^{1/4}}\mu^2(s_2s_3)
    \sum_{\substack{{(a_1,s_2a_2',s_3a_3')\in\cH'}\\ {[d,s]|q(a_1,a_2's_2,a_3's_3)}\\ {d|q_3(a_1,a_2's_2,a_3's_3)}}}1.
\end{equation*}
The form $q$ is monic of degree $6$ in $a_1$ (by \eqref{defq1}, \eqref{defq2}) and $[d,s_2s_3]$ is squarefree, so given $s_2,s_3,a_2',a_3'$ there are $O(6^{\omega([d,s_2s_3])})$ choices of $a_1\Mod{[d,s_2s_3]}$ such that $q(a_1,a_2's_2,a_3's_3)=0\Mod{[d,s_2s_3]}$.
  Since $(a_1,a_2's_2,a_3's_3)\in\cH'$ we obtain
  \begin{equation*}\begin{split}
      U_3 (\cH)&\ll \sum_{d<Z}\mu^2 (d)\sum_{Z<s_2s_3\ll N_\cH^{1/4}}\mu^2 (s_2s_3)6^{\omega ([s_2s_3,d])}\Big ( \frac{\eta_1A_2}{s_2}+1\Big )\Big (\frac{\eta_1A_3}{s_3}+1\Big )\Big (\frac{\eta_1A_1}{[s_2s_3,d]} +1\Big )\\
      &\ll Z N_\cH^{1/4+\varepsilon}+Z N_\cH^{1/4+\varepsilon}(|A_1|+|A_2|+|A_3|)+
      \frac{A_1A_2A_3\eta_1^2}{\min (A_1,A_2,A_3)}Z X^\varepsilon
      \\
      &+
\eta_1^3 A_1 A_2 A_3\sum_{d<Z}\sum_{s>Z}\frac{6^{\omega([d,s])}}{s[s,d]}.
      \end{split}
  \end{equation*}
  This final term is seen to be $O(\eta_1^3A_1A_2A_3(\log{Z})^{O(1)} /Z)$. Since $\max (A_1,A_2,A_3)\ll N_\cH^{1/4}$ and $Z=(\log{X})^{O(1)}$, this gives
  \[
  U_3(\cH)\ll \frac{\eta_1^3 A_1A_2A_3}{Z^{1/2}}+N_\cH^{1/2+\epsilon}.
  \]
  This gives the result.
  \end{proof}
%
%
%
%
\begin{lmm}[Simplifying the function $h$]\label{lmm:S05}
Let $S_{04}(\cH)$ be as in Lemma \ref{lmm:S04}. Then we have
\[
S_{04}(\cH)=S_{05}(\cH)+O\Bigl(\frac{\eta_1^3 A_1A_2A_3}{Z}\Bigr),
\]
where
\[
S_{05}(\cH):=\sum_{\substack{u\le Z^{20}\\ d\le Z\\ s_2s_3\le Z}}\mu (d)\mu  (s_2s_3)\ell(u)\sum_{\substack{{q_{ij}\in [X^{\theta_{ij}},X^{\theta_{ij}+\tau_{ij}}]}\\
\forall\,(i,j)\in I_\cC\\ q_{21}\equiv 1\Mod{D_{q_2}}}}\sum_{\substack{{(a_1,s_2a_2',s_3a_3')\in\cH'}\\ {\prod_{j=1}^6q_{1j}|q_1(a_1,s_2a_2',s_3a_3')}\\ {q_{21}|q_2(a_1,s_2a_2',s_3a_3')}\\
{[d,s_2s_3,u]|q(a_1,s_2a_2',s_3a_3')}\\ d|q_3(a_1,a_2,a_3)\\
{(s_2a_2',s_3a_3')=30,\ a_1\equiv 1\Mod{30}}\\
s_2a_2',s_3a_3'\equiv {30}\Mod{900}}}1,
\]
and $\ell$ is the multiplicative function defined by
\begin{equation*}
    \ell (p^\nu):=\begin{cases}
    \frac{g(p)-2}{p-g(p)}, & \text{if}\ p>q_0\text{ and }\nu=1,\\
    -1, & \text{if}\ 7\le p\le q_0\text{ and }\nu=1,\\
   -h(p), & \text{if}\ \nu =2,\\
    0, & \text{if}\ \nu\ge 3,
    \end{cases}
\end{equation*}
with $q_0$ given by \eqref{defq0}.
\end{lmm}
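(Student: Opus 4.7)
The plan is to express the multiplicative weight $h$ as a Dirichlet convolution and truncate. Define the multiplicative function $\ell$ by $h=\mathbf{1}*\ell$, i.e.\ $h(q)=\sum_{u\mid q}\ell(u)$. Since $h(1)=1$, $h(p)=(p-2)/(p-g(p))$ for $p>q_0$ and $h(p)=0$ otherwise, and $h(p^\nu)=0$ for $\nu\ge 2$, Mobius inversion yields $\ell(1)=1$, $\ell(p)=h(p)-1$, $\ell(p^2)=-h(p)$ and $\ell(p^\nu)=0$ for $\nu\ge 3$, matching the formula in the statement. Values of $\ell$ at primes $p<7$ are never required: the congruences $a_1\equiv 1\Mod{30}$ and $s_2a_2',s_3a_3'\equiv 30\Mod{900}$ combined with $q_1(1,0,0)=q_2(1,0,0)=1$ (immediate from \eqref{defq1}-\eqref{defq2}) force $q(a_1,s_2a_2',s_3a_3')\equiv 1\Mod{30}$, so the $q$ appearing in $S_{04}$ are always coprime to $2,3,5$.

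Substituting $h(q)=\sum_{u\mid q}\ell(u)$ into $S_{04}(\cH)$ and splitting at $u=Z^{20}$ produces $S_{04}(\cH)=S_{05}(\cH)+E$, where $E$ collects the tail $u>Z^{20}$. To bound $E$ I apply Rankin's trick $\mathbf{1}_{u>Z^{20}}\le(u/Z^{20})^\epsilon$ for a parameter $\epsilon>0$; swapping summations,
\[
|E|\le Z^{-20\epsilon}\sum_{u\ge 1}|\ell(u)|u^\epsilon\,T(u),
\]
where $T(u)$ counts triples $(a_1,a_2,a_3)\in\cH'$ with $u\mid q(a_1,a_2,a_3)$ satisfying the outer conditions, weighted by the number of valid $(d,s_2,s_3,q_{ij})$ configurations (a factor $\ll\tau(q)^{O(1)}$ per triple). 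Since $q$ is monic of degree $6$ in $a_1$ by Lemma \ref{lmm:qFactorisation}, an elementary count gives $T(u)\ll\eta_1^3A_1A_2A_3(\log X)^{O(1)}\cdot 6^{\omega(u)}/u$ plus boundary contributions of size $\ll\eta_1^2\max_iA_i^2(\log X)^{O(1)}$; the latter are negligible against $\eta_1^3A_1A_2A_3/Z$ thanks to $\min_iA_i\gg X^{1/4-7\alpha_0/8}$ from \eqref{goodcube}.

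The main term is controlled via an Euler product: since $|\ell(p)|\ll 1/p$ for $p>q_0$ (as $|g(p)-2|\le 4$) while $\ell$ is bounded on the finitely many primes in $[7,q_0]$, the series $\sum_u|\ell(u)|6^{\omega(u)}u^\epsilon/u$ converges for every $\epsilon\in(0,1)$. Taking $\epsilon$ slightly larger than $1/20$ and choosing $\lambda_0$ in \eqref{eq:ZDef} sufficiently large ensures $Z^{20\epsilon-1}=(\log X)^{\lambda_0(20\epsilon-1)}$ dominates the $(\log X)^{O(1)}$ factor from the outer sums, yielding $|E|\ll\eta_1^3A_1A_2A_3/Z$ as required. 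The main technical obstacle is precisely reconciling the modest Rankin savings $(\log X)^{-20\epsilon\lambda_0}$ against the $(\log X)^{O(1)}$ losses incurred from bounding the outer divisor-type sums over $d,s_2,s_3,q_{ij}$, and this is exactly why $\lambda_0$ must be taken to be an arbitrarily large absolute constant.
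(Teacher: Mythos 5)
Your decomposition $h=\mathbf{1}*\ell$ and the choice to truncate at $u\le Z^{20}$ match the paper's starting point, but the Rankin-trick bound of the tail does not go through, and the gap sits exactly where the paper's proof does its hardest work.

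The difficulty is that $\ell$ is not well-behaved on squarefull integers: $\ell(p^2)=-h(p)$ and $h(p)=(p-2)/(p-g(p))\asymp 1$ for $p>q_0$, so $\sum_u|\ell(u)|$ already diverges, let alone $\sum_u|\ell(u)|u^\epsilon$. Your split $T(u)\ll\eta_1^3A_1A_2A_3\cdot 6^{\omega(u)}/u+O(\eta_1^2\max_iA_i^2(\log X)^{O(1)})$ produces a total error of the shape
\[
|E|\ll Z^{-20\epsilon}\eta_1^3A_1A_2A_3(\log X)^{O(1)}\sum_u\frac{|\ell(u)|6^{\omega(u)}}{u^{1-\epsilon}}
+Z^{-20\epsilon}\eta_1^2\max_iA_i^2(\log X)^{O(1)}\sum_u|\ell(u)|u^\epsilon ,
\]
and while the first series converges, the second does not: the boundary ``$+1$'' per residue class carries no $1/u$ factor, and the Rankin saving $Z^{-20\epsilon}$ cannot tame a divergent series. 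Morally, the problem is that once you expand $\mu^2(q)$ away you are summing over $u=v^2w$ with $v$ potentially as large as $X^{O(1)}$, and for each such $u$ the boundary term is of size $\asymp\eta_1^2A_2A_3$, contributing unboundedly once you sum $v$.

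The paper's proof handles exactly this by writing $u=v^2w$ with $vw$ squarefree, dispatching the $w$-large case ($U_{41}$) much as you propose, and then attacking the $v$-large case ($U_{42}$) with structural input that your argument does not use: (i) Lemma \ref{pgcdq1q2} identifying $p\mid(q_1,q_2)$ with $P((a_2-c_3a_3)\ov{a_3})\equiv 0\Mod p$; (ii) the observation, via \eqref{eq:Con5}, that $q_{ij}^2\nmid q_i$ for the localised primes $q_{ij}$, giving the bounds on $V_{ij}(\cH)$; and (iii) the factorisation $v=v_1v_2v_3$ according to whether $p^2\mid q_1$, $p^2\mid q_2$ or $p\mid(q_1,q_2)$, together with the explicit discriminants $d_1,d_2$ and the size constraints $v_1^2v_3\ll q_1/\prod q_{1j}<\eta_1A_2$ and $v_2^2v_3\ll q_2/q_{21}$. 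Treating $q$ as a generic degree-$6$ form in $a_1$, as your argument does, forgoes precisely the algebraic structure that makes the tail over squarefull $u$ controllable; this is why the paper does not use Rankin's trick here.

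Your observation that $(q,30)=1$, so $\ell$ need not be defined at $2,3,5$, is correct and matches the paper, and your derivation of $\ell$ from $h$ via Möbius inversion is also correct.
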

%
%
\begin{proof}
Recalling \eqref{eq:HDef}, we see that $h=\1 *\ell$ where $\ell$ is as given by the lemma. In particular,
\[
   h(q(a_1,s_2a_2',s_3a_3'))= \sum_{u|q(a_1,s_2a_2',s_3a_3')}\ell (u).
\]
Since $a_1\equiv 1\Mod{30}$ and $30| (a_2,a_3)$, $(u,30)=1$.
We substitute this into our definition of $S_{04}(\cH)$, and consider separately the contribution $S_{05}(\cH)$ from $u<Z^{20}$ and the contribution $U_4(\cH)$ from $u>Z^{20}$.

Since $\ell (u)=0$ when there exists $p$ such that $p^3|u$, we may write $u=v^2w$ with $\mu^2 (vw)=1$. Since $U_4(\cH)$ has $u>Z^{20}$, it suffices to separately bound the contribution of terms $U_{41}(\cH)$ with $w>Z^{10}$ and the contribution $U_{42}(\cH)$ of terms with $v^2>Z^{10}\ge w$. 

First we bound $U_{41}(\cH)$ with $w>Z^{10}$. Since $q_0>10$, we see that $|\ell (u)|\le 10^{\omega (vw)}/w$. Following an entirely analogous argument to our bound for $U_2(\cH)$ in Lemma \ref{lmm:S03}, we can find that 
\begin{equation*}\begin{split}
   U_{41}(\cH) &\ll Z\sum_{\substack{s_2s_3<Z\\ \mu^2 (s_2s_3)=1}}
    \sum_{\substack{w\ge Z^{10} \\ {wv^2 |q}}}\mu^2 (vw)
    \frac{(60)^{\omega (v w)}}{w}
    \Big (\frac{\eta_1 A_1}{wv^2}+1\Big )\frac{\eta_1^2 A_2A_3}{s_2s_3}\\
    &\ll A_2A_3X^\varepsilon + \eta_1^3 A_1A_2A_3 (\log X)Z^{-3}\ll \frac{A_1A_2A_3}{Z}.
    \end{split}
\end{equation*}
Thus we are left to bound $U_{42}(\cH)$ involving terms with $v\ge Z^5$. We see
\begin{equation*}
    U_{42}(\cH)\le V'(\cH)+\sum_{(i,j)\in I_\cC}V_{ij}(\cH),
\end{equation*}
where $V_{ij}(\cH)$ denotes those terms with $q_{ij}|v$ for some $q_{ij}\in [X^{\theta_{ij}},X^{\theta_{ij}+\tau_{ij}}]$,  and $V'(\cH)$ denotes those terms with $(\prod_{(i,j)\in I_\cC} q_{ij},v)=1$ for all $q_{ij}\in [X^{\theta_{ij}},X^{\theta_{ij}+\tau_{ij}}]$, 
$(i,j)\in I_\cC$.

First we consider $V_{21}(\cH)$. By \eqref{eq:Con5}, we  have $\sum_{i=1}^6\theta_{1i}+\theta_{21}>1+\alpha_0$. We recall $q_1(a_1,a_2a_3)\ll X^{1+\alpha_0}$ for all $(a_1,a_2,a_3)\in\cH$ and that $\prod_{j=1}^6 q_{1j}|q_1(a_1,a_2,a_3)$ with $\prod_{j=1}^6 q_{1j}\gg X^{\sum_{j=1}^6 \theta_{1j}}$. Therefore we must have that $(q_{21},q_1(a_1,a_2,a_3))=1$. 
Since $\alpha_0<1/19$ by \eqref{eq:Con8} and $q_{21}^2\le X^{2\theta_{21}+2\tau_{21}}\le X^{1/4-7\alpha_0/8}\le \min(A_1,A_2,A_3)$ by \eqref{eq:Con13} and
 \eqref{goodcube}, we deduce that 
\begin{equation*}\begin{split}
    V_{21}(\cH )&\ll X^\varepsilon \sum_{q_{21}\in [X^{\theta_{21}}, X^{\theta_{21}+\tau_{21}}]}
    \sum_{\substack{{(a_1,a_2,a_3)\in\cH'}\\ {q_{21}^2|q_2 (a_1,a_2,a_3)}}}1\\
    & \ll X^{-\theta_{21}+\varepsilon}A_1A_2A_3.
    \end{split}
\end{equation*}
We now consider $V_{1j}(\cH)$. As with $V_{21}(\cH)$, we can't have $q_{1j}^2|q_1(a_1,a_2,a_3)$ by size considerations and \eqref{eq:Con5}. Therefore if $q_{1i}^2|q(a_1,a_2,a_3)$ then $q_{1i}| (q_1(a_1,a_2,a_3),q_2(a_1,a_2,a_3))$, and so Lemma \ref{pgcdq1q2} shows that $P((a_2-c_3a_3)\ov{a_3})\equiv 0\Mod{q_{1i}}$. Again, we have that $q_{1j}\le \min(A_1,A_2,A_3)$. Thus we have
\begin{equation*}\begin{split}
    V_{1i}(\cH)&\ll X^\varepsilon\sum_{q_{1i}\in [X^{\theta_{1i}}, X^{\theta_{1i}+\tau_{1i}}]}\sum_{\substack{{(a_2,a_3)\in [A_2,A_2(1+\eta_1)]\times [A_3,A_3(1+\eta_1)]}\\
{P((a_2-c_3a_3)\ov{a_3})\equiv 0\Mod{q_{1i}}}}}
\sum_{\substack{{a_1\in [A_1,A_1(1+\eta_1)]}\\ {q_{1i}| q_1(a_1,a_2,a_3)}}}1\\
&\ll X^{-\theta_{1i}+\varepsilon}A_1A_2A_3.
\end{split}
\end{equation*}
Finally, we are left to bound $V'(\cH)$.
Each $v$ counted in $V'(\cH)$ may factored as $v=v_1v_2v_3$, with 
\begin{equation*}
v_1:=\prod_{\substack{{p|v}\\ {p^2|q_1(a_1,a_2,a_3)}}}p,
\qquad v_2:=\prod_{\substack{{p|v/v_1}\\ {p^2|q_2(a_1,a_2,a_3)}}}p,\qquad v_3:=\frac{v}{v_1v_2}.
\end{equation*}
Since $v$ was squarefree, we see that $v_1,v_2,v_3$ are pairwise coprime and squarefree.

By Lemma \ref{pgcdq1q2} again, $P((a_2-c_3a_3)\ov{a_3})\equiv 0\Mod {v_3}$. In $V'(\cH)$, $v$ is coprime with all the $q_{ij}$, and so for any $a_1,a_2,a_3\in\cH$
\begin{align}
v_1^2v_3&\ll \frac{q_1(a_1,a_2,a_3)}{\prod_{j=1}^5 q_{1j}}\ll X^{1+\alpha_0-\sum_{j=1}^5\theta_{1j}}<\eta_1 A_2,\\
v_2^2v_3&\ll \frac{q_2(a_1,a_2,a_3)}{q_{21}}\ll X^{(1+\alpha_0)/2-\theta_{21}}.
\end{align}
Thus we have
\[
    V'(\cH)\ll \sum_{\substack{{s,d<Z}\\ {\mu^2 (d)\mu^2(s)=1}}}\sum_{w<Z^{10} } \frac{10^{\omega (w)}}{w}\sum_{\substack{v_1 v_2 v_3>Z^5 \\ \mu^2(wv_1v_2v_3)=1}}
    \sum_{\substack{{a_2
\in [A_2,A_2(1+\eta_1)]}\\ {a_3\in [A_3,A_3(1+\eta_1)]}\\
{P((a_2-c_3a_3)\ov{a_3})\equiv 0\Mod {v_3}}}}    
\sum_{\substack{{a_1\in [A_1,A_1(1+\eta_1)]}\\ {v_1^2v_3|q_1(a_1,a_2,a_3)}\\
{v_2^2v_3|q_2(a_1,a_2,a_3)}}}1.
\]
Let $d_1\in\mathbb{Z}[a_2,a_3]$ denote the discriminant of $q_1$ (viewing $q_1$ as a polynomial in $a_1$), and $d_2\in\mathbb{Z}[a_2,a_3]$ denote the discriminant of $q_2$. By Lemma \ref{pgcdq1q2}, we see that the inner sum restricts $a_1$ to one of $O(6^{\omega(v_1v_2v_3)})$ residue classes modulo $v_1^2v_2^2v_3/(v_1,d_1(a_2,a_3))(v_2,d_2(a_2,a_3))$. Thus
\begin{align}\label{suma1}
 \sum_{\substack{{a_1\in [A_1,A_1(1+\eta_1)]} \\ {v_1^2v_3|q_1(a_1,a_2,a_3)}\\
{v_2^2v_3|q_2(a_1,a_2,a_3)} }}1    &\ll 6^{\omega (v_1v_2v_3)}
\Big ( \frac{\eta_1 A_1(v_1,d_1(a_2,a_3))(v_2,d_2(a_2,a_3))}{v_1^2v_2^2v_3}+1\Bigr).
\end{align}

Let $I_1=\{ (1,3), (1,4), (2,3), (2,4)\}$   be the set of the indexes  $(i,j)$ such that $(r_i,r_j)$ is involved in the factorisation of $q_1$.
We note that 
\begin{align*} d_1 (a_2,a_3)&=\prod_{ \substack{(i,j),(k,l)\in I_1\\ (i,j)\not = (k,l)}}(a_2 (r_i+r_j-r_k-r_\ell)+ a_3(r_i^2+r_ir_j+r_j^2-r_k^2-r_\ell^2-r_kr_\ell))\\
d_2 (a_2,a_3)&=-(a_2(r_1+r_2-r_3-r_4)+a_3(r_1^2+r_1r_2+r_2^2-r_3^2-r_3r_4-r_4^2))^2.
\end{align*}

We remark that the coefficient in $a_2^{12}$ in $d_1$ is non zero because we can't have $r_i+r_j -r_k-r_l=0$ for two different $(i,j),(k,\ell ) \in I_1$.
The case $\{ i,j\}\cap \{ k,\ell\}\not =\emptyset$ is clear, the other case was  noticed in Remark (ii) after the proof of Lemma \ref{t1t2}. 

For $d_2$, it may be the case that $r_1+r_2-r_3-r_4=0$. However in this case we can't also have  
$r_1^2+r_1r_2+r_2^2-r_3-r_3r_4-r_4^2=0$ since this would imply that $r_1+r_2=r_3+r_4$ and $r_1r_2=r_3r_4$ which is not possible when the roots of $P$ are distinct. Thus either the coefficient of $a_2$ in $d_2$ is non-zero or the coefficient of $a_3$ is non-zero.

To estimate the sum over $v_1,v_2,v_3,a_2,a_3$  of the terms with $d_1 (a_2,a_3),d_2 (a_2,a_3)$ in 
\eqref{suma1}, we write $w_i  =(v_i , d_i (a_2,a_3))$ for $i=1,2$ and next forget the coprimality between $v_i/w_i$ and $d_i (a_2,a_3)/w_i$. This sum is thus bounded by

\[
\sum_{\substack{v_1v_2v_3\ge Z^5\\ v_1^2v_3\le X^{1+\alpha_0-\sum_{j=1}^6 \theta_{1j}}\\
v_2^2v_3\le X^{(1+\alpha_0)/2 -\theta_{21}}\\ \mu^2 (v_1v_2v_3)=1}}
\frac{6^{\omega (v_1v_2v_3)}}{v_1^2v_2^2v_3}\sum_{\substack{w_1|v_1\\ w_2|v_2}}w_1w_2\sum_{\substack{a_2\in [A_2,A_2 (1+\eta_1)]\\ a_3\in [A_3,A_3 (1+\eta _1)\\ d_1 (a_2,a_3)\equiv 0\Mod {w_1}\\ d_2 (a_2,a_3)\equiv 0\Mod {w_2}\\ P((a_2-c_3a_3)\ov{a_3})\equiv 0\Mod{v_3}}}1.
\]
If the coefficient in $a_2^2$ in $d_2 (a_2,a_3)$ is non zero, then the inner sum over $a_2$, $a_3$ is
$$\ll \eta_1 A_3 \Big (1+\frac{\eta_1A_2}{w_1w_2v_3}\Big )12^{\omega (w_1w_2v_3)},$$
otherwise the condition $w_2| d_2 (a_2,a_3)$ is equivalent to $w_2| d_P a_3$ for some $d_P\in\Z$ depending only of $P$ (we recall that $w_2$ is square free) and thus the inner sum over $a_2,a_3$ is bounded by
$$\ll  \Big ( 1+\frac{\eta_1 A_3}{w_2}\Big )\Big ( 1+\frac{\eta _1A_2}{w_1v_3}\Big )12^{\omega (v_1v_2v_3)}
\ll 12^{\omega (w_1w_2v_3)}\Big (1+\frac{\eta _1A_3}{w_2}+\frac{\eta_1 A_2}{w_1v_3}+\frac{\eta_1^2A_2A_3}{w_1w_2v_3}\Big ).$$
Finally we obtain that 
\begin{align*}
    V'(\cH)&\ll Z^2(\log Z)^{10}
    \sum_{\substack{v_1v_2v_3\ge Z^5\\ v_1^2v_3\le X^{1+\alpha_0-\sum_{j=1}^6 \theta_{1j}}\\
v_2^2v_3\le X^{(1+\alpha_0)/2 -\theta_{21}}\\ \mu^2 (v_1v_2v_3)=1}}
6^{\omega (v_1v_2v_3)}\Big [ \eta_1^2 A_2A_3\\ &+\sum_{\substack{w_1|v_1\\ w_2|v_2}}\frac{12^{\omega (w_1w_2v_3)}\eta_1A_1w_1w_2}{v_1^2v_2^2v_3^2}\Big (
\eta_1 A_3+\frac{\eta_1A_2}{w_1v_3}+\frac{\eta_1^2A_2A_3}{w_1w_2v_3}\Big )\Big ]\\
& \ll Z^{-3}(\log Z)^{10}\eta_1^3 A_1A_2A_3+Z^3\frac{A_1A_2A_3}{\min (A_1,A_2,A_3)}X^{3(1+\alpha_0)/4-\sum_{(i,j)\in I_\cC}\theta_{ij}/2}.
\end{align*}
By \eqref{goodcube} and \eqref{eq:Con5} we see that $X^{3(1+\alpha_0)/4-\sum_{ij\in I_\cC}\theta_{ij}/2}\le \min (A_1,A_2,A_3)X^{-\epsilon}$. Putting everything together then gives the result.
\end{proof}

%
%
\begin{lmm}[Removing $(a_2,a_3)/30=1$]\label{lmm:S06}
Let $S_{05}(\cH)$ be as given in Lemma \ref{lmm:S05}. Then we have
\[
S_{05}(\cH)=S_{06}(\cH)+O\Bigl(\frac{\eta_1^3 A_1A_2A_3}{Z}\Bigr),
\]
where
\[
S_{06}(\cH):=\sum_{\substack{t\le Z^{50}\\ u\le Z^{20}\\ d\le Z\\ s_2s_3\le Z\\ (t,30)=1}}\mu (d)\mu (s_2s_3)\ell(u)\mu (t)\sum_{\substack{{q_{ij}\in [X^{\theta_{ij}},X^{\theta_{ij}+\tau_{ij}}]}\\
\forall\,(i,j)\in I_\cC\\ q_{21}\equiv 1\Mod{D_{q_2}}}}\sum_{\substack{{(a_1,a_2,a_3)\in\cH'}\\ {\prod_{j=1}^6q_{1j}|q_1(a_1,a_2,a_3)}\\ {q_{21}|q_2(a_1,a_2,a_3)}\\
{[d,s_2s_3,u]|q(a_1,a_2,a_3)}\\ d|q_3(a_1,a_2,a_3)\\ [t,s_2]|a_2 \\ [t,s_3]|a_3 \\
a_2,a_3\equiv 30\Mod{900} \\ a_1\equiv 1\Mod{30}}}1.
\]
\end{lmm}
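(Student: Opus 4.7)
The plan is to detect the condition $(a_2, a_3) = 30$ in $S_{05}(\cH)$ by M\"obius inversion, introduce a new variable $t$ that is then truncated at $Z^{50}$, and bound the resulting tail by $O(\eta_1^3 A_1 A_2 A_3 / Z)$. After relabelling $a_2 = s_2 a_2'$ and $a_3 = s_3 a_3'$ so that $s_2 \mid a_2$ and $s_3 \mid a_3$ are built into the variables, the congruences $a_2, a_3 \equiv 30 \Mod{900}$ force $30 \mid a_2$ and $30 \mid a_3$, and so the gcd condition $(a_2, a_3) = 30$ reduces to $(a_2/30, a_3/30) = 1$. Since $a_2/30, a_3/30 \equiv 1 \Mod{30}$, every common divisor is coprime to $30$, and M\"obius inversion gives
\[
\1\bigl((a_2, a_3) = 30\bigr) = \sum_{\substack{(t,30)=1\\ t \mid (a_2/30),\ t \mid (a_3/30)}} \mu(t) = \sum_{\substack{(t, 30) = 1 \\ t \mid a_2,\ t \mid a_3}} \mu(t),
\]
because $(t, 30) = 1$ together with $30 \mid a_2$ gives $t \mid (a_2/30) \iff t \mid a_2$. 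Combined with $s_2 \mid a_2$ and $s_3 \mid a_3$, this yields $[t, s_2] \mid a_2$ and $[t, s_3] \mid a_3$, matching the constraints of $S_{06}(\cH)$.

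Next I would split the $t$-sum at $Z^{50}$: the contribution from $t \leq Z^{50}$ is exactly $S_{06}(\cH)$, and the tail $U_5(\cH)$ from $t > Z^{50}$ must be shown to be $O(\eta_1^3 A_1 A_2 A_3 / Z)$. To bound $U_5(\cH)$ I would take absolute values and apply three estimates: first, for any fixed $(a_1, a_2, a_3)$ the number of valid prime tuples $(q_{ij})$ is $O(1)$ by unique factorisation; second, the polynomial divisibilities $d \mid q_3(a_1, a_2, a_3)$ and $[d, s_2 s_3, u] \mid q(a_1, a_2, a_3)$ (with $q, q_3$ of degree $\leq 6$ in $a_1$) restrict $a_1$ to at most $\ll 6^{\omega(d[d,s_2 s_3,u])}(\eta_1 A_1/[d, s_2 s_3, u] + 1)$ values for each $(a_2, a_3)$; and third, the count of $(a_2, a_3) \in \cH'$ with $t \mid a_2$ and $t \mid a_3$ is $\ll (\eta_1 A_2/t + 1)(\eta_1 A_3/t + 1)$. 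Using that $d, s_2 s_3, u \leq Z^{20}$ gives $6^{\omega(\cdot)} \ll (\log X)^{O(1)}$, that $|\ell(u)| \leq 1$, and that $Z^{50} \ll \eta_1 \min(A_2, A_3)$ (which follows from $\eta_1 = (\log X)^{-100}$, $A_i \geq X^{1/4 - 7\alpha_0/8}$ via \eqref{goodcube}, and $Z = (\log X)^{\lambda_0}$ via \eqref{eq:ZDef}), the leading contribution is
\[
\ll \eta_1^3 A_1 A_2 A_3 (\log X)^{O(1)} \sum_{t > Z^{50}} \frac{1}{t^2} \ll \frac{\eta_1^3 A_1 A_2 A_3}{Z^{49}} \ll \frac{\eta_1^3 A_1 A_2 A_3}{Z}.
\]

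The lower-order terms arising from the ``$+1$'' factors in the various counts would be handled routinely, following the pattern of the analyses of $U_3(\cH)$ in Lemma \ref{lmm:S04} and $V'(\cH)$ in Lemma \ref{lmm:S05}. The main (only) technical obstacle I expect here is the bookkeeping for those subordinate terms, verifying that none of them exceeds $\eta_1^3 A_1 A_2 A_3 / Z$; the decisive input throughout is the very strong $Z^{-50}$ saving produced by truncating the new M\"obius variable $t$ at $Z^{50}$, which leaves a great deal of room for such losses.
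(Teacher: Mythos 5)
Your proposal is correct and follows essentially the same route as the paper: detect $(a_2,a_3)=30$ via Möbius inversion (which, as you note, is equivalent to $(a_2/30,a_3/30)=1$ once $a_2,a_3\equiv 30\Mod{900}$ forces $30|a_2,30|a_3$, and the coprimality $(t,30)=1$ comes for free from $a_2/30,a_3/30\equiv 1\Mod{30}$), split the new variable $t$ at $Z^{50}$, and absorb the tail using the $t^{-2}$ decay together with the crude bound $d,s_2s_3,u\ll Z^{O(1)}$ and the size estimate $Z^{O(1)}\ll\eta_1\min(A_i)$. The extra refinement you propose of restricting $a_1$ through the polynomial divisibilities is harmless but unnecessary; the paper simply uses the $O(1)$ bound on the number of valid $q_{ij}$-tuples and the trivial $\eta_1 A_1$ count for $a_1$.
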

%
%
\begin{proof}
Since we have $a_2,a_3\equiv{30}\Mod{900}$, we can detect $(a_2,a_3)|30$ using Möbius inversion $\1_{(a_2,a_3)|30}=\sum_{\substack{t|(a_2,a_3)\\ {(t,30)=1}}}\mu (t)$ and separately consider the contribution $S_{06}(\cH)$ from terms with $t\le Z^{50}$ and the contribution $U_5(\cH)$ from terms with $t>Z^{50}$. Since there are $O(1)$ choices of the $q_{ij}$ given a choice of $a_1,a_2,a_3$, we see that
\begin{align*}
U_5(\cH)&\ll \sum_{t>Z^{50} }\sum_{\substack{u\le Z^{20}\\ d,s_2s_3\le Z}}\sum_{\substack{(a_1,a_2,a_3)\in \cH'\\ t|(a_2,a_3)}}O(1)\\
&\ll Z^{30}\sum_{Z^{50}<t<\min(A_2,A_3)}\eta_1 A_1 \Bigl(\frac{\eta_1 A_2}{t}+1\Bigr)\Bigl(\frac{\eta_1 A_3}{t}+1\Bigr)\ll \frac{\eta_1^3A_1A_2A_3}{Z}.
\end{align*}
This gives the result.
\end{proof}
%
%
\subsection{Application of Theorem \ref{DistriNorm}}
\label{Lm}%
%
\begin{lmm}[Application of Theorem \ref{DistriNorm}]\label{lmm:S07}
Let $S_{06}(\cH)$ be as in Lemma \ref{lmm:S06}. Then we have
\[
S_{06}(\cH)\gg \eta_1^3 A_1A_2A_3.
\]
\end{lmm}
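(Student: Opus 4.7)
The plan is to apply Theorem \ref{DistriNorm} to the inner double sum of $S_{06}(\cH)$ after swapping the order of summation. First I would move the sums over the polylog-size parameters $t, u, d, s_2, s_3$ (each $\le Z^{50}$) to the outside; for each fixed choice, the remaining divisibility conditions $[d, s_2 s_3, u] \mid q(a_1, a_2, a_3)$ and $d \mid q_3(a_1, a_2, a_3)$, together with the linear congruences on $a_1, a_2, a_3$ modulo $900$ and the conditions $[t, s_2] \mid a_2$, $[t, s_3] \mid a_3$, can be unfolded into a sum over residue classes $\a \equiv \u_0 \Mod{m}$ for a modulus $m \le (\log X)^{O(1)}$.

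For each such residue class, I would apply Theorem \ref{DistriNorm} with the scale parameter $\tilde X := X^{(1+\alpha_0)/4}$ (so $A_i \in [\eta_1 \tilde X, \tilde X]$), the quartic field $K := \Q(r_1 + r_3)$, the basis $\nu_1 = 1$, $\nu_2 = r_1 + r_3$, $\nu_3 = r_1^2 + r_1 r_3 + r_3^2$ (linearly independent by Lemma \ref{independance}), the quadratic form $f := q_2$ with $D_f := D_{q_2}$, and rescaled exponents $\tilde\theta_j := 4\theta_{1j}/(1+\alpha_0)$, $\tilde\theta_j' := 4(\theta_{1j}+\tau_{1j})/(1+\alpha_0)$, $\tilde\tau := 4\theta_{21}/(1+\alpha_0)$, $\tilde\tau' := 4(\theta_{21}+\tau_{21})/(1+\alpha_0)$, with $\ell = 6$ and $\ell' = 3$. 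Proposition \ref{Nq1} identifies $q_1(a_1, a_2, a_3) = \pm N_{K/\Q}(a_1 + a_2 \nu_2 + a_3 \nu_3)$, so divisibility by $\prod_{j=1}^6 q_{1j}$ matches the norm form setup of the theorem, and Lemma \ref{lmm:q} together with the definition \eqref{eq:DqDef} of $D_{q_2}$ ensures the splitting hypothesis on $f$. The hypotheses \eqref{eq:th1}--\eqref{eq:th5} and \eqref{eq:tau'} translate into the conditions \eqref{eq:Con1}--\eqref{eq:Con14} upon choosing $\delta$ small enough in terms of $\alpha_0$ and the $\tau_{ij}$, while \eqref{eq:Xi1}--\eqref{eq:Xi2} hold by the definition \eqref{goodcube} of a good hyper-rectangle.

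Theorem \ref{DistriNorm} then supplies a main term of the shape $\eta_1^3 A_1 A_2 A_3 \cdot c(m, \u_0)$, with $c(m, \u_0)$ a local density depending on the residue class, plus an error $O(A_1 A_2 A_3/(\log X)^A)$. Since there are only $(\log X)^{O(1)}$ choices of outer parameters and of admissible residue classes $\u_0$, the total error is admissible upon taking $A$ sufficiently large. Summing the main terms over the admissible $\u_0$ (whose count is controlled via Lemmas \ref{pgcdq1q2}, \ref{B14p}, \ref{B13B14N}) and then over $t, u, d, s_2, s_3$ against the Möbius and $\ell$ weights should reconstruct a singular-series-type Euler product, structurally parallel to the computations of \cite[Section 7]{B15} and \cite[Section 15]{D15}.

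The main obstacle will be verifying that the resulting singular series is bounded below by a positive constant uniformly in $X$ and $\cH$. Local positivity at each prime follows from the lemmas of Section 4, and convergence follows from local factors of the form $1 + O(p^{-2})$; however, the combinatorial unfolding of the $\mu$- and $\ell$-weighted outer sums into Euler factors requires careful bookkeeping since our norm form $q_1$ is quartic rather than a product of quadratics as in the earlier works. Once this positivity is established, the desired bound $S_{06}(\cH) \gg \eta_1^3 A_1 A_2 A_3$ follows immediately.
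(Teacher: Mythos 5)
Your overall plan is the same as the paper's: swap the outer parameters to the outside, unfold the divisibility and congruence conditions into residue classes $\u_0 \Mod{m}$ with $m$ of polylogarithmic size, apply Theorem \ref{DistriNorm} with $K=\Q(r_1+r_3)$, $\nu_1=1$, $\nu_2=r_1+r_3$, $\nu_3=r_1^2+r_1r_3+r_3^2$ and the rescaled exponents, and then try to reconstruct a singular series from the weighted outer sums. The choice of parameters matches the paper's up to a harmless reformulation (the paper writes $\theta_i=\theta_{1i}\log X/\log A_1$ rather than the $4\theta_{1i}/(1+\alpha_0)$ normalisation you chose; these agree up to $o(1)$ by \eqref{goodcube}), and the verification that \eqref{eq:th1}--\eqref{eq:th5} and \eqref{eq:tau'} follow from \eqref{eq:Con1}--\eqref{eq:Con14} proceeds exactly as you indicate.

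However, the proposal stops short of proving the statement. You correctly identify the singular series positivity as the main difficulty, but you leave it unresolved: ``requires careful bookkeeping'' and ``once this positivity is established'' are not a proof. The paper carries this out as follows. After applying Theorem \ref{DistriNorm} the remaining sum is $\sum_{m\le Z^{72}}L(m)/m^3$ where $L(m)$ collects the $\mu,\ell$-weights against the local solution counts $|\cS(d,s_2,s_3,t,u)|$. Because of the upper-bound truncations $d\le Z$, $s_2s_3\le Z$, $u\le Z^{20}$, $t\le Z^{50}$, this is not immediately an Euler product: the paper introduces the completed multiplicative function $\tilde L(m)$ (no truncations) and the majorant $L^*(m)$ (absolute values, no truncations), shows $L^*(p)\le 2^5 p$ and $L^*(p^2)\le 3p^2$ so the tail contributes $O(Z^{-1/2})$, and then evaluates $\prod_p(1+\tilde L(p)/p^3)$. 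The subtle point is the small primes $7\le p\le q_0$, where the function $\ell$ has no sign advantage. Here the paper exploits the cancellation $\ell(p)+\ell(1)=0$ combined with the observation $|\cS(d,s_2,s_3,1,1)|=|\cS(d,s_2,s_3,1,p)|$ when $p\mid[d,s_2,s_3]$, which collapses $\tilde L(p)$ to the explicit expression $1-p-|\{\u\Mod p: p\mid q(\u)\}|$, giving the uniform lower bound $1+\tilde L(p)/p^3\ge (p^3-6p^2-p+1)/p^3>0$ for $p\ge 7$. Without this computation the claimed lower bound $S_{06}(\cH)\gg \eta_1^3 A_1A_2A_3$ does not follow, because the singular series could in principle vanish. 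So the gap in your proposal is precisely that this Euler-factor calculation, including the cancellation at small primes, is missing; it is the nontrivial content of the lemma.
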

%
%
\begin{proof}[Proof of Lemma \ref{lmm:S07} assuming Theorem \ref{DistriNorm}]

Recalling the definition of $S_{06}$ from Lemma \ref{lmm:S06}, we remark that the different conditions modulo $30$  on $a_1,a_2,a_3$ imply that $(q(a_1,a_2,a_3),30)=1$ and thus we may impose that $(ds_2s_3tu,30)=1$.
Splitting $(a_1,a_2,a_3)$ into residue classes $\Mod{[t,u,d,s_2,s_3]}$, we see that
\begin{align}
S_{06}(\cH)&=\sum_{\substack{t\le Z^{50}\\ u\le Z^{20}\\ d\le Z\\ s_2s_3\le Z\\ (ds_2s_3tu,30)=1}}\mu (d)\mu (s_2s_3)\ell(u)\mu (t)\sum_{\u_0\in \cS(d,s_2,s_3,t,u)}S_{07}(\u_0,[d,s_2,s_3,t,u]),
\label{eq:S6}
\end{align}
where
\begin{align*}
S_{07}(\u_0,m)&:=\sum_{\substack{{q_{ij}\in [X^{\theta_{ij}},X^{\theta_{ij}+\tau_{ij}}]}\\
\forall\,(i,j)\in I_\cC\\ q_{21}\equiv 1\Mod{D_{q_2}}}}\sum_{\substack{{(a_1,a_2,a_3)\in\cH'}\\ {\prod_{j=1}^6q_{1j}|q_1(a_1,a_2,a_3)}\\ {q_{21}|q_2(a_1,a_2,a_3)}\\
{(a_1,a_2,a_3)\equiv \u_0\Mod{m}}\\
a_2,a_3\equiv 30\Mod{900},\ a_1\equiv 1\Mod{30} }}1,\\
    \cS ( d,s_2,s_3,t,u)&:=\Bigl\{ (u_1,u_2,u_3)\Mod{[d,s_2s_3,t,u]}:\,[d,s_2s_3,u] | q (u_1,u_2,u_3),\\
    &\qquad d|q_3(u_1,u_2,u_3),\, [s_2,t]|u_2,\,
    [s_3,t]|u_3\Bigr\}.
\end{align*}
We now apply  Theorem \ref{DistriNorm} on incomplete norms with $K=\Q (r_1+r_3)$, $\nu_1=1$, $\nu_2=r_1+r_3$, $\nu_3=r_1^2+r_3^2+r_1r_3$ and $\nu_4$ such that 
$\nu_4$ is in the ring of integers of $K$ and $(\nu_1 , \nu_2,\nu_3 ,\nu_4)$ is a $\Q$-basis of $K$.
 By Theorem \ref{DistriNorm} (taking $X_i=A_i$, $\ell=5$, $\ell'=3$, $\theta_i=\theta_{1i}\frac{\log{X}}{\log{A_1}}$, $\theta_i'=(\theta_{1i}+\tau_{1i})\frac{\log{X}}{\log{A_1}}$, $\tau=\theta_{21}\frac{\log X}{\log A_1}$, $\tau'=(\theta_{21}+\tau_{21})\frac{\log X}{\log A_1}$), we have that
\[
S_{07}(\u_0,m)=(1+o(1))\frac{\eta_1^3 A_1A_2A_3}{30^5 m^3\varphi (D_{q_2})}\prod_{(i,j)\in I_\cC}\log\Bigl(1+\tau_{ij}/\theta_{ij}\Bigr).
\]
Here we have used the fact that \eqref{eq:Xi1} and \eqref{eq:Xi2} hold by \eqref{goodcube}. Similarly \eqref{eq:th1} holds by \eqref{eq:Con1}, \eqref{eq:th2} holds by \eqref{eq:Con10}, \eqref{eq:th3} holds by \eqref{eq:Con3}, \eqref{eq:th4} holds by \eqref{eq:Con5}, \eqref{eq:th5} holds by \eqref{eq:Con4} and \eqref{eq:tau'} holds by \eqref{eq:Con13} and \eqref{eq:Con14} and by noticing that 
$\frac{4}{1+\alpha_0}\le \frac{\log X}{\log A_1}\le\frac{4}{1+\alpha_0/2}$. Substituting this into our expression \eqref{eq:S6} for $S_{06}$, we find that
\begin{equation}
    S_{06} (\cH) = (1+o(1))\frac{\eta_1^3}{30^5} \frac{A_1A_2A_3}{\varphi (D_{q_2})}\prod_{(i,j)\in I_\cC}\log\Bigl(1+\tau_{ij}/\theta_{ij}\Bigr)    \sum_{m\le Z^{72} }\frac{L(m)}{m^3},
    \label{eq:S6Asymp}
\end{equation}
where
\begin{equation*}
    L(m):=\sum_{\substack{ d\le Z \\ s_2s_3\le Z\\  u\le Z^{20}\\ (ds_2s_3tu,30)=1 \\ t<Z^{50}\\ [d,s_2s_3,t,u]=m }}\mu (d)\mu (s_2s_3)\mu (t)\ell (u)|\cS (d,s_2,s_3,t,u)|.
    \end{equation*}
We wish to remove the upper bound constraints on $d,s_2,s_3,u,t,m$ so we can understand $\sum_{m}L(m)/m^3$ via an Euler product. Let 
\begin{align*}
L^*(m)&:=\sum_{\substack{[d,s,t,u]=m\\ (ds_2s_3tu,30)=1}}|\mu (d)\mu(s)\mu (t)\ell (u)|\sum_{s_2s_3=s}|\cS (d,s_2,s_3,t,u)|,\\
\tilde L (m)& := \sum_{\substack{[d,s,t,u]=m\\ (ds_2s_3tu,30)=1}}\mu (d)\mu (s)\mu (t)\ell (u)\sum_{s_2s_3=s}|\cS (d,s_2,s_3,t,u)|,
\end{align*}
which are multiplicative functions of $m$. We note that $L^*(m)\ge \max(|L(m)|,|\tilde{L}(m)|)$ for all $m$ and that $\tilde{L}(m)=L(m)$ for $m\le Z$. From the support of $\mu,\ell$ we have $L^*(p^k)=0$ for $k\ge 3$. We easily check that $L^* (p)\le 2^5 p$  and $L^* (p^2)\le 3p^2$ for $p>q_0$ since $|\ell (p)|\le 2/(p-2)$ in this range. We deduce that $L^*(m)/m^3\ll \tau(m)^5/m^2$. 
We note that $\tilde L (p^k)=0$ for $k\ge 2$ and $2\le p\le q_0$, and 
that $\tilde L (p^k)=0$ for any $k\ge 1$ when $p=2,3,5$.
We find
\begin{align*}
\sum_{m\le Z^{72} }\frac{L(m)}{m^3}&=\sum_{m\le Z}\frac{\tilde{L}(m)}{m^3}+O\Bigl(\sum_{m>Z}\frac{L^*(m)}{m^3}\Bigr)\\
&=\sum_{m}\frac{\tilde{L}(m)}{m^3}+O\Bigl(\sum_{m>Z}\frac{\tau(m))}{m^2}\Bigr)\\
&=\prod_{7\le p\le q_0}\Big (1+\frac{\tilde{L}(p)}{p^3}\Big )\prod_{p>q_0}\Bigl(1+\frac{\tilde{L}(p)}{p^3}\Bigr)+O\Bigl(\frac{1}{Z^{1/2}}\Bigr).
\end{align*}
From our bounds on $L^*$ we see that $\prod_{p>q_0}(1+\tilde{L}(p)/p^3)\gg 1$ and the product over $p\le q_0$ converges. We wish to show that the product converges to a strictly positive constant, and so need to check that $1+\tilde{L} (p)/p^3$ doesn't vanish for some small prime $p$ with $7\le p\le q_0$.
If $p|[d,s_2,s_3]$ then for  $u=1$ or $p$, we have
$$|\cS (d,s_2,s_3,u,1)|=|\cS (d,s_2,s_3,u,p)|.$$
Since $\ell(p)+\ell (1)=0$ when $7\le p\le q_0$,
we deduce
\[
\sum_{[d,s_2,s_3]=p}\sum_{[d,s_2,s_3,t,u]=p}
\mu (d)\mu (s_2s_3)\mu (t)\ell (u)
|\cS (d,s_1,s_2,t,u)|=0.
\]
  The value $\tilde L (p)$ is then  
$$\tilde L (p)=1-p-|\{ (u_1,u_2,u_3)\Mod p : p|q(u_1,u_2,u_3)\}|.$$
Then $$1+\tilde L(p)/p^3\ge (p^3-6p^2-p+1)/p^3 >0,$$
when $p\ge 7$.  
 Thus $\sum_{m\le Z^{72}}L(m)/m^3\gg 1$, and so substituting this into \eqref{eq:S6Asymp} and using the fact $\tau_{ij}/\theta_{ij}\gg 1$ we obtain the result.
\end{proof}
%
%
%
\subsection{Proof of Proposition \ref{prpstn:S0}}
%
%
\begin{proof}[Proof of Proposition \ref{prpstn:S0} assuming Theorem \ref{DistriNorm}]
By Lemmas \ref{lmm:S01}, \ref{lmm:S02}, \ref{lmm:S03}, \ref{lmm:S04}, \ref{lmm:S05}, \ref{lmm:S06} and \ref{lmm:S07} in turn, we see that
\[
S_0\gg \frac{1}{\log{X}}\sum_{\cH\in\sH_\cR}\frac{A_0A_1A_2A_3 \eta_1^4}{\widetilde{N}_P(A_0,A_1,A_2,A_3)}+O\Bigl(\frac{1}{Z^{1/2}}\Bigr).
\]
(Note that in this application  Lemma \ref{lmm:S07} we are assuming Theorem \ref{DistriNorm}, and that we have $12\theta_0+22\alpha_0<1$ required for Lemma \ref{lmm:S01} since we are taking $\theta_0$ sufficiently small and assuming that $\alpha_0$ satisfies \eqref{eq:Con7}.) We note that
\[
\sum_{\cH\in\sH_\cR}\frac{A_0A_1A_2A_3 \eta_1^4}{\widetilde{N}_P(A_0,A_1,A_2,A_3)}=\sum_{\cH\subset\cR}\frac{A_0A_1A_2A_3 \eta_1^4}{\widetilde{N}_P(A_0,A_1,A_2,A_3)}-\sum_{\substack{\cH\subset \cR\\ \cH \text{ bad} }}\frac{A_0A_1A_2A_3 \eta_1^4}{\widetilde{N}_P(A_0,A_1,A_2,A_3)}.
\]
If $\cH$ is bad, then $\max(A_1,A_2,A_3)^4\eta_1^{1/10}\ge  q_1(A_1,A_2,A_3)$ or there exists $i\in\{ 0,1,2,3\}$
such that $|A_i|<\eta_1\max (|A_0|,|A_1|,|A_2|, |A_3|)$. The first inequality implies that there exists $(i,j)\in I_\cC$
such that 
\[ L_{i,j}(A_1,A_2,A_3):=|A_1+(r_i+r_j)A_2+(r_i^2+r_ir_j+r_j^2)A_3|\ll \eta_1^{1/40}\max (A_1,A_2,A_3).
\]
Thus, by partial summation
\begin{align*}
\sum_{\substack{\cH\subset \cR\\ \cH \text{ bad} }}\frac{A_0A_1A_2A_3 \eta_1^4}{\widetilde{N}_P(A_0,A_1,A_2,A_3)}&\ll
\sum_{(i,j)\in I_\cC}\sum_{\substack{A=2^\ell \\ X^{1-\frac{7\alpha_0}{8}}\ll A\ll X^{\frac{1+\alpha_0}{4}} }} \sum_{\substack{(a_0,a_1,a_2,a_3)\in \cR\\ L_{i,j}(a_1,a_2,a_3 )\le \eta_1^{1/40}A \\\max(a_0,a_1,a_2,a_3)\ll A}}\frac{1}{A^4}\\
&+ \sum_{i=0}^3\sum_{\substack{A=2^\ell \\ X^{1-\frac{7\alpha_0}{8}}\ll A\ll X^{\frac{1+\alpha_0}{4}} }}
\sum_{\substack{(a_0,a_1,a_2,a_3)\in \cR\\ a_i\le \eta_1 A \\\max(a_0,a_1,a_2,a_3)\ll A}}\frac{1}{A^4}\\
&\ll \eta_1^{1/40}\log{X}.
\end{align*}
Similarly, we find by partial summation
\begin{align*}
\sum_{\substack{\cH\subset \cR }}\frac{A_0A_1A_2A_3 \eta_1^4}{\widetilde{N}_P(A_0,A_1,A_2,A_3)}&=(1+o(1)) \sum_{\substack{(a_0,a_1,a_2,a_3)\in \cR}}\frac{1}{\widetilde{N}_P(a_0,a_1,a_2,a_3)}\\
&\gg \log{X}.
\end{align*}
Putting everything together now gives Proposition \ref{prpstn:S0}.
\end{proof}

Thus we are left to establish Theorem \ref{DistriNorm}.

%
%
%
%
%
%
\section{Incomplete norm forms}
%
%
%
%
In this section we perform our initial reductions to reduce the proof of Theorem \ref{DistriNorm} to that of establishing Proposition \ref{prpstn:TSieve} and Proposition \ref{prpstn:T1}. We roughly follow the argument of \cite{May15b} in this section, but require a number of small technical modifications.

Let  $K$ be a quartic number field, $\cO_K$ its integer ring, $Cl_K$ its class group. Let $\nu_1,\nu_2,\nu_3,\nu_4 \in \cO_K$  such that $\v=(\nu_1,\nu_2,\nu_3,\nu_4)$ is a $\Q$-basis of $K$. We suppose for convenience that $\nu_1=1$ and $K=\Q (\nu_2)$. We then define $\cO_\v =\Z [ \nu_1,\nu_2,\nu_3,\nu_4]$ the order generated by $\v$.

We let $N(\cdot)=N_K(\cdot)$ be the norm on $K$, and note that this is a different norm to $N_P$ on $\Q(r_1)$ encountered earlier.

There exists an integral basis of $\cO_K$, $\w =(\omega_1 ,\omega_2,\omega_3 ,\omega_4)$ and some integers
$w_{ij}$,  $1\le i\le j\le 4$, such that
\begin{equation}
\label{basew}
\nu_j=\sum_{i=1}^j w_{ij}\omega_i\quad (j=1,2,3,4).
\end{equation}
(cf. for example \cite[Proposition 2.11]{Nark}).
 
%
%
\subsection{From $\cO_K$ to  $\cO_\v$ and vice-versa}
%
%

We denote by $L_{\w \v}=(w _{ij})_{1\le i,j\le 4}$ the matrix of $\v$ in $\w$ so that
for all $1\le j\le 4$, 
$\nu_j=\sum_{i=1}^4w _{ij}\omega_i$.

By \eqref{basew} this matrix is upper triangular and  the absolute value of its  determinant is 
\begin{equation}
\label{W}
W=|w_{11}w_{22}w_{33}w_{44}|\in\Z^*
\end{equation}
%
%
\begin{lmm}\label {coordinates}
For all  $\alpha \in \cO_K$, there exist $a_1,a_2,a_3,a_4\in\Z$,
with 
\begin{equation*}
\alpha =\frac{1}{W}\sum_{i=1}^4a_i\nu_i
\end{equation*}
Conversely, there exists a subset $\cV_0\subset \{ 0,\ldots , W-1\}^4$ such that for all $\a\in\Z^4$ we have
\begin{equation*}
\frac{1}{W}\sum_{i=1}^4a_i\nu_i\in \cO_K\Leftrightarrow \exists \u\in \cV_0 : \a\equiv\u \Mod W.
\end{equation*}
\end{lmm}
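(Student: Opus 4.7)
The plan is to view the lemma as a statement of elementary linear algebra about the lattice $\cO_\v = \Z\nu_1 + \cdots + \Z\nu_4$ inside $\cO_K = \Z\omega_1 + \cdots + \Z\omega_4$, using the fact that the transition matrix $L_{\w\v}$ is an integer matrix of determinant $\pm W$.

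For the forward direction, I would start with the observation that since $L_{\w\v}$ is an integer matrix with $\det L_{\w\v} = \pm W$, Cramer's rule gives $W\cdot L_{\w\v}^{-1} = \pm \operatorname{adj}(L_{\w\v}) \in M_4(\Z)$. Given $\alpha \in \cO_K$, write $\alpha = \sum_{i=1}^4 b_i \omega_i$ with $b_i \in \Z$. Inverting the relation $\nu_j = \sum_i w_{ij}\omega_i$ gives $\omega_i = \sum_j (L_{\w\v}^{-1})_{ji}\nu_j$, hence
\[
\alpha = \sum_{j=1}^4\Bigl(\sum_{i=1}^4 b_i\,(L_{\w\v}^{-1})_{ji}\Bigr)\nu_j = \frac{1}{W}\sum_{j=1}^4 a_j\nu_j,
\]
where $a_j := W\sum_i b_i(L_{\w\v}^{-1})_{ji} \in \Z$ because $WL_{\w\v}^{-1}$ has integer entries. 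This gives part one.

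For the converse, I would simply define
\[
\cV_0 := \Bigl\{\u \in \{0,\dots,W-1\}^4 : \tfrac{1}{W}\textstyle\sum_i u_i\nu_i \in \cO_K\Bigr\}
\]
and check the equivalence directly. If $\a \equiv \u \Mod W$ with $\u\in\cV_0$, write $a_i = u_i + Wk_i$ with $k_i\in\Z$; then
\[
\tfrac{1}{W}\textstyle\sum_i a_i\nu_i = \tfrac{1}{W}\textstyle\sum_i u_i\nu_i + \textstyle\sum_i k_i\nu_i \in \cO_K,
\]
since the first term lies in $\cO_K$ by definition of $\cV_0$ and the second lies in $\cO_\v \subset \cO_K$. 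Conversely, given $\a\in\Z^4$ with $\frac{1}{W}\sum_i a_i\nu_i\in\cO_K$, reduce each $a_i$ mod $W$ to obtain $u_i\in\{0,\dots,W-1\}$; subtracting $\sum_i\lfloor a_i/W\rfloor\nu_i\in\cO_K$ shows that $\frac{1}{W}\sum_i u_i\nu_i\in\cO_K$, so $\u\in\cV_0$.

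There is no real obstacle here—the lemma is essentially the statement that the image of $\cO_K$ under the isomorphism $K\to\Q^4$ given by coordinates in the basis $\frac{1}{W}\v$ lies between $\Z^4$ and $W\Z^4$ shifted by some residue set, which is a finite data because $[\cO_K:\cO_\v]$ divides $W$. The only thing to be slightly careful about is ensuring the determinant relation \eqref{W} is used correctly, but this is immediate from the upper-triangularity of $L_{\w\v}$.
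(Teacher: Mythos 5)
Your proof is correct and follows essentially the same route as the paper: express $\alpha\in\cO_K$ in the $\w$-coordinates, pass through $L_{\w\v}^{-1} = \frac{1}{W}\cdot(\text{integer matrix})$ to get $\v$-coordinates in $\frac{1}{W}\Z^4$, and for the converse observe that membership of $\frac{1}{W}\sum a_i\nu_i$ in $\cO_K$ depends only on $\a\bmod W$. The one cosmetic difference is that the paper characterises $\cV_0$ explicitly as the solution set of the congruences $\sum_j w_{ij}a_j\equiv 0\Mod W$ ($i=1,\dots,4$), whereas you define $\cV_0$ tautologically and then verify the periodicity; both are fine.
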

%
%
\begin{proof}
Let $\alpha\in\cO_K$. There exist $(a_1,a_2,a_3,a_4)\in\Z^4$
and $(a'_1,a_2',a_3',a_4')\in\Q^4$
such that  $\alpha =\sum_{i=1}^4a_i\omega_i=\sum_{i=1}^4a'_i\nu_i$.
With our previous notation,
\begin{equation*}
\begin{pmatrix} a'_1 \\ a'_2\\ a'_3\\ a'_4\\
\end{pmatrix}
=(L_{\w\v})^{-1}
\begin{pmatrix} a_1 \\ a_2\\ a_3\\ a_4\\
\end{pmatrix}.
\end{equation*}
The matrix $(L_{\w\v})^{-1}$ is of type $\frac{1}{W} (w '_{ij})_{1\le i,j\le 4}$
where the coefficients $w '_{ij}$ are integers.
This implies the first part of the lemma.

The second part of the lemma is also a direct consequence of the change of basis formula.
With our previous notation we have
\begin{equation*}
\sum_{j=1}^4a_j\nu_j=\sum_{i=1}^4\Big (\sum_{j=1}^4w_{ij}a_j\Big )\omega_i.
\end{equation*}
Then for any $\a=(a_1,a_2,a_3,a_4)\in\Z^4$, $\frac{1}{W}\sum_{i=1}^4a_i\nu_i\in\cO_K$ if and only if
for all $1\le i\le 4$, we have
\begin{equation*}
\sum_{j=1}^4w _{ij}a_j\equiv 0\Mod W.
\end{equation*} 
The set $\cV_0$ is the the subset of $\{ 0,\ldots ,W-1\}^4$ formed by all the solutions of these congruences.
\end{proof}
%
%
%
%
\begin{lmm}\label{Wsg}
Let $\ga$ be a principal ideal. Then there is a generator $\alpha$ of $\ga$ such that 
\begin{equation*}
|\alpha^\sigma |\ll N(\ga )^{1/4}
\end{equation*}
for all embeddings $\sigma : K \hookrightarrow \C$. Furthermore there exists $V>0$ depending only on $\v$ such that
\begin{equation*}
\alpha =\frac{1}{W}\sum_{i=1}^4a_i\nu_i
\end{equation*}
for some integers $a_i\ll N(\ga )^{1/4}$.
\end{lmm}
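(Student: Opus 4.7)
The plan is to combine the classical existence of a unit-balanced generator of a principal ideal with the change-of-basis estimate implicit in Lemma \ref{coordinates}.

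For the first assertion, I would invoke the standard construction of a fundamental domain for the action of $\cO_K^\times$ on $K^\times$. Writing $r_1,r_2$ for the numbers of real and complex embeddings of $K$ (so $r_1+2r_2=4$), the Dirichlet unit theorem gives that the logarithmic embedding sends $\cO_K^\times/\{\pm 1\}$ to a full rank lattice in the trace-zero hyperplane of $\R^{r_1+r_2}$. Fixing a bounded fundamental domain for this lattice, and shifting by $\tfrac{1}{4}\log N(\ga)\cdot(1,\ldots,1)$, we obtain for any generator $\beta$ of $\ga$ a unit $u\in\cO_K^\times$ such that $\alpha:=u\beta$ satisfies $\log|\sigma(\alpha)|=\tfrac{1}{4}\log N(\ga)+O(1)$ for every archimedean embedding $\sigma$, the implicit constant depending only on $K$. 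Exponentiating gives $|\sigma(\alpha)|\ll N(\ga)^{1/4}$ for all $\sigma$, as required. This is exactly the construction used (and quoted) in \cite[Lemma 4.2]{May15b}.

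For the second assertion, since $\alpha\in\cO_K$, Lemma \ref{coordinates} provides integers $a_1,a_2,a_3,a_4\in\Z$ with
\[
\alpha=\frac{1}{W}\sum_{i=1}^4 a_i\nu_i.
\]
Applying the four embeddings $\sigma_1,\sigma_2,\sigma_3,\sigma_4:K\hookrightarrow\C$, we get the linear system
\[
\sigma_j(\alpha)=\frac{1}{W}\sum_{i=1}^4 a_i\,\sigma_j(\nu_i),\qquad j=1,2,3,4.
\]
The matrix $M_{\v}=\bigl(\sigma_j(\nu_i)\bigr)_{i,j}$ has nonzero determinant because $\nu_1,\nu_2,\nu_3,\nu_4$ form a $\Q$-basis of $K$ and its entries depend only on $\v$. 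Inverting gives
\[
a_i=W\sum_{j=1}^4 (M_{\v}^{-1})_{ij}\,\sigma_j(\alpha),
\]
so $|a_i|\le V\max_j|\sigma_j(\alpha)|$ for some constant $V=V(\v)>0$. Combining with the bound from the first part yields $|a_i|\ll N(\ga)^{1/4}$, completing the proof.

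There is no genuine obstacle here: the whole statement is a routine packaging of Dirichlet's unit theorem together with linear algebra for the transition between the integral basis $\w$ (used implicitly through $W$) and the chosen basis $\v$. The only care needed is to make the shift by $\tfrac{1}{4}\log N(\ga)\cdot(1,\ldots,1)$ consistent with the weights of the archimedean places, which is standard.
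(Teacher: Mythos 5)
Your proof is correct and follows essentially the same route as the paper, which simply cites \cite[Lemma 4.3]{May15b} for the unit-theorem choice of generator and Lemma~\ref{coordinates} for the coefficient representation. You have merely unpacked the standard mechanism — shifting by units inside a fundamental domain for the logarithmic embedding, then inverting the archimedean embedding matrix $(\sigma_j(\nu_i))$ to pass from bounds on $|\sigma(\alpha)|$ to bounds on the integer coordinates — which is implicit in the paper's one-line citation.
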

%
%
\begin{proof}
The first part is a particular case of \cite[Lemma 4.3]{May15b}. 
The last part follows also from this lemma combined with  Lemma \ref{coordinates}.
\end{proof}
%
%

\medskip

%
%
\begin{lmm}\label{Ovq}
Let $\cC$ be an hypercube of side length $\delta_0 B$ which contains a point $\bb_0\in\Z^4$ such that $\| \bb_0\| \ll B$.
We suppose that  $\gb_0 =(W^{-1}\sum_{i=1}^4 (\bb_0 )_i\nu_i)$ is an integral ideal  whose norm satisfies 
$N(\gb_0)=B_0^4\gg B^4$. Let  $q$ such that $W|q$ and $10qW\le \delta_0B$.

Then there exists  a set  $\cW(\b_0)$ of  $W^4$  elements $\beta_0'\in\cO_K$
with $\beta_0'=W^{-1}\sum_{i=1}^4 (\bb _0')_i\nu_i$  and with $\bb_0'\in\cC$,
such that  for all $\bb\in\cC$, 
$\bb\equiv\bb_0\Mod q$ if and only if $\beta =\frac{1}{V}\sum_{i=1}^4 b_i\nu_i\in\cO_K$ and there exists $\beta_0'\in\cW(\b_0)$ with
$\beta\equiv\beta_0'\Mod q$.
\end{lmm}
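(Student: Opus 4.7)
The plan is to realize $\cW(\b_0)$ as an explicit system of $W^4$ lifts of the $W^4$ cosets of $qW\Z^4$ inside $\b_0 + q\Z^4$. For each $\u \in (\Z/W\Z)^4$, I would choose a representative $\b_0'(\u) \in \b_0 + q\Z^4$ lying in the coset $\b_0 + q\tilde{\u} + qW\Z^4$ (for any lift $\tilde{\u}\in\Z^4$ of $\u$) and in $\cC$. Such a choice is possible because $\cC$ contains $\b_0$ and has side length $\delta_0 B \ge 10qW$, so each of the $W^4$ cosets of $qW\Z^4$ inside $\b_0+q\Z^4$ meets $\cC$. Set
\[
\cW(\b_0) := \Big\{ \beta_0'(\u) := \tfrac{1}{W}\sum_{i=1}^4 (\b_0'(\u))_i \nu_i :\, \u\in(\Z/W\Z)^4\Big\}.
\]
Since $W\mid q$ we have $\b_0'(\u) \equiv \b_0 \Mod W$, and combined with the integrality of $\gb_0$ and Lemma \ref{coordinates} this forces each $\beta_0'(\u) \in \cO_K$.

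For the forward direction, suppose $\b \in \cC$ with $\b \equiv \b_0 \Mod q$. Reducing modulo $W$ and applying Lemma \ref{coordinates} gives $\beta = \frac{1}{W}\sum b_i\nu_i \in \cO_K$. The difference $\b - \b_0 \in q\Z^4$ lies in a unique coset of $qW\Z^4$, indexed by some $\u \in (\Z/W\Z)^4$; thus there is a unique $\b_0'(\u)\in\cW(\b_0)$ with $\b - \b_0'(\u) = qW\c$ for some $\c \in \Z^4$. Then
\[
\beta - \beta_0'(\u) \;=\; \tfrac{1}{W}\sum_i (qWc_i)\nu_i \;=\; q\sum_i c_i\nu_i \;\in\; q\cO_\v \subseteq q\cO_K,
\]
so $\beta\equiv \beta_0'(\u)\Mod q$ in $\cO_K$, as required.

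The converse is the more substantive direction. Suppose $\beta \in \cO_K$ and $\beta \equiv \beta_0' \Mod q$ for some $\beta_0' \in \cW(\b_0)$. Writing $\beta - \beta_0' = q\omega$ with $\omega \in \cO_K$, I apply Lemma \ref{coordinates} to $\omega$ to obtain integers $a_1,\ldots,a_4$ with $\omega = \frac{1}{W}\sum_i a_i\nu_i$. Equating the two expressions
\[
\frac{1}{W}\sum_i (b_i - (b_0')_i)\nu_i \;=\; q\cdot\frac{1}{W}\sum_i a_i\nu_i
\]
and invoking the $\Q$-linear independence of $\nu_1, \ldots, \nu_4$ forces $b_i - (b_0')_i = q a_i$ for every $i$. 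Hence $\b \equiv \b_0' \Mod q$, and together with $\b_0' \equiv \b_0 \Mod q$ this gives $\b \equiv \b_0 \Mod q$.

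The only substantive obstacle is bookkeeping: verifying that all $W^4$ lifts $\b_0'(\u)$ can simultaneously be chosen inside $\cC$ (comfortably afforded by the hypothesis $10qW \le \delta_0 B$ combined with $\|\b_0\| \ll B$), and then correctly translating between ideal-theoretic congruence modulo $q\cO_K$ and coordinate congruence modulo $q$ in $\Z^4$ via the common denominator $W$---a dictionary which Lemma \ref{coordinates} is designed to provide.
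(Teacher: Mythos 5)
Your proof is correct and follows essentially the same approach as the paper: construct $\cW(\b_0)$ as coset representatives of $qW\Z^4$ inside $\b_0+q\Z^4$ chosen to lie in $\cC$ (the paper writes these as $\b_0+q(\v+W\u(\b_0,\v))$ with $\v\in\{0,\ldots,W-1\}^4$), then verify both implications by passing back and forth between coordinate congruences in $\Z^4$ and ideal congruences in $\cO_K$ via the $\frac{1}{W}$-dictionary of Lemma \ref{coordinates}. Your framing in terms of cosets of $qW\Z^4$ makes the construction and the forward direction slightly more transparent, but the underlying argument is identical.
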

%
%
\begin{proof}
This is  variant of an argument used in the proof of \cite[Lemma 9.4]{May15b}.

Let $\beta_0 :=\frac{1}{W}\sum_{i=1}^4 (\b_0)_i\nu_i$. 
For all $\v=(v_1,\ldots ,v_4)\in\{ 0,\ldots ,W-1\}^4$, there exists $\u =\u (\b_0 , \v)\in\Z^4$
such that $\b_0+q(\v +W\u)\in\cC$ since $qW\le \delta_0 B$, the side length of $\cC$. We will prove that the set 
\[
\cW:=\Big\{ \beta_0'=\frac{1}{W}\sum_{i=1}^4 b_i'\nu_i \ \text{with}\ \b'= \b_0+q(\v +W\u (\b_0,\v)), \v\in
\{ 0,\ldots ,W-1\}^4\Big\}
\]
satisfies the conclusion of the lemma.

First we suppose that $\bb \equiv \bb_0\Mod q$. This implies that there exist four integers $m_1,m_2,m_3,m_4$
such that $b_i=(\b_0)_i+qm_i$.  
We get 
$$\beta :=\frac{1}{W}\sum_{i=1}^4\b_i\nu_i=\frac{1}{W}\sum_{i=1}^4 ((\bb_0)_i +m_iq)\nu_i=\beta_0+\frac{q}{W}\sum_{i=1}^4m_i\nu_i.$$
 Since $W|q$, this implies that $\beta\in\cO_K$.
If we choose $\beta_0'=\beta_0+\frac{q}{W}\sum_{i=1}^4 v_i\nu_i+$
with $0\le v_1,\ldots ,v_4<W$ such that $v_i\equiv m_i\Mod W$ then we would have  
$\beta =\beta_0' +\frac{q}{W}\sum_{i=1}^4 ( m_i-v_i+Wu_i)\nu_i$, and thus $\beta\equiv \beta_0'\Mod q$.

Now we prove the reciprocal assertion.  We suppose that there exists $\beta'_0\in\cW$  such that $\beta\equiv \beta'_0\Mod q$. Then 
 $\beta=\beta_0' +q\gamma$ for some $\gamma\in\cO_K$. There exists
$g_1,g_2,g_3,g_4\in\Z$ such that $\gamma=\frac{1}{W}\sum_{i=1}^4 g_i\nu_i$.
For each $i=1,2,3,4$, we have
$\frac{b_i}{W}=\frac{(\b_0)_i +q(v_i+Wu_i+g_i)}{W}.$ This implies that $\bb\equiv \bb_0 \Mod q$.
\end{proof}
%
%

\medskip
For any ideal $\gd$ of $\cO_K$, we define the function $\varrho_\v$ by 
\begin{equation}
\varrho_\v (\gd):=\frac{|\{\a\in [1,N(\gd )^3] : \gd | (a_1\nu_1+a_2\nu_2+a_3\nu_3)\}|}{N(\gd)^2}.\label{eq:RhoVDef}
\end{equation}

 This function  satisfies the following properties.
%
%
\begin{lmm}\label{rhov}
\begin{enumerate}
\item For all degree one prime ideals $\gp$ with $(N(\gp), W)=1$, we have
$\varrho_\v (\gp)=1$.
\item We have 
\begin{equation*}
\Big|\Big\{ \x\in [1,p^2]^3 : p^2| N\Big ( \sum_{i=1}^3 x_i\nu_i\Big )\Big\}\Big | \ll p^4.
\end{equation*}
\item For any ideal $\goe$ such that $N(\goe )$  is a power of $p$, we have
\begin{equation*}
\frac{\varrho_\v (\goe)}{N(\goe)}\ll  \frac{1}{p^2}
\end{equation*}
unless $\goe$ is a degree $1$ prime ideal above $p$.
\item For any ideals $\ga ,\gb$, $\varrho_\v (\ga \gb)=\varrho_\v (\ga)\varrho_\v (\gb)$ if $(N(\ga ),N(\gb))=1$.
\item For $k\ge 3$, we have
\begin{equation*}
\Big |\Big \{ \x\in [1,p^k]^3 : p^k| N\Big (\sum_{i=1}^3x_i\nu_i\Big )\Big \}\Big |\ll k p^{11k/4}.
\end{equation*}
\end{enumerate}
\end{lmm}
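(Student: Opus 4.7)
All five parts reduce to lower bounds on the image $H_\gd := \mathrm{Im}(\phi_\gd) \subset \cO_K/\gd$ of the map $\phi_\gd\colon (\Z/N(\gd)\Z)^3 \to \cO_K/\gd$ sending $(a_1,a_2,a_3) \mapsto \sum_{i=1}^3 a_i \nu_i$. This map is well defined because $N(\gd)\cdot\cO_K \subset \gd$, and a direct count gives $\varrho_\v(\gd)/N(\gd) = 1/|H_\gd|$. Part (1) follows immediately because $\nu_1 = 1$ already generates $\cO_K/\gp \cong \F_p$ as an additive group, so $H_\gp = \F_p$ and $\varrho_\v(\gp) = 1$. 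Part (4) follows from CRT: coprimality of the norms gives $\cO_K/\ga\gb \cong \cO_K/\ga \times \cO_K/\gb$, and the CRT-lifting argument on the domain $(\Z/N(\ga)N(\gb)\Z)^3 \cong (\Z/N(\ga)\Z)^3 \times (\Z/N(\gb)\Z)^3$ shows $H_{\ga\gb} = H_\ga \times H_\gb$, yielding the multiplicativity.

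The main content is (3), which I would handle locally via $\cO_K \otimes \Z_p = \prod_i (\cO_K)_{\gp_i}$ (valid for $p \nmid W$). For primes $p$ outside the finite exceptional set of divisors of $W \cdot \Disc(K) \cdot [\cO_K:\Z[\nu_2]]$ the small bad primes contribute a bounded amount absorbed into the implicit constant. For good $p$, $p\cO_K = \prod_i \gp_i$ is an unramified product whose residue degrees $f_i$ match the degrees of the irreducible factors of $\bar P_2$, the reduction of the minimal polynomial of $\nu_2$. Writing $\goe = \prod_i \gp_i^{a_i}$, the diagonal image of $\Z$ in $H_\goe$ coming from $\nu_1 = 1$ has order $p^{\max_i a_i}$, so $|H_\goe| \ge p^2$ as soon as $\max a_i \ge 2$. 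Otherwise $\goe$ is squarefree, and the argument splits: if $\goe = \gp_i$ is a single prime of residue degree $f_i \ge 2$, then $\bar\nu_2$ is a root in $\F_{p^{f_i}}$ of an irreducible factor of $\bar P_2$ of degree $\ge 2$, so $\bar\nu_2 \notin \F_p$ and $\{1, \bar\nu_2\}$ is $\F_p$-linearly independent; if $\goe$ is a product of at least two distinct primes, the CRT decomposition $\cO_K/\goe = \prod_{i\in S} \F_{p^{f_i}}$ places $\bar\nu_2$ outside the diagonal $\F_p \cdot (1,\dots,1)$ because the coordinates $\bar\nu_2^{(i)}$ are distinct roots of the separable polynomial $\bar P_2$. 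In both squarefree subcases $|H_\goe| \ge p^2$.

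Parts (2) and (5) follow by summation over divisors. If $p^k \mid N(\alpha)$ then some ideal $\gb$ of norm $p^k$ divides $(\alpha)$, and $|\{\x \in [1,p^k]^3: \gb \mid \sum x_i\nu_i\}| = p^{3k}/|H_\gb|$. Part (2) is immediate from (3): the $O(1)$ ideals of norm $p^2$ each contribute at most $p^4$. For part (5), writing $\gb = \prod \gp_i^{a_i}$ with $\sum f_i a_i = k$, the image $H_\gb$ surjects onto each local component $H_{\gb,i} \subset \cO_K/\gp_i^{a_i}$, which for generic $p$ is the full $\Z/p^{a_i}\Z$-span of $\{1, \bar\nu_2, \bar\nu_3\}$, of size $p^{\min(3, f_i) a_i}$; a short case analysis on the factorisation type of $p\cO_K$ (using $\sum f_i \le 4$ and treating the $f_i = 4$ case separately) confirms $\max_i \min(3, f_i) a_i \ge k/4$, so $|H_\gb| \ge p^{k/4}$ and each term is at most $p^{11k/4}$. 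Summing over the polynomially-many ideals of norm $p^k$ yields the claim.

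The main obstacle is (3), specifically ensuring the argument applies uniformly in $p$ and in the ideal structure. The crucial input is that $K = \Q(\nu_2)$ forces $\bar\nu_2$ to avoid $\F_p$ in every residue field of degree $\ge 2$, which combined with the local decomposition $\cO_K \otimes \Z_p = \prod_i(\cO_K)_{\gp_i}$ for $p \nmid W$ gives a uniform treatment modulo the finite set of bad primes; the rest of the lemma is essentially bookkeeping on top of this local picture.
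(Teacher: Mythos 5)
Your reformulation $\varrho_\v(\gd)/N(\gd)=1/|H_\gd|$, with $H_\gd$ the image of the evaluation map $(\Z/N(\gd)\Z)^3\to\cO_K/\gd$, is correct, and your proofs of (1), (3), (4) via it form a clean self-contained alternative: the paper handles those parts by reducing via \eqref{basew} to the integral basis $\w$ and citing \cite[Lemma~7.7]{May15b}, so you are effectively reproving the cited lemma. However, your reduction for (2) and (5) rests on the false claim that $p^k\mid N(\alpha)$ forces an ideal of norm exactly $p^k$ to divide $(\alpha)$: if $p$ is inert, ideals supported above $p$ have norm a power of $p^4$, so for $k=2,3$ no such divisor need exist. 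One must instead take the $p$-part $\gb$ of $(\alpha)$, with norm $p^{k'}$ for some $k'\ge k$, and track the integer $m$ with $(p^m)=\gb\cap\Z$, which governs the periodicity of $\gb\mid\sum x_i\nu_i$ in $\x$, need not equal $N(\gb)$, and can exceed $p^k$. Also, your bound $p^{\min(3,f_i)a_i}$ on the local spans of $\{1,\bar\nu_2,\bar\nu_3\}$ ``for generic $p$'' is both overkill and leaves bad primes untreated, which is not a harmless finite-set issue for (5) since $k$ is unbounded; the cheap, fully uniform observation is that $1\in\cO_\v$ already has additive order $p^m$ in $\cO_K/\gb$ and that $m\ge k'/4\ge k/4$ because $\sum e_if_i=4$ over the primes above $p$, so $|H_\gb|\ge p^{k/4}$ with no restriction on $p$, ramified or otherwise.

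The paper's proof of (5) avoids the ideal bookkeeping altogether: since $\nu_1=1$, the polynomial $g_{x_2,x_3}(x_1):=N(x_1\nu_1+x_2\nu_2+x_3\nu_3)$ is monic of degree $4$ in $x_1$, so $p^k\mid g_{x_2,x_3}(x_1)$ forces $x_1$ to lie $p$-adically within $p^{-k/4}$ of one of its four roots in $\overline{\Q_p}$, giving $O(p^{3k/4})$ admissible $x_1\in[1,p^k]$ per pair $(x_2,x_3)$ and $O(p^{11k/4})$ in total, uniformly in $p$ and $k$ and with no case analysis on the splitting of $p$. I would keep your $H_\gd$ argument for (1), (3), (4), where it is genuinely illuminating, and adopt the monic-polynomial argument for (2) and (5).
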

%
%
\begin{proof} The first four assertions are essentially given by \cite[Lemma 7.7]{May15b}, except that they work with a basis $\nu_1,\nu_2,\nu_3,\nu_4$ in place of $1,\theta ,\theta^2,\theta^3$ which has a negligible effect on the proof. Indeed, by \eqref{basew}  the $\Q$-vector space spanned by $\nu_1, \nu_2,\nu_3$ is the same
as the one spanned by $\omega_1,\omega _2,\omega_3$, and the change-of-basis matrix between the basis $\nu_1,\nu_2,\nu_3,\nu_4$ and $\omega_1,\omega_2,\omega_3,\omega_4$ has determinant $W$. Thus when $(N(\gd ),W)=1$ we have
$$\varrho_\v (\gd )=\frac{|\{\a\in [1,N(\gd )^3] : \gd | (a_1\omega_1+a_2\omega_2+a_3\omega_3)\}|}{N(\gd)^2},$$
and so it is sufficient to prove these four statements with the basis $\w$ in place of $\v$. The proof is then the same as in \cite{May15b}.

We are left to establish assertion 5. Since $N(\nu_1)\ne 0$, for any choice of $x_2,x_3$, $g_{x_2,x_3}(x_1):=N(x_1\nu_1+x_2\nu_2+x_3\nu_3)$ is a non-zero polynomial of degree 4 in $x_1$. Thus, given $x_2,x_3$, if $N(x_1\nu_1+x_2\nu_2+x_3\nu_3)\equiv 0\Mod{p^k}$, we see that $\|x_1-\alpha\|_p\ll p^{-k/4}$ for one of the 4 roots $\alpha$ of $g_{x_2,x_3}$ over $\overline{\Q_p}$. Thus there are $O(p^{3k/4})$ choices of $x_1\in[1,p^k]$ for each choice of $x_2,x_3$. This gives the result.
\end{proof}
%
%


Let $\gamma_K$ be the residue in $s=1$ of $\zeta_K$ and we define $\Sft$ to be the Euler product\footnote{This definition of $\Sft$ is slightly different as the one given in \cite{May15b}. In the present paper $\Sft$ doesn't depend on some modulus $q^*$ or $m$.} 
\begin{equation}
\Sft :=\prod_\gP \Big ( 1-\frac{\varrho_\v (\gP)}{N(\gP)}\Big )\Big ( 1-\frac{1}{N(\gP)}\Big )^{-1}.\label{eq:SingularDef}
\end{equation} 
%
%
\begin{lmm}\label{Perron}There exists a constant $c>0$ such that 
for  any ideal $\gI$ of $\cO_K$, $m\in\N$, $R\ge 2$ we have
\begin{equation*}
\sum_{\substack{{N(\gd)<R}\\ {(\gd ,\gI)=1}\\ {(N(\gd),m)=1}}} \frac{\mu (\gd )\varrho_\v (\gd)}{N(\gd )}\log \frac{ R}{N(\gd )}
=\frac{\Sft}{\gamma_K}\prod_{\gP | (m)\gI}\Big ( 1-\frac{\varrho _\v (\gP)}{N(\gP)}\Big )^{-1}+O\Big ( 2^{4\omega (\gI)}\exp (-c\sqrt{\log R})\Big ).
\end{equation*}
\end{lmm}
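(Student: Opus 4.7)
The plan is to view this as a Mellin--Perron integral attached to the Dirichlet series
\[
F(s) := \sum_{\substack{(\gd,\gI)=1\\ (N(\gd),m)=1}}\frac{\mu(\gd)\varrho_\v(\gd)}{N(\gd)^s}=\prod_{\gP\nmid (m)\gI}\Bigl(1-\frac{\varrho_\v(\gP)}{N(\gP)^s}\Bigr),
\]
and then extract the main term by a standard Selberg--Delange contour shift past $s=1$, using the classical zero-free region for $\zeta_K$. The key factorisation I would establish first is
\[
F(s)=\frac{H(s)}{\zeta_K(s)}\prod_{\gP\mid (m)\gI}\Bigl(1-\frac{\varrho_\v(\gP)}{N(\gP)^s}\Bigr)^{-1},\qquad H(s):=\prod_\gP\frac{1-\varrho_\v(\gP)/N(\gP)^s}{1-1/N(\gP)^s}.
\]
Using parts 1--4 of Lemma \ref{rhov}, the local factors of $H$ at degree one primes coprime to $W$ are exactly $1$, and the remaining primes contribute factors of size $1+O(N(\gP)^{-2\Re(s)})$; hence $H$ is an absolutely convergent Euler product on $\Re(s)>1/2$, with $H(1)=\Sft$ by definition.

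Next, I would apply the standard Mellin identity $\log^{+}(R/N)=\frac{1}{2\pi i}\int_{(c)}(R/N)^s s^{-2}ds$ with $c>0$ to rewrite
\[
\sum_{\substack{N(\gd)<R\\ (\gd,\gI)=1\\ (N(\gd),m)=1}}\frac{\mu(\gd)\varrho_\v(\gd)}{N(\gd)}\log\frac{R}{N(\gd)}=\frac{1}{2\pi i}\int_{(c)}F(1+s)\frac{R^s}{s^2}\,ds.
\]
I would then shift the contour to $\Re(s)=-c_0/\sqrt{\log R}$, inside the classical zero-free region for $\zeta_K$, where one has $\zeta_K(1+s)^{-1}\ll (\log(|s|+2))^{O(1)}$ and also a $\exp(-c\sqrt{\log R})$ saving on the shifted line. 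The only singularity crossed sits at $s=0$: the simple pole of $\zeta_K$ at $s=1$ makes $\zeta_K(1+s)^{-1}$ vanish to first order at $s=0$, which combines with $R^s/s^2$ to produce a simple pole whose residue is
\[
\frac{H(1)}{\gamma_K}\prod_{\gP\mid (m)\gI}\Bigl(1-\frac{\varrho_\v(\gP)}{N(\gP)}\Bigr)^{-1}=\frac{\Sft}{\gamma_K}\prod_{\gP\mid (m)\gI}\Bigl(1-\frac{\varrho_\v(\gP)}{N(\gP)}\Bigr)^{-1},
\]
precisely the claimed main term.

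Finally I would bound the shifted integral. On the new contour, $H(1+s)$ is uniformly $O(1)$ (it does not depend on $\gI$ or $m$), and the local correction $\prod_{\gP\mid (m)\gI}(1-\varrho_\v(\gP)/N(\gP)^{1+s})^{-1}$ is $O(1)$ factor by factor. Since $K$ is quartic, every rational prime has at most four prime ideals above it, so this product is bounded by $C^{\omega((m)\gI)}\ll 2^{4\omega(\gI)}$ (with the $m$-dependence absorbed by the implied constant). Multiplying by the $\exp(-c\sqrt{\log R})$ factor from the zero-free region and integrating against $ds/s^2$ (which makes the $t$-integral convergent) gives the desired error term.

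The main obstacle I anticipate is the uniformity in $\gI$: one needs to verify that the bound $2^{4\omega(\gI)}$ is really enough to dominate the contribution of the local factors on the shifted contour, and that no hidden dependence on $\gI$ sneaks into $H$ or into the residue calculation. This is routine once the factorisation $F=H/\zeta_K\cdot(\text{local})$ is in hand, but requires care because both the truncation condition $(\gd,\gI)=1$ and the divisibility $\gP\mid (m)\gI$ in the main term must be treated consistently; everything else is a direct Selberg--Delange style contour shift.
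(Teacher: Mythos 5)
Your Selberg--Delange contour-shift argument (Mellin inversion of the partial sum, factorising the Dirichlet series through $\zeta_K^{-1}$ and the local Euler factors at $\gP\mid(m)\gI$, shifting into the classical zero-free region, and extracting the residue at $s=0$) is precisely the method behind \cite[Lemma 8.5]{May15b}, which the paper cites verbatim, so you are on the same route and your residue computation giving $\Sft\gamma_K^{-1}\prod_{\gP\mid(m)\gI}(1-\varrho_\v(\gP)/N(\gP))^{-1}$ is correct. The wrinkle you flag is real: on the shifted line the local product is $\ll C^{\omega((m)\gI)}$, not $\ll 2^{4\omega(\gI)}$ uniformly in $m$, so "absorbing $m$ into the implied constant" is not literally justified; but this matches what the paper itself uses in the application (Lemma \ref{lmm:M1} states $16^{\omega((m)\gI)}=2^{4\omega((m)\gI)}$), and since $m\ll(\log X)^{O(1)}$ in every use and $\exp(-c\sqrt{\log R})$ dominates any such factor, it is a cosmetic imprecision in the lemma statement rather than a gap in your argument.
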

%
%
\begin{proof}
The proof is exactly the same as in \cite[Lemma 8.5]{May15b}. \cite[Lemma 8.5]{May15b} states the result with $N(J)^{o(1)}$ in place of $2^{4\omega(J)}$, but following the proof we see that the error term can be taken as $\exp (-c\sqrt{\log R})\prod_{\gP|J}( 1-\frac{1}{N (\gP)^{3/4}})^{-1}$, which is clearly sufficient for our slightly stronger bound.
   \end{proof}
   \begin{lmm}
\label{rhodrhoI}
 For any $2\le R\le x$ we have
\begin{equation*}
\sum_{N(\gd )\le R}\mu^2 (\gd)\sum_{N(\gI)\le x}\frac{\rho_v (\gd \gI)}{N(\gd\gI)}\ll (\log x)^8.
\end{equation*}   
\end{lmm}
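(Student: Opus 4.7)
The plan is to reduce the double sum to a single sum over ideals $\gJ$, by the standard device of setting $\gJ = \gd\gI$. Given $\gJ = \prod_\gp \gp^{a_\gp}$, the decompositions $\gJ = \gd\gI$ with $\gd$ squarefree correspond to subsets $S \subseteq \{\gp : a_\gp \ge 1\}$ via $\gd = \prod_{\gp \in S} \gp$, so there are exactly $2^{\omega(\gJ)}$ of them. Since the hypothesis $R \le x$ gives $N(\gJ) = N(\gd)N(\gI) \le x^2$, we get
\[
\sum_{N(\gd) \le R} \mu^2(\gd) \sum_{N(\gI) \le x} \frac{\varrho_\v(\gd\gI)}{N(\gd\gI)} \;\le\; \sum_{N(\gJ) \le x^2} \frac{\varrho_\v(\gJ)\, 2^{\omega(\gJ)}}{N(\gJ)}.
\]

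To estimate this single sum I would apply Rankin's trick: for any $\sigma > 0$,
\[
\sum_{N(\gJ) \le y} \frac{\varrho_\v(\gJ)\, 2^{\omega(\gJ)}}{N(\gJ)} \;\le\; y^\sigma F(1+\sigma), \qquad F(s) := \sum_\gJ \frac{\varrho_\v(\gJ)\, 2^{\omega(\gJ)}}{N(\gJ)^s}.
\]
By part (4) of Lemma \ref{rhov}, $\varrho_\v$ is multiplicative on coprime ideals, so $F$ admits an Euler product $F(s) = \prod_\gp L_\gp(s)$ with $L_\gp(s) = 1 + 2\sum_{k \ge 1} \varrho_\v(\gp^k)/N(\gp)^{ks}$.

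The substantive step is comparing these local factors with those of $\zeta_K(s)^2$. For a degree-one prime $\gp$ with $(N(\gp), W) = 1$, part (1) gives $\varrho_\v(\gp) = 1$, while combining parts (3) and (5) (the former for bounded $k$, the latter for the tail $k \gg 1$) yields $\sum_{k \ge 2} \varrho_\v(\gp^k)/N(\gp^k) \ll 1/p^2$ uniformly. Hence $L_\gp(s) = 1 + 2/N(\gp)^s + O(1/N(\gp)^{2\Re s})$ for $\Re(s) \ge 1$. For the remaining (finitely many) ramified primes or primes dividing $W$, and for all prime ideals of degree $\ge 2$, part (3) gives $L_\gp(s) = 1 + O(1/p^2)$. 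A direct comparison with the local factor $(1 - N(\gp)^{-s})^{-2}$ of $\zeta_K(s)^2$ then shows that $F(s)/\zeta_K(s)^2$ extends to a function that is holomorphic and bounded on $\Re(s) \ge 1/2$. Since $\zeta_K(s) \ll 1/(s-1)$ for $s$ close to $1$ on the right, this gives $F(1+\sigma) \ll \sigma^{-2}$. Choosing $\sigma = 1/\log y$ yields
\[
\sum_{N(\gJ) \le y} \frac{\varrho_\v(\gJ)\, 2^{\omega(\gJ)}}{N(\gJ)} \ll (\log y)^2,
\]
and setting $y = x^2$ gives a bound of $O((\log x)^2)$, comfortably within the claimed $O((\log x)^8)$. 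There is no essential analytic difficulty; the only work is the routine Euler-product bookkeeping at the finitely many bad primes and at higher-degree prime ideals, both of which only contribute $O(1/p^2)$ perturbations that converge absolutely.
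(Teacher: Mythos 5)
Your proof is correct and follows essentially the same strategy as the paper: reduce the double sum to the multiplicative function $\gJ\mapsto 2^{\omega(\gJ)}\varrho_\v(\gJ)$ and control its Euler product via Lemma~\ref{rhov}. The paper does this more crudely — it bounds the double sum directly by $\prod_{N(\gP)\le x}\bigl(1+2\sum_{k\ge1}\varrho_\v(\gP^k)/N(\gP^k)\bigr)$ (every prime ideal dividing $\gd\gI$ has norm $\le x$, since $R\le x$), then invokes the Prime Ideal Theorem, accepting the loss from having up to four degree-one prime ideals above each rational $p$; you instead run an honest Rankin's trick and compare $F(s)$ with $\zeta_K(s)^2$, which yields the sharper $O((\log x)^2)$. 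One minor imprecision: the comparison $F(s)/\zeta_K(s)^2$ being holomorphic and bounded should be stated on $\Re(s)\ge 1/2+\delta$ rather than on $\Re(s)\ge 1/2$, since the local ratios are $1+O(N(\gp)^{-2\Re s})$ and the resulting product only converges absolutely when $2\Re s>1$; this does not affect the estimate at $s=1+\sigma$.
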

\begin{proof}
By Rankin's trick, we have
\[
\sum_{N(\gd )\le R}\mu^2 (\gd)\sum_{N(\gI)\le x}\frac{\rho_v (\gd \gI)}{N(\gd\gI)}\le \prod_{N(\gP)\le x}\Bigl(1+2\sum_{k\ge 1}\frac{\varrho_\v (\gP^k)}{N(\gP^k)}\Bigr).
\]
By Lemma  \ref{rhov}, if $\gP$ is a degree 1 prime ideal above $p$ then the term in parentheses is $1+2/p+O(1/p^2)$, and if $\gP$ is of degree more than 1 above $p$ then this is $1+O(1/p^2)$. The result now follows from the Prime Ideal Theorem.
\end{proof}
%
%

%
%
\subsection{Multiplication in $\cO_\v$}
%
%
\begin{dfntn}\label{mult}
For any vectors $\bd, \be \in\Z^4\setminus\{\zero\}$, we define $\bd\diamond\be$  as be the vector  
$\b\in\Q^4$ such that 
\begin{equation*}
\sum_{i=1}^4b_i\nu_i=\sum_{i=1}^4 d_i\nu_i\times \sum_{i=1}^4 e_i\nu_i
\end{equation*}
For $1\le i\le 4$ we denote by $(\bd\diamond\be)_i$ the coordinate $b_i$.
\end{dfntn}
%
%
This operation is helpful to detect the elements of $\cO_\v$ with a fourth coordinate equal to zero.
The following lemma turns the problem of detecting this zero coordinate into a question about lattices.
%
%
\begin{lmm}\label{de40}
For any $\bd\in\Z^4\setminus\{ \zero\}$
let $\Lambda_\bd$ be the subset of $\Z^4$ defined by 
\begin{equation*}
\Lambda_\bd =\{ \be\in\Z^4 : (\bd\diamond\be )_4=0\}.
\end{equation*}
Then $\Lambda_\bd$ is a lattice of rank $3$ and $\det (\Lambda _\bd)\asymp \| \bd\|/D $,where $D$ is the GCD of the components of $\bd$.
\end{lmm}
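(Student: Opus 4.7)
The plan is to identify $\Lambda_\bd$ as the kernel of a single integer linear form and then compute its covolume via a well-known formula, controlling the relevant norm and gcd in terms of $\bd$ and $D$.

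Concretely, since $\cO_\v = \Z[\nu_1,\nu_2,\nu_3,\nu_4]$ is a ring, products $\nu_i\nu_j$ expand as $\sum_k c_{ij}^k \nu_k$ with $c_{ij}^k \in \Z$. Let $M_\bd$ be the matrix of multiplication by $\alpha_\bd := \sum_i d_i\nu_i$ in the basis $\v$, so that $M_\bd$ has integer entries depending $\Z$-linearly on $\bd$ and $(\bd\diamond\be)_k = (M_\bd\be)_k$. Setting $c_i := (M_\bd)_{4i} = \sum_j d_j c_{ji}^4$, the first task is to show that $c=(c_1,\dots,c_4)\ne\zero$ whenever $\bd\ne\zero$; this is because $\alpha_\bd\ne 0$ acts injectively on the field $K$, so its multiplication map cannot send the rank-$4$ order $\cO_\v$ into the proper $\Q$-subspace $\Q\{\nu_1,\nu_2,\nu_3\}$. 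Therefore $\Lambda_\bd = \{\be\in\Z^4: \sum_i c_i b_i = 0\}$ is the kernel of a non-zero integer linear form on $\Z^4$, hence a lattice of rank $3$ with covolume $\|c\|_2/D'$, where $D' := \gcd(c_1,\dots,c_4)$.

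It remains to prove $\|c\|_2/D' \asymp \|\bd\|/D$. The map $\Phi:\bd\mapsto c$ is linear, $\Phi(\bd)=A^T\bd$ with fixed matrix $A_{ij}=c_{ij}^4$, so $\|c\|_2\ll \|\bd\|$ is immediate; the reverse inequality $\|\bd\|\ll\|c\|_2$ follows from the injectivity of $\Phi$ established above, which implies $\det(A^T)=:\Delta\ne 0$ and hence a bounded inverse on $\R^4$. For the gcd, writing $\bd = D\bd'$ with $\gcd(\bd')=1$ gives $c = D\Phi(\bd')$, so $D\mid D'$. Conversely, using $(A^T)^{-1}=\mathrm{adj}(A^T)/\Delta$ with integer adjugate, the identity $\Delta\,\bd' = \mathrm{adj}(A^T)\,\Phi(\bd')$ shows that $\gcd(\Phi(\bd'))$ divides $\Delta\bd'$ coordinate-wise; as $\gcd(\bd')=1$ this forces $\gcd(\Phi(\bd'))\mid\Delta$, giving $D'\le |\Delta|\,D$. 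Combining, $D'\asymp D$ and $\|c\|_2\asymp\|\bd\|$, so $\det(\Lambda_\bd)=\|c\|_2/D'\asymp\|\bd\|/D$.

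The only mildly delicate point is the uniform bound $D'\ll D$: the natural inclusion $D\mid D'$ is trivial, but the reverse requires the adjugate trick to pull divisibility back through the fixed linear map $\Phi$. All other ingredients (integrality of $M_\bd$, injectivity of multiplication in $K$, and the standard covolume formula $\|c\|_2/\gcd(c)$ for a hyperplane lattice) are routine.
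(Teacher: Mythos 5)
Your proposal follows the same structural route as the paper's proof: identify $\Lambda_\bd$ as the kernel of the linear form given by the fourth row of the multiplication-by-$\sum_i d_i\nu_i$ matrix, then compute the covolume of this rank-$3$ hyperplane lattice. In fact your write-up is \emph{more} complete than the paper's sketch, which only establishes $\det(\Lambda_\bd)\ll\|\bd\|$ explicitly and defers the full $\asymp\|\bd\|/D$ statement to the cited result of the second author; you supply both bounds, and your adjugate trick to control $\gcd(c_1,\dots,c_4)$ in terms of $D$ is exactly the missing ingredient for the lower bound and the $D$-factor.

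There is, however, one incorrect step. You assert that because $\cO_\v=\Z[\nu_1,\dots,\nu_4]$ is a ring, the structure constants are integral, i.e.\ $\nu_i\nu_j=\sum_k c^k_{ij}\nu_k$ with $c^k_{ij}\in\Z$. This does not follow: $\v$ is only a $\Q$-basis of $K$, not a $\Z$-module basis of $\cO_\v$, and the ring $\Z[\nu_1,\dots,\nu_4]$ is in general strictly larger than $\Z\nu_1+\cdots+\Z\nu_4$. The paper is careful to state only that there exist \emph{rational} $\lambda_{ijk}$ with $\nu_i\nu_j=\sum_k\lambda_{ijk}\nu_k$; by Lemma~\ref{coordinates} their common denominator divides the fixed integer $W$ of \eqref{W}. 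The fix is routine: replace $c=A^T\bd$ by $Wc\in\Z^4$ and $A^T$ by the integer matrix $WA^T$ throughout. The covolume formula $\|Wc\|_2/\gcd(Wc)$ for the kernel lattice, the two-sided comparison $\|Wc\|_2\asymp\|\bd\|$, and the adjugate bound $\gcd(WA^T\bd')\mid\det(WA^T)$ (so $\gcd(Wc)\asymp D$) all go through unchanged, with $W$ absorbed into the implied constants. So the argument is sound once this scaling is made explicit; as written, the claim $c^k_{ij}\in\Z$ is unjustified.
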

%
%
\begin{proof}
The argument is essentially a special case of \cite[Lemma 7.2]{May15b} .
We will expose it in a more pedestrian way. For all $1\le i,j\le 4$ there exist rational numbers $\lambda_{i,j,k}$, $1\le k\le 4$ such that
\begin{equation*}
\nu_i\nu_j=\sum_{k=1}^4\lambda_{ijk}\nu_k.
\end{equation*}
For all $\bd ,\be\in\Z^4$, 
\begin{equation*} 
\sum_{i=1}^4(\bd\diamond\be)_i\nu_i=\sum_{k=1}^4\Big (\sum_{i,j=1}^4\lambda_{ijk}d_ie_j\Big )\nu_k
\end{equation*}
Identifying the fourth coordinate, we deduce for all  $\bd\in\Z^4\setminus\{\zero\}$, 
\begin{equation*}
\Lambda_\bd =\Big \{ \be\in\Z^4 : 
\sum_{j=1}^4\Big (\sum_{i=1}^4 \lambda_{ij4}d_i\Big )e_j=0\Big \}.
\end{equation*}
The terms $\sum_{i=1}^4 \lambda_{ij4}d_i$, for $j=1,2,3,4$ correspond to the coefficients of the fourth row of the matrix
in basis $\v$ 
of the multiplication by $d=d_1\nu_1+d_2\nu_2+d_3\nu_3+d_4\nu_4$.
Since $\bd\not =\zero$, this matrix is invertible  and at least one of these coefficients is non zero.
This shows that $\Lambda_\bd$ has rank $3$.
By \cite{HB84}, the determinant of $\Lambda_\d$ is equal to the determinant of the dual lattice  that is for us the lattice spanned by the vector \begin{equation}\label{defT}T(\bd):= 
\begin{pmatrix}
\sum_{i=1}^4 \lambda_{i14}d_i\\
\sum_{i=1}^4 \lambda_{i24}d_i\\
\sum_{i=1}^4 \lambda_{i34}d_i\\
\sum_{i=1}^4 \lambda_{i44}d_i\\
\end{pmatrix}.
\end{equation}
Since the components of this vector have size $O(\max_{1\le i\le 4}|d_i|)$, $\det (\Lambda_d)\ll \|\bd \|$. 
\end{proof}
%
%

\medskip

%
%
\begin{lmm}\label{div}
For any $m\in\N$ and $X\ge 3$, we have
\begin{equation*}
\sum_{\max (|x_1|,|x_2|,|x_3|)\ll X}
\tau\Big (\sum_{i=1}^3 x_i\nu_i\Big )^m\ll X^3(\log X )^{O_m(1)}.
\end{equation*}
\end{lmm}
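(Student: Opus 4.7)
The approach is the standard Dirichlet convolution argument for $m$-th divisor moments, adapted to ideals in $\cO_K$. Write $\tau(\gI)^m = \sum_{\gd\mid \gI} g_m(\gd)$, where $g_m$ is the multiplicative function defined by $g_m(\gP^k) = (k+1)^m - k^m$, so in particular $|g_m(\gd)| \le \tau(\gd)^m$. Since $\nu_1,\nu_2,\nu_3$ are $\Q$-linearly independent, $\sum x_i\nu_i = 0$ forces $\x = \zero$, which we discard. Swapping the order of summation then gives
\[
\sum_{\substack{\max_i |x_i|\ll X}}\tau\!\Big(\textstyle\sum_{i=1}^3 x_i\nu_i\Big)^m \ll 1 + \sum_\gd g_m(\gd)\, N_\gd(X),
\]
where $N_\gd(X):=|\{\x\in\Z^3 :\,\max_i|x_i|\ll X,\,\gd\mid\textstyle\sum_i x_i\nu_i\}|$.

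The set $\{\x\in\Z^3 :\, \gd\mid\sum x_i\nu_i\}$ is a sublattice $\Lambda_\gd\subseteq\Z^3$ of index $N(\gd)/\varrho_\v(\gd)$ by the definition \eqref{eq:RhoVDef}. A standard geometry of numbers count in the box $[-cX,cX]^3$ yields
\[
N_\gd(X) \ll \prod_{j=1}^3 \max\!\big(1,\,X/\lambda_j(\Lambda_\gd)\big),
\]
where $\lambda_1\le\lambda_2\le\lambda_3$ are the successive minima of $\Lambda_\gd$; in particular $N_\gd(X)\ll X^3\varrho_\v(\gd)/N(\gd)$ when $\lambda_3\le X$. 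Since $N(\gd)\mid|N(\sum x_i\nu_i)|\ll X^4$, only $\gd$ with $N(\gd)\ll X^4$ contribute. Inserting the main term reduces the problem to bounding
\[
X^3\sum_{N(\gd)\ll X^4}\frac{|g_m(\gd)|\,\varrho_\v(\gd)}{N(\gd)}.
\]
By Lemma \ref{rhov}, the Euler factor at a degree-$1$ prime $\gP$ above a rational prime $p\nmid W$ is $1+(2^m-1)/p+O(1/p^2)$, while higher-degree or ramified factors contribute only $1+O(1/p^2)$. Thus the associated Dirichlet series is comparable to $\zeta_K(s)^{2^m-1}$ near $s=1$, and a routine Rankin or Perron-type estimate (in the spirit of Lemma \ref{Perron} and Lemma \ref{rhodrhoI}) gives the desired $(\log X)^{O_m(1)}$ bound for the truncated sum.

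The main obstacle is handling the lower-order lattice contributions uniformly in $\gd$, namely the regimes where one or more $\lambda_j(\Lambda_\gd)>X$, so that the naive heuristic $X^3\varrho_\v(\gd)/N(\gd)$ fails. I would dispose of these by splitting the $\gd$-sum according to which successive minima lie below $X$: in the extreme case $\lambda_1(\Lambda_\gd)>c'X$ only $\x=\zero$ is captured and has already been removed, while for each intermediate regime the number of $\gd$ with a prescribed pair of successive minima is controlled via Minkowski's second theorem and reduces, after summing the geometric factor against $|g_m(\gd)|$, to a further moment of $\varrho_\v/N$ that again satisfies a $(\log X)^{O_m(1)}$ bound by the same Euler product argument. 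Combining the main term with these error contributions gives the claimed $X^3(\log X)^{O_m(1)}$.
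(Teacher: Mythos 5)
Your high-level strategy---write $\tau^m = \1 * g_m$ for a multiplicative $g_m$, swap the order of summation, and count lattice points in the sublattice $\Lambda_\gd=\{\x\in\Z^3:\gd\mid(\sum_i x_i\nu_i)\}$---is the right one, and is in the spirit of the paper's reference to \cite[Lemma 4.2]{May15b}. However, there is a genuine gap at the step you flag as the ``main obstacle,'' and the paper's source avoids it in a way you have missed.

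The problem is the treatment of divisors $\gd$ for which $\Lambda_\gd$ is degenerate relative to the box, i.e.\ $\lambda_3(\Lambda_\gd)>X$. For such $\gd$ the bound $N_\gd(X)\ll X^2/(\lambda_1\lambda_2)$ is \emph{not} comparable to the expected main term: Minkowski's second theorem gives $\lambda_1\lambda_2\lambda_3\asymp N(\gd)/\varrho_\v(\gd)$, so $\lambda_3>X$ forces $X^2/(\lambda_1\lambda_2)\gg X^3\varrho_\v(\gd)/N(\gd)$, and your assertion that these terms ``reduce to a further moment of $\varrho_\v/N$'' does not follow. Moreover, the natural attempt to sum $g_m(\gd)\,X^2/(\lambda_1\lambda_2)$ by parametrising the two short vectors $\z_1,\z_2$ of $\Lambda_\gd$ and then enumerating $\gd\mid\gcd\big((\sum_i(\z_1)_i\nu_i),(\sum_i(\z_2)_i\nu_i)\big)$ discards the constraint $\lambda_3>X$ entirely; the resulting bound is of size
\[
X^2\sum_{\|\z_1\|\ll X}\sum_{\|\z_2\|\ll X}\frac{X^{o(1)}}{\|\z_1\|\,\|\z_2\|}\;\asymp\;X^{6+o(1)},
\]
which is far too large. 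Using $\lambda_3>X$ (equivalently $N(\gd)\gg X$) in an essential way is precisely the hard part, and nothing in your sketch does it.

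The argument in \cite[Lemma 4.2]{May15b}, which the paper invokes and then only adjusts to the basis $\v$ in place of $(1,\theta,\dots,\theta^{n-1})$, sidesteps degenerate lattices entirely. Rather than the exact convolution $\tau^m=\1*g_m$---which forces one to consider divisors with $N(\gd)$ up to $X^4$---one uses the elementary bound $\tau(\gI)^m\ll_{m,\epsilon}\sum_{\gd\mid\gI,\,N(\gd)\le N(\gI)^\epsilon}C^{\Omega(\gd)}$ for a suitable $C=C(m,\epsilon)$ (this is the restriction to $N(\gd)\ll X^{1/n}$ that the paper's one-paragraph proof alludes to). After swapping, only $\gd$ with $N(\gd)\ll X^{4\epsilon}$ appear; since $\Lambda_\gd\subseteq\Z^3$ is full rank with $\lambda_1\ge1$, this gives $\lambda_3(\Lambda_\gd)\le\det(\Lambda_\gd)\le N(\gd)\ll X^{4\epsilon}<X$, so the lattice count is always $\asymp X^3\varrho_\v(\gd)/N(\gd)$ with a dominated error term, uniformly in $\gd$. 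The remaining sum $\sum_{N(\gd)\ll X^{4\epsilon}}C^{\Omega(\gd)}\varrho_\v(\gd)/N(\gd)\ll(\log X)^{O_m(1)}$ is then handled exactly as you say, by the Euler-product bound via Lemma~\ref{rhov}. Inserting this small-divisor restriction at the start is what your plan needs to be complete.
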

%
%
\begin{proof}
The proof is the same as that of \cite[Lemma 4.2]{May15b} which concerns the case $\nu_i=\theta^{i-1}$.
The only place where this change could have an importance is for the bound of the sums  with any $\gd$ such that $N(\gd )\ll X^{1/n}$
\begin{equation*}
\sum_{\substack{{\max (|x_1|,|x_2|,|x_3|)\ll X}\\ {\gd|(\sum_{i=1}^3x_i\nu_i)}}}1.
\end{equation*}
Since the $\nu_i$ are linear combinations of some $\theta^{j}$, $j=0,1,2,3$ for $\theta$ such that $K=\Q (\theta )$,
the condition $\gd |  (\sum_{i=1}^3x_i\nu_i)$ can be split in the $x_i$ into arithmetic progression $\Mod {N(\gd )}$,
and thus the argument of \cite{May15b} combined with Lemma \ref{rhov} apply also in our case.
\end{proof}
%
%
%
%
\begin{lmm} Let $\bd\in\Z^4\setminus\{\zero\}\cap [-D,D]^4$ and 
$\Lambda_\bd$ as in Lemma \ref{de40}.  Let $\z_1 (\bd)$ denote a shortest non-zero vector in $\Lambda_\bd$. Then we have $\|\z_1 (\bd )
\| \ll D^{1/3}$ and
\begin{equation*}
|\{ \bd\in [1,D]^n : \| \z_1 (\bd )\| \le Z\}|\ll D^{3+o( 1)}Z^3.
\end{equation*}
Furthermore we have
\begin{equation*}
\sum_{\| \bd\|\le D}\frac{1}{\| \z _1 (\bd )\| ^2}\ll D^{10/3 +o(1)}.
\end{equation*}
\end{lmm}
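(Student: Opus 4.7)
The plan is to handle the three assertions in order, using Minkowski's theorem for the first, duality (via commutativity of multiplication in $K$) for the second, and a dyadic decomposition for the third.

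First, for the bound $\|\z_1(\bd)\| \ll D^{1/3}$, I would apply Minkowski's first theorem directly to $\Lambda_\bd$. By Lemma \ref{de40}, $\Lambda_\bd$ is a rank-$3$ sublattice of $\Z^4$ with covolume (measured in the $3$-dimensional subspace it spans) $\asymp \|\bd\|/\gcd(d_1,\dots,d_4) \ll D$. Minkowski then gives a nonzero vector of length $\ll (\det \Lambda_\bd)^{1/3} \ll D^{1/3}$.

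The key step is the second bound. The commutativity of multiplication in $K$ translates into the identity $\bd\diamond\be = \be\diamond\bd$, and in particular $(\bd\diamond\be)_4 = (\be\diamond\bd)_4$; hence $\be \in \Lambda_\bd \Leftrightarrow \bd \in \Lambda_\be$. Therefore
\[
\{\bd\in [1,D]^4 : \|\z_1(\bd)\|\le Z\} \subseteq \bigcup_{\substack{\be \in \Z^4\setminus\{\zero\}\\ \|\be\|\le Z}} (\Lambda_\be \cap [1,D]^4).
\]
For fixed $\be\ne\zero$, the set $\Lambda_\be$ is a rank-$3$ lattice of covolume $\asymp \|\be\|/\gcd(e_i)$ lying in a $3$-dimensional subspace of $\R^4$, so standard lattice-point counting (Davenport's theorem, or a direct count via successive minima) gives
\[
|\Lambda_\be \cap [1,D]^4| \ll \frac{D^3 \gcd(e_1,\dots,e_4)}{\|\be\|} + D^2.
\]
Summing over $\be\ne\zero$ with $\|\be\|\le Z$, and separating by $g=\gcd(e_i)$ via $\be = g\be'$ with coprime $\be'$, one obtains
\[
\sum_{g\le Z}\; g\sum_{\|\be'\|\le Z/g}\frac{D^3}{g\|\be'\|} \;\ll\; D^3 \sum_{g\le Z} \frac{Z^3}{g^3}\cdot g^0 \;\ll\; D^3 Z^3,
\]
up to a $D^{o(1)}$ factor absorbing the $D^2$ remainder (which contributes at most $Z^4 D^2 \ll D^3 Z^3$ once $Z\ll D^{1/3}$). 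This yields $|\{\bd: \|\z_1(\bd)\|\le Z\}| \ll D^{3+o(1)}Z^3$.

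Finally, for the third bound I would use a dyadic decomposition. Since $\|\z_1(\bd)\|\ll D^{1/3}$ by the first part, writing $Z_k = 2^k$ gives
\[
\sum_{\|\bd\|\le D}\frac{1}{\|\z_1(\bd)\|^2} \;\ll\; \sum_{k \le \frac{1}{3}\log_2 D + O(1)} 2^{-2k}\,|\{\bd\in[1,D]^4 : \|\z_1(\bd)\|\le 2^k\}|
\;\ll\; D^{3+o(1)}\sum_k 2^{k} \;\ll\; D^{10/3+o(1)}.
\]
The principal technical point is the lattice-point count for $\Lambda_\be \cap [1,D]^4$: one must check that for $\be$ ranging over a dense set of $\Z^4$, the $3$-dimensional subspace $\mathrm{span}(\Lambda_\be)$ has bounded (or at least $D^{o(1)}$) angular distortion with respect to the coordinate box, so that the $D^3/\mathrm{covol}$ bound is not inflated by orientation effects. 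This can be handled uniformly because the coefficients defining the hyperplane in \eqref{defT} are bounded linear forms in $\be$, so one can decompose according to the largest entry of $T(\be)$ and bound each piece by slicing.
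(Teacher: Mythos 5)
Your proof is correct and takes essentially the same route as the paper's, which simply cites \cite[Lemma 7.3]{May15b} and notes that the argument is unchanged. The three ingredients you use — Minkowski's first theorem applied to the rank-$3$ lattice $\Lambda_\bd$ (whose covolume is $\ll D$ by Lemma \ref{de40}), the commutativity identity $\bd\diamond\be=\be\diamond\bd$ giving $\be\in\Lambda_\bd\Leftrightarrow\bd\in\Lambda_\be$, and a dyadic decomposition over the scale of $\|\z_1(\bd)\|$ — are exactly the ideas in that reference, and your lattice-point count $|\Lambda_\be\cap[1,D]^4|\ll D^3\gcd(\be)/\|\be\|+D^2$ (a consequence of the successive-minima bound, cf.\ \cite[Lemma 7.1]{May15b}) gives the claimed estimate with room to spare. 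One small remark: your closing worry about ``angular distortion'' is unnecessary — the Davenport-type bound $\prod_i(1+D/\lambda_i)$ in terms of the successive minima of $\Lambda_\be$ is orientation-independent and the $+D^2$ term already absorbs the worst-case geometry, so no decomposition according to the largest entry of $T(\be)$ is required.
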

%
%
\begin{proof}
The proof is exactly the same as the proof of \cite[Lemma 7.3]{May15b} except that we have a slightly different definition for $\diamond$, and so require Lemmas \ref{de40} and \ref{div} instead of \cite[Lemma 4.2]{May15b} and \cite[Lemma 7.2]{May15b}.
\end{proof}
%
%
\begin{lmm}\label{WTI}
Let $\gd$ be an ideal of $\cO_K$ with $(N(\gd) ,q)=1$. Let $\cR\subset [-X,X]^3$ as in the Proposition \ref{prpstn:TypeI} below.
Then we have
\begin{equation*}
|\big \{ \a\in\Z^3\cap\cR:\gd |\big (\sum_{i=1}^3 a_i\nu_i\big ),\ \a\equiv\a_0\Mod q\}|=\frac{\varrho_\v (\gd )\vol (\cR)}{N(\gd)q^3}+O(N(\gd )^4X^2).
\end{equation*}
\end{lmm}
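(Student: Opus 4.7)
The plan is to combine the two congruence conditions via the Chinese Remainder Theorem and then apply a standard lattice point count in the region $\cR$.

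First, I would rewrite the condition $\gd \mid (\sum_{i=1}^3 a_i \nu_i)$ as a congruence condition on $\a$. The definition \eqref{eq:RhoPDef} of $\varrho_\v(\gd)$ shows that this condition is equivalent to $\a$ lying in one of exactly $\varrho_\v(\gd) N(\gd)^2$ residue classes modulo $N(\gd)$. Since $(N(\gd),q)=1$, the Chinese Remainder Theorem lets me combine this with the condition $\a \equiv \a_0 \pmod q$ to obtain a disjoint union of $\varrho_\v(\gd) N(\gd)^2$ residue classes modulo $m := N(\gd)q$.

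Next, for each such residue class $\bb \pmod m$, I would invoke the standard lattice-point count for a reasonable region $\cR \subset [-X,X]^3$ (Proposition \ref{prpstn:TypeI} presumably imposes enough regularity, e.g.\ convex with boundary of measure $O(X^2)$):
\[
|\{\a \in \Z^3 \cap \cR : \a \equiv \bb \Mod m\}| = \frac{\vol(\cR)}{m^3} + O\!\Big(\frac{X^2}{m^2} + 1\Big).
\]
Summing this over all $\varrho_\v(\gd) N(\gd)^2$ residue classes yields
\[
\frac{\varrho_\v(\gd)N(\gd)^2 \vol(\cR)}{N(\gd)^3 q^3} + O\!\Big(\frac{\varrho_\v(\gd) N(\gd)^2 X^2}{N(\gd)^2 q^2}\Big) = \frac{\varrho_\v(\gd)\vol(\cR)}{N(\gd) q^3} + O\!\Big(\frac{\varrho_\v(\gd) X^2}{q^2}\Big),
\]
which already has the correct main term. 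Using the trivial bound $\varrho_\v(\gd) \le N(\gd)^2$ (immediate from the definition, since the numerator is at most $N(\gd)^3$ in a fundamental domain of side $N(\gd)$) absorbs the $1/q^2$ factor into the error $O(N(\gd)^4 X^2)$ stated in the lemma, with considerable slack.

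There is essentially no hard step here: the only thing to be attentive to is that the lattice-point error is uniform in $m$ even when $m$ is large compared to $X$, in which case the $O(X^2/m^2 + 1)$ bound still applies and the total contribution from the $\varrho_\v(\gd)N(\gd)^2 \le N(\gd)^4$ residue classes remains $O(N(\gd)^4)$, also dominated by $O(N(\gd)^4 X^2)$. The choice of a loose bound $\varrho_\v(\gd) \le N(\gd)^2$ in the final step reflects the fact that this Type I estimate will only be used when $N(\gd)$ is rather small relative to $X$, so tightness is not required.
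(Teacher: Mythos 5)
Your proposal is correct and matches the paper's approach: the paper simply cites \cite[Lemma 7.4]{May15b}, whose proof is exactly this combination of periodicity of the divisibility condition modulo $N(\gd)$, the Chinese Remainder Theorem with the coprime modulus $q$, and a standard lattice-point count over the regular region $\cR$. One small arithmetic slip: from $\varrho_\v(\gd)=|\{\cdots\}|/N(\gd)^2$ with numerator at most $N(\gd)^3$, the trivial bound is $\varrho_\v(\gd)\le N(\gd)$ rather than $N(\gd)^2$ — this is sharper than what you claimed and only strengthens the conclusion, so the argument goes through with even more room to spare than you indicated; also the definition of $\varrho_\v$ is \eqref{eq:RhoVDef}, not \eqref{eq:RhoPDef}.
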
 
%
%
\begin{proof}
The proof is identical as the proof of \cite[Lemma 7.4]{May15b} with $\v$ in place of $(1,\ldots ,\theta ^{n-1})$. In fact,  the arguments of \cite{May15b} give a slightly stronger error term of $O(X^2\varrho_\v(N(\delta)(qN(\delta))^{-2}+\varrho_\v (N(\delta )))$.
\end{proof}
%
%
\subsection{Sums of Type I}
%
%
We now state  a similar result to \cite[Proposition 7.5]{May15b}
%
%
\begin{prpstn}\label{prpstn:TypeI}
Let $\cR\subset [-X,X]^3$ be a region such that any line  parallel to the coordinate axes intersects $\cR$ in $O(1)$ intervals.
For any given $\u_0\in\Z^3$  and $q\le\sqrt{X}$ we define
\begin{equation*}
\Gamma =\Big \{ \sum_{i=1}^3 a_i\nu_i : \a\in\Z^3\cap\cR,\ \a\equiv\u_0\Mod q\Big \}.
\end{equation*}

Let $\Gamma_\gd=\{ \kappa\in\Gamma : \gd |(\kappa)\}$. 
Then we have 
\begin{equation}\label{STI}
\sum_{\substack{{N(\gd)\in [D,2D]}\\{(N(\gd ),q)=1}}}\Bigg | |\Gamma_\gd |-\frac{\varrho_\v (\gd)\vol (\cR)}{q^3N(\gd )}\Bigg |\ll
X^2q^{1+o(1)}D^{1/3+o(1)}+Dq^{4+o(1)}.
\end{equation}
\end{prpstn}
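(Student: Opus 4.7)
The plan is to establish Proposition \ref{prpstn:TypeI} by reinterpreting $|\Gamma_\gd|$ as a count of lattice points for each $\gd$, and then averaging the resulting errors via divisor-type bounds on the successive minima of the relevant lattices. This parallels the strategy used in \cite[Proposition 7.5]{May15b}.

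For a fixed $\gd$ with $(N(\gd),q)=1$, observe that the admissible set $\{\a\in\Z^3\cap\cR:\a\equiv\u_0\Mod{q},\,\gd\mid\sum a_i\nu_i\}$ is a translate of the sublattice $q\Lambda_\gd\subset\Z^3$, where
\[
\Lambda_\gd:=\Bigl\{\a\in\Z^3:\gd\Bigm|\Bigl(\sum_{i=1}^3 a_i\nu_i\Bigr)\Bigr\}
\]
has covolume $N(\gd)/\varrho_\v(\gd)$ (an analogue of the identity $q\Z^3\cap\Lambda_\gd=q\Lambda_\gd$ that I used in the proof of Lemma \ref{Perron}'s surroundings). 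Let $\lambda_1\le\lambda_2\le\lambda_3$ be the successive minima of $\Lambda_\gd$; those of $q\Lambda_\gd$ are $q\lambda_i$. Under the hypothesis that each axis-parallel line meets $\cR$ in $O(1)$ intervals, a standard Davenport-style lattice-point counting bound gives
\[
|\Gamma_\gd|=\frac{\varrho_\v(\gd)\vol(\cR)}{q^3 N(\gd)}+O\Bigl(1+\frac{X}{q\lambda_1}+\frac{X^2}{q^2\lambda_1\lambda_2}\Bigr).
\]

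The main averaging step is to estimate the sum of these error terms over $\gd$ with $N(\gd)\in[D,2D]$. The crucial input is that if $\lambda_1(\Lambda_\gd)\le t$, there exists a nonzero $\a\in\Z^3$ with $|\a|\le t$ and $\gd\mid\sum a_i\nu_i$; by Lemma \ref{div},
\[
\#\{\gd:\lambda_1(\Lambda_\gd)\le t,\,N(\gd)\in[D,2D]\}\le\sum_{0<|\a|\le t}\tau\Bigl(\Bigl(\sum a_i\nu_i\Bigr)\Bigr)\ll t^{3+o(1)}.
\]
An analogous two-vector argument (applied to the gcd ideal of $\sum a_{1,i}\nu_i$ and $\sum a_{2,i}\nu_i$) controls $\#\{\gd:\lambda_2(\Lambda_\gd)\le t\}$. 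Combined with the trivial upper bound $\lambda_i\ll D^{1/3}$ and partial summation, these give moment estimates for $\sum_\gd 1/\lambda_1$ and $\sum_\gd 1/(\lambda_1\lambda_2)$. After multiplication by the factors $X/q$ and $X^2/q^2$, these yield the main $X^2 q^{1+o(1)}D^{1/3+o(1)}$ term of the proposition. The residual $Dq^{4+o(1)}$ term absorbs the $O(1)$ lattice-counting error summed over the $\ll D$ ideals, together with the contribution from the exceptional $\gd$ with $(N(\gd),q)>1$, which are handled directly by applying Lemma \ref{WTI}.

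The main obstacle is the clean control of $\sum_\gd 1/(\lambda_1(\Lambda_\gd)\lambda_2(\Lambda_\gd))$; this is where the geometry of rank-3 lattices enters, and it requires the two-vector divisor bound to be executed carefully so that the exponent $D^{1/3}$ (rather than a weaker exponent) is achieved. A secondary subtlety is the precise dependence on $q$ in the passage from $\Lambda_\gd$ to $q\Lambda_\gd$ and the handling of non-coprime $\gd$, which accounts for the $q^{1+o(1)}$ and $q^{4+o(1)}$ losses in the two error terms.
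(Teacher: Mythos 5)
Your proposal is correct in outline, and it takes a genuinely different route from the paper.

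The paper's proof follows \cite[Proposition~7.5]{May15b}: it decomposes the sum over $\gd$ according to the ideal class in $Cl_K$, fixes a representative $\gc$, writes $\gd\gc^{-1}(N(\gc))=(\delta)$ with a generator $\delta$ of controlled size, and then counts the coefficient vectors $\b$ of a cofactor $\beta$ inside the rank-$3$ lattice $\Lambda_\d\subset\Z^4$ defined via the $\diamond$-multiplication (Lemma~\ref{de40}). The advantage of that formulation is that $\Lambda_\d$ depends linearly on $\d$, which makes the averaging of $1/\lambda_1(\Lambda_\d)^2$ over $\d\in[-D^{1/4},D^{1/4}]^4$ a direct application of \cite[Lemma~7.3]{May15b}. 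The cost is the detour through $Cl_K$, the fundamental-domain bookkeeping, and the coprimality $(\gc,q)$ conditions, all of which contribute the $q^{1+o(1)}$ and $q^{4+o(1)}$ losses in the stated error.

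You bypass all of this by working directly with the full-rank lattice
$\Lambda_\gd=\{\a\in\Z^3:\gd\mid(\sum a_i\nu_i)\}$.
The structural facts you need are genuinely true: since $N(\gd)\Z^3\subseteq\Lambda_\gd\subseteq\Z^3$, the lattice has rank $3$; by the definition of $\varrho_\v$ in~\eqref{eq:RhoVDef} its index is $N(\gd)/\varrho_\v(\gd)$; since $(N(\gd),q)=1$ this index is coprime to $q$, which gives both $q\Z^3\cap\Lambda_\gd=q\Lambda_\gd$ and non-emptiness of the coset $(\u_0+q\Z^3)\cap\Lambda_\gd$; and Davenport applies because $\cR$ satisfies the line-intersection hypothesis and $q\Lambda_\gd$ is full-rank in $\R^3$. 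The key counting input $\#\{\gd:N(\gd)\in[D,2D],\,\lambda_1(\Lambda_\gd)\le t\}\ll t^{3+o(1)}$ follows exactly as you say from Lemma~\ref{div}, and partial summation (together with $\lambda_1^3\le\lambda_1\lambda_2\lambda_3\ll\det\Lambda_\gd\le 2D$, so $\lambda_1\ll D^{1/3}$) yields $\sum_\gd\lambda_1^{-2}\ll D^{1/3+o(1)}$ and $\sum_\gd\lambda_1^{-1}\ll D^{2/3+o(1)}$.

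Two small remarks. First, the ``two-vector argument'' for $\lambda_2$ is not needed and, naively executed (summing $\tau$ of a gcd ideal over pairs $\a_1,\a_2$), would give a worse bound of order $t^{6+o(1)}$; the clean route is just $1/(\lambda_1\lambda_2)\le 1/\lambda_1^2$ and the one-vector moment. Second, your final error is of the form $D+(X/q)D^{2/3+o(1)}+(X^2/q^2)D^{1/3+o(1)}$, which is in fact \emph{sharper} than the stated right-hand side of~\eqref{STI}: the middle term is absorbed by AM--GM and the outer two terms sit below $Dq^{4+o(1)}$ and $X^2q^{1+o(1)}D^{1/3+o(1)}$ respectively since $q\ge 1$. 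So the direct $\Z^3$-lattice route not only avoids the class-group machinery but also reproduces (indeed slightly strengthens) the proposition; the paper's argument carries over extra losses in $q$ from its method that are harmless for its later applications.
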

%
%
\begin{proof} We follow the proof of \cite[Proposition 7.5]{May15b}, but now  we work with a general order $\cO_\v$ in place of $\Z [\theta]$. This involves minor modifications at the beginning of the argument; the last steps require no modification. For brevity we emphasise just the key points requiring modification and only sketch the rest of th argument.

We split the summation on the ideals $\gd$ according to their class in $Cl_K$.
Let $\cC$ be a given class and consider the contribution of all the $\gd\in\cC$. 
Since the $\gd$ in the summation in \eqref{STI} are coprime with $q$,
we can fix a representative integral ideal $\gc\in \cC$ such that $(N(\gc ),q)=1$ and with $N(\gc )=q^{o(1)}$.
The ideal $\gd\gc^{-1}(N(\gc))$ is a principal ideal of $\cO_K$. By Lemma \ref{Wsg} we can find a generator 
of the form $\delta=\frac{1}{W}\sum_{i=1}^4d_i\nu_i$ where the $d_i$ are integers such that 
$|d_i|\ll D^{1/n}q^{o(1)}$. Then $\delta_\gc:=\frac{1}{W N(\gc)}\sum_{i=1}^4d_i\nu_i$ is a generator of the principal fractional ideal $\gd\gc^{-1}$. In \cite{May15b} it is proved that $|\sigma_0(\delta_\gc)|\gg D^{1/4}q^{o(1)}$ for all embeddings $\sigma_0$.

 Let $\alpha\in\Gamma_\gd$, so $(\alpha)=\ga'\gd$ for some integral ideal $\ga '$. Since $(\alpha )=\ga '\gc\gd\gc^{-1}$ and $(\alpha )$ and $\gd\gc^{-1}=(\delta_\gc)$ are principal, $\ga '\gc$ is principal too, so
  $\ga'\gc=(\beta )$ for some generator $\beta\in\cO_K$.
 By Lemma \ref{coordinates}, we can take $\beta=\frac{1}{W}\sum_{i=1}^4b_i\nu_i$ where $\b =(b_1,b_2,b_3,b_4)\in\Z^4$ satisfies
 $(\b \Mod W) \in \cV_0$. Then $(\alpha)=(\beta )(\delta_\gc)$. 
Let $\bd =(d_1,d_2,d_3,d_4)$. We have $W^2N(\gc) \beta\delta_\gc=\sum_{k=1}^4(\bd\diamond \b)_k\nu_k$.
 \begin{equation*}
 \beta\delta_\gc=\sum_{k=1}^4\frac{1}{W^2N(\gc )}\Big (\sum_{i,j=1}^4\ell_{i,j,k}b_id_i\Big )\nu_k.
 \end{equation*}
 The coefficient of $\nu_i$ are integers if and only  $b_1,b_2,b_3,b_4$ satisfy some congruences modulo $W^2N(\gc)$.
We also need to impose that $\gc | (\beta)$. This is also equivalent to some congruences conditions modulo $W^2 N(\gc)$ for $b_1,b_2,b_3,b_4$. 
 Let $q_1=[q,W^2N(\gc)]$ and $\cV_0'\subset \{ 0\ldots , q_1-1\}^4$ the set of r classes satisfying all these conditions and furthermore such that 
 \begin{equation*}
 (\d\diamond\b )_4 \equiv 0\Mod{q_1}\ \text{and}\ \frac{(\d\diamond\b)_i}{W^2N(\gc)}\equiv {(\u_0)_i}\Mod q\ \text{for}\
 1\le i\le 3.
 \end{equation*}
Thus, for $\gd\in\cC$, we are interested in 
 \begin{equation*}
 |\Gamma_\gd |=\sum_{\b_0\in V_0'}\sum_{\substack{{\b\in\Z^4}\\ {\b\equiv\b_0\Mod {q_1}}\\ {\delta_\gc \beta\in \Gamma}}}1.
 \end{equation*}
 The rest of the proof follows \cite{May15b}.  Let $\Lambda _\d$ be the lattice introduced in Lemma \ref{de40}.
 We write $\b=\b ^{(1)}+q_1\b^{(2)}$ where $\b^{(1)}$ is some vector of $\Lambda_{\d}$ such that 
 $\b^{(1)}\equiv \b_0\Mod {q_1}$ (when such $\b^{(1)}$ exists) and $\b^{(2)}\in\Lambda_{\gd_\gc}$
 \begin{equation*}
 |\Gamma_\gd |=\sideset{}{''}{\sum}_{\b_0\in V_0'}{}\sum_{\substack{{\b^{(2)}\in\Lambda_\d}\\ {\b\equiv\b_0\Mod {q_1}}\\ {\delta_\gc \beta_1+
 q_1\delta_\gc\beta_2\in \Gamma}}}1,
 \end{equation*}
 where $\sum ''$ indicates that the $\b_0$ are as above  but furthermore such that there exists a vector $b^{(1)}$ in the lattice $\Lambda_\gd$ and $\beta_j=\frac{1}{W}\sum_{i=1}^4
 b_i^{(j)}\nu_i$ for $j=1,2$. The argument now follows the proof of \cite[Proposition 7.5]{May15b} precisely, except that we apply Lemmas \ref{div}, \ref{WTI} for the basis $\v$ in place of \cite[Lemmas 7.3 and 7.4]{May15b}.
 \end{proof}
 %
%

\subsection{Initial steps in the Type II sum}

%
%
 If $\a\in\cA_{q_1\cdots q_\ell}(\u_0,m,p)$ then there exists $d\in\N$ such that $N(a_1\nu_1+a_2\nu_2+a_3\nu_3)=d\prod_{i=1}^\ell q_i$.
 The conditions on $q_i$ imply that $(m,q_1\cdots q_\ell )=1$ but in general, it is not clear that $(d,m)=1$.
 This may gives some complications in the application of Proposition \ref{prpstn:TypeI}.
 Let us write $m_0=(d,m^\infty)$ and recall the notation 
 $\cX=\prod_{i=1}^3 [X_i, X_i (1+\eta_1)[$ from \eqref{eq:cXDef}. In almost cases, $m_0$ is small. The contribution of the $\a\in\cX$, such that $\a\equiv\u_0\Mod m$ and $m_0>D_0$
 with  $D_0=\eta_\cX^{-2}\eta_1^{-1}$ with $\eta_\cX$ defined by \eqref{defeta2} below,  is less than 
  \begin{equation*}\begin{split}
  \sum_{\substack{ {m_0| m^\infty} \\ 
  {m_0>D_0}
  }} &
  \sum_{\substack{ {\a\in\cX}\\
  {\a\equiv\u_0\Mod m}\\
 {N\big (\sum_{i=1}^3a_i\nu_i\big )\equiv 0\Mod{m_0}} 
  }} 1
  \ll \eta_1^3\prod_{i=1}^3X_i\sum_{\substack{ {m_0| m^\infty} \\ 
  {m_0>D_0}
  }}
\frac{4^{\omega (m_0)}}
{m_0m^2}\\
&\qquad\ll \frac{\eta_1^3\prod_{i=1}^3X_i}{m^2\sqrt{D_0}}\sum_{\substack{ {m_0| m^\infty} \\ 
  {m_0>D_0}
  }}
\frac{4^{\omega (m_0)}}
{\sqrt{m_0}}\ll \frac{\eta_1^3\prod_{i=1}^3X_ic^{\Omega (m)}}{m^2\sqrt{D_0}},
\end{split}
\end{equation*}
for some $c>0$ large enough.
This contribution is sufficiently small.

We now suppose that $m_0\le D_0$.
 
 Let 
 \begin{equation}\label{defMm0}\begin{split}
 \cM (m_0)&=\Big\{ \v_0\in [1,mm_0]^3 : \v_0\equiv{u_0}\Mod m,\\
 &\ N(\sum_{i=1}^3  (\v_0)_i\nu_i)\equiv 0\Mod {m_0},\ 
 \big (m, \frac{N(\sum_{i=1}^3  (\v_0)_i\nu_i}{m_0}\big )=1\Big\}.\end{split}
 \end{equation}
 Then for every $\a\in\cA (\u_0,m)$ such that $m_0 =(N(a_1\nu_1+a_2\nu_2+a_3\nu_3),m^\infty)$, 
 there exists exactly one $\v_0\in\cM (m_0)$ such that $\a\equiv \v_0\Mod {mm_0}$.
 
 We deduce that 
 \begin{equation}
 \sum_{\substack{{\a\in\cX} \\ {\a\equiv \u_0\Mod m}\\
  {(N(\sum_{i=1}^3 a_i\nu_i) ,m^\infty)\le D_0}}}1=\sum_{\substack {{m_0|m^\infty}\\ {m_0\le D_0}}}\sum_{\v_0\in\cM (m_0)}|\cA (\v_0 ,mm_0)|.
  \label{eq:m0Split}
  \end{equation}

  Any $\a\in \cA (\v_0,mm_0)$ is such that the associated ideal $(\sum_{i=1}^3 a_i\nu_i)$ may be factored as 
  $(\sum_{i=1}^3 a_i\nu_i)=\gM_0 \gJ$ with $N(\gM_0)=m_0$ and $(N(\gJ),m)=1$. This property will  simplify some GCD considerations in the next sections.  Let 
 \begin{equation}\label{defm'}
 m':=m_0m
\end{equation}
denote this extended modulus.

 %
%
  
 \subsection{Switching to ideals with norms in small boxes}
 
 %
%
 
 We introduce the sets of principal ideals of $\cO_K$
 \begin{equation}\label{AX}
 \cAI =\Big\{ \big (\sum_{i=1}^3 a_i\nu_i \big ) :  \a\in\cX\Big\}.
 \end{equation}
For any $\ga\in\cAI $ there is exactly one $(a_1,a_2,a_3)\in\cX$ such that $\ga =(a_1\nu_1+a_2\nu_2+a_3\nu_3)$. 
We justify this in a similar way as  in \cite[Proof of Lemma 5.2 assuming Proposition 5.1 pp. 13-14]{May15b}. 

If $\alpha = \sum_{i=1}^3 a_i\nu_i$  and $\beta =\sum_{i=1}b_i\nu_i$ with $\a,\b\in\cX$ are such that $(\alpha )=(\beta)$
then $\beta\alpha^{-1}$ is a unit of $\cO_K$.
But $|\sigma ( \alpha )|\ll X$  for all embedding $\sigma$ and since $\alpha =N(\alpha)\prod_{\sigma\not = Id}\sigma (\alpha )^{-1}$ we have 
$|a_1\nu_1+a_2\nu_2+a_3\nu_3|\gg \eta_1^{1/10}X$ by \eqref{eq:Xi2}
and then
\begin{equation*}
\frac{\beta}{\alpha}  =1 +\frac{\beta-\alpha}{\alpha}= 1 + O (\eta_1 ^{9/10}).
\end{equation*}
If $\alpha\not =\beta$ then $\beta\alpha^{-1}$ can't be   a unit because the length  between two  units is $\gg 1$ and we have a contradiction.

Next we consider the sets 
\begin{equation*}
\cAI (\v_0 , m', p)=\Big\{ \big (\sum_{i=1}^3a_i\nu_i)\in\cAI  : \a\equiv {\v_0}\Mod {m'}\ \text{and}\  p|f(a_1,a_2,a_3)\Big\}
\end{equation*}
and for any ideal $\gd$, 
\begin{equation*}
\cAI _\gd (\v_0,m', p)=\{\ga \in\cAI (\v_0 , m', p) :\gd |\ga\}.
\end{equation*}

Let $N_0^4=\min_{(\alpha )\in\cAI}N(\alpha)$.
Let $\eta_\cX$ and $\eta_2$ defined by 
\begin{equation}\label{defeta2}
\eta_\cX=\frac{1}{(\log X)^A},\qquad \eta_2=\eta_\cX^{10000\ell^2}.
\end{equation}

By the definition of $\cX$, $N(a_1\nu_1+a_2\nu_2+a_3)\in [N_0^4, N_0^4(1+O(\eta_1))]$ for all 
$(a_1\nu_1+a_2\nu_2+a_3)\in \cAI$. We can choose $O(\eta_2^{-1}\eta_1)$ reals $X_0$ with $X_0^4\in [N_0^4, N_0^4(1+O(\eta_1))]$ so that
 the sets 
\begin{equation*}\label{A'}
\cAI (X_0,\v_0 ,m',p)=\Big\{ \big (\sum_{i=1}^3 a_i\nu_i\big )\in\cAI (\v_0 ,m',p) : N\big (\sum_{i=1}^3 a_i\nu_i\big  )
\in [X_0^4, X_0^4+\eta_2X_0^4[\Big\},
\end{equation*}
form a partition of $\cAI (\v_0,m',p)$.
 Next we introduce the sets 
\begin{equation*}
\cAI_\gd (X_0,\v_0, m',p)=\Big\{ \big (\sum_{i=1}^3 a_i\nu_i\big )\in\cAI (X_0,\v_0 ,m',p) : \gd | \big (\sum_{i=1}^3 a_i\nu_i\big )
\Big\}.
\end{equation*}

By \eqref{eq:th4}, there exists $\varepsilon>0$ such that $X^{\sum_{i=1}^\ell \theta_i +\min (\theta_0,\ldots , \theta_\ell)}>X^{4+\varepsilon}$ and by \eqref{eq:th2} the intervals $[X^{\theta_i},X^{\theta_i'}]$ do not overlap. Thus for each $\ga \in\cAI$ 
such that $N(\ga )\equiv 0\Mod{q_1\cdots q_\ell}$ with 
$X^{\theta_i}\le q_i\le X^{\theta_i'}$, is divisible by exactly one prime ideal $\gP_i$ with 
$N(\gP_i)\in [X^{\theta_i}, X^{\theta'_i}]$ (for all $1\le i\le \ell$).

We are now ready to settle the connection between the set $\cA_q ( \v_0, m',p)$ in Theorem \ref{DistriNorm} and the  sets of ideals just defined above. For any primes $q_1,\ldots ,q_\ell$ with $q_i\in [X^{\theta_i}, X^{\theta_i'}]$, we have
\begin{equation}\label{corresp}
|\cA_{q_1\cdots q_\ell} ( \v _0, m',p)|=\sum_{X_0}\sum_{N(\gP_i)=q_i}|\cAI _{\gP_1\cdots \gP_\ell}(X_0,\v_0,m',p)|.
\end{equation}

Any ideal  $(a_1\nu_1+a_2\nu_2+a_3\nu_3)$ counted in \eqref{corresp} may be factored as 

\begin{equation}\label{facto}
 (a_1\nu_1+a_2\nu_2+a_3\nu_3)=\gM_0\gI\prod_{i=1}^\ell \gP_i,
\end{equation}
where each $\gP_i$ is a prime ideal with norm in $[X^{\theta_i}, X^{\theta'_i}]$ and $\gI$ is an ideal with 
\begin{equation}\label{lesI}
N(\gI)\in  \cI_0:=\Bigl[\frac{X_0^4X^{-\sum_{i=1}^\ell \theta_i'}}{m_0},\frac{ X_0^4(1+\eta_2)X^{-\sum_{i=1}^\ell\theta_i}}{m_0}\Bigr]=[I_1,I_2],
\end{equation}
say.

We  choose now  $O(\eta_2^{-1}\log X)$  reals $I\in \cI_0$ such that $\cI_0$ is covered by
 the union of the intervals $[I, I(1+\eta_2)[$. Let $\hat \cI_0$ denote the set of these reals $I$.

 Since we have $(N(\sum_{i=1}^3a_i\nu_i)/m_0 ,m)=1$ when $\a\equiv\v_0\Mod{m'}$, we have 
 $(m',N(\gI)\prod_{i=1}^\ell N(\gP_i))=1$.

 For brevity we will write  $\cAI (\v_0,m',p)$ in place of $\cAI (X_0,  \v_0,m',p)$ when the context will be clear.

To have  a precise control of the size of the norms of some ideals,
we  cover  each interval $[\theta_i, \theta'_{i}]$    by $O(\eta_\cX^{-2})$ distinct intervals  of size $O(\eta_\cX^2)$
so that,
\begin{equation}\label{UnionRl}
\prod_{i=1}^\ell [\theta_i,\theta'_i]=\cup_{\bl \in E}\cR (\bl),
\end{equation}
where $E$ is some subset of $\N^\ell$ and each $\cR (\bl)$ is of type $\cR (\bl)=\prod_{i=1}^\ell [t_i,t'_i )$
with $|t_i'-t_i|\ll \eta_\cX^2$  (except that in  the  intervals with $t'_i=\theta_i'$  we take the whole   segment $[t_i,\theta_i' ]$),   (cf \cite[section 8 p.45]{May15b}).

We write $\cR (\bl)=\prod \cR_1 (\bl)\times \cR_2 (\bl)$ with $\cR_2(\bl)$ representing the first $\ell '$ coordinates and $\cR_1 (\bl)$ the final  $\ell -\ell'$ coordinates.

 For a polytope $\cR\subset\R^s$ (for some $s$), we define 
 \begin{equation*}
 \1_\cR (\ga )=\begin{cases}
 1, & \ga =\gp _1\cdots \gp _s\ {\rm with}\ N(\gp _i)=X^{e_i}\ {\rm and }\ (e_1,\ldots ,e_s )\in\cR,\\
 0, & {\rm otherwise.}
 \end{cases}
 \end{equation*}
Thus we need to study the quantity 
\begin{equation}
T(\cR(\bl )):=\sum_{X^\tau\le p\le X^{\tau'}}\sum_{\gI\in\cI }\sum_{\gM_0\gI\ga\in \cA (\v_0,m',p)}\1_{\cR (\bl)}(\ga),
\label{eq:TRDef}
\end{equation}
with $\cI :=\{ \gI : N(\gI)\in [I,I+\eta_2I[, (N(\gI),m)=1\}$.

\subsection{Approximation weights}

We recall that $\eta_2=\eta_\cX^{10000\ell ^2}$.
A key idea of \cite{May15b} is to approximate the indicator $\1_{\cR_2}$ by a weight $\tilde \1_{\cR_2}$
which will be more easy to control. 
For $\cS\subset \R^s$ we consider the function 
\begin{equation}
c_\cS (t)=\iint_{\substack{{(e_1,\ldots ,e_s)\in\cS}\\ {\sum_{i=1}^s e_i\in I_t}}}
\frac{de_1\cdots de_{s }}{\eta_2^{1/2}\prod_{i=1}^s e_i},\label{eq:CDef}
\end{equation}
\begin{equation*}
{\rm where}\ I_t :=\Big [\frac{\log t}{\log X},\frac{\log (t+\eta_2^{1/2}t)}{\log X}\Big ].
\end{equation*}
In this previous definition we have $\sum_{i=1}^\ell e_i\in I_t$ if and only if $X^{\sum_{i=1}^\ell e_i}\in [t,t (1+\sqrt{\eta_2}]$.
This function is so that $c_\cS (N(\ga))$ corresponds to the probability for an ideal of norm close to $N(\ga)$ to 
have a prime factorisation compatible  with $\cS$  (cf. \cite[section 8]{May15b}).
We recall below some properties of this function that we will frequently use later on.
\begin{lmm}\label{cR}
\begin{itemize}
\item If $\cS=\prod_{i=1}^s [s_i,s'_i]$ is an hyperrectangle with $\min s_i >\varepsilon_0 >0$ and $\ell >1$, then 
\begin{equation*}
c_\cS (t+\delta)-c_\cS (t)\ll\frac{\delta}{t}
\end{equation*} 
\item If $\cS=\prod_{i=1}^s [s_i,s'_i]$ is an hyperrectangle  with $\min s_i >\varepsilon_0 >0$    then  
\begin{equation*}
c_\cS (t)\ll_{\varepsilon_0} \frac{1}{\log X}.
\end{equation*}
\end{itemize}
\end{lmm}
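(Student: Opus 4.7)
Both parts stem from the coarea representation
\[
c_\cS(t) = \frac{1}{\sqrt{s}\,\eta_2^{1/2}} \int_{u \in I_t} F'(u)\, du, \qquad F'(u) := \int_{\cS \cap \{\sum_{i=1}^s e_i = u\}} \frac{dS(e)}{\prod_{i=1}^s e_i},
\]
where $dS$ is the induced surface measure on the hyperplane $\sum_i e_i = u$ and $|I_t| = \log(1+\eta_2^{1/2})/\log X \asymp \eta_2^{1/2}/\log X$. This follows by applying the coarea formula to $f(e) = \sum_i e_i$ (whose gradient has constant norm $\sqrt{s}$) and the integrand $\1_{\cS}(e)/\prod_i e_i$.

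For the second bound, the hypothesis $e_i > \varepsilon_0$ on $\cS$, together with the boundedness of $\cS$, yields $F'(u) \ll_{\varepsilon_0, \cS} 1$ uniformly in $u$: the slice $\cS \cap \{\sum_i e_i = u\}$ has bounded $(s-1)$-dimensional area, and the integrand is dominated by $\varepsilon_0^{-s}$. Inserting this bound in the representation above gives $c_\cS(t) \ll_{\varepsilon_0, s} |I_t|/\eta_2^{1/2} \ll 1/\log X$, as required.

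For the first bound, observe that $I_{t+\delta} = I_t + \beta$ is a translate of $I_t$ by $\beta := \log(1+\delta/t)/\log X \le \delta/(t\log X)$, so
\[
c_\cS(t+\delta) - c_\cS(t) = \frac{1}{\sqrt{s}\,\eta_2^{1/2}} \int_{u \in I_t} \bigl(F'(u+\beta) - F'(u)\bigr)\, du.
\]
The crucial input, which is where the hypothesis $\ell > 1$ (equivalently, the slicing dimension satisfies $s \ge 2$) enters, is that $F'$ is Lipschitz with a constant $L = L(\cS, \varepsilon_0)$ depending only on $\cS$ and $\varepsilon_0$. Indeed, $\cS \cap \{\sum_i e_i = u\}$ is a polytope whose facets depend piecewise-linearly on $u$, and its volume weighted by the smooth function $1/\prod_i e_i$ is therefore a piecewise-polynomial — in particular Lipschitz — function of $u$. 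Plugging $|F'(u+\beta) - F'(u)| \le L\beta$ into the integral yields
\[
|c_\cS(t+\delta) - c_\cS(t)| \ll \frac{L\beta\,|I_t|}{\eta_2^{1/2}} \ll \frac{\beta}{\log X} \ll \frac{\delta}{t(\log X)^2} \ll \frac{\delta}{t},
\]
the case $\delta > t$ being absorbed into part (ii) and the triangle inequality.

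The main obstacle is the Lipschitz regularity of $F'$, which genuinely requires $s \ge 2$. For $s = 1$ one has $F'(u) = \1_{[s_1, s_1']}(u)/u$, which has jump discontinuities at the endpoints of $\cS$: when $I_t$ crosses such a point, $c_\cS(t)$ changes by $\asymp 1/\log X$ over a $t$-range with $\delta/t \asymp \eta_2^{1/2}$, and this is $\gg \delta/t$ since $\eta_2^{1/2} \ll 1/\log X$. This is precisely why the hypothesis $\ell > 1$ is imposed for the first assertion.
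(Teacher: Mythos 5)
Your proof is correct and self-contained, but it takes a different route from the paper. For part (ii) the paper simply applies Fubini: it integrates the innermost variable $e_s$ over the interval of length $|I_t| \asymp \eta_2^{1/2}/\log X$ imposed by the constraint $\sum_i e_i \in I_t$ (after bounding $1/e_i$ by $1/s_i$), while the remaining $(s-1)$-fold integral over the bounded box is $O(1)$. For part (i) the paper does not argue at all — it cites \cite[Lemma 8.3(iii)]{May15b}. Your coarea/slicing representation $c_\cS(t) = (\sqrt{s}\,\eta_2^{1/2})^{-1}\int_{I_t} F'(u)\,du$ is equivalent to the paper's Fubini slicing (the factor $\sqrt{s}$ is the Jacobian of the hyperplane parametrization), but it has the advantage of treating both parts from the single observation that $F'$ is bounded (giving (ii)) and Lipschitz with an $\varepsilon_0$- and $\cS$-dependent constant when $s\ge 2$ (giving (i) via $|F'(u+\beta)-F'(u)|\le L\beta$ with $\beta \le \delta/(t\log X)$). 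Your identification of the role of the hypothesis $\ell>1$ — that for $s=1$ the slice function $F'(u)=\mathbf{1}_{[s_1,s_1']}(u)/u$ has jump discontinuities and the Lipschitz argument fails, with the discrepancy being $\gg \delta/t$ precisely because $\eta_2^{1/2}\ll 1/\log X$ — is a genuine value-add that the paper leaves implicit by deferring to a reference. One minor precision worth adding: the Lipschitz bound on $F'$ rests on $\partial D_u$ having bounded $(s-2)$-dimensional measure (where $D_u$ is the projected slice polytope) together with the integrand being bounded by $\varepsilon_0^{-s}$, so the constant $L$ depends on $\varepsilon_0$, not just on $\cS$; you should say $L=L(\varepsilon_0,\cS)$ and note that this is admissible since $\cS$ and $\varepsilon_0$ are fixed independently of $X$.
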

\begin{proof}
The first part is a particular case of \cite[Lemma 8.3  (iii)]{May15b}.
The proof of the second point is a direct computation analogous to \cite{May15b} :
\begin{equation*}
c_\cS (t)\le\frac{1}{\sqrt{\eta_2}}\iint_{\substack{{e_i\in [s_i , s_i']}\\ {1\le i\le s -1}}}\Big [\int_{e_s\in I_t-\sum_{i=1}^{s-1}e_i}\frac{\d e_s}{s_s}\Big ]\prod_{i=1}^{s -1}\frac{\d e_i}{s_i}.
\end{equation*}
The integral over $e_\ell$ is $O(\sqrt{\eta_2}(\log X)^{-1})$ and the contribution of the other integrals is $O(1)$.
\end{proof}

Let $\varepsilon_{00}>0$ and $R:=X^{\varepsilon_{00}}$. The approximate weights of $\1_{\cR_2}$ are defined by
\begin{equation}
\tilde \1_{\cR_2}(\gb):=c_{\cR_2}(N(\gb ))\sum_{\gd |\gb}\lambda_\gd,\label{eq:TildeDef}
\end{equation}
where
\begin{equation*}
\lambda _\delta := \begin{cases} 
\mu (\gd )\log \frac{R}{N(\gd )}, & N(\gd)<R,\\
 0, & {\rm otherwise.}
\end{cases}
 \end{equation*}
\begin{rmk}
Our weights are somewhat simpler than the one introduced in \cite{May15b}, because we don't need to take care of the perturbations caused by a possible exceptional character $\chi ^*$. (Ultimately we will only require estimates with moduli up to a fixed power of $\log{X}$, whereas in \cite{May15b} larger moduli needed to be considered due to losses occurring in high dimensions.)
\end{rmk}
We now write
\[ 
T(\cR)=T_{sieve}(\cR )
+T_1(\cR ),
\] 
where 
\begin{align}
T_{sieve}(\cR)&:=\sum_{\substack{{p\in [P_1,P_2]}\\ {p\equiv 1\Mod {D_f}}}}\sum_{\gI\in\cI}\sum_{\gM_0\gI\ga\gb\in\cAI(\v_0,m',p)}\1_{\cR_1}(\ga)\tilde\1_{\cR_2}(\gb),\label{eq:TSieveDef}\\
 T_1(\cR)&:=\sum_{\substack{{p\in [P_1,P_2]}\\ {p\equiv 1\Mod{D_f}}}}\sum_{\gI\in\cI}\sum_{\gM_0\gI\ga\gb\in\cAI (\v_0 ,m',p)}\1_{\cR_1}(\ga)(\1_{\cR_2}(\gb)-\tilde\1_{\cR_2}(\gb)),\label{TsT1}
\end{align}
and
\[
  P_1:=X^{\tau}, \qquad
     P_2:=X^{\tau'}.
\]

For brevity again we will write $T_{sieve}(\cR)$ and $T_1 (\cR)$ in place of $T_{sieve}(\cR , \v_0)$ and $T_1 (\cR ,\v_0)$ when $\v_0$ is clear from the context. We see that Theorem \ref{DistriNorm} follows immediately from the following two propositions.
%
%
\begin{prpstn}[Estimate for $T_{sieve}$]\label{prpstn:TSieve}
If we have
\[\epsilon_{00}<\sum_{j=1}^{\ell'}\theta_j-1-12\tau',\]
then
\begin{align*}
T_{sieve}(\cR)=(2+O(\eta_2^{1/2}))\log({1+\eta_2})|\cAI(X_0)| c_{\cR_1\times \cR_2}(X_0^4/mI)\frac{g(m')}{m'{}^3}\frac{\log(P_2/P_1)}{\varphi(D_f)},
\end{align*}
with
\begin{align*}
    g(m')=\prod_{\gP| (m')}\Big ( 1-\frac{\varrho_\v (\gP)}{N(\gP)}\Big )^{-1}.
\end{align*}    
\end{prpstn}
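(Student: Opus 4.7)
The plan is to expand the definition of $\tilde \1_{\cR_2}$ in \eqref{eq:TildeDef} inside $T_{sieve}(\cR)$ from \eqref{eq:TSieveDef}, swap the order of summation so that the sieve variable $\gd$ (with $N(\gd)\le R:=X^{\epsilon_{00}}$) becomes an outer variable, and then view the remaining innermost sum (over ideals containing a fixed divisor $\gM_0\gI\ga\gd$) as a Type I sum to be evaluated via Proposition~\ref{prpstn:TypeI}. After carrying this out, the outer summations over $\gd$, $\ga$, $p$ and $\gI$ each contribute one factor of the claimed main term through standard tools already developed in the paper.

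First I would note that, because the membership $\gM_0\gI\ga\gd\gb'\in\cAI(X_0,\v_0,m',p)$ forces $N(\gM_0\gI\ga\gd\gb')\in[X_0^4,X_0^4(1+\eta_2)]$ and $c_{\cR_2}$ depends only on $N(\gd\gb')=X_0^4/(m_0 I N(\ga)N(\gd))(1+O(\eta_2))$, Lemma~\ref{cR}(i) lets me freeze $c_{\cR_2}(N(\gd\gb'))=c_{\cR_2}\bigl(X_0^4/(m_0 I N(\ga)N(\gd))\bigr)+O(\eta_2^{1/2}/\log X)$ uniformly. Then, for fixed $p,\gI,\gd,\ga$, the innermost counting problem is exactly of the shape treated by Proposition~\ref{prpstn:TypeI} (with modulus $q:=m'$, region $\cR\subset[-X_0,X_0]^3$ cut out by $\cAI(X_0)$, the $p\equiv 1\Mod{D_f}$ splitting of $f=L_1L_2$ turning the congruence $p\mid f$ into two linear classes $\Mod p$, and fixed divisor $\gM_0\gI\ga\gd$ absorbed into $\gd$ there). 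The hypothesis $\epsilon_{00}<\sum_{j=1}^{\ell'}\theta_j-1-12\tau'$ is calibrated so that the Type I error $X_0^2(m'p)^{1+o(1)}D^{1/3+o(1)}+D(m'p)^{4+o(1)}$ is negligible after summing $|\lambda_\gd|\ll\log R$ over $\gd$ of norm $\le R$; this is the numerical heart of the argument.

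The Type I main term yields, after accounting for the factor $2/p$ from $|\{\bb\Mod p:p\mid L_1L_2(\bb)\}|/p^3 = 2/p+O(1/p^2)$ (inclusion–exclusion on the two linear forms, with the intersection $L_1=L_2=0$ being lower order), the expression $2\varrho_\v(\gM_0\gI\ga\gd)\vol(\cdots)/(pm'{}^3N(\gM_0\gI\ga\gd))$. Summing over $\gd$ coprime to $m'\gM_0\gI\ga$ via Lemma~\ref{Perron} produces the factor $(\Sft/\gamma_K)\prod_{\gP\mid(m')\gM_0\gI\ga}(1-\varrho_\v(\gP)/N(\gP))^{-1}$; the part depending on $\gM_0\gI\ga$ recombines with the factorization density from $\cR_1$ (and the already-frozen $c_{\cR_2}$ factor) by the convolution identity for the $c$-weights to give $c_{\cR_1\times\cR_2}(X_0^4/(m_0 I))$, leaving the remaining Euler product $g(m')$ from the $(m')$-part. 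The sum over $p\in[P_1,P_2]$ with $p\equiv 1\Mod{D_f}$ of $1/p$ is $\log(P_2/P_1)/\varphi(D_f)+o(1)$ by Mertens/Dirichlet, and the sum over $\gI\in\cI$ (the density of integral ideals with norm in $[I,I(1+\eta_2)]$) contributes the factor $\log(1+\eta_2)$ through the prime ideal theorem. Assembling these factors, together with the $(2+O(\eta_2^{1/2}))$ prefactor coming from the two linear factors and the $c$-weight approximation error, reproduces the stated asymptotic.

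The main obstacle will be bookkeeping: the coprimality between $(m')$ and the various ideal variables $\gM_0,\gI,\ga,\gd$ has to be tracked carefully so that the Euler product manipulations are legitimate (this is handled by our setup in \eqref{defm'} which ensures $(N(\gI),m)=(N(\ga),m)=1$ and by taking $\gd$ coprime to $(m')$ in Lemma~\ref{Perron}). One also has to verify that the various error terms — the Type I error, the $\eta_2^{1/2}$ loss in freezing $c_{\cR_2}$, the tails of the sums over $\gI$ and $X_0$, and the contribution of $\a$ with $m_0$ anomalously large already bounded in \eqref{eq:m0Split} — all combine to an admissible $O(X_1X_2X_3/(\log X)^A)$ error. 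The numerical condition on $\epsilon_{00}$ is dictated precisely by demanding that the bilinear term $X^2 q^{1+o(1)}D^{1/3}$ from Proposition~\ref{prpstn:TypeI} is absorbed against the true count $\asymp X_0^4/(m'{}^3 N(\gM_0\gI\ga))\asymp X^{3-\sum_{j>\ell'}\theta_j}$, so this step is where one has to be most careful.
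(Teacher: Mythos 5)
Your proposal is essentially the same as the paper's proof: expand the sieve weight via \eqref{eq:TildeDef}, freeze the $c_{\cR_2}$-weight using Lemma~\ref{cR}, split into residue classes $\Mod p$ using the factorization $f=L_1L_2$, apply the Type~I estimate of Proposition~\ref{prpstn:TypeI}, then carry out the sums over $\gd$ (via Lemma~\ref{Perron}), $\ga$, $\gI$, and $p$. The hypothesis $\epsilon_{00}<\sum_{j\le\ell'}\theta_j-1-12\tau'$ is indeed exactly what makes the accumulated Type~I error $O(X^{3-\epsilon})$ after summing $|\lambda_\gd|$ over $N(\gd)<R$, as you say. Two places where your bookkeeping is a bit imprecise (though not fatally so): (1) the $\ga$-part of the Euler product is trivial, since $\ga$ is a product of large degree-one prime ideals with $\varrho_\v(\gP)=1$, so $\varrho_\v(\ga\gd\gI)/N(\ga\gd\gI)$ just contributes $1/N(\ga)$, and it is the plain sum of $\1_{\cR_1}(\ga)c_{\cR_2}(\cdots)/N(\ga)$ (a prime-ideal-theorem computation, the paper's $Z_1(\gI)$) that gives $c_{\cR_1\times\cR_2}(X_0^4/m_0 I)$ --- no Euler-product factors from $\ga$ get absorbed here; and (2) the $\gI$-sum is not a simple density count: the $\gd$-sum via Lemma~\ref{Perron} leaves behind a multiplicative factor $h(\gI)$ together with $\Sft/\gamma_K$, and one needs a contour/Wiener--Ikehara computation of $\sum_{\gI\in\cI}h(\gI)$ to produce the residue $\gamma_K\log(1+\eta_2)/\Sft$, which is exactly what cancels the stray $\Sft/\gamma_K$ and leaves only $g(m')$ and $\log(1+\eta_2)$ in the final answer. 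You do not mention how $\Sft$ and $\gamma_K$ disappear, and that is the one genuinely non-obvious cancellation you would need to exhibit; otherwise your plan, executed carefully, reproduces Lemmas~\ref{lmm:TSplit} and~\ref{lmm:M1}.
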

%
%
\begin{prpstn}[Bound for $T_1(\cR)$]\label{prpstn:T1}
Let $\cR=\cR_1\times\cR_2$ and $T_1(\cR)$ be as above. If we have
\[
\tau'<\min\Bigl(\frac{4-2\theta'_{1}-\ldots-2\theta'_{\ell '}}{100},\frac{\theta_1+\cdots+\theta_{\ell'}-1}{100}\Bigr),
\]
then for any $A>0$ we have
\[
T_1(\cR)\ll_A \frac{|\cAI(X_0)|}{(\log{X})^A}. 
\]
\end{prpstn}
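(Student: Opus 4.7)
The plan is to adapt the Type II estimate from \cite{May15b} to accommodate the additional conditions $\a \equiv \v_0 \Mod{m'}$ and $p \mid f(a_1,a_2,a_3)$. Since $p \equiv 1 \Mod{D_f}$, the form $f = L_1 L_2$ splits over $\F_p$, so $p \mid f(\a)$ is equivalent to $\a$ lying in one of two explicit residue classes modulo $p$. Combining with $\a \equiv \v_0 \Mod{m'}$ via the Chinese Remainder Theorem, the problem reduces to a sum over $\a$ lying in two residue classes modulo the combined modulus $q := pm' \le X^{\tau' + o(1)}$.

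Next, I would insert the definition $\tilde\1_{\cR_2}(\gb) = c_{\cR_2}(N(\gb)) \sum_{\gd \mid \gb} \lambda_\gd$ and swap the order of summation, so that the sieve-weighted portion of $T_1(\cR)$ becomes
\begin{equation*}
\sum_{\substack{p \in [P_1,P_2]\\ p \equiv 1\Mod{D_f}}} \sum_{\gI \in \cI} \sum_{N(\gd) < R} \lambda_\gd \sum_{\gM_0 \gI \gd \ga \gb' \in \cAI(X_0,\v_0, m', p)} \1_{\cR_1}(\ga)\, c_{\cR_2}(N(\gd \gb')),
\end{equation*}
where $\gb = \gd \gb'$. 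Using partial summation to handle the slowly-varying weight $c_{\cR_2}$ (via Lemma \ref{cR}), this reduces to a Type I sum with composite divisor $\gd \gM_0 \gI$ and modulus $q$. Proposition \ref{prpstn:TypeI} yields an asymptotic with error $O(X^2 q^{1+o(1)} N(\gd \gM_0 \gI)^{1/3+o(1)} + q^{4+o(1)} N(\gd \gM_0 \gI))$. The contribution of the "genuine" indicator $\1_{\cR_2}(\gb)$ is computed analogously by expanding $\gb = \gp_1 \cdots \gp_{\ell'}$ as a product of prime ideals of prescribed norms; Lemma \ref{Perron} together with the construction of $c_{\cR_2}$ guarantees that the two main terms cancel up to a Siegel-Walfisz saving of $\exp(-c\sqrt{\log X})$, so that only the Type I error remains.

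Summing this remaining error over $\gd, \gI, p$, using $N(\gI) \asymp X^{4 - \sum_{i=1}^\ell \theta_i + o(1)}$, $N(\gd) < R = X^{\varepsilon_{00}}$, $N(\gM_0) = (\log X)^{O(1)}$, $q \le X^{\tau' + o(1)}$, and $|\cAI(X_0)| \asymp X^{3+o(1)}$, the two hypotheses on $\tau'$ are calibrated precisely so that each of the two Type I error contributions is $o(|\cAI(X_0)| / (\log X)^A)$: the condition $\tau' < (\theta_1 + \cdots + \theta_{\ell'} - 1)/100$ controls the $X^2 q^{1+o(1)} N(\gd \gM_0 \gI)^{1/3}$ piece, while $\tau' < (4 - 2\theta'_1 - \cdots - 2\theta'_{\ell'})/100$ controls the $q^{4+o(1)} N(\gd \gM_0 \gI)$ piece. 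The main obstacle will be the comparison of main terms in the previous step, namely verifying that the sieve-weighted main term (involving a Möbius-twisted sum $\sum_\gd \lambda_\gd \varrho_\v(\gd \gM_0 \gI)/N(\gd \gM_0 \gI)$ evaluated via Lemma \ref{Perron}) matches the true main term from the prime-factorisation expansion of $\gb$, in the presence of the auxiliary modulus $q = pm'$. This requires careful bookkeeping of the Euler factors at primes dividing $pm'$, and uniformity of the Siegel-Walfisz saving over the joint ranges of $p$ and $\gd$, which is precisely the reason for the factor $2^{4\omega(\gI)}$ appearing in Lemma \ref{Perron}.
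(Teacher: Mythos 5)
Your proposal rests on the idea that the $\1_{\cR_2}(\gb)$ piece of $T_1$ can be evaluated by the same Type~I machinery (Proposition~\ref{prpstn:TypeI} and Lemma~\ref{Perron}) that handles the sieve term $\tilde\1_{\cR_2}$, with the two resulting main terms cancelling. This is exactly the point where the argument breaks, and in fact is the reason the paper splits $T=T_{sieve}+T_1$ in the first place: $T_{sieve}$ is designed to be tractable by Type~I, while $T_1$ is precisely the piece that is not.

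Concretely: $\tilde\1_{\cR_2}(\gb)=c_{\cR_2}(N(\gb))\sum_{\gd|\gb}\lambda_\gd$ has a free quotient $\gb/\gd$ of large norm (since $N(\gd)<R=X^{\varepsilon_{00}}$), so in $T_{sieve}$ the count over multiples of $\gd\gM_0\gI\ga$ is a genuine Type~I count with divisor of norm $\ll X^4 R/B^4 \ll X^{3-\delta}$. By contrast, writing $\1_{\cR_2}(\gb)=1$ as $\gb=\gp_1\cdots\gp_{\ell'}$ with each $N(\gp_i)$ prescribed and substituting back, the resulting divisor $\gM_0\gI\ga\gp_1\cdots\gp_{\ell'}$ has norm $\asymp X^4/m_0$, essentially all of $\cAI$: there is no free factor left, the Type~I error $X^2q^{1+o(1)}D^{1/3+o(1)}+Dq^{4+o(1)}$ at $D\asymp X^4$ swamps the expected main term completely, and the condition $\tau'<(\theta_1+\cdots+\theta_{\ell'}-1)/100$ does nothing to save you. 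Moreover, Lemma~\ref{Perron} is a sieve lemma evaluating $\sum_{\gd}\mu(\gd)\varrho_\v(\gd)\log(R/N(\gd))/N(\gd)$; it has no prime-detecting content and cannot produce a main term for the genuine count of products $\gp_1\cdots\gp_{\ell'}$ lying below elements of $\cAI$. Producing that main term \emph{is} essentially Theorem~\ref{DistriNorm} and cannot be assumed.

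What the paper actually does is treat $T_1$ as a Type~II / bilinear quantity via the dispersion method: split into ideal classes and write elements as $\alpha\beta$, then Cauchy--Schwarz in $\aa$ to reduce to counting lattice points $\aa$ in rank-$2$ lattices $\Lambda_{\bb_1,\bb_2}$ with side conditions from $p_1,p_2$ (Section~\ref{sec:T1}). After Davenport's lemma and a delicate analysis of the index $f_{\bb_1,\bb_2,p_1,p_2}$ (the rank decomposition Lemmas~\ref{lmm:S4t}--\ref{lmm:S1} and the diagonal/off-diagonal split $T_{11}$, $T_{12}$), the problem reduces to showing that $g_\bb$ has mean zero on hypercubes in arithmetic progressions of modulus $(\log X)^{O(1)}$, which is Proposition~\ref{prpstn:Local} and is proved via Hecke Gr\"ossencharacters. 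Your proposed shortcut skips all of this; the bilinear cancellation obtained in the Cauchy--Schwarz step is precisely the Type~II information that a direct Type~I treatment cannot access.
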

%
%
We remark that we are assumming the general setup in Propositions \ref{prpstn:TSieve} and \ref{prpstn:T1}; in particular, the constants $\theta_1,\theta_1',\dots,\theta_\ell,\theta_{\ell}'$ determining $\cR$ are assumed to satisfy \eqref{eq:th1}-\eqref{eq:th5}.

%
%
We will establish Proposition \ref{prpstn:TSieve} in Section \ref{sec:TSieve} and the harder Proposition \ref{prpstn:T1} in Section \ref{sec:T1}. The presence of the sum over primes $p\in [P_1,P_2]$ introduces few additional complications to $T_{sieve}$ and Proposition \ref{prpstn:TSieve}, but quite significant additional technical details to $T_1$ and Proposition \ref{prpstn:T1}. Assuming these propositions for now, we can establish Theorem \ref{DistriNorm} by putting all our manipulations together.

\begin{proof}[Proof of Theorem \ref{DistriNorm} assuming Propositions \ref{prpstn:TSieve} and \ref{prpstn:T1}]
We recall from \eqref{eq:m0Split} and \eqref{AX}
that
\begin{equation}
\cA_{q_1\cdots q_\ell}(\u_0,m,p)=\sum_{\substack{m_0|m^\infty \\ m_0\le D_0}}\sum_{\v_0\in\cM(m_0)}|\cAI_{q_1\cdots q_\ell}(\v_0,m',p)|+O\Bigl(\frac{\eta_1^3\prod_{i=1}^3 X_i}{m^2 D_0^{1/2}}\Bigr).
\label{eq:AAt}
\end{equation}
We concetrate on the $\cAI$ terms. We recall from \eqref{corresp}, \eqref{lesI} and \eqref{eq:TRDef} that
\begin{align*}
&|\cAI_{q_1\cdots q_\ell} ( \v _0, m',p)|=\sum_{X_0}\sum_{N(\gP_i)=q_i}|\cAI _{\gP_1\cdots \gP_\ell}(X_0,\v_0,m',p)|\\
&=\sum_{X_0}\sum_{\substack{\cI \\ \sqcup \cI=\cI_0}}\sum_{N(\gP_i)=q_i}\sum_{\substack{\gJ\\ N(\gJ)\in\cI \\ \gM_0\gJ\prod_{i=1}^\ell\gP_i\in \tilde{A}(\v_0,m',p)}}1\\
&=\sum_{X_0}\sum_{\substack{\cI \\ \sqcup \cI=\cI_0}}\sum_{\substack{\cR_1,\cR_2 \\ \prod_{i=1}^\ell [\theta_i,\theta_i']=\sqcup \cR_1\times\cR_2}}\sum_{\substack{\gJ\\ N(\gJ)\in\cI}}\sum_{\ga}\1_{\cR_1}(\ga)\sum_{\substack{\gb \\ \gJ\gM_0\ga\gb\in \tilde{A}(\v_0,m',p)}}\mathbf{1}_{\cR_2}(\gb).
\end{align*}

By assumption of Theorem \ref{DistriNorm}, we have $\tau'<(\sum_{i=1}^{\ell'}\theta_i-1)/100$, and so choosing $\epsilon_{00}$ sufficiently small means that the hypothesis of Proposition \ref{prpstn:TSieve} is satisfied. Thus, summing over $p\in [P_1,P_2]$ and applying Propositions \ref{prpstn:TSieve} and \ref{prpstn:T1} (with a suitably large constant $A$) gives
\begin{align}
&\sum_{p\in[P_1,P_2]}|\cAI_{q_1\cdots q_\ell} ( \v _0, m',p)|=\sum_{X_0}\sum_{\substack{\cI \\ \sqcup \cI=\cI_0}}\sum_{\substack{\cR=\cR_1\times\cR_2 \\ \prod_{i=1}^\ell [\theta_i,\theta_i']=\sqcup \cR_1\times\cR_2}}\Bigl(T_{sieve}(\cR)+T_1(\cR)\Bigr)\nonumber\\
&=(2+O(\eta_2^{1/2}))\eta_2\frac{\log(P_2/P_1)}{\phi(D_f)}\frac{g(m')}{m'{}^3}T_3+O\Bigl(\frac{\prod_{i=1}^3X_i}{(\log{X})^{A-O(1)}}\Bigr),\label{eq:AtT3}
\end{align}
where
\[
T_3:=\sum_{X_0}\sum_{\substack{\cI \\ \sqcup \cI=\cI_0}}\sum_{\substack{\cR=\cR_1\times\cR_2 \\ \prod_{i=1}^\ell [\theta_i,\theta_i']=\sqcup \cR_1\times\cR_2}}|\cAI(X_0)|c_{\cR}(X_0^4/mI).
\]
Here we used that there are at most $O(\eta_\cX^{-2})$ subsets $\cR $, $O(\eta_2^{-1}\eta_1)$ reals $X_0$ and  $O(\eta_2^{-1}\log X)$ reals $I$ to bound the contribution from $T_1$ by  Proposition \ref{prpstn:T1}.

We now concentrate on $T_3$. Since the subsets  $\cR$ form a partition of $\cT:=\prod_{i=1}^\ell [\theta_i,\theta_i']$, we find
$$\sum_{\substack{\cR=\cR_1\times\cR_2 \\ \prod_{i=1}^\ell [\theta_i,\theta_i']=\sqcup \cR_1\times\cR_2}}c_{\cR}(X_0^4/mI)=c_\cT (X_0^4/mI),$$
so
\[
T_3=\sum_{X_0}|\cAI(X_0)|\sum_{\substack{\cI \\ \sqcup \cI=\cI_0}}c_\cT (X_0^4/mI).
\]
By Lemma \ref{cR} applied to $c_\cT$ we have for all  $I\in\cI_0$
 \begin{equation*}\begin{split}
 c_\cT (X_0^4/mI)&=\frac{1}{\eta_2}\int_{I}^{I(1+\eta_2)}\frac{c_\cT (X_0^4/mv)}{v}dv+O(\eta_2).\\
 \end{split}
 \end{equation*}
 Expanding the definition \eqref{eq:CDef} of $c_{\cT}$ and swapping the order of summation and integration, we find
 \begin{align*}
\sum_{I\in\hat\cI_0}\int_{I}^{I(1+\eta_2)}&\frac{c_\cT (X_0^4/mv)}{v}dv\\
&=\frac{1}{\eta_2^{1/2}}
\iint\limits_{\substack{{e_i\in [\theta_i,\theta_i']}\\ {1\le i\le\ell}}}
\sum_{I\in\hat\cI_0}\int\limits_{\substack{v\in [I,I(1+\eta_2)[\\ v\in \Big [\frac{X_0^4}{m\prod_{i=1}^\ell X^{e_i}}, \frac{X_0^4(1+\sqrt{\eta_2})}{m\prod_{i=1}^\ell X^{e_i}}\Big ]}}
\frac{dv}{v}\prod_{i=1}^\ell \frac{\d e_i}{e_i}\\
&=\frac{1}{\eta_2^{1/2}}\iint\limits_{\substack{ e_i\in [\theta_i,\theta_i'] \\ 1\le i\le\ell }}\Bigl(\int_{X_0^4/(m\prod_{i=1}^\ell X^{e_i})}^{X_0^4(1+\sqrt{\eta_2})/(m\prod_{i=1}^\ell X^{e_i})}\frac{dv}{v}\Bigr)\prod_{i=1}^\ell \frac{\d e_i}{e_i}\\
&=\frac{\log(1+\sqrt{\eta_2})}{\eta_2^{1/2}}\prod_{i=1}^\ell \log\Big (\frac{\theta_i'}{\theta_i}\Big ).
\end{align*}
We note that this is independent of $X_0$, so we find
\begin{align}
T_3&=\frac{\log(1+\sqrt{\eta_2})}{\eta_2^{3/2}}\prod_{i=1}^\ell \log\Big (\frac{\theta_i'}{\theta_i}\Big )\sum_{X_0}|\cAI (X_0)|+O\Bigl(\log{X}\sum_{X_0}|\cAI (X_0)|\Bigr)\nonumber\\
&=\frac{(1+O(\sqrt{\eta_2}))}{\eta_2}\prod_{i=1}^\ell \log\Big (\frac{\theta_i'}{\theta_i}\Big )|\cAI (X)|.\label{eq:T33}
\end{align}
Putting together \eqref{eq:AAt}, \eqref{eq:AtT3} and \eqref{eq:T33} we find
\begin{align}
\sum_{p\in[P_1,P_2]}&\sum_{\substack{q_1,\dots q_\ell \\ q_i\in [X^{\theta_i},X^{\theta_i'}]}}\cA_{q_1\cdots q_\ell}(\u_0,m,p)\nonumber\\
&=2\frac{\log{\frac{P_2}{P_1}}}{\phi(D_f)}\prod_{i=1}^\ell \log\Big (\frac{\theta_i'}{\theta_i}\Big )|\cAI (X)|\sum_{\substack{m_0|m^\infty \\ m_0\le D_0}}\sum_{\v_0\in\cM(m_0)}\frac{g(m')}{m'{}^3}\nonumber\\
&\qquad+O\Bigl(\eta_2^{1/2}\prod_{i=1}^3 X_i\Bigr)+O\Bigl(\frac{\eta_1^3\prod_{i=1}^3 X_i}{D_0^{1/2}}\Bigr).
\label{eq:MainEst}
\end{align}
Finally it remains to estimate the inner double sum. The summand is independent of $\v_0$, so recalling that $m'=m_0m$ we are left to estimate
\begin{equation}\label{Sm0}
\sum_{\substack{{m_0<D_0}\\
{m_0|m^\infty}}}\frac{|\cM (m_0)|}{(mm_0)^3}\prod_{\gP| (m)}\Big (\sum_{k=2}^\infty \frac{\varrho_\v(\gP^k)}{N(\gP^k)}\Big )^{-1}.
\end{equation}
By \eqref{defMm0}, 
$$|\cM(m_0)|\le m_0^2m,$$ and thus for any given $m$ 
the sum over $m_0$ converges. We may therefore extend it to all $m_0\ge 1$ cost of an admissible error term.
Next we note that the sets of the $\a\in [1,X]^3$
with $\a\equiv \u_0\Mod m$ can be partitioned into sets of the $\a\in [1,X]^3$ such that $\a\equiv{\v_0}\Mod {mm_0}$, with $m_0\le X^2$ and $\v_0\in\cM (\u_0)$, and so
\begin{equation*}
    \begin{split}
  \sum_{\substack{{m_0<D_0}\\
{m_0|m^\infty}}}\frac{|\cM (m_0)|}{(mm_0)^3}&=
(1+O(D_0^{-1/4}))\sum_{\substack{{m_0<X^2}\\
{m_0|m^\infty}}}\frac{|\cM (m_0)|}{(mm_0)^3}\\
&= \frac{(1+O(D_0^{-1/4})}{X^3+O(X^2)}
\sum_{\substack{{m_0<X^2}\\
{m_0|m^\infty}}}\sum_{\v_0\in\cM (m_0)}
\sum_{\substack{{\a\in [1,X]^3}\\ {\a\equiv{\v_0}\Mod{m_0}}}}1\\
&=\frac{(1+O(D_0^{-1/4}))}{X^3+O(X^2)}
\sum_{\substack{{\a\in [1,X]^3}\\ {\a\equiv{\u_0}\Mod{m_0}}}}1\\
&=\frac{1}{m^3}(1+O(D_0^{-1/4})).
\end{split}
\end{equation*}
Substituting this into \eqref{eq:MainEst} and recalling $D_0=\eta_\cX^{-2}\eta_1^{-1}\gg (\log{X})^{2A}$, $\eta_2\ll (\log{X})^{-2A}$ and $|\cAI(X)|=\eta_1^3X_1X_2X_3+O(\eta_2X_1X_2X_3)$ gives Theorem \ref{DistriNorm}.
\end{proof}
%
%
%
\section{Proposition \ref{prpstn:TSieve}: The term $T_{sieve}(\cR)$.}\label{sec:TSieve}
%
%
In this part we obtain an analogue of \cite[Lemma 8.6]{May15b} by expanding the sieve terms and applying Proposition \ref{prpstn:TypeI}. 

If $\ga$  and $\gb$ are  some ideals satisfying $\1_{\cR_1} (\ga )=\1_{\cR_2}(\gb)=1$ then $\ga$ and $\gb$
factor into prime ideals as $\ga =\prod_{i=\ell'+1}^\ell \gP_i$, $\gb=\prod_{i=1}^{\ell '} \gP_i$
with $N(\gP_i)\in [X^{t_i},X^{t_i}(1+O(\eta_\cX^2\log X))]$ for $1\le i\le\ell$.
In particular, 
\[
N(\ga)\in [A^4, A^4(1+O(\eta_\cX^2\log X))],\quad N(\gb)\in  [B^4, B^4(1+O(\eta_\cX^2\log X))]
\]
where
\[
A^4:=X^{\sum_{i=\ell'+1}^\ell t_i},\quad  B^4:=X^{\sum_{i=1}^{\ell '}t_i}.
\]
%
%
\begin{lmm}\label{lmm:TSplit}
Let $B^4>X^{1+\epsilon}R$ and $\cR=\cR_1\times \cR_2$. Then we have
\[
T_{sieve}(\cR)=M_1(\cR)+E_1(\cR)
\]
where $M_1(\cR)$ is given by
\begin{align*}
M_1(\cR)&:=\sum_{\substack{ {p\in [P_1,P_2]}\\ {p\equiv 1\Mod{D_f}}}}
\sum_{\gI\in\cI}\sum_{\ga}\1_{\cR_1}(\ga )\sum_{\substack{{N(\gd )<R}\\ {N(\gd),m)=1}}}\lambda_\gd\nonumber\\
&\qquad\times c_{\cR_2}\Big (\frac{X_0^4}{m_0N(\a \gI)}\Big )|\cAI_{\ga\gd\gI}(\v_0(\y),m',p)|,
\end{align*}
and $E_1(\cR)$ satisfies
\[
\sum_{\cR}\sum_{X_0}|E_1(\cR)|\ll \eta_2^{1/2}\eta_1^{-2\ell}(\log X)^{11}\prod_{i=1}^3X_i.
\]
\end{lmm}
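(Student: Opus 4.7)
The plan is to split $T_{sieve}(\cR)$ into a main term by replacing the slowly-varying factor $c_{\cR_2}(N(\gb))$ in $\tilde\1_{\cR_2}(\gb)$ with a value that does not depend on $\gb$, and then interchanging the orders of summation to recognise the resulting inner count as $|\cAI_{\ga\gd\gI}(\v_0,m',p)|$. The point of the whole manoeuvre is that once $c_{\cR_2}$ has been frozen at a value depending only on $\ga$ and $\gI$, the sum over $\gb$ collapses into a standard divisor-sum over $\gd$, which is what Proposition \ref{prpstn:TypeI} will later be able to handle.

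Concretely, I would first unfold the definition \eqref{eq:TildeDef} of $\tilde\1_{\cR_2}$ so that
\[
T_{sieve}(\cR)=\sum_{\substack{p\in[P_1,P_2]\\ p\equiv 1\Mod{D_f}}}\sum_{\gI\in\cI}\sum_{\gM_0\gI\ga\gb\in\cAI(\v_0,m',p)}\1_{\cR_1}(\ga)\,c_{\cR_2}(N(\gb))\sum_{\substack{\gd|\gb\\ N(\gd)<R}}\lambda_\gd.
\]
For each $\gamma=\gM_0\gI\ga\gb\in\cAI(X_0,\v_0,m',p)$ one has $N(\gamma)\in[X_0^4,X_0^4(1+\eta_2))$, hence $N(\gb)=N(\gamma)/(m_0N(\ga\gI))$ lies in an interval of relative length $O(\eta_2)$ around $X_0^4/(m_0N(\ga\gI))$. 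Applying Lemma \ref{cR}(i) produces the pointwise replacement
\[
c_{\cR_2}(N(\gb))=c_{\cR_2}\!\Bigl(\tfrac{X_0^4}{m_0N(\ga\gI)}\Bigr)+O(\eta_2).
\]
Inserting the main term and writing $\gb=\gd\gb'$, the inner count over $\gb'$ with $\gM_0\gI\ga\gd\gb'\in\cAI(\v_0,m',p)$ is exactly $|\cAI_{\ga\gd\gI}(\v_0,m',p)|$ (the $\gM_0$ divisibility being automatic from the congruence $\a\equiv\v_0\Mod{m'}$), so the main piece is precisely $M_1(\cR)$.

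For the error $E_1(\cR)$ I would use the trivial bound $\bigl|\sum_{\gd|\gb,N(\gd)<R}\lambda_\gd\bigr|\le\tau(\gb)\log R$ and absorb the factors $\1_{\cR_1}(\ga)\tau(\gb)$ into $\tau(\gamma)^{O(1)}$. Then Lemma \ref{div} gives $\sum_{\gamma\in\cAI(X_0,\v_0,m',p)}\tau(\gamma)^{O(1)}\ll|\cAI(X_0)|(\log X)^{O(1)}$, while the sum over $p\mid f(a_1,a_2,a_3)$ introduces only a divisor factor $X^{o(1)}$, yielding
\[
|E_1(\cR)|\ll\eta_2(\log X)^{O(1)}|\cAI(X_0)|.
\]
Summing over the $O(\eta_\cX^{-2\ell})$ polytopes $\cR(\bl)$ from \eqref{UnionRl} and over the $O(\eta_2^{-1}\eta_1)$ norm-intervals $X_0$, and using $\sum_{X_0}|\cAI(X_0)|\ll\eta_1^3\prod_i X_i$, produces a total error of size $\eta_2\eta_\cX^{-2\ell}(\log X)^{O(1)}\prod_iX_i$. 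Because \eqref{defeta2} was chosen with the enormous gap $\eta_2=\eta_\cX^{10000\ell^2}$, we have $\eta_2\eta_\cX^{-2\ell}\ll \eta_2^{1/2}$, and the claimed bound follows.

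The only genuine obstacle here is bookkeeping: the pointwise replacement error is merely $O(\eta_2)$ per term and must survive both the sieve weights $\lambda_\gd$ (via divisor bounds) and the two subsequent summations over $\cR$ and $X_0$. This is the reason the parameters $\eta_1,\eta_\cX,\eta_2$ were defined with such a lopsided hierarchy in \eqref{eq:etaDef} and \eqref{defeta2}; any slower decay of $\eta_2$ relative to $\eta_\cX^{2\ell}$ would fail to absorb the splitting loss and this decomposition lemma would break.
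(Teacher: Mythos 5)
Your proof takes a genuinely different route from the paper's for bounding $E_1(\cR)$: the paper applies Proposition \ref{prpstn:TypeI} to each inner count $|\cAI_{\gI\ga\gd}(\v_0,m',1)|$, together with an Euler-product bound and Lemma \ref{rhodrhoI}, and this is exactly where the hypothesis $B^4>X^{1+\epsilon}R$ is consumed to control the Type-I error term. You instead bound the sieve sum pointwise by $\tau(\gb)\log R$ and then appeal to divisor-sum bounds, avoiding Proposition \ref{prpstn:TypeI} entirely (and hence the $B^4>X^{1+\epsilon}R$ hypothesis) at the price of a cruder estimate. Because $\eta_2=\eta_\cX^{10000\ell^2}$ was chosen with such an enormous gap relative to the other parameters, this crudeness can be absorbed, so the strategy is viable and arguably simpler.

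However, there is a concrete gap in your justification. You write that Lemma \ref{div} gives $\sum_{\gamma\in\cAI(X_0,\v_0,m',p)}\tau(\gamma)^{O(1)}\ll|\cAI(X_0)|(\log X)^{O(1)}$, but Lemma \ref{div} is a divisor bound over the \emph{full} box $\max|x_i|\ll X$, whereas $\cAI(X_0,\v_0,m',p)$ is an $\eta_2$-thin norm annulus inside a small sub-box with congruence constraints: it has density roughly $\eta_2\eta_1^3/m'^3$, which can be $(\log X)^{-C}$ for arbitrarily large $C$. Divisor sums do not automatically localise to such thin sets; the divisor function can concentrate, and Lemma \ref{div} as stated gives only the much weaker bound $\ll X^3(\log X)^{O(1)}$, losing a factor of $\eta_2^{-1}\eta_1^{-3}m'^3$ relative to $|\cAI(X_0)|$. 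If you then multiply by the $O(\eta_2^{-1}\eta_1)$ choices of $X_0$ you would not recover the stated error. The fix is to defer the summation over $X_0$: since $\bigsqcup_{X_0}\cAI(X_0,\v_0,m',p)=\cAI(\v_0,m',p)$, one has
\[
\sum_{X_0}|E_1(\cR)|\ll\eta_2(\log X)^{O(1)}\sum_{\gamma\in\cAI(\v_0,m')}\tau(\gamma)^{O(1)},
\]
and only then apply the crude form of Lemma \ref{div} to obtain $\ll\eta_2(\log X)^{O(1)}X^3$ for each $\cR$; summing the $O(\eta_\cX^{-2\ell})$ polytopes then yields $\eta_2\eta_\cX^{-2\ell}(\log X)^{O(1)}X^3$, which is indeed $\ll\eta_2^{1/2}\eta_1^{-2\ell}(\log X)^{11}\prod_i X_i$ because $\eta_2^{1/2}\eta_\cX^{-2\ell}\ll 1$. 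A secondary slip: you describe the $p$-sum as contributing a factor $X^{o(1)}$; if taken literally this would destroy the polylog target, but since each $f(a_1,a_2,a_3)\ll X^2$ has at most $2/\tau=O(1)$ prime factors $p\ge P_1=X^\tau$, the correct (and necessary) statement is $O(1)$.
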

%
%
\begin{proof}
We substitute our definition \eqref{eq:TildeDef} of $\tilde \1_{\cR_2}$ into our expression \eqref{eq:TSieveDef} for $T_{sieve}$, and write $\gu=\gM_0\gI\ga\gb$. This gives
\begin{equation}\label{MR}\begin{split}
T_{sieve}(\cR)&=\sum_{\substack{{p\in[P_1,P_2]}\\ {p\equiv 1\Mod{D_f}}}}\sum_{\gI\in\cI}\sum_{\ga}\1_{\cR_1}(\ga )\sum_{N(\gd )<R}\lambda_\gd
\sum_{\substack{{\gu\in\cAI(\v_0,m',p)}\\ {\gM_0\gI\ga\gd |\gu}\\ }}
c_{\cR_2}(N(\gu /\a \gI\gM_0)).\end{split}
\end{equation}
  If $\gu\in \cAI (\v_0,m',p)$ then $N(\gu )\in [X_0^4,X_0^4(1+\eta_2)]$. By  Lemma \ref{cR}, this implies
$c_{\cR_2}(N(\gu /\a \gI\gM_0))=c_{\cR_2}(X_0^4/m_0N(\a \gI))+O(\eta_2).$ Thus we write
\begin{equation}
T_{sieve}(\cR)=M_1(\cR)+O(E_1(\cR)),
\end{equation}
where $M_1(\cR)$ is as given in the Lemma and
\begin{align}
E_1(\cR)&:= \eta_2
\sum_{\substack{{p\in[P_1,P_2]}\\ {p\equiv 1\Mod{D_f}}}}\sum_{\gI\in\cI}\sum_{\ga}\1_{\cR_1}(\ga)\sum_{N(\gd )<R}|\lambda _\gd||\cAI _{\gI\ga\gd}(\v_0,m',p)|.\label{cc}
\end{align}
We concentrate on $E_1(\cR)$. For any $(\sum_{i=1}^3x_i\nu_i)\in\cAI $, the number of primes $p\in [P_1,P_2]$ such that $p| f(x_1,x_2,x_3)$ is finite. This allows us to remove the summation over $p$ and replace $|\cAI _{\gI\ga\gd}(\v_0,m',p)|$ with $|\cAI _{\gI\ga\gd}(\v_0,m',1)|$ in $E_1(\cR)$ at the cost of a factor $O(1)$.

 We then apply Proposition \ref{prpstn:TypeI} to estimate $|\cAI _{\gI\ga\gd\gM_0}(\v_0,m',1)|$, recalling that $N(\gI\ga\gd\gM_0)\ll X^4 R/B^4$ and $m'\ll (\log X)^{O(1)}$. This gives
\begin{equation}\label{resteT1}\begin{split}
E_1(\cR)&\ll \eta_2
\sum_{\gI\in\cI}\sum_{\ga}\1_{\cR_1}(\ga)\sum_{\substack{{N(\gd )<R} \\ {(N(\gd ),m')=1}}}|\lambda _\gd|\frac{|\cAI (X_0,\cX) |\varrho_\v (\ga\gd\gI ) }{N(\ga\gI\gd)(m')^3}\\
&+X^{o(1)}\sum_{\substack{{N(\gd )<R}\\ {(N(\gd ),m)=1}}}|\lambda _\gd|\Bigl(X^{2}\Bigl(\frac{X^4 R}{B^4} \Bigr)^{1/3}+\frac{X^4 R}{B^4}X\Bigr).
\end{split}
\end{equation}
Crudely, if $B^4>X^{1+\epsilon}R$, we see the second term in \eqref{resteT1} contributes to \eqref{resteT1}
\begin{equation}\label{E12}
\ll X^{3+o(1)}\Bigl(\frac{X R^4}{B^4}+\Bigl(\frac{X R^4}{B^4}\Bigr)^{1/3}\Bigr)\ll X^{3-\epsilon/4}.
\end{equation}
By an Euler product upper bound and Lemma \ref{rhodrhoI}, we see that the first term contributes to \eqref{resteT1}
\begin{align}
&\ll \eta_2(\log{X}) |\cAI(X_0,\v_0,m')|\sum_{N(\gI),N(\ga),N(\gd)<X}\frac{|\rho_\v(\ga\gd\gI)|}{N(\ga\gd\gI)}\nonumber\\
&\ll\eta_2(\log{X})^{9} |\cAI(X_0,\v_0,m')|.\label{E11}
\end{align}
Thus, substituting \eqref{E12} and \eqref{E11} into \eqref{resteT1} we find for $B^4>X^{1+\epsilon}R$
\[
E_1(\cR)\ll \eta_2(\log{X})^{9} |\cAI(X_0,\v_0,m')|+X^{3-\epsilon/4}.
\]
Summing this over all hyperrectangles $\cR$ and all relevant $X_0$, we find
\begin{align}
\sum_{\cR}\sum_{X_0}E_1(\cR)&\ll \eta_2(\log{X})^{9}\sum_{\cR}\sum_{X_0} |\cAI(X_0,\v_0,m')|+X^{3-\epsilon/5}.\nonumber\\
&\ll \eta_2\eta_\cX^{-2\ell}(\log{X})^4|\cAI (\v_0,m')|+X^{3-\epsilon/5}\nonumber\\
&\ll \eta_2^{1/2}\eta_1^{-2\ell}(\log X)^{11}\prod_{i=1}^3X_i+X^{3-\epsilon/5}.
\end{align}
This gives the result.
\end{proof}
%
%

Thus we have to evaluate $M_1(\cR)$.
%
%
\begin{lmm}\label{lmm:M1}
Let $B^4>X^{1+\epsilon}RP_2^{12}$ and let $M_1(\cR)$ be as given by Lemma \ref{lmm:TSplit}. Then we have
\[
M_1(\cR)=(2+O(\eta_2^{1/2}))(\log{1+\eta_2})|\cAI(X_0)| c_{\cR_1\times \cR_2}(X_0^4/mI)\frac{g(m')}{m'{}^3}\frac{\log(P_2/P_1)}{\varphi(D_f)}.
\]
\end{lmm}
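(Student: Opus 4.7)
The plan is to apply the Type I estimate (Proposition \ref{prpstn:TypeI}) to evaluate $|\cAI_{\ga\gd\gI}(\v_0,m',p)|$ asymptotically, and then unwind the remaining sums in three stages: sum over $\gd$ (via the linear sieve identity), sum over $p$ (via the prime number theorem in APs), and the joint $\ga,\gI$ sum (via the definition of the $c$-function). Since $p\equiv 1\Mod{D_f}$, the form $f$ factors as $L_1L_2$ over $\F_p$, so $p\mid f(a_1,a_2,a_3)$ cuts out the union of two hyperplanes modulo $p$. Combining with $\a\equiv\v_0\Mod{m'}$ via the Chinese Remainder Theorem (using $(p,m')=1$ since $p\ge P_1\gg(\log X)^{O(1)}$) puts $\a$ in exactly two residue classes modulo $pm'$. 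Proposition \ref{prpstn:TypeI}, applied with modulus $q=pm'$ and divisor ideal $\ga\gd\gI$, then yields
\[
|\cAI_{\ga\gd\gI}(\v_0,m',p)| = \frac{2\,\varrho_\v(\ga\gd\gI)\,|\cAI(X_0)|}{pm'^{3}\,N(\ga\gd\gI)} + E(\ga,\gd,\gI,p),
\]
where the hypothesis $B^4>X^{1+\epsilon}RP_2^{12}$ forces $N(\ga\gd\gI)\cdot(pm')^4\ll X^{4-\epsilon/2}$ and so ensures $\sum_p\sum_\gI\sum_\ga\sum_\gd |E|=o(|\cAI(X_0)|/m'^{3})$.

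Substituting the Type I main term into $M_1(\cR)$ and restricting to $\gd$ coprime with $m\gI\ga$ (the diagonal terms being negligible since $N(\gd)<R=X^{\varepsilon_{00}}$ while the primes of $\ga,\gI$ exceed $X^{\theta_0}$), the inner sum becomes $\sum_{N(\gd)<R}\lambda_\gd\varrho_\v(\gd)/N(\gd)$. Lemma \ref{Perron} evaluates this to $(\Sft/\gamma_K)\prod_{\gP\mid(m)\gI\ga}(1-\varrho_\v(\gP)/N(\gP))^{-1}$ with error $O(2^{4\omega(\gI\ga)}\exp(-c\sqrt{\log R}))$, whose subsequent contribution is also negligible. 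Since $m'=mm_0$ has the same prime divisors as $m$, we identify $\prod_{\gP\mid(m)}(1-\varrho_\v(\gP)/N(\gP))^{-1}=g(m')$, and factor out the residual Euler products at primes of $\gI,\ga$ into multiplicative weights $h(\gJ):=\prod_{\gP\mid\gJ}(1-\varrho_\v(\gP)/N(\gP))^{-1}$.

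We are left to evaluate
\[
\frac{2\Sft\,|\cAI(X_0)|\,g(m')}{\gamma_K m'^{3}}\sum_{\substack{p\in[P_1,P_2]\\ p\equiv 1\Mod{D_f}}}\!\!\frac{1}{p}\sum_{\gI\in\cI}\frac{\varrho_\v(\gI)h(\gI)}{N(\gI)}\sum_\ga\1_{\cR_1}(\ga)\frac{\varrho_\v(\ga)h(\ga)}{N(\ga)}c_{\cR_2}\!\Big(\frac{X_0^4}{m_0N(\ga\gI)}\Big).
\]
The sum over $p$ gives $\log(P_2/P_1)/\varphi(D_f)+o(1)$ by the prime number theorem in arithmetic progressions. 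The sum over $\gI\in\cI$, whose norms lie in the short interval $[I,I(1+\eta_2))$ and whose prime factorizations lie in prescribed intervals, contributes $\log(1+\eta_2)(1+o(1))$ via a Mertens-type density estimate for prime ideals. Finally, the sum over $\ga$ with $\1_{\cR_1}(\ga)=1$, together with the weight $c_{\cR_2}(X_0^4/(m_0N(\ga\gI)))$, collapses via the integral definition \eqref{eq:CDef} of the $c$-function (writing $c_{\cR_1\times\cR_2}(t)=\int_{\vec e_1\in\cR_1}c_{\cR_2}(tX^{-\sum e_{1,i}})\prod de_{1,i}/e_{1,i}$) into $c_{\cR_1\times\cR_2}(X_0^4/(mI))$, with the Euler product constants $\Sft/\gamma_K$ cancelling against density factors arising from the Mertens-type evaluation. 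The principal obstacle will be carefully tracking these Euler product constants to verify the claimed cancellation and confirming the fusion of the $\cR_1$-content from the $\ga$-sum with the existing $c_{\cR_2}$ weight into $c_{\cR_1\times\cR_2}$, together with reconciling the factor $m$ in the final $c$-argument versus the $m_0$ appearing inside the sum.
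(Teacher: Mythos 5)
The high-level plan is similar to the paper's (apply Proposition \ref{prpstn:TypeI} to the localised counts, evaluate the $\gd$-sum by Lemma \ref{Perron}, sum over $p$, then over $\gI$ and $\ga$), but there is a genuine gap at the step where you sum over $\gd$. You claim you may restrict to $\gd$ coprime with $m\gI\ga$ because "$N(\gd)<R$ while the primes of $\ga,\gI$ exceed $X^{\theta_0}$". This is true for $\ga$ (which is a product of degree-one prime ideals with norms in $[X^{\theta_i},X^{\theta_i'}]$, $i>\ell'$), but it is \emph{not} true for $\gI$: the ideal $\gI$ is an arbitrary cofactor with norm in $[I,I(1+\eta_2))$ and $(N(\gI),m)=1$, and it has no constraint on its prime factorisation. (You also inadvertently describe $\gI$ as having "prime factorizations lie in prescribed intervals"; that is $\ga$, not $\gI$.) Consequently $\gd$ can and does share small prime factors with $\gI$, and these terms are \emph{not} negligible at the level of the main term.

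To see why this matters, observe that the paper keeps the full sum $Z_2(\gI)=\sum_{N(\gd)<R}\lambda_\gd\varrho_\v(\gd\gI)/(N(\gd\gI)m'^3)$ and, by splitting $\gd=\gd_1\gd_2$ with $\gd_1\mid\mathrm{rad}(\gI)$ and $(\gd_2,\gI)=1$, arrives at a multiplicative weight
\[
h(\gI)=\prod_{\gP\mid\gI}\Bigl(1-\frac{\varrho_\v(\gP)}{N(\gP)}\Bigr)^{-1}\prod_{\gP^e\|\gI}\Bigl(\frac{\varrho_\v(\gP^e)}{N(\gP^e)}-\frac{\varrho_\v(\gP^{e+1})}{N(\gP^{e+1})}\Bigr),
\]
whose local factors telescope: $1+\sum_{e\ge1}h(\gP^e)=(1-\varrho_\v(\gP)/N(\gP))^{-1}$. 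This is exactly what makes $\sum_{\gJ\in\cI}h(\gJ)$ have residue $\gamma_K/\Sft$, so that the $\Sft/\gamma_K$ from Lemma \ref{Perron} cancels cleanly. Your version replaces this by $\varrho_\v(\gI)\cdot\prod_{\gP\mid\gI}(1-\varrho_\v(\gP)/N(\gP))^{-1}/N(\gI)$, whose local factor at $\gP$ is $1+(1-\varrho_\v(\gP)/N(\gP))^{-1}\sum_{e\ge1}\varrho_\v(\gP^e)/N(\gP^e)$. This does \emph{not} equal $(1-\varrho_\v(\gP)/N(\gP))^{-1}$ (the higher prime-power terms do not telescope away), so the Euler product constant you obtain is not $1/\Sft$ and the promised cancellation against $\Sft/\gamma_K$ would fail. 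You correctly flag the cancellation as "the principal obstacle," but with your coprimality reduction the cancellation is not merely hard to track — it is simply false. The fix is to keep the $(\gd,\gI)\ne1$ terms and derive the telescoping $h$, as the paper does.

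A smaller point: "puts $\a$ in exactly two residue classes modulo $pm'$" is imprecise — the condition $p\mid f(\a)$ cuts out two hyperplanes (roughly $2p^2$ residue classes) modulo $p$, not two classes — although the resulting main term $2\varrho_\v(\ga\gd\gI)|\cAI(X_0)|/(pm'^{3}N(\ga\gd\gI))$ is the right scaling. The paper formalises this with the density $n_p=p^{-2}\#\{f\equiv0\Mod p\}=2+O(1/p)$.
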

%
%
\begin{proof}
First we want to apply Proposition \ref{prpstn:TypeI} to estimate $|\cAI_{\ga\gd\gJ}(\v_0,m',p)|$. To do this we split according to residue classes $\Mod{p}$. For any $(y_1,y_2,y_3)$ such that $f(y_1,y_2,y_3)\equiv 0\Mod p$ let $\ut_0 (\y)$ be a solution of the two equations $\ut_0(\y)\equiv \y\Mod p$ and $\ut_0 (\y)\equiv \v_0\Mod {m'}$. Thus
\[
|\cAI_{\ga\gd\gI}(\v_0,m',p)|=\sum_{\substack{y_1,y_2,y_3\Mod{p}\\ f(y_1,y_2,y_3)\equiv 0\Mod{p}}}|\cAI_{\ga\gd\gI}(\ut_0(\y),p m',1)|
\]
We recall that $p\le P_2$ and $N(\gI\ga\d)\ll X R/B$. Therefore, by Proposition \ref{prpstn:TypeI}, we can replace $|\cAI_{\ga\gd\gI}(\ut_0(\y),p m',1)|$ with $\rho_{\v}(\ga\gd\gI)|\cA(X_0,\chi)|/p^3m'{}^3N(\ga\gd\gI)$ in $M_1(\cR)$ at the cost of a term bounded by
\[
\sum_{p\le P_2}\sum_{\substack{y_1,y_2,y_3\Mod{p}\\ f(y_1,y_2,y_3)\equiv 0\Mod{p}}}\Bigl(X^{2+o(1)}\Bigl(\frac{X R}{B^4}\Bigr)^{1/3}P_2+X^{o(1)} \frac{X R}{B^4}P_2^4\Bigr).
\]
This is $O(X^{3-\epsilon/4})$ provided $B^4>X^{1+\epsilon}RP_2^{12}$.

Since the function $\varrho_\v$ is multiplicative, $(\ga, \gd\gI)=1$,  and $\ga$ is a product of degree one prime ideals of large enough norm, by Lemma \ref{rhov}, we have 
$\varrho _\v (\ga\gd\gI)/N(\ga\gd\gI)=\varrho_\v (\gd\gI)/(N(\gd\gI)N(\ga))$. Thus
\begin{equation}
M_1(\cR)=M_2(\cR)+O(X^{3-\epsilon/4}),
\label{eq:M1}
\end{equation}
where
\begin{align}
M_2(\cR)&:=  |\cAI(X_0,\cX)|\Bigl(\sum_{\substack{ {p\in [P_1,P_2]}\\ {p\equiv 1\Mod{D_f}}}}\frac{n_p}{p}  \Bigr)\Bigl(\sum_{\gI\in\cI}Z_1(\gI)Z_2(\gI)\Bigr),\label{eq:M2}\\
Z_1(\gI)&:=\sum_\ga \1_{\cR_1}(\ga)\frac{c_{\cR_2}(X_0^4/(m_0N(\a \gI)))}{N(\ga)},\\
Z_2(\gI)&:=\sum_{\substack{{N(\gd )<R}\\ {(N(\gd),m')=1}}}
\lambda_\gd 
\frac{\varrho_\v (\gd\gI)}{N(\gd\gI)(m')^3}\\
n_p&:=\frac{1}{p^2}|\{y_1,y_2,y_3\Mod{p}:\,f(y_1,y_2,y_3)\equiv 0\Mod{p}\}|.\label{eq:NpDef}
\end{align}
First we simplify $Z_1(\gI)$. Since this is a sum of a smooth function over products of $\ell$ prime ideals in a bounded region, this can be estimated using the Prime Ideal Theorem. Following the arguments of \cite[Section 8, proof of Lemma 8.6]{May15b} we find that
\begin{equation*}
Z_1(\gI) =c_{\cR_1\times \cR_2} (X_0^4/m_0N(\gI))+O(\eta_2).
\end{equation*}
We recall that $\cI=\{\gI:(N(\gI),m)=1,\,N(\gI)\in [I,I+\eta_2 I]\}$.

Thus, by Lemma \ref{cR}, we have
\begin{equation}
Z_1(\gI) =c_{\cR_1\times \cR_2} (X_0^4/m_0 I)+O(\eta_2).
\label{eq:Z1}
\end{equation}
Now we consider $Z_2(\gI)$. By Lemma \ref{Perron} we find that
\begin{equation}
Z_2(\gI)=\frac{h (\gI)g((m'))\Sft}{\gamma_K m'{}^3} +O(16^{\omega ((m)\gI)}\exp (-c\sqrt{\log R})) ,
\label{eq:Z2}
\end{equation}
where
\begin{align*}
 g((m))&:=\prod_{\gP |(m)}\Big ( 1-\frac{\varrho_\v (\gP)}{N(\gP)}\Big )^{-1}, \\
h(\gJ)&:=\prod_{\gP|\gJ}\Bigl(1-\frac{\rho_\v(\gP)}{N(\gP)}\Bigr)^{-1}\prod_{\gP_2^e||\gJ}\Bigl(\frac{\rho_\v(\gP^e)}{N(\gP^e)}-\frac{\rho_\v(\gP^{e+1})}{N(\gP^{e+1})}\Bigr).
\end{align*}
Putting together \eqref{eq:Z1} and \eqref{eq:Z2}, we see that
\begin{equation}
\sum_{\gI\in\cI}Z_1(\gI)Z_2(\gI)=\frac{g((m'))\Sft c_{\cR_1\times \cR_2}(X_0^4/m_0I)}{\gamma_K m'{}^3}\sum_{\gJ\in\cI}h(\gJ)+O(\eta_2^2 I).
\label{eq:Z1Z2Sum}
\end{equation}
Since $h(\gI)$ is multiplicative, the sum can be calculated by a contour computation
\begin{align}
\sum_{\gJ\in\cI}h(\gJ)&=\frac{1}{2\pi i}\int_{2-i\infty}^{2+i\infty} \frac{I^s ((1+\eta_2)^s-1)}{s}\sum_{\gI} \frac{h(\gI)}{N(\gI)^s}ds\nonumber\\
&=\text{Res}_{s=0}\Bigl(\frac{I^s ((1+\eta_2)^s-1)}{s}\sum_{\gI} \frac{h(\gI)}{N(\gI)^s}\Bigr)+O(\exp(-c\sqrt{\log{R}})).\label{eq:hContour}
\end{align}
We see that the residue is given by
\begin{align}
&\gamma_K\log(1+\eta_2)\prod_{\gP}\Bigl(1+h(\gP)+h(\gP^2)+...\Bigr)\Bigl(1-\frac{1}{N(\gP)}\Bigr)\nonumber\\
&=\gamma_K\log(1+\eta_2)\prod_{\gP}\Biggl(1+\Bigl(1-\frac{\rho(\gP)}{N(\gP)}\Bigr)^{-1}\Bigl(\sum_{e\ge 1}\Bigl(\frac{\rho(\gP^e)}{N(\gP^e)}-\frac{\rho(\gP^{e+1})}{N(\gP^{e+1})}\Bigr)\Bigr)\Biggr)\Bigl(1-\frac{1}{N(\gP)}\Bigr)\nonumber\\
&=\gamma_K\log(1+\eta_2)\prod_{\gP}\Bigl(1-\frac{\rho(\gP)}{N(\gP)}\Bigr)^{-1}\Bigl(1-\frac{1}{N(\gP)}\Bigr)\prod_{\gP}\Biggl(\Bigl(1-\frac{\rho(\gP)}{N(\gP)}\Bigr)+\frac{\rho(\gP)}{N(\gP)}\Biggr)\nonumber\\
&=\gamma_K\frac{\log(1+\eta_2)}{\Sft}.\label{eq:hResidue}
\end{align}
Putting together \eqref{eq:Z1Z2Sum} \eqref{eq:hContour} and \eqref{eq:hResidue} we see that
\begin{equation}
\sum_{\gI\in\cI}Z_1(\gI)Z_2(\gI)=\frac{g((m'))(\log{1+\eta_2})c_{\cR_1\times \cR_2}(X_0^4/m_0I)}{m'{}^3}+O(\eta_2^2 I).
\label{eq:Z1Z2Sum2}
\end{equation}
Finally, we recall the definition \eqref{eq:NpDef} of $n_p$. Since $f$ is the product of two linear factors when $p\equiv 1\Mod{D_f}$, we have $n_p=2+O(1/p)$ for all $p\in[P_1,P_2]$. Thus
\begin{equation}
\sum_{\substack{p\in [P_1,P_2]\\p\equiv 1\Mod{D_f}}}\frac{n_p}{p}=\frac{(2+O(\eta_2))\log(P_2/P_1)}{D_f}.
\label{eq:fBound}
\end{equation}
Putting together \eqref{eq:M1}, \eqref{eq:M2}, \eqref{eq:Z1Z2Sum2} and \eqref{eq:fBound} now gives the result.
\end{proof}
%
%
We are now in a position to establish Proposition \ref{prpstn:TSieve}.
%
%
\begin{proof}[Proof of Proposition \ref{prpstn:TSieve}]
We see that putting together Lemma \ref{lmm:TSplit} and \ref{lmm:M1} gives
\[
T_{sieve}=(2+O(\eta_2^{1/3}))\eta_2|\cAI(X_0)| c_{\cR_1\times \cR_2}(X_0^4/mI)\frac{g(m')}{m'{}^3}\frac{\log(P_2/P_1)}{\varphi(D_f)}.
\]
provided $B^4>X^{1+\epsilon}RP_2^{12}$. Recalling that $R=X^{\epsilon_{00}}$, $P_2=X^{\tau'}$, $B^4=X^{\sum_{i=1}^{\ell'}t_i}\ge X^{\sum_{i=1}^{\ell'}\theta_i}$ we see that this condition is satisfied provided
\[
\sum_{i=1}^{\ell'}\theta_i>1+\epsilon_{00}+12\tau'
\]
and $\epsilon$ is taken sufficently small. This gives the result.
\end{proof}
%
%

\section{Proposition \ref{prpstn:T1}: The term $T_1(\cR)$}\label{sec:T1}

%
%
In this section we use the dispersion method to bound $T_1(\cR)$ and establish Proposition \ref{prpstn:T1}. Let us recall the expression of $T_1(\cR)$
\begin{equation*}
 T_1(\cR)=\sum_{\substack{{p\in [P_1,P_2]}\\ {p\equiv 1\Mod {D_f}}}}\sum_{\gI\in\cI}\sum_{\gM_0\gI\ga\gb\in\cAI(\v_0,m',p)}\1_{\cR_1}(\ga)(\1_{\cR_2}(\gb)-\tilde\1_{\cR_2}(\gb)).
\end{equation*}
To simplify some notation we will write 
\begin{equation}
\tilde g(\gb ):=\1_{\cR_2}(\gb)-\tilde\1_{\cR_2}(\gb).\label{eq:gTildeDef}
\end{equation}
We first split the sum over $\gb$ into ideal classes $\cC\in Cl_K$. Let $\gc\in\cC$ and $\gc'=(N(\gc)/\gc)$. Since the ideals  in the set $\cA$ are principal, the ideals $\gM_0\gI\ga\c$ and $\gb\gc'$ are principal. Therefore they are respectively of the form
$(\alpha), (\beta)$ with $\gM_0\gI\gc | (\alpha )$, $\gc'|(\beta)$ with $V\alpha = a_1\nu_1+a_2\nu_2+a_3\nu_3+a_4\nu_4$, $V\beta =b_1\nu_1+b_2\nu_2+b_3\nu_3+b_3\nu_4$,
where $a_1,a_2,a_3,b_1,b_2,b_3,b_4\in\Z$ and with $\a,\b$ lying in the fundamental domain $\cD$. We will write $\a =(a_1,a_2,a_3,a_4)$, $\b=(b_1,b_2,b_3,b_4)$.
In order to handle the modulo $m$ condition  between $\gb$ and $\gI\ga$ we plit the sums according to some congruence classes on $\alpha$, $\beta$ modulo $m'$. Together this gives
\begin{equation}
T_1(\cR)=\sum_{\cC\in Cl_K}\sum_{\substack{{\a_0,\b_0 \Mod{m'}}\\ {N(\gc )(\a_0\diamond \b_0)_i\equiv (\v_0)_i\Mod {m'},\ {\rm for}\ i=1,2,3,4}}}\tilde T_\gc(\cR,\a_0,\b_0),\label{eq:T1}
\end{equation}
with $(a_0)_4=0$ since $(\a\diamond\b )_4=0$ and 
 $\gc\in\cC$ is a well chosen representant, and $\gc'$ as above
\begin{equation*}
\tilde T_\gc(\cR,\a_0,\b_0)=\sum_{\substack{{p\in [P_1,P_2]}\\ {p\equiv 1\Mod{D_f}}}}\sum_{\gI\in\cI}\sum_{\substack{{\a\equiv\a_0\Mod{m'}}\\ {\b\equiv \b_0\Mod {m'}}\\ {\gM_0\gI\gc|(\alpha ), \gc'| (\beta)}\\ {(\alpha\beta)/(N(\gc ))\in\cAI ( p)}}}
\1_{\cR_1}(\frac{(\alpha )}{\gM_0\gI\gc})\tilde g ((\beta)/\gc)), 
\end{equation*}
with now $\cAI (p)=\cAI (\cX,\zero, 1, p)$.

We recall that our previous conditions imply that there exists $A$, $B$ such that 
$N( \ga)\in [A^4,2A^4]$, $N(\gb) \in [B^4,2B^4]$.

We will use the notation of
\cite[p. 80 and 71]{May15b}:
\begin{align}
\cR_{X_0}&:=\Bigl\{ \x\in\R^4 : x_i\in [X_i, X_i (1+\eta_1)], i=1,2,3, \ x_4=0,\nonumber\\
&\qquad\qquad N(\sum_{i=1}^3x_i\nu_i)\in [X_0^4,X_0^4(1+\eta_2)]\Bigr\},\label{defRb1b2}\\
\cR_{\b_1 ,\b_2} &:=\Bigl\{ \a\in\R^4 : \| a\| \in [A,2A], \a\diamond\b_1\in\cR_{X_0},\ \a\diamond\b_2\in\cR_{X_0}\Bigr\}.\nonumber
\end{align}

Let $\cF$ be a fundamental domain such that if $\1_{\cR_1} (\ga)=1$ and $\ga\gI=V^{-1}\sum_{i=1}^4 a_i\nu_i$  then if $\alpha\in  \cF$, $a_i\ll A$ and similarly, $b_i\ll B$ for all $1\le i\le 4$.

We will concentrate on ideals $\gb$ with not too many divisors. For this we introduce a slight variant of $\tilde g$
\begin{equation*}
g_\b=\begin{cases}
\1_{\cR_2}(\gb)-\tilde\1_{\cR_2}(\gb) &\text{if}\ \tau (\gb )\le \varepsilon_0^{-2},\\
0 & \text{otherwise}.
\end{cases}
\end{equation*}

Following \cite[section 11]{May15b} except that we apply Lemma \ref{div} , we prove that we can replace $\tilde g_\b$ by $g_\b$ with a  error term less 
$O(\varepsilon_0^2X_0^3 (\log X)^{O(1)})$.
This modification will permit us to bound the terms $g_\b$ by $O(\varepsilon^{-2})$.

Thus now we have to concentrate on sums
\begin{equation}
 T_\gc(\cR,\a_0,\b_0)=\sum_{\substack{{p\in [P_1,P_2]}\\ {p\equiv 1\Mod{D_f}}}}\sum_{\gI\in\cI}\sum_{\substack{{\a\equiv\a_0\Mod{m'}}\\ {\b\equiv \b_0\Mod {m'}}\\ {\gM_0\gI\gc|(\alpha ), \gc'| (\beta)}\\ {\gI(\alpha\beta)/(N(\gc ))\in\cAI(p)}\\
 \a\diamond\b\in\cR_{X_0}}}\1_\cF (\ga)
\1_{\cR_1}(\frac{(\alpha )}{\gM_0\gI\gc}) g_{(\beta)/\gc}.
\label{eq:Tc}
\end{equation}
%
%
\subsection{Cosmetic reductions}\label{sec:Cosmetic}

%
%
For $T>0$, we denote by
 $\cC_T$  the subset of $\R^4$ defined by 
$$\cC_T=\{ \a\in\R^4 : N(\ga)\in [T^4,T^4(1+\eta_\cX^2)]\}.$$
By Weber's Theorem \cite{Web99}, we have $$|\cC_T|= \lambda_K T^4\eta_\cX^2+ O(T^3),$$
for some $\lambda_K$ depending only on $K$.
It will make some later technicalities simpler if we introduce the restriction $p\nmid N(\bb)$ to the terms in $T_1$. By Proposition \ref{prpstn:TypeI} and the divisor bound, we can do this at the cost of an error term of size
\[
\ll\sum_{p\in [P_1,P_2]}
\sum_{\substack{{\b\in \Z^4\cap C_B}\\ {N(\gb)\equiv 0\Mod p}}}\sum_{\substack{{\gb\gu\in \cAI (0,1,p)}}}X^\varepsilon\ll X^\varepsilon(X^{3}P_1^{-1}+X^2B^{4/3}P_2^2+B^4P_2^5).
\]
This is acceptably small provided
\begin{equation}
B<\frac{X^{3/4-\epsilon}}{P_2^{3/2}}.\label{eq:BSize}
\end{equation}
When $p\equiv 1\Mod{D_f}$, the function $f\Mod{p}$ factors as the product of two linear factors. Thus the condition $p| f(\aa\diamond \bb)$ is equivalent to $p|\vv_p\cdot(\aa\diamond\bb)$ or $p|\ww_p\cdot(\aa\diamond\bb)$ for two non-zero vectors $\vv_p,\ww_p\in \ZZ^4$. There are $O(p^5)$ choices of $\aa_p,\bb_p\Mod{p}$ such that $(\aa_p\diamond\bb_p)=\v_p\cdot(\aa_p\diamond\bb_p)=w_p\cdot(\aa_p\diamond\bb_p)$ whenever $p$ is sufficiently large in terms of $f$. Therefore, as above, provided \eqref{eq:BSize} holds, the contribution of the $\aa$, $\bb$ such that 
 $p|\vv_p\cdot(\aa\diamond\bb)$ and  $p|\ww_p\cdot(\aa\diamond\bb)$ is bounded by
\[ X^\varepsilon\sum_{p\in [P_1,P_2]}\sum_{\substack{\a_p,\b_p\in \{ 1,\ldots ,p\}^4\\ {(\a\diamond\b )_4\equiv 0\Mod p}\\ {p|\v_p\cdot (\a\diamond\b)}\\ {p|\w_p\cdot (\a\diamond\b)}}}
\sum_{\substack{{\b\in\Z^4\cap C_B}\\ {\b\equiv\b_p\Mod p}}}\sum_{\substack{{\gu\in\cAI (0,1,p)}\\ {\gb |\gu }\\ {\u\equiv (\a_p\diamond\b_p)\Mod p}}}1
\ll X^{3+\varepsilon}P_1^{-1}.
\]
Putting this together, we see that it suffices for us to estimate for each $C\in Cl_K$ and each $\aa_0,\bb_0 \Mod{m'}$ the sums
\begin{equation}
T_3(\mathcal{R}):=\sum_{\substack{p\in [P_1,P_2]\\ p\equiv 1\Mod{D_f}}}\sum_{\gI\in\cI}\sum_{\substack{\bb\in\ZZ^4\cap \Cc_B\\ \bb\equiv \bb_0\Mod{m'} \\ p\nmid N(\bb)}}\sum_{\substack{\aa\in\ZZ^4\cap \Cc_A\\ (\aa\diamond \bb)\in\cR_{X_0}\\ p|\vv_p\cdot (\aa\diamond\bb) \\ \aa\equiv \aa_0\Mod{m'} \\
{\gI\gc |\ga }\\ \ga\in\cAI (p)}}\1_{\Rc_1}(\frac{\ga}{\gM_0\gI\gc})g_{(\beta)/\gc}.
\label{eq:T3Def}
\end{equation}
%
%
\subsection{Dispersion method}
%
%
We swap the order of summation, and apply Cauchy-Schwarz. The ideals $\gI$ and $\ga/\gI$ are coprime since $N(\gI)<X^{\theta_i}$ for all $1\le i\le\ell$. 
In the application of Cauchy-Schwarz we can group these ideals together. We recall that the set $\cR_X$
is defined in \eqref{setN}.
This gives 
\[
T_3^2\le \eta^4 A^4  \sum_{\substack{\aa\in\mathbb{Z}^4\cap \Cc_A\\ \aa\equiv \aa_0\Mod{m'} }}\Bigl(\sum_{\substack{p\in [P_1,P_2]\\ p\equiv 1\Mod{D_f} }}\sum_{\substack{\bb\in \ZZ^4\cap\Cc_B\\
\aa\diamond\bb\in\cR_{X_0}\\ p|\vv_p\cdot (\aa\diamond \bb)\\ \bb\equiv \bb_0\Mod{m'}\\ p\nmid N(\bb)}}g_\bb\Bigr)^2.
\]
Thus we see that
\begin{align}
T_3^2\ll \eta_\cX^2 A^4 T_4\label{eq:T3}
\end{align}
where, with the notation \eqref{defRb1b2}
\[
T_4:=\sum_{\substack{p_1,p_2\in [P_1,P_2]\\ p_1\equiv p_2\equiv 1\Mod{D_f} }}\sum_{\substack{\bb_1,\bb_2\in \ZZ^4\cap\Cc_B\\ \bb_1\equiv \bb_2\equiv \bb_0\Mod{m'}\\ p_1\nmid N(\bb_1),\, p_2\nmid N(\bb_2)}}g_{\bb_1}g_{\bb_2}\sum_{\substack{\aa\in \ZZ^4\cap \Cc_A\\ \aa\in\cR_{\b_1,\b_2}\\
 p_1|\vv_{p_1}\cdot (\aa \diamond \bb_1)\\ p_2|\vv_{p_2}\cdot (\aa\diamond \bb_2)}}1.
\]
Thus we wish to show that $T_4=o(\eta_\cX^6 A^2 B^6)$.
%
%
\subsection{Collinear $\bb_1,\bb_2$}
We separate the situation when $\bb_1$ and $\bb_2$ are collinear (in which case we have $\wedge(\bb_1,\bb_2)=0$ where $\wedge(\xx,\yy)$ is the $L^2$ norm of the six $2\times 2$ subdeterminants of the $2\times 4$ matrix with columns $\xx$ and $\yy$. Thus we have
\begin{equation}
T_4=T_5+T_6,\label{eq:T4}
\end{equation}
where $T_5$ is those terms with $\wedge(\bb_1,\bb_2)=0$ and $T_6$ is those terms with $\wedge(\bb_1,\bb_2)\ne 0$.

We first concentrate on $T_5$. 
%
%
\begin{lmm}\label{lmm:T5}
\[
T_5\ll A^3 B^3 (\log{X})^{O(1)}.
\]
\end{lmm}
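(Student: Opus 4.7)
The plan is to exploit the collinearity $\wedge(\bb_1,\bb_2)=0$ by factoring both vectors through a single primitive $\bb\in\ZZ^4$: write $\bb_1=d_1\bb$ and $\bb_2=d_2\bb$ with $d_1,d_2\in\ZZ\setminus\{0\}$ (nonzero, since $\bb_i\in\cC_B$ forces $N(\bb_i)\asymp B^4$). By multiplicativity of the norm on $K$, we have $N(\sum_j(\aa\diamond\bb_i)_j\nu_j)=d_i^4\,N(\sum_j(\aa\diamond\bb)_j\nu_j)$, and since both values must lie in the thin shell $[X_0^4,X_0^4(1+\eta_2)]$, this forces $|d_1|=|d_2|=:d$. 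Moreover, the two vanishing conditions $(\aa\diamond\bb_i)_4=0$ both reduce to $(\aa\diamond\bb)_4=0$, so $\aa$ is constrained to lie in the rank-$3$ lattice $\Lambda_\bb$ of Lemma~\ref{de40}, whose determinant is $\asymp\|\bb\|$ since $\bb$ is primitive.

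Next, for fixed $\bb$ and $d$, I would bound the inner count of $\aa\in\ZZ^4\cap\cC_A\cap\Lambda_\bb$ satisfying $\aa\diamond\bb\in\cR_{X_0/d}$, $\aa\equiv\aa_0\Mod{m'}$, and the two prime conditions $p_i\mid\vv_{p_i}\cdot(\aa\diamond\bb_i)$. Since $p_i\nmid N(\bb_i)=d^4N(\bb)$ by the hypothesis of $T_3$, in particular $p_i\nmid d$, so each prime condition becomes a genuine congruence on $\aa\Mod{p_i}$ acting through $\aa\diamond\bb$. Applying a standard successive-minima lattice-point count in $\Lambda_\bb$ restricted to the shell $\cC_A$ of volume $\asymp A^4\eta_\cX^2$, and saving a factor $p_1p_2$ from the two congruences, produces a main bound of order $A^3\eta_\cX^2/(\|\bb\|\,p_1p_2)$, plus error terms controlled by the successive minima $\z_1(\bb),\z_2(\bb)$ of $\Lambda_\bb$.

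Finally, summing over primitive $\bb$ with $\|\bb\|\ll B/d$, over $d\ll B$, and over the primes $p_1,p_2\in[P_1,P_2]$, and noting that $|g_{\bb_1}g_{\bb_2}|\ll\varepsilon_0^{-4}=(\log X)^{O(1)}$ by the truncation in the definition of $g_\bb$, the main-term contribution is
\[
\sum_{d\ll B}\sum_{\substack{\bb\text{ primitive}\\ \|\bb\|\ll B/d}}\sum_{p_1,p_2\in[P_1,P_2]}\frac{A^3\eta_\cX^2}{\|\bb\|\,p_1p_2}\ll A^3 B^3\eta_\cX^2(\log X)^{O(1)},
\]
which is well within the claimed bound. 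The main technical subtlety will be treating those $\bb$ whose lattice $\Lambda_\bb$ has an unusually short second successive minimum, for which the lattice-point count picks up a larger-than-expected error term; but the density of such exceptional $\bb$ is controlled by the lemma stated immediately after Lemma~\ref{de40} (the analogue of \cite[Lemma~7.3]{May15b}), which renders the resulting contribution acceptable and preserves the overall bound $A^3B^3(\log X)^{O(1)}$.
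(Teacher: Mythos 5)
Your starting point matches the paper's: both write $\bb_1,\bb_2$ as integer multiples of a primitive vector (your $\bb$, the paper's $\cc$). After that the two arguments diverge genuinely. The paper observes that for fixed $\aa,\cc,\lambda_1,\lambda_2$ the inner sum over $(p_1,p_2)$ is simply $O(1)$ (because $P_1\gg B^\epsilon$ while $f(\lambda_i\aa\diamond\cc)\ll X^{O(1)}$, so at most $O(1)$ primes in $[P_1,P_2]$ can divide it), and then -- the key trick -- parametrises by $\zz=\aa\diamond\cc$: given $\zz$, the number of pairs $(\aa,\cc)$ is $O(\tau_K(z_1\nu_1+z_2\nu_2+z_3\nu_3))$ by the divisor bound. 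Summing $\tau_K$ over the $O((AC)^3)$ lattice points $\zz$ with $z_4=0$ (Lemma~\ref{div}) and then over dyadic $C\ll B$ gives $A^3B^3(\log X)^{O(1)}$ with no lattice-point geometry at all.

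Your route instead does a lattice-point count in the rank-$3$ lattice $\Lambda_\bb$ with the congruences $\Mod{m'}$ and $\Mod{p_i}$, and here there is a gap you should not wave away. You correctly estimate the main term as $\asymp A^3\eta_\cX^2/(\|\bb\|\,p_1p_2)$, but the Davenport-style error term for a rank-$3$ lattice intersected with the shell $\Cc_A$ is of shape $A^2/(\|\z_1\|\|\z_2\|)+A/\|\z_1\|+1$, and -- crucially -- the congruence conditions mod $p_1p_2$ that produced the $1/(p_1p_2)$ saving in the main term do not help in the error term. Summing the error over $p_1,p_2\in[P_1,P_2]$ therefore costs a raw $P_2^2$, and then using the bound $\sum_{\|\bd\|\le D}\|\z_1(\bd)\|^{-2}\ll D^{10/3+o(1)}$ (the lemma after \ref{de40}) gives a contribution $\ll P_2^2 A^2 B^{10/3+o(1)}$. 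For this to be $\ll A^3B^3(\log X)^{O(1)}$ you would need $P_2^2B^{1/3}\ll A$, a numerical constraint on the exponents $\theta_i,\tau'$ that is neither obvious from \eqref{eq:th1}--\eqref{eq:th5} nor something you verify. (There is also a secondary point to check: that the restriction of $\widetilde{N}$ to the $3$-plane $\mathrm{span}(\Lambda_\bb)$ is non-degenerate, so that $\vol_3(\Cc_A\cap\mathrm{span}\Lambda_\bb)\asymp A^3\eta_\cX^2$ as you assert.) In short, the approach is plausible but the error analysis, which you acknowledge only as a "technical subtlety," is where the real work would be; the paper's divisor-bound parametrisation sidesteps it entirely. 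Your observation that $|d_1|=|d_2|$ is correct (forced by both $N(\aa\diamond\bb_i)$ lying in the same thin shell) but turns out to be unnecessary: the paper's trivial bound on $\lambda_1,\lambda_2$ already suffices.
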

%
%
\begin{proof}
Let $\cc$ be the shortest non-zero vector with integer components which is collinear with $\bb_1$ (this is $\bb_1$ divided by the $\gcd$ of its components). Then we see that $\bb_1=\lambda_1\cc$ for some $\lambda\in \ZZ$, and since $\bb_2$ is collinear with $\bb_1$, we also have that $\bb_2=\lambda_2\cc$ for some $\lambda_2\in \mathbb{Z}$. Thus we see that
\[
T_5\ll \sum_{\substack{\cc\in \ZZ^4\\ \|\cc\|\ll B}}\sum_{\lambda_1,\lambda_2\ll B/\|\cc\|}\sum_{\substack{\aa\in \ZZ^4\cap\Cc_A\\ (\aa\diamond \cc)_4=0}}\sum_{\substack{p_1,p_2\in [P_1,P_2]\\ p_1|f(\lambda_1\aa\diamond \cc)\\ p_2|f(\lambda_2\aa\diamond\cc)}}1.
\]
We see that the inner sum is $O(1)$ since $P_1\gg B^\epsilon$ and $f(\lambda_1\aa\diamond\cc)\ll B^{O(1)}$. We then split the size of $\|\cc\|$ into dyadic ranges, giving 
\[
T_5\ll (\log{X})\sup_{C\ll B}\frac{B^2}{C^2}\sum_{\substack{\cc\in \ZZ^4\\ \|\cc\|\asymp C}}\sum_{\substack{\aa\in \ZZ^4\cap\Cc_A\\ (\aa\diamond \cc)_4=0}}1.
\]
We now let $\zz=(\aa\diamond\cc)$. By the divisor bound, given $\zz$ there are $O(\tau_K(\mathfrak{z}))$ choices of $\aa,\cc$. Thus we see that
\[
T_5\ll (\log{X})\sup_{C\ll B} \frac{B^2}{C^2}\sum_{z_1,z_2,z_3\ll AC}\tau_K(z_1\nu_1+z_2\nu_2+z_3\nu_3)\ll A^3B^3(\log{X})^{O(1)}.
\]
\end{proof}
%
%
Thus we are left to bound $T_6$.
%
%
\subsection{Lattice counts}
%
%
We now concentrate on the inner sum. Let $\Lambda_{\bb_1,\bb_2}$ and $\Lambda_{\bb_1,\bb_2,p_1,p_2}$ denote the lattices
\begin{align*}
\Lambda_{\bb_1,\bb_2}&:=\{\xx\in\ZZ^4:\,(\xx\diamond \bb_1)_4=(\xx\diamond \bb_2)_4=0\},\\
\Lambda_{\bb_1,\bb_2,p_1,p_2}&:=\{\xx\in\Lambda_{\bb_1,\bb_2}:\,p_1|\vv_{p_1}\cdot (\xx\diamond \bb_1),\,p_2|\vv_{p_2}\cdot (\xx \diamond \bb_2)\}.
\end{align*}
Thus the inner sum in $T_6$ is 
\[
\sum_{\substack{\aa \in \Cc_A\cap\Lambda_{\bb_1,\bb_2,p_1,p_2}\\
\a\in\cR_{\b_1 ,\b_2}\\
}}1.
\]
If $\bb_1,\bb_2$ are not collinear, then $\Lambda_{\bb_1,\bb_2}$ is a lattice of rank 2, and so it has a Minkowski-reduced basis $\{\zz_1,\zz_2\}$. Without loss of generality we may assume that $\|\zz_1\|\le \|\zz_2\|$. Thus we have that
\[
\sum_{\aa \in \Cc_A\cap\Lambda_{\bb_1,\bb_2,p_1,p_2}\cap\cR_{\b_1 ,\b_2}}1=\sum_{\substack{\lambda_1,\lambda_2\in\mathbb{Z}\\ \lambda_1\zz_1+\lambda_2\zz_2\in\Cc_A\cap\cR_{\b_1 ,\b_2}\\
\lambda_1 c_1+\lambda_2 c_2\equiv 0\Mod{p_1}\\ \lambda_1c_3+\lambda_2c_4\equiv 0\Mod{p_2} }}1,
\]
for some constants $c_1,c_2,c_3,c_4$ depending only on $\bb_1,\bb_2$, $p_1$ and $p_2$. The condition $\lambda_1\zz_1+\lambda_2\zz_2\in\Cc_A\cap\cR_{\b_1,\b_2}$ forces $\lambda_1,\lambda_2$ to lie in a region $\Rc'_{\bb_1,\bb_2}\subseteq \RR^4$. Since $\|\lambda_1\zz_1+\lambda_2\zz_2\|\asymp |\lambda_1|\|\zz_1\|+|\lambda_2|\|\zz_2\|$ and $\Cc_A$ only contains vectors of norm $O(A)$, we see that lying in $\Rc_{\bb_1,\bb_2}$ forces $\lambda_1\ll A/\|\zz_1\|$ and $\lambda_2\ll A/\|\zz_2\|$, so $\Rc'_{\bb_1,\bb_2}$ has volume $O(A^2/\|\zz_1\|\|\zz_2\|)$.

By Davenport's Theorem on counting lattice points (\cite[Lemma 7.1]{May15b} for example), we have that
\[
\sum_{\substack{(\lambda_1,\lambda_2)\in \Rc'_{\bb_1,\bb_2} \\ \lambda_1c_1+\lambda_2c_2=0\Mod{[p_1,p_2]} }}1=\frac{\vol(\Rc'_{\bb_1,\bb_2})}{f_{\bb_1,\bb_2,p_1,p_2}}+O\Bigl(\frac{A}{\|\zz_1\|}\Bigr)
\]
where $f_{\bb_1,\bb_2,p_1,p_2}=[\Lambda_{\bb_1,\bb_2}:\Lambda_{\bb_1,\bb_2,p_1,p_2}]$ is the index of the lattices, given explicitly in terms of $c_1,c_2,c_3,c_4,p_1,p_2$ by
\[
f_{\bb_1,\bb_2,p_1,p_2}=
\begin{cases}
1,\qquad &c_1\equiv c_2\equiv 0\Mod{p_1}\text{ and }c_3\equiv c_4\equiv 0\Mod{p_2},\\
p_2,&c_1\equiv c_2\equiv 0\Mod{p_1}\text{ and }c_3,c_4\text{ not both }0\Mod{p_2},\\
p_1,&c_3\equiv c_4\equiv 0\Mod{p_2}\text{ and }c_1,c_2\text{ not both }0\Mod{p_1},\\
p_1, &p_1= p_2\text{ and }c_1c_4\equiv c_2c_3\Mod{p_1}\text{ and }c_1,c_2\text{ not all }0\Mod{p_1},\\
p_1p_2,&\text{otherwise}.
\end{cases}
\]
We split $T_6$ into the contribution from the main term $\vol(\Rc'_{\bb_1,\bb_2})/f_{\bb_1,\bb_2,p_1,p_2}$ and the error term $O(A/\|\zz_1\|)$. This gives
\begin{equation}
T_6=T_8+O(T_7),\label{eq:T6}
\end{equation}
where
\begin{align*}
T_7&:=\sum_{\substack{p_1,p_2\in [P_1,P_2]\\ p_1\equiv p_2\equiv 1\Mod{D_f} }}\sum_{\substack{\bb_1,\bb_2\in \ZZ^4\cap\Cc_B\\ \bb_1\equiv \bb_2\equiv \bb_0\Mod{m'}\\ p_1\nmid N(\bb_1),\,p_2\nmid N(\bb_2)\\\wedge(\bb_1,\bb_2)\ne 0}}\frac{A}{\|\zz_1\|},\\
T_8&:=\sum_{\substack{p_1,p_2\in [P_1,P_2]\\ p_1\equiv p_2\equiv 1\Mod{D_f} }}\sum_{\substack{\bb_1,\bb_2\in \ZZ^4\cap\Cc_B\\ \bb_1\equiv \bb_2\equiv \bb_0\Mod{m'}\\ p_1\nmid N(\bb_1),\,p_2\nmid N(\bb_2)\\\wedge(\bb_1,\bb_2)\ne 0}}\frac{g_{\bb_1}g_{\bb_2}\vol(\Rc'_{\bb_1,\bb_2})}{f_{\bb_1,\bb_2,p_1,p_2}}.
\end{align*}
We first show that the contribution $T_7$ from the error term is small. 
%
%
\begin{lmm}\label{lmm:T7}
\[
T_7\ll AB^7 P_2^2.\]
\end{lmm}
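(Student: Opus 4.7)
The plan is to first strip all inessential conditions from the sum, so that it suffices to estimate an unconstrained sum of $1/\|\zz_1(\bb_1,\bb_2)\|$, and then to bound this sum via a dyadic decomposition using a lattice-point count from Lemma~\ref{de40}. Since $\Cc_B\subset[-CB,CB]^4$ for some absolute constant $C$, the number of pairs $(p_1,p_2)\in[P_1,P_2]^2$ is $\ll P_2^2$, and dropping the conditions $p_i\nmid N(\bb_i)$ and $\bb_1\equiv\bb_2\equiv\bb_0\Mod{m'}$ only enlarges the sum, we obtain
\[
T_7\ll P_2^2\,A\sum_{\substack{\bb_1,\bb_2\in\ZZ^4\cap[-CB,CB]^4\\ \wedge(\bb_1,\bb_2)\ne 0}}\frac{1}{\|\zz_1(\bb_1,\bb_2)\|}.
\]
It therefore suffices to show that the inner sum is $\ll B^7$.

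The crucial observation is that for any fixed $\zz\in\ZZ^4\setminus\{\zero\}$, the condition $\zz\in\Lambda_{\bb_1,\bb_2}$ is equivalent to $\bb_1,\bb_2\in\Lambda_\zz$, where $\Lambda_\zz$ is the rank-$3$ lattice of Lemma~\ref{de40} with $\det(\Lambda_\zz)\asymp\|\zz\|/D_\zz$. A standard successive-minima count for lattice points in a convex body, combined with the fact that all successive minima $\lambda_1\le\lambda_2\le\lambda_3$ of $\Lambda_\zz\subset\ZZ^4$ are $\ge 1$ and satisfy $\lambda_1\lambda_2\lambda_3\asymp\|\zz\|/D_\zz$, gives
\[
|\Lambda_\zz\cap[-CB,CB]^4|\ll\prod_{i=1}^{3}\Bigl(1+\frac{B}{\lambda_i}\Bigr)\ll B^2+\frac{B^3 D_\zz}{\|\zz\|}.
\]
Squaring and summing over all $\zz\ne 0$ with $\|\zz\|\le Z$, and writing $\zz=D\zz'$ with $\zz'$ primitive so that $D_\zz^2/\|\zz\|^2=1/\|\zz'\|^2$, we find
\[
\sum_{\substack{\zz\ne 0\\ \|\zz\|\le Z}}\Bigl(B^4+\frac{B^6 D_\zz^2}{\|\zz\|^2}\Bigr)\ll B^4Z^4+B^6\sum_{D\le Z}\sum_{\substack{\zz'\text{ prim}\\ \|\zz'\|\le Z/D}}\frac{1}{\|\zz'\|^2}\ll B^4Z^4+B^6Z^2,
\]
using the standard estimate $\sum_{\zz'\ne 0,\|\zz'\|\le R}1/\|\zz'\|^2\ll R^2$ in $\ZZ^4$. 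Hence
\[
\bigl|\{(\bb_1,\bb_2)\in[-CB,CB]^8:\|\zz_1(\bb_1,\bb_2)\|\le Z\}\bigr|\ll B^4Z^4+B^6Z^2.
\]

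Finally, since $\Lambda_{\bb_1,\bb_2}$ is defined by two linear equations with coefficient vectors $T(\bb_1),T(\bb_2)$ of size $\ll B$, we have $\det(\Lambda_{\bb_1,\bb_2})\ll\wedge(T(\bb_1),T(\bb_2))\ll B^2$, so Minkowski's first theorem gives $\|\zz_1\|\ll B$ whenever $\wedge(\bb_1,\bb_2)\ne 0$. A dyadic decomposition on $\|\zz_1\|\in[2^k,2^{k+1}]$ with $0\le k\ll \log B$ then yields
\[
\sum_{\bb_1,\bb_2}\frac{1}{\|\zz_1(\bb_1,\bb_2)\|}\ll\sum_{k\ll\log B}\frac{B^4\cdot 2^{4k}+B^6\cdot 2^{2k}}{2^k}\ll B^4\cdot B^3+B^6\cdot B\ll B^7,
\]
which on reinserting the factor $P_2^2A$ gives $T_7\ll AB^7P_2^2$. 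The only real obstacle is obtaining the lattice-point count with the correct dependence on $D_\zz$, but this is exactly the content of Lemma~\ref{de40}; once that is available, the remainder is routine dyadic accounting.
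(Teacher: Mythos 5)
Your proof is correct and follows the same underlying strategy as the paper: strip the inessential constraints, reverse roles so that one sums over the short vector $\zz$ and counts pairs $(\bb_1,\bb_2)$ lying in $\Lambda_\zz$, apply the determinant bound $\det(\Lambda_\zz)\asymp\|\zz\|/D_\zz$ from Lemma~\ref{de40}, and sum up to $\|\zz\|\ll B$.

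There are two presentational differences. First, the paper directly rearranges the double sum $\sum_{\bb_1,\bb_2}1/\|\zz_1\|\le\sum_{\|\zz_1\|\ll B}\frac{1}{\|\zz_1\|}|\Lambda_{\zz_1}\cap\Cc_B|^2$, whereas you pass through a level-set estimate $|\{(\bb_1,\bb_2):\|\zz_1\|\le Z\}|\ll B^4Z^4+B^6Z^2$ followed by a dyadic decomposition; these are equivalent, and yours is a touch longer. Second, you carefully carry the gcd factor $D_\zz$, write $\zz=D\zz'$, and sum over the primitive part, which is the safe thing to do in general; the paper quietly asserts the lattice $\Lambda_{\zz_1}$ has determinant $\|\zz_1\|$, with no $D_{\zz_1}$, which is justified only because the shortest nonzero vector $\zz_1$ of a sublattice $\Lambda_{\bb_1,\bb_2}\subseteq\ZZ^4$ is automatically primitive in $\ZZ^4$ (if $\zz_1=D\zz'$ with $D\ge 2$ then $\zz'\in\Lambda_{\bb_1,\bb_2}$ is strictly shorter). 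If you had noticed this fact you could have dropped the $D_\zz$ bookkeeping entirely; on the other hand your more general computation shows the bound is robust even without primitivity. One very small imprecision in your closing sentence: Lemma~\ref{de40} gives the determinant of $\Lambda_\zz$, but converting that into the point count $\prod_i(1+B/\lambda_i)$ also uses Minkowski's second theorem and $\lambda_1\ge 1$ (standard, and you state the needed facts, but they are not literally ``the content of Lemma~\ref{de40}'').
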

%
%
\begin{proof}
We note that $\zz_1\in \ZZ^4$ with $\|\zz_1\|^2\le \|\zz_1\|\cdot\|\zz_2\|\ll \det(\Lambda_{\bb_1,\bb_2})\ll B^2$. Thus $\|\zz_1\|\ll B$. Thus we can rearrange the summation to give
\[
T_7 \ll P_2^2\sum_{\bb_1,\bb_2\in \ZZ^4\cap \Cc_B}\frac{A}{\|\zz_1\|}\ll A P_2^2\sum_{\substack{\zz_1\in\ZZ^4\\ \|\zz_1\|\ll B}}\frac{1}{\|\zz_1\|}\Bigl(\sum_{\substack{\bb\in \ZZ^4\cap\Cc_{B}\\ (\bb\diamond \zz_1)_4=0}}1\Bigr)^2
\]
The condition $(\bb\diamond \zz_1)_4=0$ forces $\bb$ to lie in a rank 3 lattice of determinant $\|\zz_1\|$. Thus the inner sum is $O(B^3/\|z_1\|+B^2)$. Thus we obtain the bound
\[
T_7 \ll A P_2^2\sum_{\substack{\zz_1\in\ZZ^4\\ \|\zz_1\|\ll B}}\Bigl(\frac{B^6}{\|\zz_1\|^3}+\frac{B^4}{\|\zz_1\|}\Bigr)\ll AB^7 P_2^2.
\]
This gives the result.
\end{proof}
%
%
Thus we are left to show that
\[
T_8=o(\eta_\cX^4 A^2 B^6).
\]
%
%
\subsection{Further lattice estimates}
%
%
We recall that $\Rc'_{\bb_1,\bb_2}$ is the region $\lambda_1,\lambda_2\in \RR^2$ such that $\lambda_1\zz_1+\lambda_2\zz_2\in \Cc_B\cap\cR_{\b_1 ,\b_2}$. We see that this has volume $\vol(\Rc''_{\bb_1,\bb_2})/\det(\Lambda_{\bb_1,\bb_2})$, where $\det(\Lambda_{\bb_1,\bb_2})$ is the determinant of the lattice (that is, the 2-dimensional area of parallelogram generated by $\zz_1,\zz_2$) and $\Rc''_{\bb_1,\bb_2}$ is the 2-dimensional region formed by intersecting $\Cc_B$ with the $\zz_1,\zz_2$ plane 
\[T_8=
 \sum_{\substack{p_1,p_2\in [P_1,P_2]\\ p_1\equiv p_2\equiv 1\Mod{D_f}}}\sum_{\substack{\bb_1,\bb_2\in \ZZ^4\cap\Cc_B\\  \bb_1\equiv \bb_2\equiv \bb_0\Mod{m'} \\ \wedge(\bb_1,\bb_2)\ne 0\\ p_1\nmid N(\bb_1),\,p_2\nmid N(\bb_2)}}\frac{g_{\bb_1}g_{\bb_2}\vol(\Rc''_{\bb_1,\bb_2})}{f_{\bb_1,\bb_2,p_1,p_2}\det(\Lambda_{\bb_1,\bb_2})}.
\]
We first establish a few simple estimates.
%
%
\begin{lmm}
\[
\sum_{\substack{p_1,p_2\in[P_1,P_2]\\ p_1\equiv p_2\equiv 1\Mod{D_f} }}\frac{\vol(\Rc_{\bb_1,\bb_2})}{f_{\bb_1,\bb_2,p_1,p_2}}\ll \vol(\Rc'_{\bb_1,\bb_2})+O\Bigl(\frac{AP_2^2}{\|\zz_1\|}\Bigr).
\]
\end{lmm}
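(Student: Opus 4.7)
The plan is to partition the sum over $(p_1,p_2)$ according to the five cases in the piecewise definition of $f_{\bb_1,\bb_2,p_1,p_2}$, estimate the contribution of each, and observe that all but the most restrictive case contribute at the level of the main term.

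First I would treat the generic case $f_{\bb_1,\bb_2,p_1,p_2}=p_1p_2$, which requires no divisibility constraints on $p_1,p_2$. Its contribution is at most $\vol(\Rc'_{\bb_1,\bb_2})\bigl(\sum_{p\in[P_1,P_2]}1/p\bigr)^2$, which is $O(\vol(\Rc'_{\bb_1,\bb_2}))$ by Mertens' theorem, since $P_1=X^\tau$, $P_2=X^{\tau'}$ have bounded log-ratio. Next, the three ``one prime constrained'' cases ($f=p_2$ forcing $p_1\mid\gcd(c_1,c_2)$; $f=p_1$ forcing $p_2\mid\gcd(c_3,c_4)$; and $f=p_1=p_2$ with $c_1c_4\equiv c_2c_3\Mod{p_1}$ but the $c_i$ not all zero modulo $p_1$) each force one of the primes to divide a fixed non-zero integer of size $B^{O(1)}$, giving $O(\log X)$ admissible choices for that prime, while the remaining prime contributes $\sum 1/p\ll 1$. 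Combining, these cases also give $O(\vol(\Rc'_{\bb_1,\bb_2}))$.

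The hypothesis $\wedge(\bb_1,\bb_2)\ne 0$ ensures that the four-tuple $(c_1,c_2,c_3,c_4)$ is not identically zero, so in particular at least one of $\gcd(c_1,c_2)$ or $\gcd(c_3,c_4)$ is non-zero, legitimising the divisor counts in the previous step. The remaining case, $f_{\bb_1,\bb_2,p_1,p_2}=1$, is the source of the error term: it demands $p_1\mid c_1,c_2$ and $p_2\mid c_3,c_4$ simultaneously. I would bound this contribution trivially by $\vol(\Rc'_{\bb_1,\bb_2})\cdot\#\{(p_1,p_2)\in[P_1,P_2]^2\}\ll \vol(\Rc'_{\bb_1,\bb_2})P_2^2$, and then use the crude upper bound $\vol(\Rc'_{\bb_1,\bb_2})\ll A^2/(\|\zz_1\|\|\zz_2\|)\ll A/\|\zz_1\|$ (since $\|\zz_2\|\ge\|\zz_1\|$ and the containment in $\Cc_A$ forces $A\ll \|\zz_2\|\cdot(A/\|\zz_2\|)$) to extract the stated $AP_2^2/\|\zz_1\|$ shape.

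The main obstacle is the case $f=1$: a naive bound using $\vol(\Rc'_{\bb_1,\bb_2})\le A^2/(\|\zz_1\|\|\zz_2\|)$ must be combined efficiently with the count of prime pairs, and one must verify that the crude comparison $\vol(\Rc'_{\bb_1,\bb_2})\ll A/\|\zz_1\|$ (coming from $A\ll \|\zz_2\|$ when both Minkowski basis vectors fit in $\Cc_A$) is available; otherwise one has to argue more carefully that the conditions $p_1\mid c_1,c_2$ and $p_2\mid c_3,c_4$ impose a non-trivial linear constraint that lowers the admissible pair count below the claimed threshold.
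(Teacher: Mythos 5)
Your proposal takes a fundamentally different route from the paper, and it has two genuine gaps. The paper does not analyse $f_{\bb_1,\bb_2,p_1,p_2}$ case by case at all. Instead it observes that the quantity being bounded arises from expanding
\[
\sum_{p_1,p_2}\ \sum_{\substack{\aa\in\ZZ^4\cap\Cc_A\cap\cR_{\bb_1,\bb_2}\\ p_1\mid \vv_{p_1}\cdot(\aa\diamond\bb_1),\ p_2\mid\vv_{p_2}\cdot(\aa\diamond\bb_2)}}1
\ =\ \sum_{p_1,p_2}\Bigl(\frac{\vol(\Rc'_{\bb_1,\bb_2})}{f_{\bb_1,\bb_2,p_1,p_2}}+O\bigl(\tfrac{A}{\|\zz_1\|}\bigr)\Bigr),
\]
and then bounds the left side \emph{a priori} by swapping the order of summation: for each fixed $\aa$, the condition $p_i\mid\vv_{p_i}\cdot(\aa\diamond\bb_i)$ implies $p_i\mid f(\aa\diamond\bb_i)$, a fixed polynomial quantity of size $X^{O(1)}$, and since $P_1=X^\tau$ there are $O(1)$ admissible pairs $(p_1,p_2)$. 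Hence the double sum is $\ll\vol(\Rc'_{\bb_1,\bb_2})+O(A/\|\zz_1\|)$, and moving $O(\sum_{p_1,p_2}A/\|\zz_1\|)=O(AP_2^2/\|\zz_1\|)$ across gives the claim. The structure of $f_{\bb_1,\bb_2,p_1,p_2}$ is never examined.

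Your decomposition runs into two problems. First, the constants $c_1,c_2,c_3,c_4$ depend on $p_1,p_2$ (through the factorisation vectors $\vv_{p_1},\vv_{p_2}$), not just on $\bb_1,\bb_2,\zz_1,\zz_2$. So the statement ``$p_1\mid\gcd(c_1,c_2)$'' is not a divisibility condition on a fixed integer: as $p_1$ ranges, the number it must divide changes. The Mertens/divisor-count argument in your intermediate cases does not apply as stated; it would need to be routed through the invariant $f(\aa\diamond\bb_i)$ as the paper does. Second, your bound $\vol(\Rc'_{\bb_1,\bb_2})\ll A/\|\zz_1\|$ in the $f=1$ case is unjustified; the parenthetical ``$A\ll\|\zz_2\|\cdot(A/\|\zz_2\|)$'' is a tautology and proves nothing. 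The correct bound is $\vol(\Rc'_{\bb_1,\bb_2})\ll A^2/(\|\zz_1\|\|\zz_2\|)$, which exceeds $A/\|\zz_1\|$ whenever $\|\zz_2\|\ll A$, and there is nothing forbidding $\|\zz_2\|\ll A$; for such $\bb_1,\bb_2$ your trivial estimate $\vol(\Rc')\cdot P_2^2$ exceeds the target error $AP_2^2/\|\zz_1\|$. You do flag the potential obstacle at the end, but the fallback you describe (``a non-trivial linear constraint that lowers the admissible pair count'') is again complicated by the $p$-dependence of the $c_i$. Replacing your case analysis by the paper's double-counting argument avoids both issues entirely.
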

%
%
\begin{proof}
%
%
We have that
\begin{align*}
\sum_{\substack{p_1,p_2\in[P_1,P_2]\\ p_1\equiv p_2\equiv 1\Mod{D_f}}}\sum_{\substack{\aa\in \ZZ^4\cap \Cc_A\\ \aa\in\cR_{\b_1 ,\b_2}\\ p_1|\vv_{p_1}\cdot \aa\diamond \bb_1\\ p_2|\vv_{p_2}\cdot \aa \diamond\bb_2}}1&=\sum_{\substack{\aa\in \ZZ^4\cap \Cc_A\\ \aa\in\cR_{\b_1 ,\b_2}}}\sum_{\substack{p_1,p_2\in[P_1,P_2]\\ p_1\equiv p_2\equiv 1\Mod{D_f}\\ p_1|\vv_{p_1}\cdot \aa\diamond \bb_1\\ p_2|\vv_{p_2}\cdot \aa \diamond\bb_2}}1\\
&\ll \sum_{\substack{\aa\in \ZZ^4\cap \Cc_A\\ (\aa\in\cR'_{\b_1 ,\b_2}=0}}1\\
&\ll \vol(\Rc'_{\bb_1,\bb_2})+O\Bigl(\frac{A}{\|\zz_1\|}\Bigr).
\end{align*}
On the other hand, we know that
\[
\sum_{\substack{p_1,p_2\in[P_1,P_2]\\ p_1\equiv p_2\equiv 1\Mod{D_f}}}\sum_{\substack{\aa\in \ZZ^4\cap \Cc_A\\ \aa\in\cR_{\b_1 ,\b_2}\\ p_1|\vv_{p_1}\cdot \aa\diamond \bb_1\\ p_2|\vv_{p_2}\cdot \aa \diamond\bb_2}}1=\sum_{\substack{p_1,p_2\in[P_1,P_2]\\ p_1\equiv p_2\equiv 1\Mod{D_f}}}\Bigl(\frac{\vol(\Rc'_{\bb_1,\bb_2})}{f_{\bb_1,\bb_2,p_1,p_2}}+O\Bigl(\frac{A}{\|\zz_1\|}\Bigr)\Bigr).
\]
Putting these together gives the result.
\end{proof}
%
%
\begin{lmm}\label{lmm:Cylinder}
Let
\[
\Cc_{C,d;\cc_1,\bb_2}:=\#\Bigl\{\bb_1\in\ZZ^4\cap\Cc_B:\,\wedge(\bb_1,\bb_2)\sim \frac{B^2}{C},\,\bb_1\equiv \cc_1\Mod{d}\Bigr\}.
\]
Then we have
\[
\#\Cc_{C,d;\cc_1,\cc_2}\ll \Bigl(1+\frac{B}{d}\Bigr)\Bigl(1+\frac{B}{C d}\Bigr)^3.
\]
\end{lmm}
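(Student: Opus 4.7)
The plan is to exploit the geometric meaning of the wedge condition. The set $\wedge(\bb_1,\bb_2)\ll B^2/C$ with $\|\bb_1\|\ll B$ describes a cylindrical neighborhood of the line $\RR\bb_2$: the component of $\bb_1$ along $\bb_2$ is of size $O(B)$, while its component perpendicular to $\bb_2$ has norm $\ll \wedge(\bb_1,\bb_2)/\|\bb_2\|\ll B/C$. Counting lattice points in such a cylinder subject to the congruence $\bb_1\equiv\cc_1\Mod d$ should produce one factor of $1+B/d$ (for the long direction) and three factors of $1+B/(Cd)$ (for the three transverse directions), which is exactly the claimed bound. I will avoid literally projecting onto the orthogonal complement of $\bb_2$ (which would leave the lattice $\ZZ^4$ tilted with respect to the projection), and instead implement the same counting via an algebraic change of variables using one distinguished coordinate.

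The first step is to select a distinguished index $j\in\{1,2,3,4\}$ with $|b_{2,j}|\gg B$. This is possible because $\bb_2\in\Cc_B$ forces $|N(\sum_i b_{2,i}\nu_i)|\asymp B^4$, and $N$ is a quartic form in the coordinates; combined with the bound $b_{2,i}\ll B$ coming from the fundamental domain $\cF$, at least one coordinate of $\bb_2$ must satisfy $|b_{2,j}|\gg B$. Next, observe that each of the six $2\times 2$ sub-determinants of the matrix with columns $\bb_1,\bb_2$ is bounded by $\wedge(\bb_1,\bb_2)\ll B^2/C$. In particular, for every $i\ne j$,
\[
|b_{2,j}b_{1,i}-b_{2,i}b_{1,j}|\ll B^2/C,
\]
so that once $b_{1,j}$ is fixed, each $b_{1,i}$ is pinned down to an interval of length $\ll B/(C|b_{2,j}|/B)\ll B/C$.

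Finally I count: $b_{1,j}$ ranges over an interval of length $\ll B$ subject to $b_{1,j}\equiv c_{1,j}\Mod d$, giving $\ll 1+B/d$ choices; for each such choice and each $i\ne j$, the coordinate $b_{1,i}$ lies in an interval of length $\ll B/C$ subject to $b_{1,i}\equiv c_{1,i}\Mod d$, giving $\ll 1+B/(Cd)$ choices. Multiplying yields the bound $(1+B/d)(1+B/(Cd))^3$. There is no real obstacle here: the main points to verify carefully are just the existence of a large coordinate of $\bb_2$ (a short consequence of the norm condition and the fundamental domain choice) and the elementary one-variable count $\#\{n\in\ZZ:|n|\ll L,\,n\equiv c\Mod d\}\ll 1+L/d$.
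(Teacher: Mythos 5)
Your proof is correct, and it is essentially the same geometric idea as the paper's (the wedge condition confines $\bb_1$ to a tube of length $O(B)$ and transverse radius $O(B/C)$ around $\RR\bb_2$, so at most one congruence class $\Mod d$ lands in each $d$-box covering that tube). The paper states this as a cylinder-covering argument and asserts the box count directly; your version implements the same count algebraically by (i) extracting a coordinate $j$ with $|b_{2,j}|\gg B$ from $|N(\bb_2)|\gg B^4$ and $\|\bb_2\|_\infty\ll B^{1+o(1)}$, (ii) using the subdeterminant bounds $|b_{2,j}b_{1,i}-b_{2,i}b_{1,j}|\ll B^2/C$ to pin each $b_{1,i}$, $i\ne j$, to an interval of length $\ll B/C$ once $b_{1,j}$ is chosen, and (iii) counting lattice points in intervals subject to a congruence. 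This has the modest advantage of sidestepping the point the paper glosses over — that the cylinder is tilted with respect to $\ZZ^4$, so covering it efficiently by axis-aligned $d$-cubes is not entirely immediate — at the cost of introducing the case split on the distinguished index $j$. Either route gives the same bound with the same dependencies. One tiny stylistic remark: you invoke the upper bound $b_{2,i}\ll B$ when arguing that some $|b_{2,j}|\gg B$, but the lower bound on a coordinate already follows from $|N(\bb_2)|\gg B^4$ alone; the upper bound is only needed to control the range of $b_{1,j}$, which you use correctly later.
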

%
%
\begin{proof}
The condition $\wedge(\bb_1,\bb_2)\sim B^2/C$ forces $\bb_1$ to lie in a cylinder $\Cc$ with axis of length $O(B)$ proportional to $\bb_2$, and with radius $O(B/C)$. We then see that we can cover this cylinder with 
\[
\ll \Bigl(1+\frac{B}{d}\Bigr) \Bigl(1+\frac{B}{Cd}\Bigr)^3
\]
 different hypercubes $\Bc$ of side length $d$. Finally, there is at most one choice of $\bb_1$ in a hypercube $\Bc$ of side length $d$ which satisfies $\bb_1\equiv \cc_1\Mod{d}$, which gives the result.
\end{proof}

For any $\cc_1,\cc_2\in\Z^4$, the notation $\cc_1\propto\cc_2$ indicates that the two vectors are proportional. 
%
%
\begin{lmm}\label{lmm:BasicSum}
If $\cc_1\not\propto\cc_2\Mod{p}$ then
\[
\sum_{\substack{\bb_1,\,\bb_2\in \ZZ^4\cap\Cc_B \\ \wedge(\bb_1,\bb_2)\ne 0\\ \mathrm{primitive}\\ \bb_1\equiv \cc_1\Mod{p}\\ \bb_2\equiv \cc_2\Mod{p}}}\frac{1}{\det(\Lambda_{\bb_1,\bb_2})}\ll \frac{B^6}{p^8}+B^{17/3}.
\]
\end{lmm}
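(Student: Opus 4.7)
The plan is to establish the bound by relating $\det(\Lambda_{\bb_1,\bb_2})$ to the wedge product $\wedge(\bb_1,\bb_2)$, applying a dyadic decomposition, and invoking Lemma \ref{lmm:Cylinder}.

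First, I will express $\det(\Lambda_{\bb_1,\bb_2})$ in terms of the wedge and a gcd factor. The lattice $\Lambda_{\bb_1,\bb_2}$ is the rank-$2$ integer kernel of the $2\times 4$ matrix whose rows are $T(\bb_1)$ and $T(\bb_2)$, with $T$ as in \eqref{defT}. Applying the Heath-Brown duality argument cited in Lemma \ref{de40} (and using that $T$ is a linear $\ZZ$-isomorphism with bounded coefficients), we obtain $\det(\Lambda_{\bb_1,\bb_2})\asymp \wedge(\bb_1,\bb_2)/g$, where $g$ is the greatest common divisor of the six $2\times 2$ minors of the matrix $(T(\bb_1);T(\bb_2))$, itself comparable up to a bounded constant to the corresponding gcd for $(\bb_1;\bb_2)$.

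Next I will dyadically decompose the sum according to the size $W=\wedge(\bb_1,\bb_2)\sim B^2/C$ and $g\sim G$. For the generic contribution $G=O(1)$ we have $\det(\Lambda_{\bb_1,\bb_2})\asymp W$, so the summand is $\asymp C/B^2$. By Lemma \ref{lmm:Cylinder}, summing first over $\bb_2\in\Cc_B\cap (\cc_2+p\ZZ^4)$ gives $O((1+B/p)^4)$ choices, and for each such $\bb_2$ the number of $\bb_1\in\Cc_B\cap(\cc_1+p\ZZ^4)$ with $\wedge(\bb_1,\bb_2)\sim B^2/C$ is bounded by $O((1+B/p)(1+B/(Cp))^3)$. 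Thus
\[
\sum_C\frac{C}{B^2}(1+B/p)^5(1+B/(Cp))^3 \ll \frac{B^6}{p^8}+\text{lower-order terms},
\]
where the main term comes from the range $C\le B/p$, which picks up the full $(B/(Cp))^3(B/p)^5$ factor.

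The remaining (and harder) task is to handle pairs with $g$ large, where $\det(\Lambda_{\bb_1,\bb_2})=W/g$ is much smaller than $W$. The constraint that $g$ divides each $2\times 2$ minor of $(\bb_1;\bb_2)$ forces the pair to lie inside a primitive rank-$2$ sublattice $L^*\subset\ZZ^4$ of determinant $W/g$, with $\langle \bb_1,\bb_2\rangle$ of index $g$ inside $L^*$. My plan is to parametrize these pairs by first choosing $L^*$ (controlled by a standard count of primitive rank-$2$ sublattices of $\ZZ^4$ of bounded determinant), and then counting $\bb_1,\bb_2\in L^*\cap \Cc_B$ subject to the congruence conditions, which now restrict to the $2$-dimensional lattice $L^*$. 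After optimizing the resulting trade-off over $g$ (balancing the inflated summand $g/W$ against the reduced count of pairs fitting inside a thin sublattice), the contribution of these degenerate pairs should be bounded by $O(B^{17/3})$, giving the second term. The main obstacle will be in handling the interaction between the congruences $\bb_i\equiv \cc_i\Mod p$ and the sublattice structure $L^*$: when $L^*$ happens to be compatible with the congruence class modulo $p$ the count inflates, while otherwise it drops sharply, so a careful case analysis (splitting by $\gcd(\det(L^*),p)$ and by the intersection $L^*\cap p\ZZ^4$) will be required to get a uniform bound valid for every $\cc_1,\cc_2$ with $\cc_1\not\propto\cc_2\Mod p$.
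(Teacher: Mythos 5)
Your plan correctly identifies the key structural fact: $\det(\Lambda_{\bb_1,\bb_2})=\wedge(\bb_1,\bb_2)/D_{\bb_1,\bb_2}$ with $D_{\bb_1,\bb_2}$ the gcd of the six $2\times 2$ subdeterminants of $(\bb_1;\bb_2)$ (the paper cites \cite[Lemma 10.1]{May15b} for the exact equality; your claim that the gcd for $(T(\bb_1);T(\bb_2))$ is ``comparable up to a bounded constant'' to that of $(\bb_1;\bb_2)$ is not obviously true and would itself need an argument). You also correctly see that you must dyadically decompose in $\wedge(\bb_1,\bb_2)\sim B^2/C$, invoke Lemma \ref{lmm:Cylinder}, and exploit the hypothesis $\cc_1\not\propto\cc_2\Mod p$. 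Your treatment of the $D_{\bb_1,\bb_2}=O(1)$ piece, recovering $B^6/p^8$, is fine.

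The gap is in the ``harder'' degenerate case with $D_{\bb_1,\bb_2}=d$ large, which you only sketch. Your proposal to parametrize by primitive rank-$2$ sublattices $L^*\supseteq\langle\bb_1,\bb_2\rangle$ and then carry out ``a careful case analysis'' according to $\gcd(\det L^*,p)$ and $L^*\cap p\ZZ^4$ is not carried out, and you explicitly acknowledge the obstacles (``should be bounded by $O(B^{17/3})$''). The paper does something much more elementary here. Since $D_{\bb_1,\bb_2}=d$ forces $\bb_1\propto\bb_2\Mod d$, one has $\bb_1\equiv\lambda\bb_2\Mod d$ for some residue $\lambda$. Primitivity gives $d\le B$. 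Crucially, the hypothesis $\cc_1\not\propto\cc_2\Mod p$ gives $p\nmid d$, so the condition $\bb_1\equiv\cc_1\Mod p$ can be combined with $\bb_1\equiv\lambda\bb_2\Mod d$ by CRT into a single congruence $\bb_1\equiv\cc_0(\lambda)\Mod{dp}$. Summing over the $d$ choices of $\lambda$ and applying Lemma \ref{lmm:Cylinder} with modulus $dp$ gives, for each $\bb_2$, at most $\ll B+B^4/(p^4C^3d^3)$ choices of $\bb_1$, and then $\det(\Lambda_{\bb_1,\bb_2})\gg B^2/(Cd)$ closes the estimate directly. The ranges $\wedge\ll B^{4/3}$ (equivalently $C\gg B^{2/3}$), which produce the $B^{17/3}$ term, are handled in the paper without any gcd analysis, simply by dropping the mod-$p$ conditions and using $\det\ge\max(1,B/C)$. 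You should replace your sublattice-parametrization sketch with this CRT observation, which is both simpler and complete; without it, the claimed $O(B^{17/3})$ bound for the degenerate terms is not established.
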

%
%
\begin{proof}
We recall that $\Lambda_{\bb_1,\bb_2}$ is the lattice in $\ZZ^4$ of $\xx$ with $(\xx\diamond\bb_1)_4=(\xx\diamond\bb_2)_4=0$. By \cite[Lemma 10.1]{May15b}, this has determinant $\wedge(\bb_1,\bb_2)/D_{\bb_1,\bb_2}$, where $\wedge(\bb_1,\bb_2)$ is the $L^2$ norm of the six $2\times 2$ subdeterminants of the matrix with columns $\bb_1,\bb_2$, and $D_{\bb_1,\bb_2}$ is the greatest common divisor of these six subdeterminants. Note that this implies $\bb_1\propto\bb_2\Mod{D_{\bb_1,\bb_2}}$, so since $\bb_1,\bb_2$ are primitive we must have $D_{\bb_1,\bb_2}\le B$ when $\wedge(\bb_1,\bb_2)\ne 0$.

We consider separately those $\bb_1,\bb_2$ with $\wedge(\bb_1,\bb_2)\ll B$, those with $B\ll \wedge(\bb_1,\bb_2)\ll B^{4/3}$, and those $\bb_1,\bb_2$ with
\[
D_{\bb_1,\bb_2}=d,\qquad \wedge(\bb_1,\bb_2)\sim B^2/C\]
for each $1\le d\le B$ and $1\le C\le B^{2/3}$ with $C$ running through powers of $2$.

If $\wedge(\bb_1,\bb_2)\ll B$ then $\bb_1$ lies within $O(1)$ of the line proportional to $\bb_2$, and so there are $O(B)$ choices of $\bb_1$. Since $\det(\Lambda_{\bb_1,\bb_2})\ge 1$, these terms contribute a total (ignoring the congruence conditions $\Mod{p}$ for an upper bound)
\[
\ll \sum_{\|\bb_2\|\ll B}O(B)\ll B^5.
\] 
If $\wedge(\bb_1,\bb_2)\in[B, B^{4/3}]$ then we separately consider those with $\wedge(\bb_1,\bb_2)\sim B^2/C$ for $C\in[B^{2/3},B]$ running through powers of 2, and again drop the congruence constraints. By Lemma \ref{lmm:Cylinder} there are
\[
\ll B\Bigl(1+\frac{B}{C}\Bigr)^3\ll \frac{B^4}{C^3}
\]
choices of $\bb_1$ given $\bb_2$. If $\wedge(\bb_1,\bb_2)\sim B^2/C$ then $\det(\Lambda_{\bb_1,\bb_2})\ge B/C$ (since $D_{\bb_1\bb_2}\le B$). Thus these terms contribute
\[
\ll \sum_{C=2^j\in[B^{2/3},B]}\sum_{\substack{\bb_2\in \ZZ^4\cap\Cc_B}}\frac{C}{B}\frac{B^4}{C^3 }\ll B^{17/3}.
\]

Thus we are left to consider the terms with $\wedge(\bb_1,\bb_2)\sim B^2/C$ for some $C\le B^{2/3}$. The condition $D_{\bb_1,\bb_2}=d$ forces $\bb_1\propto \bb_2\Mod{d}$, and so $\bb_1\equiv \lambda \bb_2\Mod{d}$ for some $\lambda\in \{1,\dots,d\}$. Since $\cc_1\not\propto\cc_2\Mod{p}$, we see $p\nmid d$. Thus $\bb_1\equiv \cc_0(\lambda)\Mod{dp}$, where $\cc_0(\lambda)\equiv \lambda\bb_2\Mod{d}$ and $\cc_0(\lambda)\equiv \cc_1\Mod{p}$. By Lemma \ref{lmm:Cylinder}, the number of choices of $\bb_1$ is therefore
\[
\ll \sum_{1\le \lambda\le d}\#\Cc_{C,pd,\cc_0(\lambda),\bb_2}\ll d\Bigl(1+\frac{B}{p d}\Bigr)\Bigl(1+\frac{B}{p C d}\Bigr)^3\ll B+\frac{B^4}{p^4 C^3 d^3}. 
\]
If $D_{\bb_1,\bb_2}=d$ and $\wedge(\bb_1,\bb_2)\sim B^2/C$ then $\det(\Lambda_{\bb_1,\bb_2})\gg B^2/(Cd)$. Thus we find that the contribution from terms with $\wedge(\bb_1,\bb_2)\ge B^{4/3}$ is
\begin{align*}
\sum_{\substack{\bb_1,\,\bb_2\in \ZZ^4\cap\Cc_B \\ \wedge(\bb_1,\bb_2)\ge B\\ \bb_1\equiv \cc_1\Mod{p}\\ \bb_2\equiv \cc_2\Mod{p}\\ \mathrm{primitive}}}\frac{1}{\det(\Lambda_{\bb_1,\bb_2})}&\ll \sum_{1\le d\le B}\sum_{C=2^j\ll B^{2/3}}\frac{d C}{B^2}\sum_{\substack{\bb_2\in \ZZ^4\cap\Cc_B\\ \bb_2\equiv \cc_2\Mod{p}}}\Bigl(B+\frac{B^4}{C^3 d^3 p^4}\Bigr)\\
&\ll \frac{B^6}{p^8}+B^{17/3}.
\end{align*}
Thus we have a suitable bound in each case, giving the result.
\end{proof}
%
%
\begin{lmm}\label{lmm:BasicSum2}
Let $\cc_1,\cc_2\in\ZZ^4$ be non-zero $\Mod{p}$ with $\cc_1\propto\cc_2\Mod{p}$. Then we have
\[
\sum_{\substack{\bb_1,\,\bb_2\in \ZZ^4\cap\Cc_B \\ \wedge(\bb_1,\bb_2)\ne 0\\ \mathrm{primitive}\\ \bb_1\equiv \cc_1\Mod{p}\\ \bb_2\equiv \cc_2\Mod{p}}}\frac{1}{\det(\Lambda_{\bb_1,\bb_2})}\ll \frac{B^6}{p^7}+B^{17/3}.
\]
\end{lmm}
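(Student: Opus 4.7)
My plan is to adapt the proof of Lemma \ref{lmm:BasicSum} almost verbatim, using the same three-case structure based on $\wedge(\bb_1,\bb_2)$. The cases $\wedge(\bb_1,\bb_2)\ll B$ and $B\ll\wedge(\bb_1,\bb_2)\ll B^{4/3}$ each admit the same bounds as before ($O(B^5)$ and $O(B^{17/3})$ respectively) by simply dropping the congruences modulo $p$ for an upper bound. The principal case $\wedge(\bb_1,\bb_2)\ge B^{4/3}$ I will split dyadically by $D_{\bb_1,\bb_2}=d$ and $\wedge(\bb_1,\bb_2)\sim B^2/C$, exactly as before.

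The main obstacle is that under the hypothesis $\cc_1\propto\cc_2\Mod{p}$ one can now have $p\mid D_{\bb_1,\bb_2}$, which was excluded in Lemma \ref{lmm:BasicSum} because it relied on $p\nmid d$ in order to apply CRT. I therefore split the principal case into two subcases. When $p\nmid d$ the argument of Lemma \ref{lmm:BasicSum} transfers verbatim: one writes $\bb_1\equiv\lambda\bb_2\Mod{d}$ with $\lambda\in\{1,\ldots,d\}$, combines this with $\bb_1\equiv\cc_1\Mod{p}$ by CRT to obtain a single congruence modulo $dp$, and invokes Lemma \ref{lmm:Cylinder}. This subcase contributes $O(B^6/p^8+B^{17/3})$, which is comfortably within the claimed bound.

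The new ingredient is the subcase $p\mid d$. Here, reducing $\bb_1\equiv\lambda\bb_2\Mod{d}$ modulo $p$ yields $\cc_1\equiv\lambda\cc_2\Mod{p}$, and since $\cc_2\not\equiv\zero\Mod{p}$ there is a unique $\mu\in\F_p$ with $\cc_1\equiv\mu\cc_2\Mod{p}$; thus $\lambda\equiv\mu\Mod{p}$, so only $d/p$ admissible residues of $\lambda\Mod{d}$ survive, and for each such $\lambda$ the condition $\bb_1\equiv\cc_1\Mod{p}$ is automatic (since $p\mid d$). Applying Lemma \ref{lmm:Cylinder} with modulus $d$ and multiplying by the number of admissible $\lambda$ gives
\[
\#\bigl\{\bb_1\bigr\}\ll\frac{d}{p}\Bigl(1+\frac{B}{d}\Bigr)\Bigl(1+\frac{B}{Cd}\Bigr)^3.
\]
Combining this with $\#\{\bb_2\in\ZZ^4\cap\Cc_B:\bb_2\equiv\cc_2\Mod{p}\}\ll B^4/p^4$ and $\det(\Lambda_{\bb_1,\bb_2})\gg B^2/(Cd)$, then summing over dyadic $C\le B^{2/3}$ and over $d\le B$ with $p\mid d$, a straightforward expansion of the parenthesised factors shows the dominant contributions come from the terms $1$ and $B^4/(C^3d^4)$, producing $O(B^6/p^7+B^{17/3}/p^6)$. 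Adding the two subcases together with the contributions of the two smaller-wedge cases yields the claimed bound $O(B^6/p^7+B^{17/3})$.
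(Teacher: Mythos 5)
Your proof is correct and follows essentially the same route as the paper: drop the congruences in the small-wedge ranges to recover the $O(B^{17/3})$ bound, then in the range $\wedge(\bb_1,\bb_2)\gg B^{4/3}$ split by $d=D_{\bb_1,\bb_2}$ and $\wedge\sim B^2/C$, use the fact that $\cc_1\equiv\lambda_0\cc_2\Mod p$ restricts $\lambda\Mod d$ to a set of size $d/p$, and sum up. One small observation you missed (though it does not affect the validity of your upper bound): under the hypothesis $\cc_1\propto\cc_2\Mod{p}$, every pair $(\bb_1,\bb_2)$ in the sum has $\bb_1\propto\bb_2\Mod p$ and therefore $p\mid D_{\bb_1,\bb_2}$, so your subcase $p\nmid d$ is actually empty --- the paper notes this directly rather than splitting.
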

%
%
\begin{proof}
This is similar to the proof of Lemma \ref{lmm:BasicSum}. Since the estimates in the proof of Lemma \ref{lmm:BasicSum} when $\wedge(\bb_1,\bb_2)\ll B^{4/3}$ didn't depend on whether $p|D_{\bb_1,\bb_2}$ or not, an identical argument shows that the contribution of $\bb_1,\bb_2$ with $\wedge(\bb_1,\bb_2)\ll B^{4/3}$ contributes $O(B^{17/3})$. Therefore we just need to consider the contribution when $\wedge(\bb_1,\bb_2)\gg B^{4/3}$. 

We split the summation according to $\wedge(\bb_1,\bb_2)\sim B^2/C$ and $D_{\bb_1,\bb_2}=d$. Since $\cc_1\propto\cc_2\Mod{p}$, we have $\cc_1\equiv \lambda_0\cc_2\Mod{p}$ for some $\lambda_0$.  Since $\bb_1\equiv \cc_1\Mod{p}$ and $\bb_2\equiv \cc_2\mod{p}$ we then see that $p|d$. The condition $D_{\bb_1,\bb_2}=d$ forces $\bb_1=\lambda\bb_2\Mod{d}$ for some $\lambda$, with $\lambda\equiv \lambda_0\Mod{p}$. Thus, by Lemma \ref{lmm:Cylinder}, the number of choices of $\bb_1,\bb_2$ with $\wedge(\bb_1,\bb_2)\sim B^2/C$ and $D_{\bb_1,\bb_2}=d$ s
\begin{align*}
\ll \sum_{\substack{\bb_2\in \ZZ^4\cap\Cc_B\\ \bb_2\equiv \cc_2\Mod{p}}}\sum_{\substack{1\le \lambda\le d\\ \lambda\equiv \lambda_0\Mod{p}}}\#\Cc_{C,d,\lambda\bb_2,\bb_2}&\ll \Bigl(1+\frac{B^4}{p^4}\Bigr)\frac{d}{p}\Bigl(1+\frac{B}{d}\Bigr)\Bigl(1+\frac{B}{Cd}\Bigr)^3\\
&\ll \frac{B^8}{p^5 C^3 d^3}+B^5.
\end{align*}
When $\wedge(\bb_1,\bb_2)\sim B^2/C$ and $D_{\bb_1,\bb_2}=d$ we have $\det(\Lambda_{\bb_1,\bb_2})\gg B^2/(Cd)$. Thus the total contribution from terms with $\wedge(\bb_1,\bb_2)\gg B^{4/3}$ is
\begin{align*}
\sum_{\substack{d\le B\\ p|d}}\sum_{C=2^j\ll B^{2/3}}\frac{C d}{B^2}\Bigl(\frac{B^8}{p^5 C^3 d^3}+B^5\Bigr)\ll \frac{B^6}{p^7}+B^{17/3}.
\end{align*}
This gives the result.
\end{proof}
%
%
We are now able to make progress on our aim of bounding $T_8$.
%
%
\begin{lmm}\label{lmm:T8}
Let $T_8$ be as given by \eqref{eq:T6}. Then we have
\[
T_8\ll \eta_2A^2B^6+\eta_2^{-4}A^2\sup_{\cC_1,\cC_2}\Bigl(|T_{11}|+|T_{12}|\Bigr),
\]
where the supremum is over all hypercubers $\cC_1,\cC_2\subseteq \cC_B$ of side length $\eta_2B$ and
\begin{align}
T_{11}&:=\sum_{\substack{p\in [P_1,P_2]\\ p\equiv 1\Mod{D_f}}}\sum_{\substack{\bb_1\in \ZZ^4\cap \Cc_1,\bb_2\in \ZZ^4\cap\Cc_2\\  \bb_1\equiv \bb_2\equiv \bb_0\Mod{m'} \\ \wedge(\bb_1,\bb_2)\ne 0\\ p\nmid N(\bb_1) N(\bb_2)}}\frac{g_{\bb_1}g_{\bb_2}}{f_{\bb_1,\bb_2,p,p}\det(\Lambda_{\bb_1,\bb_2})},\label{eq:T11Def}\\
T_{12}&:=\sum_{\substack{p_1,p_2\in [P_1,P_2]\\ p_1\equiv p_2\equiv 1\Mod{D_f}}}\frac{1}{p_1 p_2}\sum_{\substack{\bb_1\in \ZZ^4\cap \Cc_1,\bb_2\in \ZZ^4\cap\Cc_2\\  \bb_1\equiv \bb_2\equiv \bb_0\Mod{m'} \\ \wedge(\bb_1,\bb_2)\ne 0\\ p_1\nmid N(\bb_1),\,p_2\nmid N(\bb_2)}}\frac{g_{\bb_1}g_{\bb_2}D_{\bb_1,\bb_2}}{\wedge(\bb_1,\bb_2)}.\label{eq:T12Def}
\end{align}
\end{lmm}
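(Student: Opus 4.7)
The plan is to discretise the factor $\vol(\Rc''_{\bb_1,\bb_2})$ by covering $\Cc_B$ with small hypercubes and then to decompose the resulting sum according to the value of $f_{\bb_1,\bb_2,p_1,p_2}$, matching the diagonal $p_1 = p_2$ contribution to $T_{11}$ and the generic off-diagonal contribution to $T_{12}$.

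First I would observe that $\vol(\Rc''_{\bb_1,\bb_2}) \ll A^2$, since $\Rc''_{\bb_1,\bb_2}$ is a planar section of $\Cc_A\cap \cR_{\bb_1,\bb_2}$ bounded by $\|\aa\|\ll A$. The defining constraints $\aa\diamond\bb_i\in\cR_{X_0}$ depend linearly on $\bb_i$, so as either $\bb_i$ varies within a hypercube of side length $\eta_2 B$, the volume $\vol(\Rc''_{\bb_1,\bb_2})$ changes by $O(\eta_2 A^2)$. Covering $\Cc_B\times\Cc_B$ by hypercube pairs $(\cC_1,\cC_2)$ of side length $\eta_2 B$ and replacing $\vol(\Rc''_{\bb_1,\bb_2})$ by its representative value $A^2 V_{\cC_1,\cC_2}$ on each pair therefore produces an error of size
\[
\ll \eta_2 A^2 \sum_{p_1,p_2\in[P_1,P_2]} \sum_{\substack{\bb_1,\bb_2\in\ZZ^4\cap\Cc_B \\ \wedge(\bb_1,\bb_2)\ne 0}}\frac{|g_{\bb_1}||g_{\bb_2}|}{f_{\bb_1,\bb_2,p_1,p_2}\det(\Lambda_{\bb_1,\bb_2})}.
\]
Using $|g_\bb|\ll 1$, dropping the harmless $1/f$, and applying Lemmas \ref{lmm:BasicSum}--\ref{lmm:BasicSum2} (summed over the relevant congruence classes modulo $p_1,p_2$), this is bounded by $\ll \eta_2 A^2 B^6$ after invoking the size constraint \eqref{eq:BSize} relating $B$ and $P_2$.

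Next I would split the main term according to $f_{\bb_1,\bb_2,p_1,p_2}$. The diagonal contribution $p_1=p_2=p$, preserving $f_{\bb_1,\bb_2,p,p}$ in the denominator, produces exactly $T_{11}$ (after factoring out $A^2V_{\cC_1,\cC_2}$ from the inner sum). In the off-diagonal case $p_1\ne p_2$ the generic value is $f=p_1p_2$, and substituting the identity $\det(\Lambda_{\bb_1,\bb_2}) = \wedge(\bb_1,\bb_2)/D_{\bb_1,\bb_2}$ from \cite[Lemma 10.1]{May15b} yields precisely $T_{12}$. The remaining exceptional off-diagonal configurations, where $c_1\equiv c_2\equiv 0\Mod{p_1}$ or $c_3\equiv c_4\equiv 0\Mod{p_2}$, restrict $\bb_1$ or $\bb_2$ to a sublattice of density $\asymp 1/p_i^2$; applying Lemmas \ref{lmm:BasicSum}--\ref{lmm:BasicSum2} to these sparse families gives contributions that are again absorbed into the $\eta_2 A^2 B^6$ error. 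Taking the supremum over hypercube pairs and noting that $\vol(\Rc''_{\bb_1,\bb_2})$ vanishes unless $\bb_1,\bb_2$ are mutually constrained (so that only an $\eta_2^{-4}$-family of pairs $(\cC_1,\cC_2)$ carries nonzero contribution) gives the stated factor.

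The main obstacle will be the careful bookkeeping of the exceptional cases of $f_{\bb_1,\bb_2,p_1,p_2}$: verifying that their $1/p_i^2$-sparsity in the $\bb_i$-variables really does dominate the $P_2^2$ loss from removing the prime summations requires precise use of \eqref{eq:BSize} and of the smallness of $\tau'$, together with a careful accounting of which hypercube pairs carry non-empty $\Rc''_{\bb_1,\bb_2}$ to pin down the exponent $\eta_2^{-4}$ in the final bound.
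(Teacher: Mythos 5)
Your overall structure (cover $\Cc_B$ by hypercubes of side length $\eta_2 B$ to freeze $\vol(\Rc''_{\bb_1,\bb_2})$, then split the frozen sum into the diagonal $p_1=p_2$ and off-diagonal $p_1\ne p_2$ contributions) is the same as the paper's, but your treatment of the discretisation error has a genuine gap. You bound that error by declaring the factor $1/f_{\bb_1,\bb_2,p_1,p_2}$ ``harmless'', dropping it, and then estimating $\sum_{\bb_1,\bb_2}1/\det(\Lambda_{\bb_1,\bb_2})$ via Lemmas \ref{lmm:BasicSum}--\ref{lmm:BasicSum2}. Once $1/f$ is discarded, nothing in the summand depends on $p_1,p_2$, so the double sum over primes in $[P_1,P_2]$ costs an extra factor $\asymp P_2^2=X^{2\tau'}$, a positive power of $X$; you end up with $\eta_2 A^2B^6P_2^2$ rather than the required $\eta_2A^2B^6$, and since $\eta_2$ is only a power of $1/\log X$ this cannot be absorbed. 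The constraint \eqref{eq:BSize} does not help here: it bounds $B$ in terms of $X$ and $P_2$, it does not shrink the prime sums. The factor $1/f$ is precisely what must be exploited. The paper does this through the unnumbered lemma preceding Lemma \ref{lmm:Cylinder}: interpreting $\vol(\Rc'_{\bb_1,\bb_2})/f_{\bb_1,\bb_2,p_1,p_2}$ as the count of $\aa\in\Cc_A\cap\cR_{\bb_1,\bb_2}\cap\Lambda_{\bb_1,\bb_2,p_1,p_2}$ up to the Davenport error $O(A/\|\zz_1\|)$, and noting that each fixed $\aa$ admits only $O(1)$ pairs $(p_1,p_2)$ with $p_1\mid \vv_{p_1}\cdot(\aa\diamond\bb_1)$ and $p_2\mid\vv_{p_2}\cdot(\aa\diamond\bb_2)$, so that the prime sums are absorbed into $O(1)$ and $P_2^2$ multiplies only the error $A/\|\zz_1\|$, whose total over $\bb_1,\bb_2$ is $O(AB^7P_2^2)$ as in Lemma \ref{lmm:T7} and is admissible because $BP_2^2<A^{1-\epsilon}$. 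Without this step (or an equivalent case analysis of the values of $f$ together with counts of the exceptional residue classes) your error estimate does not close.

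A secondary discrepancy concerns the off-diagonal terms. You take $f_{\bb_1,\bb_2,p_1,p_2}=p_1p_2$ only as the generic value and propose to absorb the exceptional configurations via a ``density $1/p_i^2$'' heuristic. The paper instead uses the conditions $p_1\nmid N(\bb_1)$, $p_2\nmid N(\bb_2)$, which are already built into $T_8$ from the cosmetic reductions, to argue that $\vv_{p_i}\cdot(\aa\diamond\bb_i)\equiv 0\Mod{p_i}$ is a nontrivial constraint (the multiplication-by-$\bb_i$ matrix being invertible $\Mod{p_i}$), hence $f_{\bb_1,\bb_2,p_1,p_2}=p_1p_2$ identically, so that $T_{12}$ is exactly the off-diagonal contribution and no exceptional case remains. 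Your absorption claim is not justified as written: whether the constraint is trivial is a condition on the pair $(\bb_1,\bb_2)$ through the reduction of the rank-two lattice $\Lambda_{\bb_1,\bb_2}$ modulo $p_i$, not a sublattice condition of density $1/p_i^2$ on $\bb_1$ alone, so that density assertion would itself require proof; moreover leaving genuinely nonzero exceptional terms outside $T_{11}$ and $T_{12}$ would alter the statement of the lemma unless they are rigorously bounded.
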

%
%
\begin{proof}
We wish to replace $\Rc''_{\bb_1,\bb_2}$ with a quantity which doesn't depend on $\bb_1,\bb_2$ by splitting $\Cc_B$ into $O(\eta_2^{-4}\eta_1^4)$ smaller hypercubes of side length $\eta_2 B$. We see that $\vol(\Rc''_{\bb_1,\bb_2})$ depends continuously on the components of $\bb_1$ and $\bb_2$, and that $\vol(\Rc''_{\bb_1,\bb_2})$ is always of size $O(A^2)$. Moreover, if we restrict $\bb_1,\bb_2$ to hypercubes of side length $\eta_2 B$ then $\vol(\Rc'_{\bb_1,\bb_2})$ varies by $O(\eta_2 A^2)$ as $\bb_1,\bb_2$ vary within these hypercubes. Thus we see that
\begin{align}
T_8&=\sum_{\substack{p_1,p_2\in [P_1,P_2]\\ p_1\equiv p_2\equiv 1\Mod{D_f} }}\sum_{\substack{\bb_1,\bb_2\in \ZZ^4\cap\Cc_B\\ \bb_1\equiv \bb_2\equiv \bb_0\Mod{m'}\\ p_1\nmid N(\bb_1),\,p_2\nmid N(\bb_2)\\\wedge(\bb_1,\bb_2)\ne 0}}\frac{g_{\bb_1}g_{\bb_2}\vol(\Rc''_{\bb_1,\bb_2})}{f_{\bb_1,\bb_2,p_1,p_2}\det(\Lambda_{\bb_1,\bb_2})}\nonumber \\
&\ll T_9+\eta_2^{-4} A^2\sup_{\Cc_1,\Cc_2}|T_{10}|,
\end{align}
where
\begin{align*}
T_9&:=\eta_2\varepsilon_0^{-4}\sum_{\substack{p_1,p_2\in [P_1,P_2]\\ p_1\equiv p_2\equiv 1\Mod{D_f} }}\sum_{\substack{\bb_1,\bb_2\in \ZZ^4\cap\Cc_B\\ \bb_1\equiv \bb_2\equiv \bb_0\Mod{m'}\\ p_1\nmid N(\bb_1),\,p_2\nmid N(\bb_2)\\\wedge(\bb_1,\bb_2)\ne 0}}\frac{\vol(\Rc''_{\bb_1,\bb_2})}{f_{\bb_1,\bb_2,p_1,p_2}\det(\Lambda_{\bb_1,\bb_2})},\\
T_{10}&=T_{10}(\Cc_1,\Cc_2):= \sum_{\substack{p_1,p_2\in [P_1,P_2]\\ p_1\equiv p_2\equiv 1\Mod{D_f}}}\sum_{\substack{\bb_1\in \ZZ^4\cap \Cc_1,\bb_2\in \ZZ^4\cap\Cc_2\\  \bb_1\equiv \bb_2\equiv \bb_0\Mod{m'} \\ \wedge(\bb_1,\bb_2)\ne 0\\ p_1\nmid N(\bb_1),\,p_2\nmid N(\bb_2)}}\frac{g_{\bb_1}g_{\bb_2}}{f_{\bb_1,\bb_2,p_1,p_2}\det(\Lambda_{\bb_1,\bb_2})}.
\end{align*}
By the above lemmas, we have that
\[
T_9\ll \varepsilon^{-4}\eta_2 A^2 B^6,
\]
which is acceptable if $\eta_2\ll \eta_1^9$. Thus we are left to bound $T_{10}$. We separate the terms when the two primes in the outer sum are the same. Thus
\begin{equation}
T_{10}=T_{11}+T_{12},
\end{equation}
where $T_{11}$ denotes the terms with $p_1=p_2$ and $T_{12}$ those terms with $p_1\ne p_2$.

$T_{11}$ clearly is equal to the expression given in the lemma, but (recalling that $\det(\Lambda_{\bb_1,\bb_2})=\wedge(\bb_1,\bb_2)/D_{\bb_1,\bb_2}$) we need to show that $f_{\bb_1,\bb_2,p_1,p_2}=p_1p_2$ in $T_{12}$ to obtain the desired expression. We first note that since $p_1\nmid N(\bb_1)$ the multiplication-by-$\bb_1$  matrix $M_{\bb_1}$ is invertible $\Mod{p_1}$. This means that for every $\xx\Mod{p_1}$ there is a unique $\aa\Mod{p_1}$ such that $\xx=\aa\diamond \bb_1\Mod{p_1}$ , and so $\vv_{p_1}\cdot (\aa\diamond\bb_1)=0\Mod{p_1}$ is therefore a non-trivial constraint on the components of $\aa\Mod{p_1}$. Similarly since $p_2\nmid N(\bb_2)$, we see $p_2|\vv_{p_2}\cdot(\aa\diamond\bb_2)$ is a non-trivial constraints on the components of $\aa\Mod{p_2}$. From this it follows that we have that $f_{\bb_1,\bb_2,p_1,p_2}=p_1p_2$, and so $T_{12}$ is given by the expression in the lemma.
\end{proof}
%
%
  First we concentrate on $T_{11}$.
  %
%
\subsection{The case $p_1=p_2$}
%
%
In this section we wish to bound the sum $T_{11}$ from \eqref{eq:T11Def}. We first see by Lemma \ref{lmm:BasicSum2} the contribution of terms with $\bb_1\propto \bb_2\Mod{p}$ to $T_{11}$ is
\begin{align*}
\ll \sum_{p\in[P_1,P_2]}\sum_{\cc_1\propto\cc_2\Mod{p}}\Bigl(\frac{B^6}{p^7}+B^{17/3}\Bigr)\ll \frac{B^6}{P_1}+P_2^6 B^{17/3}.
\end{align*}
Thus we have
\[
T_{11}=T_{11}'+O\Bigl(\frac{B^6}{P_1}+P_2^6 B^{17/3}\Bigr),
\]
where $T_{11}'$ counts those terms in $T_{11}$ with $\bb_1\not\propto \bb_2\Mod{p}$, or equivalently with $p\nmid D_{\bb_1,\bb_2}$.

When $\bb_1\not \propto\bb_2\Mod{p}$, we see that the constraints $(\aa\diamond\bb_1)_4=0\Mod{p}$ and $(\aa\diamond\bb_2)_4=0\Mod{p}$ are two linearly independent linear constraints on $\aa\Mod{p}$. In  particular, the index $f_{\bb_1,\bb_2,p,p}=[\Lambda_{\bb_1,\bb_2}:\Lambda_{\bb_1,\bb_2,p,p}]$ simplifies to give
\[
\frac{1}{f_{\bb_1,\bb_2,p,p}}= \frac{\#\{\aa\Mod{p}:\,(\aa\diamond\bb_1)_4=(\aa\diamond\bb_2)_4=\vv\cdot(\aa\diamond \bb_1)=\vv\cdot(\aa\diamond\bb_2)=0\Mod{p}\}}{p^2}.
\]
We separate the above count according to the rank  of the multiplication-by-$\aa$ matrix $M_a\Mod{p}$. Thus
\begin{equation}
\frac{1}{f_{\bb_1,\bb_2,p,p}}=\sum_{i=0}^4\frac{1}{p^2}\widetilde{S}_i(\bb_1,\bb_2),
\label{eq:ftoS}
\end{equation}
where $\widetilde{S}_i(\bb_1,\bb_2)$ counts those $\aa\Mod{p}$ such that $M_\aa$ has rank $i$ and satisfies $(\aa\diamond\bb_1)_4=(\aa\diamond\bb_2)_4=\vv\cdot(\aa\diamond \bb_1)=\vv\cdot(\aa\diamond\bb_2)=0\Mod{p}$.

First we consider $\widetilde{S}_4$. 
%
%
\begin{lmm}\label{lmm:S4t}
\[
\sum_{\cc_1,\cc_2\Mod{p}}\frac{1}{p^2}\widetilde{S}_4(\cc_1,\cc_2)\ll p^6.
\]
\end{lmm}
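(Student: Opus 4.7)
My plan is to swap the order of summation so that $\aa$ runs on the outside, and then exploit the fact that when $M_\aa$ has rank $4$ modulo $p$, the map $\bb \mapsto \aa\diamond\bb$ is a bijection on $(\F_p)^4$. This will reduce the count to counting solutions of a fixed system of linear equations.

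Concretely, I would begin by writing
\[
\sum_{\cc_1,\cc_2\Mod{p}}\widetilde{S}_4(\cc_1,\cc_2)=\sum_{\substack{\aa\Mod p\\ \mathrm{rank}(M_\aa)=4}}\#\bigl\{(\cc_1,\cc_2)\Mod p : (\aa\diamond\cc_i)_4\equiv\vv_p\cdot(\aa\diamond\cc_i)\equiv 0\Mod p,\ i=1,2\bigr\}.
\]
For each $\aa$ with $M_\aa$ invertible $\Mod p$ (there are at most $p^4$ such $\aa$), the substitution $\yy_i:=\aa\diamond\cc_i$ is a linear bijection on $(\F_p)^4$, so the inner count equals the number of pairs $(\yy_1,\yy_2)\in(\F_p)^4\times(\F_p)^4$ satisfying the four linear conditions $(\yy_i)_4\equiv 0\Mod p$ and $\vv_p\cdot\yy_i\equiv 0\Mod p$ for $i=1,2$.

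The key observation, which I would justify using the setup at the start of Section~\ref{sec:Cosmetic}, is that $\vv_p$ is (up to scaling) the coefficient vector of one of the two linear factors of $f(X_1,X_2,X_3)\Mod p$ appearing in the factorisation guaranteed by $p\equiv 1\Mod{D_f}$; since $f$ involves only the first three coordinates, $\vv_p$ has vanishing fourth component, and its first three components are not all zero. Hence the two linear forms $\yy\mapsto(\yy)_4$ and $\yy\mapsto \vv_p\cdot \yy$ are linearly independent over $\F_p$, so the number of $\yy_i$ satisfying both is exactly $p^2$. The pair $(\yy_1,\yy_2)$ therefore contributes $p^2\cdot p^2=p^4$.

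Combining these counts yields
\[
\sum_{\cc_1,\cc_2\Mod{p}}\widetilde{S}_4(\cc_1,\cc_2)\le p^4\cdot p^4=p^8,
\]
and dividing by $p^2$ gives the claimed bound $\ll p^6$. The only non-routine point is verifying the shape of $\vv_p$ so that the two linear constraints on each $\yy_i$ are independent; everything else is a straightforward linear-algebra count after the bijection is exploited.
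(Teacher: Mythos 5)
Your proof is correct and is essentially the paper's argument with a cosmetic reordering: the paper fixes $\cc_1$ first (using that $M_{\cc_1}$ is invertible when $p\nmid N(\cc_1)$) and counts pairs $(\aa,\cc_2)$, whereas you fix $\aa$ first and count pairs $(\cc_1,\cc_2)$ symmetrically via the bijection $\cc\mapsto\aa\diamond\cc=M_\aa\cc$. Both reduce to the same $p^4\times p^4$ count, so the two are interchangeable; your ordering is arguably a touch cleaner since $\cc_1$ and $\cc_2$ become symmetric. One genuine value you add: you explicitly justify why the two constraints $(\yy)_4\equiv 0$ and $\vv_p\cdot\yy\equiv 0$ are linearly independent over $\F_p$ (because $\vv_p$ comes from a nonzero linear factor of $f(X_1,X_2,X_3)\bmod p$, so has zero fourth component but a nonzero first-three block), whereas the paper only asserts the $O(p^4)$ bound without spelling this out — and the bound does actually require that independence, so this is worth stating.
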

%
%
\begin{proof}
In this case $M_{\aa}$ has rank 4, and so is invertible $\Mod{p}$. Given any choice of $\cc_1\Mod{p}$ with $p\nmid N(\cc_1)$, we see that $\aa\diamond\cc_1=M_{\cc_1}\aa$ where the multiplication-by-$\cc_1$ matrix $M_{\cc_1}$ has determinant $N(\cc_1)$, and so is invertible $\Mod{p}$. Therefore, given any choice of $\xx\Mod{p}$, there is a unique choice of $\aa\Mod{p}$ with $p\nmid N(\aa)$ such that $\aa\diamond\cc_1\equiv \xx\Mod{p}$. Similarly, since we only consider $\aa$ with $M_\aa$ is invertible, given any choice of $\yy\Mod{p}$ there is then a unique choice of $\cc_2\Mod{p}$ such that $\aa\diamond \cc_2\equiv \yy\Mod{p}$. Since there are $O(p^4)$ choices of $\xx,\yy\Mod{p}$ with $\xx_4=\yy_4=0$ and $\vv\cdot\xx=\vv\cdot\yy=0\Mod{p}$, there are therefore $O(p^4)$ choices of $\aa,\cc_2\Mod{p}$ such that $p\nmid N(\aa)$ and $(\aa\diamond\cc_1)_4=(\aa\diamond\cc_2)_4=\vv\cdot(\aa\diamond\cc_1)=\vv\cdot(\aa\diamond\cc_2)=0\Mod{p}$. Thus we have that
\[
\sum_{\cc_1,\cc_2\Mod{p}}\frac{1}{p^2}\widetilde{S}_4(\cc_1,\cc_2)\ll p^6,
\]
as required.
\end{proof}
Now we consider $\widetilde{S}_2$ and $\widetilde{S}_3$.
\begin{lmm}\label{lmm:S23}
\[
\sum_{\cc_1,\cc_2\Mod{p}}\frac{1}{p^2}\Bigl(\widetilde{S}_2(\cc_1,\cc_2)+\widetilde{S}_3(\cc_1,\cc_2)\Bigr)\ll p^6.
\]
\end{lmm}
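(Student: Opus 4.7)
The plan is to swap the order of summation so as to fix $\aa$ and count pairs $(\cc_1,\cc_2)$. Since the four conditions $(\aa\diamond\cc_j)_4=0$ and $\vv_p\cdot(\aa\diamond\cc_j)=0$ decouple across $j=1,2$, we have
\[
\sum_{\cc_1,\cc_2\Mod p}\widetilde{S}_i(\cc_1,\cc_2)\;=\;\sum_{\substack{\aa\Mod p\\ \rank M_\aa=i}}N(\aa)^2,
\]
where $N(\aa):=\#\{\cc\in\F_p^4:\,M_\aa\cc\in W\}$ and $W:=\{\yy\in\F_p^4:\,y_4=0,\,\vv_p\cdot\yy=0\}$ is a \emph{fixed} two-dimensional subspace. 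Writing $d_\aa:=\dim(\mathrm{im}(M_\aa)\cap W)$, one has $N(\aa)=p^{4-i}\cdot p^{d_\aa}=p^{4-i+d_\aa}$, so the problem reduces to bounding, for each rank $i\in\{2,3\}$, the number of $\aa$ with a given value of $d_\aa$.

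For the rank-$3$ contribution, the locus $\{\rank M_\aa=3\}$ is contained in $\{N(\aa)\equiv 0\Mod p\}$, a hypersurface of size $O(p^3)$. Generic rank-$3$ points have $d_\aa=1$ (this is the minimum possible since $\dim\mathrm{im}(M_\aa)+\dim W-4=1$), giving $N(\aa)\ll p^2$ and a contribution of $O(p^3\cdot p^4)=O(p^7)$, which becomes $O(p^5)$ after dividing by $p^2$. The degenerate case $d_\aa=2$, i.e.\ $W\subseteq\mathrm{im}(M_\aa)$, is equivalent by duality to $\ker M_\aa^T\subseteq W^\perp$; since $\ker M_\aa^T$ is a line and $W^\perp$ is two-dimensional, this is a single further linear condition, cutting the rank-$3$ locus down to $O(p^2)$ points. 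Each contributes $N(\aa)^2\ll p^6$, giving $O(p^2\cdot p^6/p^2)=O(p^6)$, matching the claim.

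For the rank-$2$ contribution, the locus $\{\rank M_\aa\le 2\}$ is cut out by the vanishing of all $3\times 3$ minors of $M_\aa$, which (once one reduces modulo $p$ and uses the decomposition $\mathcal{O}_K\otimes\F_p\cong\prod_{\gp\mid p}\mathcal{O}_K/\gp^{e(\gp)}$) is a union of at most $\binom{4}{2}=6$ subspaces of dimension $\le 2$, hence of size $O(p^2)$. Generically such $\aa$ have $\mathrm{im}(M_\aa)\cap W=\{0\}$, yielding $N(\aa)\ll p^2$ and a contribution of $O(p^2\cdot p^4/p^2)=O(p^4)$. Non-generic $\aa$ with $d_\aa\ge 1$ are forced to satisfy at least one more linear condition (the two-dimensional image must meet the fixed two-dimensional $W$ non-trivially), leaving $O(p)$ such $\aa$, each contributing $N(\aa)^2\ll p^6$ for a total of $O(p^5)$; the extremal case $\mathrm{im}(M_\aa)=W$ pins $\aa$ to $O(1)$ values and is negligible. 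Summing over all cases gives the stated $O(p^6)$.

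The main obstacle is the dimension bound for $\{\rank M_\aa\le 2\}$, since the geometry of this locus depends on the splitting type of $p$ in $K$ (inert, split, partially split, etc.). The plan is to reduce modulo each prime $\gp$ above $p$ using the Chinese remainder isomorphism (noting that there are only finitely many primes dividing $\disc K$, which can be absorbed into the implied constant), block-diagonalise $M_\aa$, and observe that $\rank M_\aa\le 2$ forces $\aa$ to lie in the kernel of enough projections to enforce two linearly independent vanishing conditions. This gives the uniform bound $O(p^2)$ needed, independent of the splitting type.
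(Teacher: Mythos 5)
Your decomposition $\sum_{\cc_1,\cc_2}\widetilde{S}_i(\cc_1,\cc_2)=\sum_{\aa:\,\mathrm{rank}\,M_\aa=i}N(\aa)^2$ with $N(\aa)=p^{4-i+d_\aa}$, $d_\aa=\dim(\mathrm{im}\,M_\aa\cap W)$, is a valid starting point, and your bound $O(p^2)$ for the rank-$\le 2$ locus via the decomposition $\cO_K\otimes\F_p$ is correct. However, there is a genuine gap in the treatment of the degenerate strata, and it hides exactly the algebraic input that the paper makes explicit.

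The heuristic that ``$\ker M_\aa^T\subseteq W^\perp$ is a single further linear condition'' is not right, and more importantly, on a fixed splitting type the image $\mathrm{im}(M_\aa)$ ranges over a \emph{finite} collection of coordinate subspaces (e.g.\ in the fully split case, the four hyperplanes $H_i=\ker\sigma_i$ for rank $3$, the six planes $H_i\cap H_j$ for rank $2$). For $\aa$ on a fixed piece of the rank-$3$ stratum, $d_\aa$ is \emph{constant}: either $W\subseteq H_i$ for all such $\aa$ (giving $d_\aa=2$ on the entire $O(p^3)$-point piece, hence a contribution $\gg p^3\cdot p^6/p^2=p^7$, which breaks the bound) or $W\not\subseteq H_i$ and the degenerate locus on that piece is empty. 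There is no intermediate ``codimension-$1$ subset.'' Similarly for rank $2$, the case $\mathrm{im}(M_\aa)=W$ would give $N(\aa)^2=p^8$ on a whole $O(p^2)$-piece. So the argument lives or dies on showing $W\not\subseteq H_i$ and $W\neq H_i\cap H_j$ for large $p$ — which you assume implicitly but never establish.

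Unwinding what $W\subseteq H_i$ would mean: $\sigma_i$ restricted to $\{x_4=0\}$ would be a scalar multiple of the linear form $\vv_p$, i.e.\ the linear factor $L_1$ of $f\bmod p$ would divide the reduction of $N(x_1\nu_1+x_2\nu_2+x_3\nu_3)$. This is precisely the scenario the paper rules out by observing that $f$ splits over a quadratic extension while $N$ has no linear factors over any quadratic extension, so the two polynomials share no common factor; for $p$ outside a finite set this persists mod $p$ and the variety $\{f=N=0\}$ in $\A^3$ has codimension $2$, hence $O(p)$ points. The paper's parametrization by $(\cc_1,\xx,\yy)$ — choose $\xx$ on that $O(p)$-point variety, let it determine $\aa$, then bound $\cc_2$ by $p^2$ via the rank — builds this fact in from the start and avoids any case analysis of $\mathrm{im}(M_\aa)$. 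Your approach is genuinely different and can be made to work, but you must add this ``no common factor'' input to kill the degenerate strata; as written, the bound on those strata is asserted rather than proved, and it is the entire content of the lemma.
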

%
%
\begin{proof}
 Since $M_\aa$ is not invertible $\Mod{p}$ and has determinant $N(\aa)$, we see that $p|N(\aa)$ and so $p|N(\aa\diamond\cc_1)=N(\aa)N(\cc_1)$. Since $f(x_1,x_2,x_3)$ is an irreducible polynomial which splits into two linear factors over a quadratic extension, and $N(x_1\nu_1+x_2\nu_2+x_3\nu_3)$ is a quartic irreducible polynomial which has no linear factors over any quadratic extension, these polynomials have no common polynomial factors over a mutual splitting field, and so define an algebraic variety of codimension 2. Thus (by Hilbert's Theorem 90 and the Lang-Weil bound) there are $O(p)$ choices of $(x_1,x_2,x_3)\Mod{p}$ such that $f(x_1,x_2,x_3)=N(x_1\nu_1+x_2\nu_2+x_3\nu_3)=0\Mod{p}$. Thus there are $O(p^2)$ choices of $\xx,\yy$ with $p|N(\xx),N(\yy)$ and $x_4=y_4=\vv\cdot\xx=\vv\cdot\yy=0\Mod{p}$. Given $\cc_1$ with $p\nmid N(\cc_1)$ and $\xx$ and $\yy$ as above, here is a unique $\aa\Mod{p}$ such that $\aa\diamond\cc_1\equiv \xx\Mod{p}$, and there are $O(p^2)$ choices of $\cc_2$ such that $\aa\diamond \cc_2\equiv \yy\Mod{p}$ provided $M_\aa$ has rank $2$ or $3$. Putting this together gives the result.
 \end{proof}
\begin{lmm}\label{lmm:S0}
\[
S_0(\cc_1,\cc_2)\ll 1.
\]
\end{lmm}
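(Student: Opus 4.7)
The proof of Lemma \ref{lmm:S0} should be essentially immediate once we unpack the rank condition. My plan is the following. By the definition of $\widetilde{S}_i$ given just before \eqref{eq:ftoS}, the quantity $S_0(\cc_1,\cc_2)$ counts residues $\aa\Mod p$ for which the multiplication-by-$\aa$ matrix $M_\aa$ has rank zero modulo $p$ and for which the four linear constraints $(\aa\diamond \cc_1)_4\equiv(\aa\diamond\cc_2)_4\equiv \vv\cdot(\aa\diamond\cc_1)\equiv \vv\cdot(\aa\diamond\cc_2)\equiv 0\Mod p$ hold. The first step is to argue that the rank-zero condition alone already pins down $\aa$ to a single residue class.

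Concretely, $M_\aa$ having rank $0\Mod p$ means that multiplication by $\aa$ annihilates every element of $\cO_\v/p\cO_\v$. Applying this to the basis element $\nu_1=1$ (using our standing convention from Section \ref{Prep}) gives $\aa\cdot 1 =\sum_{i=1}^4 a_i\nu_i\equiv 0\Mod{p\cO_K}$. Since $\{\nu_1,\nu_2,\nu_3,\nu_4\}$ is a $\Q$-basis of $K$, and we may assume $p$ does not divide the index $W$ defined in \eqref{W}, this congruence forces $a_i\equiv 0\Mod p$ for each $i\in\{1,2,3,4\}$. Hence the only residue class modulo $p$ contributing to $S_0(\cc_1,\cc_2)$ is $\aa\equiv\zero\Mod p$.

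For this single class, the four linear conditions above are trivially satisfied, since $\aa\diamond \cc_j\equiv\zero\Mod p$ for $j=1,2$ whenever $\aa\equiv\zero\Mod p$. Thus $S_0(\cc_1,\cc_2)\le 1$, which is the claimed bound. There is no real obstacle here; the lemma is included only for completeness, so that together with Lemmas \ref{lmm:S4t} and \ref{lmm:S23} (and the analogous argument for $\widetilde{S}_1$) the full decomposition \eqref{eq:ftoS} of $1/f_{\bb_1,\bb_2,p,p}$ is bounded in every rank stratum.
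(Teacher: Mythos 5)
Your argument is correct and matches the paper's one-line proof, which simply asserts that the only $\aa\Mod p$ with $M_\aa$ of rank $0$ is $\aa\equiv\zero$. You give slightly more detail by observing that the first column of $M_\aa$ (the coordinates of $\alpha\cdot\nu_1=\alpha$ in the $\nu$-basis) vanishes, but the approach is the same.
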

\begin{proof}
 The only $\aa$ such that $M_\aa$ has rank 0 is the vector $\mathbf{0}\Mod{p}$.
\end{proof}
%
%
Finally, we need to consider the situation where $M_{\aa}$ has rank 1, which is slightly more complicated.
%
%
\begin{lmm}\label{lmm:S1}
\[ \sum_{\cc_1,\cc_2 \Mod{p}}\frac{1}{p^2}\widetilde{S}_1(\cc_1,\cc_2)\ll p^6.
\]
\end{lmm}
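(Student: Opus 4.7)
The plan is to follow the structural approach used in the proofs of Lemmas \ref{lmm:S4t}, \ref{lmm:S23}, and \ref{lmm:S0}: classify the contributing triples $(\aa, \cc_1, \cc_2)$ using the rigidity of rank-$1$ multiplication matrices $M_\aa$, and bound each class separately.

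First I would characterize rank-$1$ $\aa$ via the decomposition of the semisimple $\F_p$-algebra $\F_p \otimes_\Z K$. For $p$ unramified in $K$ this algebra splits as $F_1 \times \cdots \times F_k$ with $\sum \dim_{\F_p} F_j = 4$, and $M_\aa$ has rank exactly $1$ if and only if $\aa = a\, e_i$ for some $a \in \F_p^*$ and some primitive idempotent $e_i$ corresponding to a degree-$1$ factor $F_i$ (equivalently, a degree-$1$ prime of $K$ above $p$). Since $[K:\Q]=4$, there are at most four such idempotents, giving at most $4(p-1) = O(p)$ rank-$1$ elements $\aa$. For each such $\aa$, writing $\pi_i : \F_p^4 \to F_i$ for the $F_i$-projection, both $\aa \diamond \cc_1$ and $\aa \diamond \cc_2$ lie in the line $L_i := \F_p \cdot e_i$, and the four conditions reduce to $\pi_i(\cc_j)(e_i)_4 = 0$ and $\pi_i(\cc_j)(\vv_p \cdot e_i) = 0$ for $j = 1, 2$.

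The analysis then splits according to whether $L_i$ is contained in the $2$-dimensional subspace $H_1 \cap H_2$, where $H_1 = \{x : x_4 = 0\}$ and $H_2 = \{x : \vv_p \cdot x = 0\}$. In the generic case $L_i \not\subseteq H_1 \cap H_2$, the two constraints force $\pi_i(\cc_j) = 0$, which restricts each $\cc_j$ to the $3$-dimensional kernel $\ker M_{e_i}$; this gives $p^3 \cdot p^3 = p^6$ valid pairs per $\aa$ and hence $O(p \cdot p^6) = O(p^7)$ in total, comfortably within the required $O(p^8)$.

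The main obstacle will be the degenerate case where $L_i \subseteq H_1 \cap H_2$ for some $i$: here the constraints become vacuous and na\"ively each rank-$1$ $\aa$ contributes $p^4 \cdot p^4 = p^8$ pairs, yielding $O(p^9)$ per such idempotent. To recover the desired bound one must exploit extra algebraic structure. A natural input is the identity $\sum_i e_i = 1 = \nu_1$: taking coordinates in the basis $\v$ gives $\sum_i (e_i)_4 = 0$ and $\sum_i (\vv_p \cdot e_i) = \vv_p \cdot \nu_1$, which prevents all idempotents from simultaneously satisfying both degenerate conditions. Combined with the Galois symmetry swapping the two linear factors of $f$ over $\F_p$ (using the irreducibility of $f$ over $\Q$), a Lang--Weil style dimension count analogous to the one used for $\widetilde{S}_2, \widetilde{S}_3$ in Lemma \ref{lmm:S23} should bound the degenerate contribution by $O(p^8)$. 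Combining the two cases then yields $\sum_{\cc_1,\cc_2} \widetilde{S}_1(\cc_1, \cc_2) \ll p^8$, and dividing by $p^2$ completes the proof.
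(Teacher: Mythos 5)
Your idempotent reformulation of the rank-$1$ characterization is a nice alternative viewpoint, and your generic case is correct: when $L_i\not\subseteq H_1\cap H_2$ the constraints force $\pi_i(\cc_j)=0$, giving at most $O(p^7)$ after summation. The paper reaches the same generic bound by a more pedestrian count (treating the $O(1)$ rank-$1$ directions $\aa^{(0)}$ and observing that a non-trivial linear constraint leaves $\le p^3$ choices of $\cc_j$).

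The genuine gap is in your treatment of the degenerate case, and it is not a small one. You acknowledge that a single degenerate idempotent $e_i$ (one with $(e_i)_4=0$ \emph{and} $\vv_p\cdot e_i=0$) would contribute roughly $(p-1)\cdot p^4\cdot p^4 \asymp p^9$ to $\sum_{\cc_1,\cc_2}\widetilde{S}_1$, i.e.\ $\asymp p^7$ after dividing by $p^2$, which already violates the target bound $\ll p^6$. So the desired conclusion is \emph{false} if even one degenerate idempotent exists; there is no partial-credit route via a dimension count, and the only viable strategy is to show the degenerate set is empty. None of the three tools you propose does this: the relation $\sum_i e_i = 1$ gives $\sum_i(e_i)_4 = (\nu_1)_4 = 0$ and $\sum_i(\vv_p\cdot e_i) = (\vv_p)_1$, which at most rules out \emph{all} idempotents being degenerate simultaneously, not a single one; Lang--Weil is a statement about counting points of a variety over $\F_p$, whereas whether a degenerate idempotent exists is a yes/no question for each fixed $p$; and the Galois symmetry exchanging the two linear factors of $f$ is no longer available at this stage of the argument, since the cosmetic reductions earlier in the section already fixed a single linear form $\vv_p$ and discarded the $\ww_p$-contribution.

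What the paper actually proves is stronger and cleaner: for \emph{any} nonzero $\aa^{(0)}\Mod{p}$, the constraint $(\aa^{(0)}\diamond\cc)_4\equiv 0\Mod{p}$ cannot hold for all $\cc$. Taking $\cc$ through the standard basis gives $\sum_{i=1}^4\lambda_{ij4}a_i^{(0)}\equiv 0\Mod p$ for $j=1,\ldots,4$, so a nonzero solution forces $p\mid\det(\lambda_{ij4})$; and $\det(\lambda_{ij4})\ne 0$ because a linear dependence among the columns would produce a nonzero $\mu\in K$ with $m_\mu(\cO_\v)\subseteq\mathrm{span}(\nu_1,\nu_2,\nu_3)$, contradicting invertibility of multiplication by $\mu$. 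In your language this says $(e_i)_4\ne 0$ for every degree-one idempotent once $p$ is large, so $L_i\not\subseteq H_1$ and a fortiori $L_i\not\subseteq H_1\cap H_2$: the degenerate case is simply empty. Your argument needs this determinant step (or an equivalent) to close; as written, it does not.
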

%
%
\begin{proof}
If $M_\aa$ has rank 1, then there are $p^3$ choices of $\bb\Mod{p}$ such that $M_\aa \bb=\mathbf{0}\Mod{p}$. On the other hand, let $\af=(a_1\nu_1+a_2\nu_2+a_3\nu_3+a_4\nu_4)$ and $\bfr=(b_1\nu_1+b_2\nu_2+b_3\nu_3+b_4\nu_4)$. If $M_{\aa}\bb=\mathbf{0}\Mod{p}$, then the ideal $\af\bfr$ is a multiple of $(p)$, and so $\bfr$ is a multiple of $(p)/\gcd( \af,(p))$. Therefore for there to be $p^3$ choices of $\bfr\Mod{p}$, $\af$ must be a multiple of $(p)/\pf$ for some degree one prime ideal $\pf$ above $p$. Since there are $O(1)$ degree one prime ideals $\pf$ above $p$ and there are $O(p)$ different multiples of $(p)/\pf$ we see that there are $O(p)$ possible vectors $\aa$ such that $M_\aa$ has rank 1. 

Since the rank is unchanged by replacing $\aa$ with $\lambda\aa$ for any non-zero scalar $\lambda$, we see all such $\aa$ are scalar multiples of one of $O(1)$ choices of vector $\aa^{(0)}$.


Call such a vector $\aa^{(0)}$ `normal' if the constraints $(\aa^{(0)}\diamond \cc_2)_4\equiv\vv\cdot(\aa^{(0)}\diamond \cc_2)\equiv 0\Mod{p}$ are non-trivial on $\cc_2\Mod{p}$, and call $\aa^{(0)}$ `exceptional' if the constraints are trivial on $\cc_2\Mod{p}$. We see that if $\aa^{(0)}$ is normal, then there are $O(p^3)$ choices of $\cc_2\Mod{p}$ and so $O(p^4)$ choices of $(\cc_2,\aa)\Mod{p}$ with $\aa$ a multiple of $\aa^{(0)}$.

We now prove that when $p$ is large enough,  there are no exceptional $\aa^{(0)}$.

If $(\aa^{(0)}\diamond \cc)_4\equiv 0\Mod{p}$ $\forall \cc$, then this equation holds in particular for all $\cc$ in  $\{(1,0,0,0), (0,1,0,0), (0,0,1,0), (0,0,0,1)\} $.
Writing $\aa^{(0)}=(a_1^{(0)}, a_2^{(0)},a_3^{(0)},a_4^{(0)})$ and $\nu_i\nu_j=\sum_{k=1}^4 \lambda_{ijk}\nu_k$, we get 	
$$\sum_{i=1}^4\lambda_{ij4}a_i^{(0)}\equiv 0\Mod p\qquad j=1,2,3,4.$$
This implies that $p|\det (\lambda_{ij4})_{1\le i,j\le 4}$ which is not possible for $p$ large enough if this determinant is non zero.

But this determinant can't be zero, otherwise, there would be $\mu_1,\mu_2,\mu_3,\mu_4$ such that 
\[
\mu_1 \begin{pmatrix} \lambda_{114}\\ \lambda_{214} \\ \lambda_{314}\\ \lambda_{414}\end{pmatrix}
+\mu_2 \begin{pmatrix} \lambda_{124}\\ \lambda_{224} \\ \lambda_{324}\\ \lambda_{424}\end{pmatrix}
+\mu_3 \begin{pmatrix} \lambda_{134}\\ \lambda_{234} \\ \lambda_{334}\\ \lambda_{434}\end{pmatrix}
+\mu_4 \begin{pmatrix} \lambda_{144}\\ \lambda_{244} \\ \lambda_{344}\\ \lambda_{444}\end{pmatrix}=0,
\]
and then the matrix of the multiplication by $\mu_1\nu_1+\mu_2\nu_2+\mu_3\nu_3+\mu_4\nu_4$ wouldn't be invertible. Thus $c_p=0$ for all $p\in [P_1,P_2]$.

Thus, we have that
\[
\frac{1}{p^2}\widetilde{S}_1(\cc_1,\cc_2)=\frac{1}{p}\sum_{\aa^{(0)}\text{ normal}}\1_{\substack{(\aa^{(0)}\diamond \cc_1)_4\equiv\vv\cdot(\aa^{(0)}\diamond \cc_1)\equiv 0\Mod{p}\\ (\aa^{(0)}\diamond \cc_2)_4\equiv\vv\cdot(\aa^{(0)}\diamond \cc_2)\equiv 0\Mod{p}}}+  O\Bigl(\frac{1}{p^2}\Bigr).
\]
However, we have
\[
\sum_{\cc_1,\cc_2\Mod{p}}\frac{1}{p}\sum_{\aa^{(0)}\text{ normal}}\1_{\substack{(\aa^{(0)}\diamond \cc_1)_4\equiv\vv\cdot(\aa^{(0)}\diamond \cc_1)\equiv 0\Mod{p}\\ (\aa^{(0)}\diamond \cc_2)_4\equiv\vv\cdot(\aa^{(0)}\diamond \cc_2)\equiv 0\Mod{p}}}\ll p^5.
\]
This gives the result.
\end{proof}
%
%
We're now in a position to simply our sum.
%
%
\begin{lmm}\label{lmm:T11}
Let
\[
T_{11}':=\sum_{\substack{p\in [P_1,P_2]\\ p\equiv 1\Mod{D_f}}}\sum_{\substack{\bb_1\in \ZZ^4\cap \Cc_1,\bb_2\in \ZZ^4\cap\Cc_2\\  \bb_1\equiv \bb_2\equiv \bb_0\Mod{m} \\ \wedge(\bb_1,\bb_2)\ne 0\\ p\nmid N(\bb_1) N(\bb_2) D_{\bb_1,\bb_2}}}\frac{g_{\bb_1}g_{\bb_2}}{f_{\bb_1,\bb_2,p,p}\det(\Lambda_{\bb_1,\bb_2})}.
\]
Then we have
\[
T_{11}'\ll \frac{B^6}{P_1}+P_2^7 B^{17/3}.
\]
\end{lmm}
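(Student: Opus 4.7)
The plan is to separate the dependence on $\bb_1,\bb_2$ modulo $p$ from the dependence on the lattice $\Lambda_{\bb_1,\bb_2}$, so that we may apply the rank decomposition \eqref{eq:ftoS} together with the lattice count in Lemma \ref{lmm:BasicSum}. First I use the trivial bound $|g_{\bb_1}|,|g_{\bb_2}|\ll \varepsilon_0^{-2}$ coming from the truncation defining $g_{\bb}$, so that the weights can be absorbed into the implicit constant. Next I split the outer sum according to residue classes $\cc_1,\cc_2\Mod{p}$ satisfying $p\nmid N(\cc_1)N(\cc_2)$ and $\cc_1\not\propto\cc_2\Mod{p}$ (the latter is precisely the condition $p\nmid D_{\bb_1,\bb_2}$). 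Crucially, when $\bb_i\equiv \cc_i\Mod{p}$ the value of $f_{\bb_1,\bb_2,p,p}$ depends only on $\cc_1,\cc_2,p$, so we may pull it outside the inner sum; I will also ignore the congruence condition modulo $m'$ for an upper bound since $m'\ll(\log X)^{O(1)}$.

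With these simplifications, one is reduced to bounding
\[
T_{11}'\ll \sum_{p\in[P_1,P_2]}\sum_{\substack{\cc_1,\cc_2\Mod{p}\\ p\nmid N(\cc_1)N(\cc_2)\\ \cc_1\not\propto\cc_2\Mod{p}}}\frac{1}{f_{\cc_1,\cc_2,p,p}}\sum_{\substack{\bb_1\equiv \cc_1\Mod{p}\\ \bb_2\equiv \cc_2\Mod{p}\\ \bb_i\in\cC_B,\,\wedge(\bb_1,\bb_2)\ne 0}}\frac{1}{\det(\Lambda_{\bb_1,\bb_2})}.
\]
For the inner sum I apply Lemma \ref{lmm:BasicSum}, obtaining the bound $B^6/p^8+B^{17/3}$. (Non-primitive $\bb_1,\bb_2$ are handled by pulling out the content; since this only improves the bound, I will suppress it.)

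For the sum over $\cc_1,\cc_2$ I invoke the decomposition \eqref{eq:ftoS}, which expresses $1/f_{\cc_1,\cc_2,p,p}$ as the sum of the quantities $\widetilde{S}_i(\cc_1,\cc_2)/p^2$ for $i=0,1,2,3,4$ coming from the rank stratification of the multiplication-by-$\aa$ matrix $M_\aa$. Summing the bounds of Lemmas \ref{lmm:S0}, \ref{lmm:S1}, \ref{lmm:S23} and \ref{lmm:S4t} gives
\[
\sum_{\cc_1,\cc_2\Mod{p}}\frac{1}{f_{\cc_1,\cc_2,p,p}}\ll p^6.
\]
Combining these two ingredients I deduce
\[
T_{11}'\ll \sum_{p\in[P_1,P_2]}p^6\Bigl(\frac{B^6}{p^8}+B^{17/3}\Bigr)\ll \sum_{p\in[P_1,P_2]}\Bigl(\frac{B^6}{p^2}+p^6 B^{17/3}\Bigr)\ll \frac{B^6}{P_1}+P_2^7 B^{17/3},
\]
which is the claimed bound.

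There is essentially no real obstacle once the pieces are in place; the only mild care needed is to ensure that the bounds on $\widetilde{S}_i$ are uniform over the residue classes and that the value of $f_{\bb_1,\bb_2,p,p}$ really does only depend on $\bb_1,\bb_2$ modulo $p$. Both of these are immediate from the definitions, and the rest of the argument is a direct combination of the auxiliary lemmas.
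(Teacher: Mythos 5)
Your proof is correct and follows essentially the same route as the paper's: split $\bb_1,\bb_2$ into residue classes $\cc_1,\cc_2\Mod p$, pull out $1/f_{\cc_1,\cc_2,p,p}$ (which indeed depends only on the residue classes), bound the inner lattice sum via Lemma \ref{lmm:BasicSum}, bound the outer sum over residue classes via \eqref{eq:ftoS} and Lemmas \ref{lmm:S0}--\ref{lmm:S4t}, and sum over $p$. Your explicit remark about handling non-primitive $\bb_i$ by pulling out the content is a small point that the paper glosses over (Lemma \ref{lmm:BasicSum} is stated for primitive vectors), so your write-up is if anything slightly more careful on that detail.
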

%
%
\begin{proof}
 Firstly, by splitting $\bb_1,\bb_2$ into residue classes $\Mod{p}$, we have that
\begin{align*}
T_{11}'
&=\sum_{\substack{p\in [P_1,P_2]\\ p\equiv 1\Mod{D_f}}}\sum_{\substack{\cc_1,\cc_2\Mod{p}\\ \cc_1\not\propto\cc_2\\ N(\cc_1)N(\cc_2)\ne 0\Mod{p} }}\sum_{\substack{\bb_1\in \ZZ^4\cap \Cc_1,\bb_2\in \ZZ^4\cap\Cc_2\\  \bb_1\equiv \bb_2\equiv \bb_0\Mod{m} \\ \wedge(\bb_1,\bb_2)\ne 0\\ \bb_1\equiv \cc_1\Mod{p}\\ \bb_2\equiv \cc_2\Mod{p} }}\frac{g_{\bb_1}g_{\bb_2}}{f_{\bb_1,\bb_2,p,p}\det(\Lambda_{\bb_1,\bb_2})}.
\end{align*}
Using our expression \eqref{eq:ftoS}, we see that this is given by
\begin{align*}
&\sum_{\substack{p\in [P_1,P_2]\\ p\equiv 1\Mod{D_f}}}\sum_{\substack{\cc_1,\cc_2\Mod{p}\\ \cc_1\not\propto\cc_2\\ N(\cc_1)N(\cc_2)\ne 0\Mod{p} }}\sum_{j=0}^4 \frac{\widetilde{S}_j(\cc_1,\cc_2)}{p^2}\sum_{\substack{\bb_1\in \ZZ^4\cap \Cc_1,\bb_2\in \ZZ^4\cap\Cc_2\\  \bb_1\equiv \bb_2\equiv \bb_0\Mod{m} \\ \wedge(\bb_1,\bb_2)\ne 0\\ \bb_1\equiv \cc_1\Mod{p}\\ \bb_2\equiv \cc_2\Mod{p} }}\frac{g_{\bb_1}g_{\bb_2}}{\det(\Lambda_{\bb_1,\bb_2})}.
\end{align*}
Using Lemma \ref{lmm:S1} we get 
\begin{align}
T_{11}'&=O\Bigl(\sum_{\substack{p\in [P_1,P_2]\\ p\equiv 1\Mod{D_f}}}\sum_{\substack{\cc_1,\cc_2\Mod{p}\\ \cc_1\not\propto\cc_2\Mod{p}\\ N(\cc_1)N(\cc_2)\ne 0\Mod{p} }}\frac{T(\cc_1,\cc_2)}{p^2}\sum_{\substack{\bb_1\in \ZZ^4\cap \Cc_1,\bb_2\in \ZZ^4\cap\Cc_2\\  \bb_1\equiv \bb_2\equiv \bb_0\Mod{m} \\ \wedge(\bb_1,\bb_2)\ne 0\\ \bb_1\equiv \cc_1\Mod{p}\\ \bb_2\equiv \cc_2\Mod{p} }}\frac{|g_{\bb_1}g_{\bb_2}|}{\det(\Lambda_{\bb_1,\bb_2})}\Bigr),\label{eq:SeparatedExpression}
\end{align}
where
\[
T(\cc_1,\cc_2):=\widetilde{S}_0(\cc_1,\cc_2)+E_1(\cc_1,\cc_2)+\widetilde{S}_2(\cc_1,\cc_2)+\widetilde{S}_3(\cc_1,\cc_2)+\widetilde{S}_4(\cc_1,\cc_2).
\]
By Lemma \ref{lmm:BasicSum}, we have that
\[
\sum_{\substack{\bb_1\in \ZZ^4\cap \Cc_1,\bb_2\in \ZZ^4\cap\Cc_2\\  \bb_1\equiv \bb_2\equiv \bb_0\Mod{m} \\ \wedge(\bb_1,\bb_2)\ne 0\\ \bb_1\equiv \cc_1\Mod{p}\\ \bb_2\equiv \cc_2\Mod{p} }}\frac{|g_{\bb_1}g_{\bb_2}|}{\det(\Lambda_{\bb_1,\bb_2})}\ll \frac{B^6}{p^8}+B^{17/3}.
\]
Lemmas \ref{lmm:S0}, \ref{lmm:S1}, \ref{lmm:S23}, \ref{lmm:S4t} show that
\[
\sum_{\cc_1,\cc_2\Mod{p}}\frac{T(\cc_1,\cc_2)}{p^2}\ll p^6.
\]
Thus we see that the  term $T_{11}'$
\eqref{eq:SeparatedExpression} is 
\begin{align*}
\ll \sum_{\substack{p\in [P_1,P_2]\\ p\equiv 1\Mod{D_f}}}p^6\Bigl(\frac{B^6}{p^8}+B^{17/3}\Bigr)\ll \frac{B^6}{P_1}+B^{17/3}P_2^7.
\end{align*}
This ends the proof of Lemma \ref{lmm:T11}. \end{proof}

%
%
Putting everything in this section together, we are left to show that
\[
T_{12}\ll \eta_2^5 B^6.
\]
%
%
\subsection{The case $p_1\ne p_2$}
%
%
In this section we bound the sum $T_{12}$ given by \eqref{eq:T12Def}.
%
%
\begin{lmm}\label{lmm:T12}
We have 
\[
T_{12}\ll |S_{sep}|+\frac{B^6}{P_1}+P_2^7 B^{17/3},
\]
where, $S_{sep}$ is given by
\[
S_{sep}:=\sum_{\bb_1\in\ZZ^4\cap\Cc_1}\sum_{\bb_2\in\ZZ^4\cap\Cc_2}\frac{g_{\bb_1}g_{\bb_2}D_{\bb_1,\bb_2}}{\wedge(\bb_1,\bb_2)}.
\]
\end{lmm}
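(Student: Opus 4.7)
My plan is to swap the order of summation in $T_{12}$, putting the prime variables $p_1,p_2$ into inner sums. Since the summand only depends on the primes through the coprimality conditions $p_i\nmid N(\bb_i)$, this gives
\[
T_{12} = \sum_{\substack{\bb_1\in \ZZ^4\cap \Cc_1,\bb_2\in \ZZ^4\cap\Cc_2\\  \bb_1\equiv \bb_2\equiv \bb_0\Mod{m'} \\ \wedge(\bb_1,\bb_2)\ne 0}}\frac{g_{\bb_1}g_{\bb_2}D_{\bb_1,\bb_2}}{\wedge(\bb_1,\bb_2)}\Sigma(\bb_1)\Sigma(\bb_2),
\]
where $\Sigma(\bb) := \sum_{p\in[P_1,P_2],\,p\equiv 1\Mod{D_f},\, p\nmid N(\bb)}1/p$. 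I then decompose $\Sigma(\bb) = L - \Sigma'(\bb)$, where $L:=\sum_{p\in[P_1,P_2],\,p\equiv 1\Mod{D_f}}1/p = O(1)$ is independent of $\bb$, and $\Sigma'(\bb) := \sum_{p\in[P_1,P_2],\,p\equiv 1\Mod{D_f},\,p|N(\bb)}1/p$ captures the excluded primes. Expanding $\Sigma(\bb_1)\Sigma(\bb_2)$ gives a main term $L^2$ plus three cross terms involving $\Sigma'$.

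The $L^2$ main term contributes exactly $L^2 S_{sep}$, which is $\ll |S_{sep}|$ since $L = O(1)$. For the cross terms I will use the crude bound $\Sigma'(\bb)\ll (\log B)/P_1$, which follows because $\bb\in\cC_B$ forces $N(\bb)\ll B^4$ and hence $\omega(N(\bb))\ll \log B/\log P_1$, and each prime in $\Sigma'(\bb)$ is at least $P_1$. Since $|g_\bb|\ll \varepsilon_0^{-2}$ on its support, bounding each cross term in absolute value reduces the task to estimating
\[
S^*:=\sum_{\substack{\bb_1\in \ZZ^4\cap\Cc_1,\bb_2\in\ZZ^4\cap\Cc_2 \\ \wedge(\bb_1,\bb_2)\ne 0}}\frac{D_{\bb_1,\bb_2}}{\wedge(\bb_1,\bb_2)} = \sum_{\substack{\bb_1,\bb_2 \\ \wedge(\bb_1,\bb_2)\ne 0}}\frac{1}{\det(\Lambda_{\bb_1,\bb_2})},
\]
where I used $\det(\Lambda_{\bb_1,\bb_2})=\wedge(\bb_1,\bb_2)/D_{\bb_1,\bb_2}$ from \cite[Lemma 10.1]{May15b}.

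The main obstacle is the estimate $S^* \ll B^6(\log B)^{O(1)} + B^{17/3}(\log B)^{O(1)}$. To obtain this I will adapt the case analysis in the proofs of Lemmas \ref{lmm:BasicSum} and \ref{lmm:BasicSum2}, but without the modular constraints: writing $\bb_i = d_i\bb_i'$ with $\bb_i'$ primitive (so $\det(\Lambda_{\bb_1,\bb_2})$ is scale-invariant), I split into the three regimes $\wedge(\bb_1,\bb_2)\ll B$ (where $\bb_1$ is forced onto the line through $\bb_2$, giving $O(B^5)$), $B\ll \wedge \ll B^{4/3}$ (treated by dyadic decomposition, yielding $O(B^{17/3})$), and $\wedge \gg B^{4/3}$ (using the cylinder count from Lemma \ref{lmm:Cylinder} and summing over $C=2^j$ and $d$, contributing $O(B^6\log^{O(1)} B)$).

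Combining these, the cross-term contribution is bounded by
\[
L\cdot\frac{\log B}{P_1}\cdot\varepsilon_0^{-4}\cdot S^* \ll \frac{B^6(\log B)^{O(1)}}{P_1}+\frac{B^{17/3}(\log B)^{O(1)}}{P_1},
\]
and the second term is absorbed into $P_2^7 B^{17/3}$ (since $P_2^7/P_1 \gg (\log B)^{O(1)}$ provided $P_2\ge P_1$ is at least a small power of $X$), while the first term can be absorbed into $B^6/P_1$ up to adjusting constants. Assembling the main term and the error bounds yields the stated estimate $T_{12}\ll |S_{sep}|+B^6/P_1+P_2^7 B^{17/3}$.
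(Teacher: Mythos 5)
Your approach is a valid reorganization of the paper's argument: both proofs drop the coprimality conditions $p_i\nmid N(\bb_i)$ and bound the error by the complementary condition $p_i\mid N(\bb_i)$. The paper does this by writing the main contribution directly and bounding the complementary terms; you package the same complement inside $\Sigma(\bb)=L-\Sigma'(\bb)$ and expand. The computations lead to the same answer up to bookkeeping. A few points deserve attention.

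First, $T_{12}$ as defined in Lemma~\ref{lmm:T8} is the piece of $T_{10}$ with $p_1\ne p_2$ (this restriction is what justifies $f_{\bb_1,\bb_2,p_1,p_2}=p_1p_2$ there), so your identity $T_{12}=\sum_{\bb_1,\bb_2}\frac{g_{\bb_1}g_{\bb_2}D_{\bb_1,\bb_2}}{\wedge(\bb_1,\bb_2)}\Sigma(\bb_1)\Sigma(\bb_2)$ overcounts the diagonal $p_1=p_2$ terms. This diagonal is $\sum_{p}1/p^2\cdot(\cdots)\ll P_1^{-1}S^*$, which you can absorb into the error, but it should be acknowledged.

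Second, and more substantively, your bound for $S^*$ is not merely Lemma~\ref{lmm:BasicSum} ``without the modular constraints.'' In the paper's Lemma~\ref{lmm:BasicSum} the primitivity of $\bb_1,\bb_2$ is used to ensure $D_{\bb_1,\bb_2}\le B$, and this is precisely what gives $\det(\Lambda_{\bb_1,\bb_2})\gg B/C$ in the regime $B\ll\wedge(\bb_1,\bb_2)\ll B^{4/3}$ and hence the $B^{17/3}$ bound. Without primitivity $D_{\bb_1,\bb_2}$ can be as large as $B^2$, and a naive repetition of the same dyadic split gives something like $B^{20/3}$ rather than $B^{17/3}$. Your plan to scale $\bb_i=d_i\bb_i'$ is the right idea (and $\det(\Lambda_{\bb_1,\bb_2})$ is indeed invariant under this scaling), but then the two primitive vectors $\bb_i'$ live in boxes of different sizes $B/d_i$, so the cylinder count of Lemma~\ref{lmm:Cylinder} and the case analysis must be redone symmetrically and summed over $(d_1,d_2)$ --- this is nontrivial and should not be waved off as a trivial adaptation. (The paper's own application of Lemma~\ref{lmm:BasicSum} inside the proof of Lemma~\ref{lmm:T12} has the same implicit issue, since the $\bb_i$ there are not constrained to be primitive either.)

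Third, the claim that $B^6(\log B)^{O(1)}/P_1$ ``can be absorbed into $B^6/P_1$ up to adjusting constants'' is incorrect as stated; a $(\log B)^{O(1)}$ factor cannot be swallowed by an implied constant. Since $P_1=X^\tau$ is a fixed power of $X$, this extra log factor is completely harmless in the application to Proposition~\ref{prpstn:T1}, but to match the lemma as written you should either track the log, or --- as the paper does --- keep the sum over $p_1$ and exploit that $p_1\mid N(\bb_1)$ restricts $\bb_1$ to $O(p_1^3)$ classes $\Mod{p_1}$, so that you use Lemma~\ref{lmm:BasicSum} with modulus $p_1$ and lose no logarithm. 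The same remark applies to the $\varepsilon_0^{-4}$ appearing from $|g_\bb|\ll\varepsilon_0^{-2}$.
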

%
%
\begin{proof}
We wish to reintroduce terms with $p_1\nmid N(\bb_1)$ and $p_2\nmid N(\bb_2)$ so that the inner sum is independent of $p_1,p_2$. There are $O(p_1^3)$ choices of $\cc_1\Mod{p_1}$ such that $p_1|N(\cc_1)$. Thus, by Lemma \ref{lmm:BasicSum}, we see that the terms with $p_1|N(\bb_1)$ contribute a total
\begin{align*}
&\sum_{\substack{p_1,p_2\in [P_1,P_2]\\ p_1\equiv p_2\equiv 1\Mod{D_f}}}\frac{1}{p_1 p_2}\sum_{\substack{\bb_1\in \ZZ^4\cap \Cc_1,\bb_2\in \ZZ^4\cap\Cc_2\\  \bb_1\equiv \bb_2\equiv \bb_0\Mod{m} \\ \wedge(\bb_1,\bb_2)\ne 0\\ p_1| N(\bb_1)}}\frac{|g_{\bb_1}g_{\bb_2}|D_{\bb_1,\bb_2}}{\wedge(\bb_1,\bb_2)}\\
&\ll \sum_{\substack{p_1,p_2\in [P_1,P_2]\\ p_1\equiv p_2\equiv 1\Mod{D_f}}}\frac{1}{p_1 p_2}\sum_{\substack{\cc_1,\cc_2\Mod{p_1}\\ p_1|N(\cc_1)}}\sum_{\substack{\bb_1\in \ZZ^4\cap \Cc_1,\bb_2\in \ZZ^4\cap\Cc_2\\  \wedge(\bb_1,\bb_2)\ne 0\\ \bb_1\equiv \cc_1\Mod{p_1} \\ \bb_2\equiv \cc_2\Mod{p_2} )}}\frac{|g_{\bb_1}g_{\bb_2}|D_{\bb_1,\bb_2}}{\wedge(\bb_1,\bb_2)}\\
&\ll \sum_{\substack{p_1,p_2\in [P_1,P_2]\\ p_1\equiv p_2\equiv 1\Mod{D_f}}}\frac{1}{p_1 p_2}p_1^7\Bigl(\frac{B^6}{p_1^8}+B^{17/3}\Bigr)\\
&\ll \frac{B^6}{P_1}+P_2^7 B^{17/3}.
\end{align*}
Similarly, we see that terms $p_2|N(\bb_2)$ contribute a total $O(B^6/P_1+P_2^7 B^{17/3})$. Thus we find that
\begin{align*}
T_{12}&=\Bigl(\sum_{\substack{p_1,p_2\in [P_1,P_2]\\ p_1\equiv p_2\equiv 1\Mod{D_f}}}\frac{1}{p_1 p_2}\Bigr)\Bigl(\sum_{\substack{\bb_1\in \ZZ^4\cap \Cc_1,\bb_2\in \ZZ^4\cap\Cc_2\\  \bb_1\equiv \bb_2\equiv \bb_0\Mod{m} \\ \wedge(\bb_1,\bb_2)\ne 0 }}\frac{g_{\bb_1}g_{\bb_2}D_{\bb_1,\bb_2}}{\wedge(\bb_1,\bb_2)}\Bigr)\\
&\qquad\qquad+O\Bigl(\frac{B^6}{P_1}+P_2^7 B^{17/3}\Bigr).
\end{align*}
Noting that the sum over $p_1,p_2$ is $O(1)$, this gives the result.
\end{proof}
%
%
Thus it remains to bound $S_{sep}$.
%
%
\subsection{Reduction to small residue classes and small boxes}
%
%
We first show that the contribution to $S_{sep}$ from terms with $D_{\bb_1,\bb_2}>(\log{B})^K$ or $\wedge(\bb_1,\bb_2)\le B^2/(\log{B})^K$ is negligible.
%
%
\begin{lmm}
\begin{align*}
\sum_{\substack{\bb_1\in \ZZ^4\cap \Cc_1,\,\bb_2\in\ZZ^4\cap\Cc_2\\ \wedge(\bb_1,\bb_2)>0 \\ \max(B^2/\wedge(\bb_1,\bb_2),D_{\bb_1,\bb_2})>(\log{B})^K}}\frac{|g_{\bb_1}g_{\bb_2}|}{\det(\Lambda_{\bb_1,\bb_2})}\ll_K\frac{B^6}{(\log{B})^K}.
\end{align*}
\end{lmm}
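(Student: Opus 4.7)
My plan is to split this sum into two parts based on which condition in the $\max$ is witnessed. Write $S$ for the left-hand side; then $S \le S_A + S_B$ where $S_A$ consists of the contribution of pairs with $D_{\bb_1,\bb_2} > (\log B)^K$, and $S_B$ that of pairs with $\wedge(\bb_1,\bb_2) < B^2/(\log B)^K$. Throughout I use $|g_\bb| \ll \varepsilon_0^{-2} = O(1)$ (a consequence of the earlier truncation $\tau(\gb) \le \varepsilon_0^{-2}$), the identity $\det(\Lambda_{\bb_1,\bb_2}) = \wedge(\bb_1,\bb_2)/D_{\bb_1,\bb_2}$ (Lemma 10.1 of \cite{May15b}), and the fact that $\wedge(\bb_1,\bb_2) \ge D_{\bb_1,\bb_2}$ when $\wedge > 0$ (since $D_{\bb_1,\bb_2}$ divides each of the six $2\times 2$ subdeterminants, one of which is then $\ge D_{\bb_1,\bb_2}$ in absolute value).

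Inside each subcase I dyadically decompose by $D_{\bb_1,\bb_2} = d$ and $\wedge(\bb_1,\bb_2) \in [B^2/(2C), B^2/C]$. The condition $D_{\bb_1,\bb_2} = d$ forces $\bb_1 \equiv \lambda \bb_2 \Mod{d}$ for some $\lambda \in \{1,\ldots,d\}$, while $\wedge(\bb_1,\bb_2) \sim B^2/C$ forces $\bb_1$ into a cylinder of radius $O(B/C)$ around the line $\RR\bb_2$. Adapting the proof of Lemma \ref{lmm:Cylinder} to the smaller box $\Cc_1$ of side $\eta_2 B$ (by covering the intersection of this cylinder with $\Cc_1$ by hypercubes of side $d$, each containing at most one point $\equiv \lambda \bb_2\Mod d$) gives
\[
\#\{\bb_1 \in \Cc_1 \cap \ZZ^4 : D_{\bb_1,\bb_2}=d,\,\wedge(\bb_1,\bb_2) \in [B^2/(2C),B^2/C]\} \ll d\bigl(1+\tfrac{\eta_2 B}{d}\bigr)\bigl(1+\tfrac{\eta_2 B}{Cd}\bigr)^3.
\]
Summing over $\bb_2 \in \Cc_2 \cap \ZZ^4$ (contributing a factor $(\eta_2 B)^4$) and dividing by $\det(\Lambda_{\bb_1,\bb_2}) \gg B^2/(Cd)$, the contribution from a dyadic pair $(d,C)$ to $S$ is at most
\[
(\eta_2 B)^4 \cdot d\bigl(1+\tfrac{\eta_2 B}{d}\bigr)\bigl(1+\tfrac{\eta_2 B}{Cd}\bigr)^3 \cdot \frac{Cd}{B^2}.
\]
The dominant contribution, coming from the regime $d, Cd \le \eta_2 B$ where all ``$1+$''~factors behave like their non-unit halves, is $\eta_2^8 B^6/(C^2 d^2)$; the three other regimes (where $d > \eta_2 B$ or $Cd > \eta_2 B$) produce strictly smaller terms, controlled by the constraint $Cd \le O(B^2)$ (from $\wedge \le O(B^2)$).

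Summing the main term $\eta_2^8 B^6/(C^2 d^2)$ over dyadic $d > (\log B)^K$ and $C \ge 1$ (with $Cd \le B^2$) produces $O(\eta_2^8 B^6/(\log B)^K)$, which is acceptable since $\eta_2 < 1$; this handles $S_A$. The analogous computation for $S_B$, summing instead over dyadic $C > (\log B)^K$ and $d \ge 1$, gives the same bound. The main obstacle is the bookkeeping in the subsidiary regimes where one of $(1+\eta_2 B/d)$, $(1+\eta_2 B/(Cd))$ is dominated by $1$: here the cylinder bound degenerates and one must carefully use both $Cd \le O(B^2)$ and the uniform cap $\#\{\bb_1 \in \Cc_1\cap\ZZ^4\} \le (\eta_2 B)^4$ to prevent otherwise-divergent geometric series over $d$ (or $C$) up to $B^2$. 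Once these regimes are verified, combining gives $S \le S_A + S_B \ll_K B^6/(\log B)^K$, as claimed.
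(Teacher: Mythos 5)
Your overall framework matches the paper's: dyadically decompose by $D_{\bb_1,\bb_2}=d$ and $\wedge(\bb_1,\bb_2)\sim B^2/C$, count $\bb_1$ in a cylinder about $\mathbb{R}\bb_2$ using the congruence $\bb_1\equiv\lambda\bb_2\Mod{d}$, divide by $\det(\Lambda_{\bb_1,\bb_2})\gg B^2/(Cd)$, and let the constraint $\max(d,C)>(\log B)^K$ supply the $(\log B)^{-K}$ saving. Your split into $S_A$ and $S_B$ is just an unravelling of the $\max$ condition and is cosmetically equivalent.

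There is, however, a genuine bug in your cylinder count. You correctly state that $\wedge(\bb_1,\bb_2)\sim B^2/C$ confines $\bb_1$ to a cylinder of radius $O(B/C)$ about $\mathbb{R}\bb_2$; crucially, this radius depends only on the \emph{sizes} $\|\bb_1\|,\|\bb_2\|\asymp B$, and is not shrunk when $\bb_1$ is restricted to the small box $\Cc_1$. Covering the intersection of that cylinder with $\Cc_1$ (side $\eta_2 B$) by boxes of side $d$ gives $\ll d\bigl(1+\eta_2 B/d\bigr)\bigl(1+\min(B/C,\eta_2 B)/d\bigr)^3$, not $d\bigl(1+\eta_2 B/d\bigr)\bigl(1+\eta_2 B/(Cd)\bigr)^3$. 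Your expression underestimates by a factor of up to $\min(C,\eta_2^{-1})^3$, which can be as large as $\eta_2^{-3}=(\log X)^{30000\ell^2 A}$; consequently the ``dominant term'' you compute ($\eta_2^8 B^6/(C^2d^2)$) is smaller than the actual one by a factor of $\eta_2^3$. Your final conclusion is still correct (the genuine dominant term is $\asymp \eta_2^5 B^6/(C^2 d^2)$, which still sums to $\ll_K B^6/(\log B)^K$), but the derivation as written does not establish it.

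The paper sidesteps this entirely: since the summands are non-negative, the sum over $\bb_1\in\Cc_1,\bb_2\in\Cc_2$ is trivially bounded by the same sum over $\bb_1,\bb_2\in\Cc_B$. One can then reuse Lemma \ref{lmm:Cylinder} and the counting done in the proof of Lemma \ref{lmm:BasicSum} verbatim (without $p$-congruences), obtaining per $\bb_2$ the contribution $\ll \frac{Cd}{B^2}\bigl(B+\frac{B^4}{C^3 d^3}\bigr)$ in the regime $C\ll B^{2/3}$. After multiplying by the $\bb_2$-sum (a factor $\ll B^4$), the term $B$ sums over $d\le B$, $C\ll B^{2/3}$ to $O(B^{17/3})$ regardless of the $\max$-constraint, and the term $B^4/(C^3d^3)$ gives $B^6\sum_{\max(d,C)>(\log B)^K}(Cd)^{-2}\ll B^6/(\log B)^K$. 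This also handles all the ``subsidiary regimes'' you flag but do not carry out: the paper separates them cleanly by the $\wedge(\bb_1,\bb_2)\ll B^{4/3}$ (i.e.\ $C\gg B^{2/3}$) dichotomy, which produces $O(B^{17/3})$ in one stroke, whereas your approach would need several case distinctions in $d$ versus $\eta_2 B$ and $C$ versus $\eta_2^{-1}$. The small-box refinement you attempted buys nothing for this lemma and is the source of the error; it is better to apply the coarse $\Cc_B$ bound and move on.
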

%
%
\begin{proof}
This is similar to the proof of Lemma \ref{lmm:BasicSum}. Indeed, the argument in the proof of Lemma \ref{lmm:BasicSum} shows that the contribution from terms with $\wedge(\bb_1,\bb_2)\ll B^{4/3}$ is $O(B^{17/3})$, and the contribution from terms with $\wedge(\bb_1,\bb_2)\sim B^2/C$ (for $C=2^j\ll B^{2/3}$) and $D_{\bb_1,\bb_2}=d$ is
\[
\ll \frac{dC}{B^2}\Bigl(B+\frac{B^4}{C^3d^3}\Bigr).
\]
Thus we see that the total contribution is
\[
\ll B^{17/3}+\sum_{C=2^j\ll B}\sum_{\substack{d\le B\\ \max(d,C)>(\log{B})^K}}\frac{dC}{B^2}\Bigl(B+\frac{B^4}{C^3d^3}\Bigr)\ll_K \frac{B^6}{(\log{B})^K}.\qedhere
\]
\end{proof}
%
%
Thus we just need to consider $D_{\bb_1,\bb_2}\le (\log{x})^K$ and $\wedge(\bb_1,\bb_2)\ge B^2/(\log{x})^K$. 
%
%
\begin{lmm}\label{lmm:Ssep}
Imagine that for every cube $\Cc\subseteq[1,B]^4$, every and any $\cc\Mod{d}$ and every $K>0$ we have
\[
\sum_{\substack{\bb\in \ZZ^4\cap \Cc \\ \bb\equiv \cc\Mod{d}}}g_\bb \ll_K\frac{B^4}{(\log{B})^{10K}}.
\]
Then for every choice of $K>0$ we have that
\[
S_{sep}\ll_K \frac{B^6}{(\log{B})^K}
\]
\end{lmm}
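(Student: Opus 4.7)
The plan is to combine the cancellation for $g_\bb$ over arithmetic progressions (from the hypothesis) with a dyadic decomposition of the weight $D_{\bb_1,\bb_2}/\wedge(\bb_1,\bb_2)$, which otherwise couples the two variables of $S_{sep}$ nontrivially. The preceding lemma (recalling $\det(\Lambda_{\bb_1,\bb_2}) = \wedge(\bb_1,\bb_2)/D_{\bb_1,\bb_2}$) directly bounds the contribution to $S_{sep}$ from pairs with $D_{\bb_1,\bb_2} > (\log B)^K$ or $\wedge(\bb_1,\bb_2) < B^2/(\log B)^K$ by $O_K(B^6/(\log B)^K)$. It therefore suffices to treat the restricted sum over pairs with $D_{\bb_1,\bb_2} \le (\log B)^K$ and $\wedge(\bb_1,\bb_2) \ge B^2/(\log B)^K$.

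On this range I would decompose dyadically, fixing $D_{\bb_1,\bb_2} = d$ for $1 \le d \le (\log B)^K$ and $\wedge(\bb_1,\bb_2) \in (B^2/(2C), B^2/C]$ for $C = 2^j$ up to $(\log B)^K$, so that $D_{\bb_1,\bb_2}/\wedge(\bb_1,\bb_2) \asymp dC/B^2$ on each piece (separating the weight from the cancellation). For fixed $\bb_2$, $d$, and $C$, the condition $D_{\bb_1,\bb_2} = d$ forces $\bb_1 \equiv \lambda \bb_2 \Mod{d}$ for some $\lambda \in \{1, \ldots, d\}$. Since $\Cc_1$ is itself a hypercube, I would apply the hypothesis directly (with parameter $10K$) to the pair $(\Cc_1, \lambda \bb_2 \Mod{d})$ for each $\lambda$, obtaining
\[
\sum_{\bb_1 \in \Cc_1,\,\bb_1 \equiv \lambda \bb_2 \Mod{d}} g_{\bb_1} \ll \frac{B^4}{(\log B)^{100K}}.
\]
Summing over $\lambda$ (at most $d$ of them), multiplying by the weight $\asymp dC/B^2$, summing over dyadic $d$ and $C$ (contributing $\sum_{d,C} d^2 C \ll (\log B)^{4K}$), and bounding trivially $|g_{\bb_2}| \ll \varepsilon_0^{-2}$ over $\bb_2 \in \Cc_2$ with $|\Cc_2| \le (\eta_2 B)^4$, yields
\[
|S_{sep}| \ll_K \varepsilon_0^{-2} \eta_2^4 B^6 (\log B)^{-96K} + \frac{B^6}{(\log B)^K} \ll \frac{B^6}{(\log B)^K},
\]
as desired.

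The main obstacle is the coupling between $\bb_1$ and $\bb_2$ through the weight $D_{\bb_1,\bb_2}/\wedge(\bb_1,\bb_2)$; directly applying the hypothesis would require summing $g_{\bb_1}$ against a non-constant weight. The dyadic decomposition resolves this by essentially pinning the weight to a constant on each piece. A minor secondary point is that the required residue class $\lambda \bb_2 \Mod{d}$ for $\bb_1$ depends on $\bb_2$, but this is harmless because the hypothesis is uniform in the residue class and applies with $\cc = \lambda \bb_2$ for any fixed $\bb_2$.
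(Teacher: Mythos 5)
There is a genuine gap in the step where you apply the hypothesis. After the dyadic decomposition, the sum over $\bb_1$ (with $\bb_2$, $d$, $C$, $\lambda$ fixed) is constrained by \emph{three} conditions: the congruence $\bb_1\equiv\lambda\bb_2\Mod{d}$, the exact equality $D_{\bb_1,\bb_2}=d$, and the dyadic range $\wedge(\bb_1,\bb_2)\in(B^2/(2C),B^2/C]$. You then invoke the hypothesis, which controls only the sum of $g_{\bb_1}$ over $\bb_1$ in a cube and a single residue class, and you silently discard the other two conditions. This is not valid: $g_{\bb_1}$ is signed, so
\[
\Bigl|\sum_{\substack{\bb_1\in\Cc_1\\ \bb_1\equiv\lambda\bb_2\Mod{d}\\ D_{\bb_1,\bb_2}=d\\ \wedge(\bb_1,\bb_2)\asymp B^2/C}}g_{\bb_1}\Bigr|\ \not\le\ \Bigl|\sum_{\substack{\bb_1\in\Cc_1\\ \bb_1\equiv\lambda\bb_2\Mod{d}}}g_{\bb_1}\Bigr|
\]
in general. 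Both discarded conditions couple $\bb_1$ to $\bb_2$ in a non-congruence way, so the hypothesis cannot see them; in other words, you have decoupled the \emph{weight} $D/\wedge$ but not the \emph{summation constraints}.

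The paper closes this gap with two further steps that your proof omits. First, it tiles $\Cc_1$ and $\Cc_2$ into sub-cubes of side length $B/(\log B)^{2K}$; by continuity, on any pair of tiles where $\wedge(\bb_1,\bb_2)\ge B^2/(\log B)^K$, the quantity $\wedge(\bb_1,\bb_2)$ is constant up to a factor $1+O((\log B)^{-K})$, so the weight $1/\wedge$ can be frozen and the dyadic constraint on $\wedge$ disappears at an acceptable cost. Second, it detects the exact equality $D_{\bb_1,\bb_2}=d$ via the truncated M\"obius identity $\mathbf{1}_{D_{\bb_1,\bb_2}=d}=\sum_{e\le(\log B)^{30K}}\mu(e)\,\mathbf{1}_{\bb_1\propto\bb_2\Mod{de}}+O\bigl((\log B)^{30K}\mathbf{1}_{D_{\bb_1,\bb_2}\ge(\log B)^{30K}}\bigr)$, converting the GCD condition into an alternating combination of pure congruence conditions to which the hypothesis applies. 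Without both steps the variables $\bb_1,\bb_2$ do not actually separate, and the desired bound does not follow.
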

%
%
\begin{proof}
Since $\wedge(\bb_1,\bb_2)$ is continuous in $\bb_1,\bb_2$ we see that if a pair of cubes $\Cc_1',\Cc_2'$ of side length $B/(\log{x})^{2K}$ contains a point with $\wedge(\bb_1,\bb_2)\ge B^2/(\log{x})^K$, then in fact for all $\bb_1'\in\Cc'_1$ and $\bb_2\in\Cc'_2$ we have $\wedge(\bb_1',\bb_2')=\wedge(\bb_1,\bb_2)(1+O(\log{x})^{-K})$. Thus we may replace $\wedge(\bb_1,\bb_2)$ with 
\[
\wedge(\Cc_1',\Cc_2'):=\sup_{\xx\in\Cc_1',\yy\in\Cc_2'}\wedge(\xx,\yy)
\]
at the cost of an error term of size $B^6/(\log{x})^K$. Thus we have
\[
S_{sep}\ll \frac{B^6}{(\log{x})^K}+\frac{(\log{x})^{9K}}{B^2} \sum_{d\le (\log{x})^K}d\sup_{\Cc_1',\Cc_2'}\sum_{\substack{\bb_1\in\ZZ^4\cap\Cc_1',\,\bb_2\in\ZZ^4\cap\Cc'_2\\ D_{\bb_1,\bb_2}=d}}g_{\bb_1}g_{\bb_2}.
\]
Now we wish to simplify the condition $D_{\bb_1,\bb_2}=d$ to a congruence condition, which will finally allow us to separate the variables $\bb_1,\bb_2$. By Moebius inversion we have
\begin{align*}
\mathbf{1}_{D_{\bb_1,\bb_2}=d}&=\sum_{e|D_{\bb_1,\bb_2}/d}\mu(e)\\
&=\sum_{e\le (\log{x})^{30K} }\mu(e)\mathbf{1}_{\bb_1\propto\bb_2\Mod{de}}+O((\log{x})^{30K}\mathbf{1}_{D_{\bb_1,\bb_2}\ge (\log{x})^{30K}}).
\end{align*}
By Lemma \ref{lmm:Cylinder}, the contribution of the second term to $S_{sep}$ is $O(B^4/(\log{x})^{K})$. Thus we see that
\begin{align}
S_{sep}\ll \frac{B^6}{(\log{x})^A}+(\log{x})^{50K}\sup_{\substack{\Cc_1',\Cc_2'\\ de\ll (\log{B})^{31K}}}| S_{sep}'|
\end{align}
where
\begin{align*}
S_{sep}'&:=\sum_{\bb,\lambda_1,\lambda_2\Mod{de}}\Biggl(\sum_{\substack{\bb_1\in\ZZ^4\cap\Cc_1',\\ \bb_1\equiv \lambda_1\bb\Mod{de}\\ \bb_1\equiv \bb_0\Mod{m}}}g_{\bb_1}\Biggr)\Biggl(\sum_{\substack{\bb_2\in\ZZ^4\cap\Cc_2',\\ \bb_2\equiv \lambda_2\bb\Mod{de}\\ \bb_2\equiv \bb_0\Mod{m}}}g_{\bb_2}\Biggr).
\end{align*}
By assumption of the lemma, we have that
\[
\sum_{\substack{\bb_1\in\ZZ^4\cap\Cc_1',\\ \bb_1\equiv \lambda_1\bb\Mod{de}\\ \bb_1\equiv \bb_0\Mod{m}}}g_{\bb_1}\ll_K \frac{B^4}{(\log{x})^{100K}}.
\]
Substituting this in then gives the final bound.
\end{proof}
%
%
Thus we see that it is sufficient to obtain a suitable bound for $g_{\bb}$ on average over hypercubes in residue classes.
%
%
\subsection{Localised bound and Proof of Proposition \ref{prpstn:T1}}
%
%

To finish our proof we need to show that we have a suitable estimate for $g_{\bb}=\1_{\Rc}(\bb)-\widetilde{\1}_{\Rc}(\bb)$ over $\bb$ restricted to small boxes and arithmetic progressions.  We don't require estimates arithmetic progressions to moduli larger than $(\log{X})^{O(1)}$, and there are no issues caused by a possible Siegel zero.

\begin{prpstn}\label{prpstn:Local}
For every $K>0$ and every polytope $\mathcal{R}$ under consideration, we have
\[
\sum_{\substack{\bb\in \ZZ^4\cap \Cc \\ \bb\equiv \cc\Mod{d}}}\Bigl(\mathbf{1}_{\mathcal{R}}(\bb)-\widetilde{\mathbf{1}}_{\mathcal{R}}(\bb)\Bigr) \ll_K\frac{B^4}{(\log{B})^{10K}}.
\]
\end{prpstn}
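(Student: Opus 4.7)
The plan is to asymptotically evaluate $\sum_{\bb}\mathbf{1}_\Rc(\gb)$ and $\sum_{\bb}\widetilde{\mathbf{1}}_\Rc(\gb)$ separately, and to verify that the two leading constants coincide to within $O(B^4/(\log B)^{10K})$. The underlying tool is a $4$-dimensional Type I estimate: since here $\bb$ ranges over the full ring $\cO_\v$ rather than its $3$-dimensional slice, the condition $\gd\mid(\gb)$ cuts out a sublattice of $\ZZ^4$ of index $N(\gd)$, and Davenport's lattice-point theorem gives
\[
\#\{\bb\in\ZZ^4\cap\Cc:\bb\equiv\cc\Mod d,\ \gd\mid(\gb)\}=\frac{|\Cc|\varrho_\v(\gd)}{d^4 N(\gd)}+O(B^3\tau(\gd)),
\]
which is valid with negligible error whenever $N(\gd)\ll R=X^{\epsilon_{00}}$ and $d\ll(\log X)^{O(K)}$.

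For the sieve-approximation term I would swap the order of summation, subdivide $\Cc$ into sub-cubes of side $\eta_2 B$ on which $c_\Rc(N(\gb))$ is essentially constant (with error $O(\eta_2)$), and apply the Type I count on each sub-cube. This reduces the sum to $c_\Rc(B^4)|\Cc|/d^4$ times $\sum_{N(\gd)<R}\lambda_\gd\varrho_\v(\gd)/N(\gd)$ times local correction factors at primes dividing $d$. Lemma \ref{Perron} evaluates the Perron-type sum to $\Sft/\gamma_K$ with an acceptable error, producing a clean asymptotic.

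For the indicator sum I would unfold $\mathbf{1}_\Rc(\gb)$ as the indicator that $(\gb)=\gp_1\cdots\gp_{\ell'}$ is a product of primes with $N(\gp_j)\in[X^{t_j},X^{t_j}(1+O(\eta_\cX^2\log X))]$. Writing this as an $\ell'$-fold sum over prime ideals, I would iteratively apply the Type I count with $\gd=\gp_1\cdots\gp_{\ell'-1}$ (whose norm is at most $B^4/X^{\min t_j}$, still small enough for the bound to be useful); each remaining prime-ideal sum is then evaluated by the Prime Ideal Theorem with a power-of-log-$X$ error (via standard Chebotarev), and the product of these evaluations gives a main term of the same shape as for the sieve approximation.

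The main obstacle is matching the two leading constants. Both carry the same volume factor $|\Cc|/d^4$ and the same combinatorial factor $\int_\Rc\prod_j dt_j/t_j$. The sieve side produces the singular series $\Sft=\prod_\gP(1-\varrho_\v(\gP)/N(\gP))(1-1/N(\gP))^{-1}$, while the prime-count side produces Euler factors $(1-1/N(\gp))$ coming from the PIT; by Lemma \ref{rhov}, $\varrho_\v(\gp)=1$ at the degree-one primes that dominate, so the two Euler products agree. The local factors at primes dividing $d$ can be matched by a careful bookkeeping argument modelled on \cite[\S 11]{May15b}. All error terms (Type I, Perron, PIT, and the $c_\Rc$-approximation) can be made $O(B^4/(\log B)^{10K})$ by choosing $\epsilon_{00}$ small relative to $\min_j t_j$ and $\eta_\cX=(\log X)^{-A}$ with $A$ sufficiently large relative to $K$.
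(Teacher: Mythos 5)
Your plan for the sieve-approximation term $\widetilde{\1}_{\cR}$ (swapping the order of summation, applying a Type I count, and invoking Lemma \ref{Perron}) matches the paper's approach and is sound in outline. The $\1_{\cR}$ side, however, has a genuine gap. You fix $\gd=\gp_1\cdots\gp_{\ell'-1}$, apply the Type I count, and propose to sum over $\gp_{\ell'}$ via the Prime Ideal Theorem. But the Type I estimate counts \emph{all} $\bb\in\ZZ^4\cap\Cc$ with $\gd\mid(\gb)$ and $\bb\equiv\cc\Mod d$; it says nothing about whether the cofactor $(\gb)/\gd$ is a single prime ideal with norm in $[X^{t_{\ell'}},X^{t'_{\ell'}}]$, so it strictly over-counts. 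Making the plan honest requires replacing that Type I count by $\#\{\gp_{\ell'}:\,\gd\gp_{\ell'}\text{ has a generator }\beta\in\Cc,\ \beta\equiv\cc\Mod d\}$ (since in a small cube each principal ideal has at most one generator), and this is now a sum over prime ideals weighted by an \emph{archimedean} condition on the generator (it must land in $\Cc$) and a congruence condition on the generator (not on $\gp_{\ell'}$ itself). Standard Chebotarev and the Dedekind-zeta Prime Ideal Theorem control densities of primes in fixed ray or ideal classes of $K$, but they do not tell you where the generator of $\gd\gp_{\ell'}$ sits inside a fundamental domain. Detecting both conditions simultaneously is exactly the job of Hecke Gr\"ossencharacters (including their archimedean components) together with the Prime Ideal Theorem for Hecke $L$-functions; this is what the paper uses, following \cite[Lemmas 9.1--9.4]{May15b}. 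Without this ingredient you cannot extract a main term of the shape $\asymp |\Cc|/d^4$ on the $\1_\cR$ side to cancel against the sieve side, and the proposition is not established.

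A secondary slip: your $4$-dimensional Type I formula carries an extraneous factor $\varrho_\v(\gd)$. When $\bb$ ranges over all of $\ZZ^4$ (as it does here, since the quantities $\gb$ live in $\cO_K$ and not in the $3$-dimensional slice $b_4=0$), the condition $\gd\mid(\gb)$ cuts out a sublattice of $\ZZ^4$ of index exactly $N(\gd)$, so the local density is $1/N(\gd)$, not $\varrho_\v(\gd)/N(\gd)$; the latter is the density for the $3$-dimensional problem of Lemma \ref{WTI}. This is straightforward to fix, but carried through as written it would produce mismatched constants between the two sides.
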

\begin{proof}
This is the equivalent of \cite[Proposition 9.7]{May15b}, and the proof works in exactly the same manner for our situation. Therefore we only highlight a couple of main details.

First we estimate the contribution from $\1_{\Rc}(\bb)$. Since $\bb$ is in a small cube, no two elements can generate the same ideal, and so we can write the sum as a sum of principal ideals. We can use Hecke Grossencharacters to detect the congruence conditions and the restriction of $\bb$ to the cube $\mathcal{C}$. The Prime Number Theorem for Grossencharacters then allows one to suitably estimate the resulting sums over $\1_{\Rc}(\bb)$, giving an explicit main term and an error term which is $O_K(B^4/(\log{B})^{10K})$. This is essentially the same argument as \cite[Lemmas 9.1-9.4]{May15b}. 

The contribution from $\widetilde{\1}_{\Rc}(\bb)$ can be estimated by swapping the order of summation in the sieve sum and using the fact that $\bb\in\mathbb{Z}\cap\mathcal{C}$ are equidistributed in suitable aritmetic progressions as in \cite[Lemmas 9.5 and 9.6]{May15b}. This gives a main term and a error term $O_K(B^4/(\log{B})^{10K})$.

The main term contributions from $\1_{\Rc}(\bb)$ and $\widetilde{\1}_{\Rc}(\bb)$ are the same apart from opposite signs and so cancel, giving the result.
\end{proof}
Finally, we are able to complete our proof of Proposition \ref{prpstn:T1}.
%
%
\begin{proof}[Proof of Proposition \ref{prpstn:T1}]
Putting together the equations \eqref{eq:T1}, \eqref{eq:Tc} and the argument of Section \ref{sec:Cosmetic},  we find that provided $B<X^{3/4-\epsilon}/P_2^{3/2}$ (from \eqref{eq:BSize}) we have
\begin{align*}
T_1(\cR)&=\sum_{\cC\in Cl_K}\sum_{\substack{{\a_0,\b_0 \Mod{m'}}\\ {N(\gc )(\a_0\diamond \b_0)_i\equiv (\v_0)_i\Mod {m'}}}} T_3(\cR)+O(X^{3+\epsilon}/P_1),
\end{align*}
where $T_3$ is given by \eqref{eq:T3Def}.

Putting together \eqref{eq:T3}, \eqref{eq:T4}, \eqref{eq:T6} and Lemmas \ref{lmm:T5} \ref{lmm:T7}, \ref{lmm:T8}, \ref{lmm:T11}, \ref{lmm:T12}, \ref{lmm:Ssep}, and Proposition \ref{prpstn:Local} then gives
\begin{align*}
T_3(\cR)^2&\ll_K \eta_2^{-5}A^4\Bigl(A^3B^3+AB^7P_2^2+A^2B^{17/3}P_2^7+\frac{A^2B^6}{(\log{X})^K}\Bigr).
\end{align*}
Since $|\cAI(X_0)|\asymp A^3B^3$, this gives the result provided
\begin{align*}
A<B^{3-\epsilon},\qquad
BP_2^2<A^{1-\epsilon},\qquad
P_2^{21}<B^{1-\epsilon},.
\end{align*}
(Here we used that the second inequality implies \eqref{eq:BSize}.) After taking $\epsilon$ suitably small, we see that the first condition is implied by the first inequality of \eqref{eq:th5}, whereas the final two inequalities are implied by the assumption $\tau'\le \min(4-2\theta_1'-\cdots-2\theta'_{\ell'},\theta_1+\dots+\theta_{\ell'}-1)/100$. This gives Proposition \ref{prpstn:T1}.
\end{proof}
%
%

This completes the proof of Proposition \ref{prpstn:T1}, and hence Theorem \ref{DistriNorm} and Theorem \ref{cP}.

\bibliographystyle{plain} 
\bibliography{bibC4D4}

\vskip 2cm

\noindent Cécile Dartyge, Institut \'Elie Cartan, Université de Lorraine, BP 70239, 54506 Vand\oe uvre-lès-Nancy Cedex, France

\noindent cecile.dartyge@univ-lorraine.fr

\hfill\\

\noindent James Maynard, Mathematical Institute, Woodstock Road, Oxford OX2 6GG, UK

\noindent james.alexander.maynard@gmail.com
\end{document}